\documentclass[11pt]{amsart}
\usepackage{amsmath,amssymb,amsthm}
\IfFileExists{stmaryrd.sty}{
  \usepackage{stmaryrd}
  \makeatletter
  \InputIfFileExists{Ustmry.fd}{}{}
  \makeatother
  \DeclareFontShape{U}{stmry}{b}{n}{<->ssub*stmry/m/n}{}
  \DeclareFontShape{U}{stmry}{bx}{n}{<->ssub*stmry/m/n}{}
}{}

\usepackage{enumitem}
\usepackage{geometry}
\usepackage{comment}
\usepackage{multirow}
\usepackage{longtable}
\numberwithin{equation}{section}
\usepackage[backend=biber, giveninits=true]{biblatex}
\DeclareLanguageMapping{english}{english-apa}
\usepackage{aliascnt}
\usepackage{hyperref}
\usepackage[capitalise]{cleveref}
\newtheorem{theorem}{Theorem}[section]

\newtheorem{thmabc}{Theorem}

\newaliascnt{prop}{theorem}
\newtheorem{prop}[prop]{Proposition}
\aliascntresetthe{prop}
\crefname{prop}{Proposition}{Propositions}
\newaliascnt{cor}{theorem}
\newtheorem{cor}[cor]{Corollary}
\aliascntresetthe{cor}
\crefname{cor}{Corollary}{Corollaries}
\newaliascnt{lemma}{theorem}
\newtheorem{lemma}[lemma]{Lemma}
\aliascntresetthe{lemma}
\crefname{lemma}{Lemma}{Lemmas}
\theoremstyle{definition}
\newaliascnt{definition}{theorem}
\newtheorem{definition}[definition]{Definition}
\aliascntresetthe{definition}
\crefname{definition}{Definition}{Definitions}
\newaliascnt{ex}{theorem}
\newtheorem{ex}[ex]{Example}
\aliascntresetthe{ex}
\crefname{ex}{Example}{Examples}
\Crefname{ex}{Example}{Examples}
\theoremstyle{remark}
\newaliascnt{remark}{theorem}
\newtheorem{remark}[remark]{Remark}
\aliascntresetthe{remark}
\crefname{remark}{Remark}{Remarks}

\newtheorem*{acknowledgements}{Acknowledgements}

\DeclareMathOperator{\diag}{diag}

\newcommand{\igu}[1]{\textup{I}_{#1}^+}
\newcommand{\igm}[1]{\textup{I}_{#1}^-}
\newcommand{\ig}[1]{\textup{I}_{#1}}
\newcommand{\N}{\mathbb{N}}
\newcommand{\C}{\mathbb{C}}
\newcommand{\Z}{\mathbb{Z}}
\newcommand{\Q}{\mathbb{Q}}

\newcommand{\Mat}{\mathrm{Mat}}
\newcommand{\Sat}{\mathcal{S}}
\newcommand{\GL}{\mathrm{GL}}

\newcommand{\Sp}{\mathrm{Sp}}
\newcommand{\Des}{\mathrm{Des}}
\newcommand{\vol}{\mathrm{vol}}
\newcommand{\Supp}{\mathrm{Supp}}
\newcommand{\des}{\operatorname{des}}
\newcommand{\negg}{\operatorname{neg}}

\def \bfd {\mathbf{d}}
\def \bfw {\mathbf{w}}

\def \mfh {\mathfrak{h}}
\def \mfp {\mathfrak{p}}
\def \Zp {\mathbb{Z}_p}

\def \mcP {\mathcal{P}}
\def \mcW {\mathcal{W}}
\def \lri {\mathfrak{o}}
\def \mfh {\mathfrak{h}}
\def \rarr {\rightarrow}
\newcommand{\desB}{\operatorname{des}_B}
\newcommand{\DesB}{\operatorname{Des}_B}
\DeclareMathOperator{\Res}{Res}

\title{Symplectic lattice counting and zeta functions of higher Heisenberg groups}

\author{Jianhao Shen}
\author{Christopher Voll}

\email{jshen@math.uni-bielefeld.de}
\email{C.Voll.98@cantab.net}

\address{Fakult\"at f\"ur Mathematik, Universit\"at Bielefeld, D-33501
Bielefeld, Germany}

\subjclass[2020]{20C08, 11M41, 11S45, 20F18, 17B30, 05A15}

\begin{document}

\begin{abstract}
  We derive explicit formulae for the subalgebra zeta functions of all higher
  Heisenberg Lie algebras over an arbitrary compact discrete valuation ring
  (cDVR)~$\lri$. To this end, we develop Hecke-theoretic techniques for the
  enumeration, by two distinct invariants, of sublattices of an $\lri$-lattice
  of finite rank endowed with a non-degenerate symplectic form.
\end{abstract}

\date{\today} \maketitle

\tableofcontents

\thispagestyle{empty}

\section{Introduction}

In this paper we combine two themes in enumerative algebra. The first is the
enumeration of subalgebras of the higher Heisenberg algebras over compact
discrete valuation rings (cDVRs). The second is the enumeration, by two
natural invariants, of sublattices in symplectic lattices over cDVRs.

\subsection{Subalgebra zeta functions of higher Heisenberg groups and algebras}
\label{subsec:intro-zeta}
In \cite{GSS/88}, Grunewald, Segal, and Smith quantified the subgroup
growth of finitely generated nilpotent groups. The \emph{subgroup zeta
function} of such a group \(G\) is the Dirichlet-type generating
series
\[
  \zeta_G(s)=\sum_{m=1}^\infty a_m(G)m^{-s},
\]
where \(s\) is a complex variable and \(a_m(G)\) denotes the number of
subgroups of \(G\) of index~\(m\). The nilpotency of \(G\) is reflected in
the Euler product decomposition
\begin{equation}\label{equ:euler}
  \zeta_{G}(s) = \prod_{p \textup{ prime}}\zeta_{G,p}(s),
\end{equation}
where the local subgroup zeta functions are
\[
  \zeta_{G,p}(s)=\sum_{i=0}^{\infty}a_{p^i}(G)p^{-is}.
\]
A central result of \cite{GSS/88} establishes that each of these Euler
factors is a rational function in~\(p^{-s}\). Computing these
functions explicitly, though, is a challenging problem, wide open for
general nilpotent groups. It is therefore notable that the problem
remained open for almost all members of one of the simplest
non-abelian families, namely the higher Heisenberg groups \(H_n(\Z)\)
defined below.

The problem of computing (local) subgroup zeta functions of nilpotent groups
may be linearized in the following sense. By the Malcev correspondence, there
exists a Lie ring (or \(\Z\)-Lie lattice) \(L_G\) such that, for all but
finitely many primes \(p\), the local subgroup zeta function
\(\zeta_{G,p}(s)\) coincides with the local \emph{subalgebra zeta function}
\begin{equation}\label{def:loc.subalg}
  \zeta_{L_G(\Zp)}(s) = \sum_{i=0}^\infty a_{p^i}(L_G(\Zp))p^{-is},
\end{equation}
where \(a_{p^i}(L_G(\Zp))\) is the number of subalgebras of the \(\Zp\)-Lie
algebra \(L_G(\Zp):=L_G\otimes_\Z\Zp\) of finite additive index~\(p^i\). (In
nilpotency class~\(2\), the correspondence holds for \emph{all} primes~\(p\).)
Via base extension, we obtain \(\lri\)-Lie lattices \(L_G(\lri)\) for any
cDVR~\(\lri\). Their subalgebra zeta functions \(\zeta_{L_G(\lri)}(s)\) are
defined in close analogy with~\eqref{def:loc.subalg}, enumerating
\(\lri\)-subalgebras of \(L_G(\lri)\) of finite additive index. They are known
to be rational in~\(q^{-s}\), where \(q\) is the residue field cardinality of
\(\lri\).

In the present paper we give three explicit descriptions of the
subalgebra zeta functions associated with the \emph{Heisenberg algebra
\(\mfh_n(\lri)\) of degree \(n\in\N\)} over an arbitrary
cDVR~\(\lri\); cf.\ Theorems~\ref{thm:A}, \ref{thm:B},
\ref{thm:C}. The \(\Z\)-Lie lattice
\begin{equation}\label{eq:pres}
  \mfh_n(\Z) =\langle x_1,\dots,x_{2n},y \mid [x_{2i-1},x_{2i}]=y
  \textup{ for }i\in[n]\rangle
\end{equation}
is the Lie lattice associated, via the Malcev correspondence, to the
\emph{Heisenberg group \(H_n(\Z)\) of degree \(n\)}, i.e.\ the
\(n\)-fold central amalgamated power of the discrete Heisenberg group
\[
  H(\Z)=
  \begin{pmatrix}
    1 & \Z & \Z \\ 0 & 1 & \Z \\ 0 & 0 & 1
  \end{pmatrix}.
\]
In presentations such as \eqref{eq:pres}, we assume Lie brackets not implied
by the specified ones to vanish. We refer to Lie algebras of the form
\(\mfh_n(\lri)\) collectively as
\emph{higher Heisenberg algebras}.

As a corollary we obtain formulae for the local zeta functions of the
higher Heisenberg groups \(H_n(\Z)\). For \(n=1\), the formula for
\(\zeta_{H_1(\Z),p}(s)=\zeta_{\mfh_1(\Zp)}(s)\) is given
in~\cite[Prop.~8.1]{GSS/88}. The formula for \(n=2\) appears in
\cite{Woodward2005}; the formula for \(n=3\) is due to Klopsch and the
second author and is recorded without proof in~\cite[\S3.3.13.9]{Berman2005}.
Our formulae appear to be new for higher values of~\(n\).

Our approach combines lattice enumeration with ideas and methods from
the theory of Hecke algebras and their modules on the one hand and
combinatorially defined generating functions called \emph{Igusa
functions} on the other hand. On the Hecke side, it is informed by the
theory of alternating modules developed in \cite{SV24,SV25}; on the
Igusa side, it is related to the type-\(\mathsf{A}\) Igusa functions
employed in \cite{Voll2005} (and many other works) and their
type-\(\mathsf{B}\) analogues, e.g.\ in \cite{SV/14}.  We also develop
formulae for the \emph{reduced subalgebra zeta functions} of the
higher Heisenberg algebras, a \((q\to1)\)-degeneration of the local
zeta functions introduced by Evseev~\cite{Evseev2009Reduced}.

Our first main result expresses the subalgebra zeta function of
\(\mfh_n(\lri)\) as a sum of \(2^n\) instantiations of \emph{augmented
Igusa functions}~\(\igu{n}\); cf.~\Cref{def:igusa.A}.

\begin{thmabc}[=\Cref{thm:final-zeta}]\label{thm:A}
  We have
  \[
  \zeta_{\mfh_n(\lri)}(s)
  =
  \sum_{\mathbf{w}\in\mcW_n}
  C_n(\mathbf{w})\,
  \igu{n}\!\bigl(q^{-2};X_0(\mathbf w),X_1(\mathbf w),\dots,X_n(\mathbf w)\bigr),
\]
with \(\mcW_n\) and \(C_n(\mathbf w)\) as in \Cref{def:Wn} and \(X_k(\bfw)\)
as defined in~\eqref{def:Xk}.
\end{thmabc}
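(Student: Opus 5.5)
We follow the standard reduction for class-two algebras with one-dimensional centre. Write $L=\mfh_n(\lri)=V\oplus Z$, where $V=\lri^{2n}$ carries the non-degenerate alternating form $\beta$ induced by the bracket, and $Z=\lri y$. Every finite-index subalgebra $M\le L$ determines a pair $(\Lambda,\mfp^{m}Z)$. Here $\Lambda=(M+Z)/Z\le V$ is its image, a lattice of finite index, and $M\cap Z=\mfp^mZ$. Conversely, given $\Lambda$ and $m$, the subalgebras with these invariants correspond to lifts of $\Lambda$. There are $|Z:\mfp^mZ|^{2n}=q^{2nm}$ such lifts, and each yields a subalgebra exactly when $\beta(\Lambda,\Lambda)\subseteq\mfp^m$. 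The index of $M$ in $L$ is $|V:\Lambda|\,q^m$. Hence
\[
\zeta_{\mfh_n(\lri)}(s)=\sum_{\Lambda\le V}|V:\Lambda|^{-s}\sum_{m\le \nu(\Lambda)} q^{(2n-s)m},
\]
where $\nu(\Lambda)=\min v(\beta(\Lambda,\Lambda))$ is the ``symplectic level'' of $\Lambda$. The inner geometric sum evaluates to a rational function of $q^{(2n-s)\nu(\Lambda)}$. It therefore suffices to compute the bivariate generating function $\sum_\Lambda |V:\Lambda|^{-s}T^{\nu(\Lambda)}$, which is exactly the enumeration of symplectic sublattices by two invariants promised in the abstract.

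To count this, we use the symplectic elementary divisor theorem. Each $\Lambda$ lies in an $\Sp_{2n}(\lri)$-orbit classified by the elementary divisors of $\Lambda$ relative to a symplectic basis. Equivalently, we take double cosets $\Sp_{2n}(\lri)\backslash \GL_{2n}(K)/\GL_{2n}(\lri)$ restricted to integral lattices, i.e.\ the Hecke-module (alternating module) picture of the earlier sections. We write $\Lambda=g\,\diag(\mfp^{a_1},\dots,\mfp^{a_{2n}})\lri^{2n}$ with $g\in\Sp_{2n}(\lri)$. The level $\nu$ is then a function of how the exponents pair up under $\beta$, namely $\min_i(a_{\sigma(2i-1)}+a_{\sigma(2i)})$ for the induced pairing. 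The number of lattices in each orbit is an index of parahoric-type subgroups, computed by Bruhat decomposition as a $q$-count over cosets of a Weyl group. Grouping the pairings by which member of each symplectic pair is larger gives $2^n$ sign patterns $\bfw\in\mcW_n$. For each $\bfw$ the orbit sizes factor as the constant $C_n(\bfw)$ times a product of $q^{-2}$-binomial (type $\mathsf{A}$ flag) counts in the ordered gaps of the exponents. Summing over the exponent chains then produces, by the defining identity of $\igu{n}$ as a sum over compositions/flags weighted by $q^{-2}$-multinomials, exactly $\igu{n}(q^{-2};X_0(\bfw),\dots,X_n(\bfw))$. The monomials $X_k(\bfw)$ absorb $q^{-s}$, the factor $q^{(2n-s)\nu}$, and $\bfw$-dependent powers of $q$.

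The main obstacle is the orbit-counting step. We must show that, after summing over $\bfw$, the fibre sizes of lattices with prescribed elementary divisors and prescribed level are precisely $q^{-2}$-multinomials up to the monomial normalisations. This requires the Hecke-algebra computation of the number of $\GL_{2n}(\lri)$-cosets in a given $\Sp$-orbit. I would first do it for $n=1$ to check the normalisation of $X_k(\bfw)$ against $\zeta_{\mfh_1}$ from \cite{GSS/88}. I would then induct on $n$ by peeling off the symplectic pair carrying the minimal valuation. The remaining steps, namely the geometric-series manipulation and the final matching with \Cref{def:igusa.A}, are routine bookkeeping.
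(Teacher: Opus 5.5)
Your first reduction is correct and is exactly how the paper begins, with $\nu(\Lambda)=\mu_n(\Lambda)$. It sends subalgebras to pairs $(\Lambda,m)$ with $m\le\nu(\Lambda)$, gives $q^{2nm}$ lifts, and gives index $|V:\Lambda|\,q^m$.

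\textbf{The gap.} The proof breaks at the next step. You assert that every finite-index $\Lambda\le L=\lri^{2n}$ has the form $g\,\diag(\pi^{a_1},\dots,\pi^{a_{2n}})\lri^{2n}$ with $g\in\Sp_{2n}(\lri)$. That is, you assume $L$ has a \emph{symplectic} basis adapted to $\Lambda$. This is false for $n\ge2$.

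Take $n=2$, a symplectic basis $e_1,e_2,f_1,f_2$ of $L$, and $\Lambda=\lri e_1+\lri(e_2+\pi f_1)+\pi^2L$.
\begin{itemize}
\item Since $e_1,\,e_2+\pi f_1,\,f_1,\,f_2$ is a basis of $L$, we get $L/\Lambda\cong(\lri/\pi^2)^2$.
\item The Gram matrix of $\Lambda$ contains the entry $\langle e_1,e_2+\pi f_1\rangle=\pi$ of valuation $1$, and its Pfaffian is $-\pi^4$. So the alternating type of $\Lambda$ is $(3,1)$.
\item A symplectically diagonal lattice $\bigoplus_i(\pi^{a_i}\lri e_i'\oplus\pi^{b_i}\lri f_i')$ has quotient type the sorted $(a_1,a_2,b_1,b_2)$ and alternating type the sorted $(a_1+b_1,a_2+b_2)$.
\item Quotient type $(2,2)$ forces all exponents into $\{0,2\}$, hence even pair sums, which is incompatible with $(3,1)$.
\end{itemize}
So $\Sp_{2n}(\lri)\backslash\GL_{2n}(k_{\mathfrak p})/\GL_{2n}(\lri)$ is not parametrised by exponent vectors, and the orbit enumeration you sketch has no foundation. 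Your proposed $n=1$ test would not detect this, since $\Sp_2=\mathrm{SL}_2$.

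This is also not the alternating-module picture of the paper. There the relevant double cosets are $\GL_{2n}(\lri)\backslash\GL_{2n}(k_{\mathfrak p})/\Sp_{2n}(k_{\mathfrak p})$. These are Gram matrices up to change of basis of $\Lambda$, indexed by the alternating type $\mu\in\mcP_n$ alone.

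\textbf{The $2^n$ summands are not orbit classes.} Even granting some orbit decomposition, the summands of the theorem cannot arise as sums of orbit sizes grouped by sign patterns. The weights $C_n(\bfw)$ are not counts: for instance $C_1((0))=1/(1-q)<0$, and the $\bfw=(0)$ summand for $n=1$ has constant term $1/(1-q)$. The individual summands do not have nonnegative integer coefficients; only their total does. In the paper, $\mcW_n$ enters through an algebraic identity, not through a partition of lattices.

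\textbf{Missing ingredients.}
\begin{enumerate}
\item[(i)] The factorisation $N_\lri(\mu)=N'_\lri(\mu)\,\alpha_n(\mu;q^2)$. It expresses the number of lattices of alternating type $\mu$ as the number of Lagrangians in the finite alternating module $M_\mu$ times a Birkhoff number. It is proved via the alternating Hecke module and Satake transform, and $\alpha_n(\mu;q^2)$ is what supplies the $q^{-2}$-multinomials.
\item[(ii)] The closed formula $N'_\lri(\mu)=\sum_{\bfw\in\mcW_n}C_n(\bfw)q^{\bfw\cdot\mu}$. It is obtained from the recursion $N'_\lri(\mu)=N'_\lri(\mu_1-1,\mu_2,\dots)+q^{|\mu|}N'_\lri(\mu_2,\dots)$ by checking that the right-hand side satisfies the same recursion.
\end{enumerate}
With (i) and (ii) in hand, you sum $q^{\bfw\cdot\mu}\alpha_n(\mu;q^2)q^{-s|\mu|}\bigl(1-q^{(\mu_n+1)(2n-s)}\bigr)/(1-q^{2n-s})$ over $\mu$ via its difference vector. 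This routinely yields $\igu{n}(q^{-2};X_0(\bfw),\dots,X_n(\bfw))$.

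What you call the ``main obstacle'' is precisely (i) and (ii), and the route you propose for it does not work.
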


Our second main result expresses the zeta function as a much more compact
\((n+1)\)-term sum, revealing a surprisingly rigid pole structure. It is
obtained by a recursive regrouping and normalization of the \(2^n\)-term
expression in~\Cref{thm:A}. Let \((a;q)_m:=\prod_{i=0}^{m-1}(1-aq^i)\) denote the
\emph{\(q\)-Pochhammer symbol}.

\begin{thmabc}[=\Cref{thm:simplified-zeta}]\label{thm:B}
We have
\[
  \zeta_{\mfh_n(\lri)}(s)
  =
  \sum_{r=0}^{n}
  \frac{1}{\left(1-q^{\,2n+\frac{r(2n+1-r)}{2}}\,T^{n+1}\right)}
  \cdot
  \frac{
    (-q)^r(1-q^{2n-2r+1})(q^2;q^2)_n
  }{
    (q;q)_{2n-r+1}(q;q)_r
    (q^{r}T;q^{2})_{n-r}
    (q^{2n-r}T;q)_{r}
  }.
\]
\end{thmabc}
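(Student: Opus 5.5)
We derive \Cref{thm:B} from \Cref{thm:A} (i.e.\ \Cref{thm:final-zeta}) by regrouping and summing the $2^n$ augmented Igusa terms. Here $T=q^{-s}$; for the reference checks below we assume the standard normalisation $\zeta_{\mfh_1(\lri)}(s)=\zeta_{\lri^2}(s)\zeta_{\lri^2}(3s-2)/\zeta_{\lri}(3s-2)$.

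First, we expand each $\igu{n}(q^{-2};X_0(\bfw),\dots,X_n(\bfw))$ via \Cref{def:igusa.A}. This is a sum over subsets $I\subseteq[n]$ of $q^{-2}$-binomial (or $q^2$-multinomial) coefficients times $\prod_{i\in I} X_i/(1-X_i)$, together with the augmenting factor $1/(1-X_0)$. We then track which numerical data of $\bfw\in\mcW_n$ actually enter $X_0(\bfw)$. We expect $X_0(\bfw)$ to depend only on a single statistic $r=r(\bfw)\in\{0,\dots,n\}$, which we would guess counts the ``type-$\mathsf B$ negative'' positions of $\bfw$. In that case $X_0(\bfw)=q^{2n+r(2n+1-r)/2}T^{n+1}$ for all such $\bfw$. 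This explains the first factor in \Cref{thm:B}: the exponent $2n+\binom{2n+1}{2}-\binom{2n+1-r}{2}$ is a sum of $r$ consecutive integers shifted by $2n$, as one expects from a lattice-volume computation on a Lagrangian-type flag.

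Second, for fixed $r$, we sum $C_n(\bfw)\cdot(\text{the remaining Igusa part})$ over all $\bfw$ with $r(\bfw)=r$. The target is
\[
\frac{(-q)^r(1-q^{2n-2r+1})(q^2;q^2)_n}{(q;q)_{2n-r+1}(q;q)_r\,(q^rT;q^2)_{n-r}\,(q^{2n-r}T;q)_r}.
\]
The plan is an induction on $n$ via the recursive structure of $\mcW_n$. We remove the last letter of $\bfw$ and compare the $\mcW_n$-sum with the $\mcW_{n-1}$-sum. The recursion of the Igusa function, $\igu{n}=\sum_{k}\binom{n}{k}_{q^{-2}}\prod\cdots\,\ig{n-k}$, should yield a first-order recurrence in $n$ for each $r$-block. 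The two denominator blocks then have a clear source: the $X_i$ with $i$ of "positive" type give the factor $(q^rT;q^2)_{n-r}$ with step $q^2$, and the "negative" ones give $(q^{2n-r}T;q)_r$ with step $q$.

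Third, we verify the resulting identity of rational functions. It amounts to a $q$-binomial summation of Gauss/$q$-Chu–Vandermonde type, which collapses the sum over the placements of the $r$ negative letters into $(q^2;q^2)_n/\bigl((q;q)_{2n-r+1}(q;q)_r\bigr)$ times the factor $(1-q^{2n-2r+1})$. We would check the small cases $n=1,2$ against the formulas of \cite{GSS/88} and \cite{Woodward2005} to fix signs and normalisations before carrying out the induction.

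The main obstacle is the second step. The weights $C_n(\bfw)$ and the $X_k(\bfw)$ depend on $\bfw$ in an intertwined way, so the partial sums need not factor neatly. If the direct induction fails, we would instead compare poles. Both sides are rational in $T$ with simple poles among $q^{-(2n+r(2n+1-r)/2)/(n+1)}$-type values and the $q^{-j}$ from the Pochhammer factors. We would match residues at each $X_0$-pole, which isolates exactly one $r$-block, and then use the degree bound and value at $T=0$ (namely $1$) to conclude equality.
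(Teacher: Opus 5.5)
Your first step is right in substance, and it is also how the paper begins. By \Cref{lem:Wn-partial-sum}, $X_0(\bfw)=q^{2n+w_n(2n+1-w_n)/2}T^{n+1}$ depends only on the terminal fibre $\{w_n,2n+1-w_n\}$, so one groups \Cref{thm:A} by $r=\min(w_n,2n+1-w_n)$. This $r$ is read off from $w_n$ alone, not from some count of ``negative positions''. Two small slips: $r(2n+1-r)/2=\binom{n+1}{2}-\binom{n+1-r}{2}$, and your $n=1$ reference formula should be $\zeta_{\lri^2}(s)\zeta_{\lri^2}(2s-2)/\zeta_{\lri}(3s-3)$.

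The gap is the second step, which carries the whole proof, and as described it would not go through.
\begin{itemize}
\item Deleting $w_n$ does not produce the $\mfh_{n-1}$ sum. What remains is still a degree-$n$ Igusa function, with multinomials $\binom{n}{I}_{q^{-2}}$ and its first $n-1$ arguments specialised. So no first-order recurrence relates the $\mcW_n$- and $\mcW_{n-1}$-sums, and the Igusa recursion you invoke involves all lower degrees.
\item An $r$-block is not fed by the $r$-block one level down. By \Cref{lem:Wn-recursion}, $\mcW_{k+1,r}$ is built from $\mcW_{k,r}$ and $\mcW_{k,2k+3-r}$, so any recursion couples two blocks.
\item Each summand of \Cref{thm:A} has denominator factors $1-X_k(\bfw)$ with $X_k\propto T^k$, while the target block has only factors linear in $T$. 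Your heuristic that the $X_i$ ``give'' $(q^rT;q^2)_{n-r}$ and $(q^{2n-r}T;q)_r$ misses that these $T^k$-factors must cancel across the fibre; that cancellation is the real content.
\item The ``$q$-Chu--Vandermonde collapse over placements of $r$ negative letters'' has no counterpart in the structure of $\mcW_{n,r}$.
\end{itemize}

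Two ideas are missing. First, fix the degree $n$ and induct on the number $k$ of specialised arguments. The fibre sums $\mathcal I_n^{k,r}(X_{k+1},\dots,X_n;T)$ of \Cref{def:Inkr} leave the trailing Igusa variables free. They satisfy the two-term recursion of \Cref{lem:In-recursion}, coupling $r$ and $2k+3-r$ via the substitution $X_{k+1}=Y_{k+1}(r,T)$. Writing $X_k(\bfw)=Y_k(w_k,q^{2n}T)$ removes the explicit $n$-dependence.

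Second, you need an explicit closed form for these intermediate sums to close the induction. After normalising by $P_{k,r}(q)$, $E_{k,r}(-T)$ and $\prod_{j>k}(1-X_j)$, they equal the coset model $\mathcal K_n^{k,r}$ over $S_n/S_k$ (\Cref{def:Knkr}). Checking that $\mathcal K_n^{k,r}$ obeys the same recursion uses the coefficient identities for $E_{k,r}$ and a telescoping step. There, $B^{(k+1)}_{k+1,r}T^{k+1}=Y_{k+1}(r,T)\,B^{(0)}_{k+1,r}$ cancels the factor $1-Y_{k+1}(r,T)$, which is exactly where the $T^{k+1}$-poles disappear. At $k=n$ the model collapses to $[n]_{q^2}!$, giving the $r$-th block.

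Your fallback does not avoid this work. Suppose you decouple $T^{n+1}$ in $X_0$ into a new variable, as the paper does in \Cref{subsec:residue-reduction}. Then matching residues at the $X_0$-poles is literally the statement that $\mathcal I_n^{n,r}(-;q^{2n}T)$ equals the second factor of the $r$-th summand of \Cref{thm:B}, i.e.\ the theorem itself. Without decoupling, you still need the fibre sums' values at the roots of $1-q^{a}T^{n+1}$. You also need to know that the $\mcW_n$-sum has no further poles besides these and the $q^{-j}$. That presupposes the very cancellation of the $1-X_k(\bfw)$ factors to be proved, and the value at $T=0$ plus a degree bound cannot substitute for it.
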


Our third main result expresses the zeta function in terms of the
\emph{type-\(\mathsf B\) Igusa function}~\(\ig{B_n}\); see
\Cref{def:igusa.B}. The numerator of this rational function is a
polynomial whose monomials are indexed by the hyperoctahedral group
\(B_n\) of order \(2^n n!\), involving the well-known statistics
\(\negg\) and \(\desB\) on \(B_n\) recalled in \Cref{sec:Bn}, as well
as a new statistic \(C\) defined in~\eqref{eq:Cg}. For \(i\in[n]_0\),
set \(c_i=\frac{n(n+5)-i(i+1)}{2} = 2n + \binom{n+1}{2}-\binom{i+1}{2}\).

\begin{thmabc}[=\Cref{thm:Bn-target}]\label{thm:C}
    We have
\begin{align*}
  \zeta_{\mfh_n(\lri)}(s)
  &= \frac{1}{(T;q)_{2n}}\,
     \ig{B_n}\!\left(
       q^{-1},-q^nT;\, q^{c_0}T^{n+1},\dots,q^{c_n}T^{n+1}
     \right)\\ &=
    \frac{
      \displaystyle
      \sum_{g\in B_n}
      (-1)^{\negg(g)}\,
      q^{C(g)}\,
      T^{(n+1)\desB(g)+\negg(g)}
    }{
      \displaystyle
      (T;q)_{2n}
      \prod_{m=0}^{n}
      \bigl(1-q^{c_m}T^{n+1}\bigr)
    }.
  \end{align*}
\end{thmabc}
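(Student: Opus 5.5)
We derive Theorem~\ref{thm:C} from the $(n+1)$-term expression of Theorem~\ref{thm:B}, which we take as already established. The plan is to bring all $n+1$ summands over the common denominator
\[
(T;q)_{2n}\prod_{m=0}^{n}\bigl(1-q^{c_m}T^{n+1}\bigr).
\]
This requires two identifications.

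\emph{Pole factors.} The exponent in the $r$-th summand is
\[
2n+\tfrac{r(2n+1-r)}{2}.
\]
With $i=n-r$ one checks $2n+\binom{n+1}{2}-\binom{i+1}{2} = c_{n-r}$, so the factors $1-q^{\,2n+r(2n+1-r)/2}T^{n+1}$, $r\in[n]_0$, are exactly the factors $1-q^{c_m}T^{n+1}$, each appearing once.

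\emph{Linear factors.} The remaining denominator of the $r$-th summand is
\[
(q^rT;q^2)_{n-r}\,(q^{2n-r}T;q)_r .
\]
It is a product of $n$ distinct factors of the form $1-q^jT$, with
\[
j\in\{r,r+2,\dots,2n-r-2\}\cup\{2n-r,\dots,2n-1\}\subseteq\{0,\dots,2n-1\}.
\]
Hence it divides $(T;q)_{2n}$. The complementary factor is a product of $n$ linear terms, namely those $j<r$ and those $j\in\{r+1,r+3,\dots,2n-r-1\}$.

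\emph{Numerator.} Multiplying out, the numerator becomes
\[
N(T)=\sum_{r=0}^n (-q)^r\,\frac{(1-q^{2n-2r+1})(q^2;q^2)_n}{(q;q)_{2n-r+1}(q;q)_r}\prod_{j\in J_r}(1-q^jT)\prod_{m\neq n-r}\bigl(1-q^{c_m}T^{n+1}\bigr),
\]
where $J_r$ is the complementary index set just described. It remains to show that
\[
N(T)=\sum_{g\in B_n}(-1)^{\negg(g)}q^{C(g)}T^{(n+1)\desB(g)+\negg(g)}.
\]
The first line of Theorem~\ref{thm:C} is then just the definition of $\ig{B_n}$ (\Cref{def:igusa.B}), specialised at $X=-q^nT$ and $Y_m=q^{c_m}T^{n+1}$. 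So one should prove the combinatorial identity in the form
\[
\ig{B_n}\bigl(q^{-1},-q^nT;\,(q^{c_m}T^{n+1})_m\bigr)=(T;q)_{2n}\,\zeta_{\mfh_n(\lri)}(s).
\]

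\emph{Strategy for the identity.} I would use the standard decomposition of $B_n$ by the position of the ``last'' descent, equivalently the type-$\mathsf B$ analogue of the recursion for $\ig{n}$. Concretely, write $\ig{B_n}$ as
\[
\sum_{I\subseteq[n]_0}\binom{n}{I}^{B}_{q^{-1}}\,(\text{negative-sign generating factor})\prod_{i\in I}\frac{Y_i}{1-Y_i}.
\]
Here the $B$-type $q$-multinomials, together with the signed $\negg$-enumeration, factor through $(X;q)$-Pochhammer products via the $q$-binomial theorem,
\[
\sum_k \binom{m}{k}_{q}\,q^{\binom k2}(-X)^k=(X;q)_m .
\]
Splitting each set $I$ according to its maximal element isolates a pole $1-Y_i$. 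The coefficient of the simple fraction $1/(1-q^{c_{n-r}}T^{n+1})$ is then a product of a type-$\mathsf A$ factor and a type-$\mathsf B$ factor of smaller rank. These sub-factors telescope via the $q$-binomial theorem (with $X=-q^nT$) into exactly the $r$-th summand of Theorem~\ref{thm:B} times $(T;q)_{2n}$. The statistic $C(g)$ from~\eqref{eq:Cg} should be precisely what records the weights $c_i$ attached to the descents, together with the $q^n$-shift carried by the negatives.

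\emph{Main obstacle.} The main difficulty is verifying this partial-fraction matching: that the residue of the $\ig{B_n}$ side at $Y_{n-r}=1$ reproduces the Pochhammer quotient
\[
(-q)^r(1-q^{2n-2r+1})(q^2;q^2)_n/\bigl((q;q)_{2n-r+1}(q;q)_r\bigr)
\]
times the complementary linear factors. My first attempt would be to compare both sides as rational functions in $Y=T^{n+1}$-type variables, treating $q^nT$ as an independent variable. I would reduce to checking residues at each of the $n+1$ simple poles, plus the behaviour at $T\to\infty$ (degree bounds show there is no polynomial part). Each residue check reduces to a $q$-Vandermonde or $q$-Chu identity. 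As a fallback, I would verify the identity by induction on $n$ using the recursion in $n$ satisfied by both sides.
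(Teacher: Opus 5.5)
Your plan is essentially the paper's proof: it also starts from Theorem~\ref{thm:B}, replaces $T^{n+1}$ by an independent variable (so the no-polynomial-part check is as that variable tends to $\infty$, not $T\to\infty$), and compares residues at the $n+1$ simple poles $q^{-c_m}$. There the Reiner-type subset expansion $\sum_{\DesB(g)\subseteq I}t^{\ell(g)}z^{\negg(g)}=\binom{n}{I}_t(-t^nz;t^{-1})_{n-\min(I\cup\{n\})}$ makes the residue at $X_m=1$ factor as $\binom{n}{m}_{q^{-1}}(T;q)_{n-m}\,\igm{B_m}\cdot\ig{n-m}$; to see this, split each $I\ni m$ at $m$, not by its maximum as you wrote. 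What your sketch still leaves to be done is the actual content: the two triangular-number evaluations $\ig{k}\bigl(q^{-1};(q^{-\binom{r+1}{2}}U^r)_{r=1}^{k}\bigr)=(-q^{-1}U;q^{-1})_k/(q^{-2}U^2;q^{-1})_k$ and $\igm{B_k}\bigl(q^{-1},Z;(q^{\binom{k+1}{2}-\binom{r+1}{2}})_{r=0}^{k-1}\bigr)=(-q^{1-k}Z;q^2)_k/(q;q^2)_k$, each obtained via $q$-Chu--Vandermonde, after which matching the $r$-th summand of Theorem~\ref{thm:B} is Pochhammer bookkeeping.
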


\begin{remark}
Modifying the data $c_i$ to $c_i' = \binom{n+1}{2}-\binom{i+1}{2}$
(also in the definition of $C$) yields the \emph{graded subalgebra
zeta functions} $\zeta^{\textup{gr}}_{\mfh_n(\lri)}(s)$ enumerating
\emph{homogeneous} subalgebras; cf.\ \cite[\S3]{R/18}. We record the
curious fact that $c_i'$ is the dimension of the algebraic variety of
symmetric $n\times n$-matrices of rank~$n-i$.
\end{remark}

We describe the poles of the \(\mfp\)-adic zeta
functions~\(\zeta_{\mfh_n(\lri)}(s)\) in~\Cref{subsec:poles}. Our work
also yields an elementary proof of the following local functional
equations.

\begin{cor}[\cite{Voll/10}]\label{cor:funeq}
  We have
  \[
    \left.\zeta_{\mfh_n(\lri)}(s)\right|_{q \rarr q^{-1}}
    =
    -q^{\binom{2n+1}{2}-(2n+1)s}\zeta_{\mfh_n(\lri)}(s).
  \]
\end{cor}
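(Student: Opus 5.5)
The plan is to deduce the functional equation from the compact $(n+1)$-term formula of \Cref{thm:B}, checking that each summand individually transforms as claimed, up to a reindexing. Write $T=q^{-s}$. The inversion $q\to q^{-1}$ is understood with $s$ fixed, so $T$ is left unchanged. The target identity is then
\[
\left.\zeta_{\mfh_n(\lri)}(s)\right|_{q\to q^{-1}}=-q^{\binom{2n+1}{2}}T^{2n+1}\zeta_{\mfh_n(\lri)}(s).
\]
Denote the $r$-th summand in \Cref{thm:B} by $S_r(q,T)$. I would first try to show that $S_r(q^{-1},T)=-q^{\binom{2n+1}{2}}T^{2n+1}S_{\sigma(r)}(q,T)$ for some involution $\sigma$ of $[n]_0$, hoping that $\sigma$ is the identity. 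If the termwise identity fails, I would fall back on a global argument, described at the end.

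The termwise computation rests on two standard facts. The first is that for $q$-Pochhammer symbols without $T$,
\[
(q^{-1};q^{-1})_m=(-1)^mq^{-\binom{m+1}{2}}(q;q)_m,
\]
and similarly for $(q^{-2};q^{-2})_n$ and for $1-q^{-(2n-2r+1)}$. The second is that each factor $1-q^aT$ becomes $1-q^{-a}T$ under inversion. Such a factor is not monomially related to $1-q^aT$, so the $T$-dependent denominators cannot be handled factor by factor. Instead I would use the identity
\[
\frac{1}{1-x^{-1}}=-\frac{x}{1-x},
\]
applied to the single factor $1-q^{2n+\frac{r(2n+1-r)}{2}}T^{n+1}$ and to each factor of $(q^rT;q^2)_{n-r}$ and of $(q^{2n-r}T;q)_r$. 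This requires reading the inversion as the symmetry $(q,T)\mapsto(q^{-1},T^{-1})$ combined with the monomial factor. That is the standard form of the Denef--Meuser / Voll functional equation: "$q\to q^{-1}$" means inverting all the numerical data $q^{a}$ and $q^{-bs}$ simultaneously, that is, $T\to T^{-1}$.

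Under this reading the bookkeeping is as follows. The factor $1-q^{2n+\frac{r(2n+1-r)}{2}}T^{n+1}$ contributes one sign and the monomial $q^{2n+\frac{r(2n+1-r)}{2}}T^{n+1}$. The two Pochhammer symbols in $T$ have $(n-r)+r=n$ factors in total. They contribute $(-1)^n$, the power $T^n$, and a $q$-power equal to the sum of their exponents. The $T$-free parts contribute signs and $q$-powers through the first fact above, together with $(-q)^r\mapsto(-q)^{-r}$. Collecting everything, the total $T$-degree is $(n+1)+n=2n+1$, which matches the required $T^{2n+1}$ independently of $r$. This is strong evidence that $\sigma=\mathrm{id}$.

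The main obstacle is verifying that the total sign is $-1$ and that the $q$-exponent equals $\binom{2n+1}{2}$ for every $r$. The exponent is a sum of quadratic expressions in $n$ and $r$:
\begin{itemize}
\item[$\bullet$] $-\binom{2n-r+2}{2}$ and $-\binom{r+1}{2}$ from the $(q;q)$ denominators,
\item[$\bullet$] $n(n+1)$ from $(q^2;q^2)_n$,
\item[$\bullet$] $2n-2r+1$ from $1-q^{2n-2r+1}$,
\item[$\bullet$] $-2r$ from $(-q)^r$,
\item[$\bullet$] the exponent sums of the $T$-Pochhammers,
\item[$\bullet$] $2n+\frac{r(2n+1-r)}{2}$ from the pole factor.
\end{itemize}
I would expand this as a polynomial in $r$ and check that it is constant. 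If the dependence on $r$ does not cancel, I would look for the pairing $r\leftrightarrow\sigma(r)$ suggested by the residual terms.

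If both the termwise and the paired identities fail, the fallback is \Cref{thm:C}. The denominator $(T;q)_{2n}\prod_m(1-q^{c_m}T^{n+1})$ inverts to itself up to an explicit monomial, by the same identity $\frac{1}{1-x^{-1}}=-\frac{x}{1-x}$. The numerator transforms under the standard involution $g\mapsto gw_0$ on $B_n$, which sends $\desB\mapsto n-\desB$ and preserves $\negg$ modulo the appropriate complement. One would then need the corresponding behaviour of the statistic $C$ under this involution, and that would be the hard step in this route.
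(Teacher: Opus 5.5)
Your plan is the paper's own proof: substitute $(q,T)\mapsto(q^{-1},T^{-1})$ into each of the $n+1$ summands of \Cref{thm:B} and check that each one acquires the factor $-q^{\binom{2n+1}{2}}T^{2n+1}$. This holds termwise with $\sigma=\mathrm{id}$, so neither fallback is needed.

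Two bookkeeping fixes are needed before you carry it out. First, keeping $s$ fixed is exactly what forces $T=q^{-s}\mapsto T^{-1}$; it does not leave $T$ unchanged. Second, in your exponent list the numerator factors must enter as $-n(n+1)$ and $-(2n-2r+1)$, while the $(q;q)$-denominators enter as $+\binom{2n-r+2}{2}+\binom{r+1}{2}$. With the signs as you wrote them the $r^2$-terms would not cancel, and you would be sent looking for a pairing that does not exist. With consistent signs everything sums to $n(2n+1)$, with total sign $-1$.
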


Owing to the Euler product~\eqref{equ:euler}, our local results have global
consequences for the analytic and asymptotic properties of the subgroup growth
of the discrete Heisenberg groups \(H_n(\Z)\). Recall that the abscissa of
convergence \(\alpha_n:=\alpha(H_n(\Z))\) of \(\zeta_{H_n(\Z)}(s)\) is the
infimum of the real numbers \(a\) such that \(\zeta_{H_n(\Z)}(s)\) converges
for \(\operatorname{Re}(s)>a\). It determines the degree of polynomial growth of
\[
  s_N(H_n(\Z)):=\sum_{m\le N}a_m(H_n(\Z))
\]
as \(N\rarr \infty\). By \cite[Thm.~1.1]{duSG00}, the zeta function
\(\zeta_{H_n(\Z)}(s)\) admits some meromorphic continuation beyond its
abscissa of convergence. A Tauberian theorem then relates the
asymptotics of \(s_N(H_n(\Z))\) to the position and order of the pole
at \(s=\alpha_n\), and to the behaviour of the continued function near
this pole.  The authors of \cite{duSG00} illustrate their theorem with
the integral Heisenberg group \(H_1(\Z)\): the subgroup zeta function
  \[
    \zeta_{H_1(\Z)}(s)=
    \frac{\zeta(s)\zeta(s-1)\zeta(2s-2)\zeta(2s-3)}{\zeta(3s-3)}
  \]
  has a double pole at \(\alpha_1=2\), resulting in the term
  \(\log N=(\log N)^{2-1}\) in
  \begin{equation}\label{equ:n=2}
    \sum_{m\le N}a_m(H_1(\Z))
    \sim
    \frac{\zeta(2)^2}{2\zeta(3)}N^2\log N .
  \end{equation}

  Our work shows that the generic members of the family
  \(\left(\zeta_{H_n(\Z)}(s)\right)_{n\in\N}\) follow a different
  pattern: the global subgroup zeta functions are no longer products
  of translates of Riemann zeta functions and their inverses; the pole
  at \(\alpha_n=2n\) is simple for~\(n\ge2\). To make this precise, we
  set
  \begin{equation}\label{def:N}
    N_n(X,Y)  = \sum_{g\in B_n}(-1)^{\negg(g)}X^{C(g)}Y^{D(g)}\in\Z[X,Y],
  \end{equation}
  with \(D(g):=(n+1)\desB(g)+\negg(g)\) and \(C(g)\) as in~\eqref{eq:Cg}, and
  \begin{equation}\label{def:R}
    R_n=
    \left(\prod_pN_n(p,p^{-2n})\right)
    \prod_{i=0}^{2n-2}\zeta(2n-i)
    \prod_{i=0}^{n}
    \zeta\left(2n^2-\binom{n+1}{2}+\binom{i+1}{2}\right).
  \end{equation}

  \begin{cor}\label{cor:global}
    For all \(n\geq1\), the abscissa of convergence of \(\zeta_{H_n(\Z)}(s)\) is
    \(\alpha_n=2n\). Moreover, for \(n\geq2\), the pole at \(s=2n\) is simple and
  \[
    s_N(H_n(\Z))\sim \frac{R_n}{2n}N^{2n}\qquad(N\to\infty).
  \]
\end{cor}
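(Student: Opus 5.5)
We plan to derive the corollary from \Cref{thm:C} together with the Euler product~\eqref{equ:euler}, which holds at every prime because $H_n(\Z)$ has class~$2$. Put $\lri=\Zp$, so $q=p$ and $T=p^{-s}$. The denominator in \Cref{thm:C} factors as
\[
  (T;p)_{2n}=\prod_{i=0}^{2n-1}(1-p^{i-s}),\qquad
  \prod_{m=0}^{n}\bigl(1-p^{c_m-(n+1)s}\bigr).
\]
Taking the product over all primes, the first factor gives $\prod_{i=0}^{2n-1}\zeta(s-i)$ and the second gives $\prod_{m=0}^n\zeta((n+1)s-c_m)$. Hence
\[
  \zeta_{H_n(\Z)}(s)=\prod_{i=0}^{2n-1}\zeta(s-i)\prod_{m=0}^{n}\zeta\bigl((n+1)s-c_m\bigr)\prod_p N_n(p,p^{-s}).
\]

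The first step is to locate the poles of the zeta factors. The factor $\zeta(s-2n+1)$ has a simple pole at $s=2n$. The factor $\zeta((n+1)s-c_m)$ has its pole at $s=(c_m+1)/(n+1)$, which is largest when $m=0$. There $c_0=2n+\binom{n+1}{2}$, so
\[
  \frac{c_0+1}{n+1}=\frac{n}{2}+\frac{2n+1}{n+1}.
\]
This is less than $2n$ exactly when $n\ge2$. For $n=2$ it equals $1+5/3=8/3<4$. For $n=1$ it equals $2=2n$, which explains the double pole in~\eqref{equ:n=2}.

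The second step, and the main obstacle, is to show that the Euler product $\prod_pN_n(p,p^{-s})$ converges absolutely and is nonzero on a half-plane $\operatorname{Re}(s)>2n-\delta$. For this we write $N_n(X,Y)=1+\sum_{g\neq 1}(\pm)X^{C(g)}Y^{D(g)}$; only the identity has $D(g)=0$. The requirement is that every nonidentity term satisfies $C(g)-2nD(g)<-1$, strictly, with some margin. Then $\sum_p\bigl|N_n(p,p^{-s})-1\bigr|$ converges near $s=2n$, and $N_n(p,p^{-2n})\neq0$ for all $p$ by the same domination. Equivalently, $C(g)+1<2nD(g)$ is exactly what makes $\alpha_n=2n$ and keeps the numerator from disturbing the pole.

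To prove this inequality we will use the definition~\eqref{eq:Cg} of $C(g)$ and split into two cases:
\begin{itemize}[label={}]
\item \emph{Negative entries.} If $g$ has a negative entry, that entry contributes $T^1$ together with some power $q^{\le 2n-1}$, coming from the $-q^nT$ shift.
\item \emph{Descents.} Each type-$\mathsf B$ descent contributes $T^{n+1}$ together with at most $q^{c_0}$.
\end{itemize}
Since $c_0+1<2n(n+1)$ and $n+\text{(shift)}<2n$, we get $C(g)\le(2n-1)\negg(g)+c_0\desB(g)$. It remains to check this against $2nD(g)-1$. If the bound on $C$ fails, we will instead argue via \Cref{thm:B}: there the poles and numerators are explicit, and we can check directly that each summand has numerator degree compatible with the required decay.

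The third step deduces the abscissa and the asymptotics. Given the above, $\zeta_{H_n(\Z)}(s)$ is holomorphic for $\operatorname{Re}(s)>2n$ and has a simple pole at $s=2n$. The residue is
\[
  \prod_{i=0}^{2n-2}\zeta(2n-i)\cdot\prod_{m}\zeta\bigl(2n(n+1)-c_m\bigr)\cdot\prod_pN_n(p,p^{-2n}).
\]
Here $2n(n+1)-c_m=2n^2-\binom{n+1}{2}+\binom{m+1}{2}$, so this residue is exactly $R_n$ from~\eqref{def:R}, and it is positive because $a_m\ge0$. For $n=1$ the abscissa is still $2$, by the same argument or by the classical formula. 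Finally, the Tauberian theorem of \cite[Thm.~1.1]{duSG00}, via meromorphic continuation slightly to the left of $2n$, gives $s_N\sim\frac{R_n}{2n}N^{2n}$.
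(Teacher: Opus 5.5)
Your proposal follows the paper's proof: it derives the Euler product \eqref{eq:global-product-growth} from \Cref{thm:C}, isolates $\zeta(s-2n+1)$ as the only factor with abscissa $2n$, shows every nonidentity monomial of $N_n$ satisfies $C(g)-2nD(g)<-1$, reads off $R_n$ as the residue, and applies the Tauberian theorem (the paper cites \cite[Thm.~4.20]{duSG00} with $a=2n$, $w=1$, rather than Thm.~1.1). The comparison you leave open closes immediately and needs no fallback to \Cref{thm:B}: your bound gives $C(g)-2nD(g)\le-\negg(g)-\tfrac{n(3n-1)}{2}\desB(g)$, and since every $g\ne1$ has $\desB(g)\ge1$, this is at most $-5$ for $n\ge2$, consistent with the exact maximum $-\tfrac{3n^2-n+4}{2}$ in \Cref{lem:max-C-minus-2nD} (with your weak bound $q^{2n-1}$ per negative entry, $D(g)\ge1$ alone would not give strictness; the true per-entry contribution is $q^{n}$).
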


\begin{remark}
  It appears that only for \(n=1\) is the number \(R_n\) a product of
  special values of Riemann's zeta function and its inverse. In
  general, \(R_n\) comprises an Euler product over evaluations at
  primes of a generating polynomial for statistics on the Weyl group
  \(B_n\). This polynomial appears to have large irreducible factors;
  see \Cref{tab:N} for \(n\in\{1,2,3\}\). This seems to be a recurrent
  theme in zeta functions in enumerative algebra; see
  \cite[Sec.~4.1]{CSV/18} or \cite[Sec.~7.3]{AMV/25} for other recent
  instances of this phenomenon.

  It seems of great interest to understand the analytic behaviour of
  \(\zeta_{H_n(\Z)}(s)\) beyond its leading pole, for instance to detect
  further poles or natural boundaries for meromorphic
  continuation. For a template of such an analysis, see~\cite{BV/26}.
\end{remark}

In \Cref{subsec:red} we derive explicit formulae for the reduced
subalgebra zeta functions associated with the higher Heisenberg
algebras.

\subsection{Enumerating $p$-adic lattices by (symplectic) type}
\label{subsec:latt}
Our results on the subalgebra zeta functions of the higher Heisenberg
algebras rest on lattice enumeration in symplectic spaces. Let
\(L=\lri^{2n}\) be the free \(\lri\)-module of rank~\(2n\), equipped
with a non-degenerate symplectic form \(\langle\,,\rangle\). We count
finite-index sublattices \(\Lambda\le L\) simultaneously by two
invariants: their \emph{quotient type}, namely the isomorphism type of
\(L/\Lambda\), encoded by a partition~\(\lambda\in\mcP_{2n}\) with at
most $2n$ nonzero parts, and their \emph{symplectic type}, namely the
elementary divisor type of the Gram matrix of
\(\langle\,,\rangle|_\Lambda\), encoded by a partition
\(\mu\in\mcP_n\) with at most $n$ nonzero parts.

Given \(\lambda\in\mcP_{2n}\) and \(\mu\in\mcP_n\), denote by
\(N_\lri(\lambda,\mu)\) the number of such sublattices;
cf.\ \Cref{def:lat}. Motivated by \cite[Thm.~3.3]{SV25}, we let
\(N'_\lri(\lambda,\mu)\) be the number of Lagrangian submodules of
type \(\lambda\) in an alternating module of type~\(\mu\). Using
Hecke-theoretic methods, we show that these invariants are related to
each other via the \emph{Birkhoff
number}~\(\alpha_n(\mu;q^2)\)~\eqref{eq:Birk-n-closed}.

\begin{prop}\label{prop:fact}
  For all \(\lambda\in\mcP_{2n}\) and \(\mu\in\mcP_n\), we have
  \[
    N_\lri(\lambda,\mu)
    =
    N'_\lri(\lambda,\mu)\,\alpha_n(\mu;q^2).
  \]
\end{prop}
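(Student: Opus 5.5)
The plan is to prove the factorization by a duality/orbit-counting argument that converts sublattices of $L$ into Lagrangian submodules of finite alternating modules. Given a finite-index sublattice $\Lambda\le L$ of symplectic type $\mu$, consider its symplectic dual
\[
\Lambda^{\perp}=\{x\in L\otimes K : \langle x,\Lambda\rangle\subseteq\lri\},
\]
with $K$ the fraction field. Then $\Lambda\le L=L^\perp\le\Lambda^\perp$. The form induces a non-degenerate alternating pairing on the finite module $M_\Lambda:=\Lambda^\perp/\Lambda$, valued in $K/\lri$, and $M_\Lambda$ has alternating type $\mu$, i.e.\ $M_\Lambda\cong(\lri/\mfp^{\mu})^2$. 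Moreover $L/\Lambda$ is a Lagrangian submodule of $M_\Lambda$, since $L$ is self-dual. Its isomorphism type is $\lambda$.

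Conversely, fix $\mu$ and consider the set $\mathcal{S}_\mu$ of pairs $(\Lambda,\iota)$. Here $\Lambda$ is a full lattice in $L\otimes K$ with $\Lambda^\perp/\Lambda$ of type $\mu$, and the "position" of $L$ is recorded by the Lagrangian $L/\Lambda$. The count then reorganizes as follows. Choose, for each $\mu$, the set $\mathcal{A}_\mu$ of lattices $\Lambda\subseteq L\otimes K$ with $\Lambda\subseteq\Lambda^\perp$ and $\Lambda^\perp/\Lambda$ of type $\mu$. Double count pairs $(\Lambda,\Lambda')$ where $\Lambda'$ is a self-dual lattice with $\Lambda\le\Lambda'\le\Lambda^\perp$ and $\Lambda'/\Lambda$ of type $\lambda$. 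The group $\Sp_{2n}(K)$ acts transitively on self-dual lattices, with stabilizer $\Sp_{2n}(\lri)$. It also acts transitively on $\mathcal{A}_\mu$, since any such lattice has a symplectic basis adapted to its elementary divisors.

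Fixing $\Lambda'=L$ gives $N_\lri(\lambda,\mu)$ pairs up to the $\Sp_{2n}(\lri)$-orbit structure. Fixing $\Lambda$ instead gives $N'_\lri(\lambda,\mu)$ choices of $\Lambda'$, namely the Lagrangians of type $\lambda$ in $\Lambda^\perp/\Lambda$. The ratio of the two fibrations is the ratio of the numbers of $\Lambda$'s over a fixed $L$ to self-dual lattices over a fixed $\Lambda$. Equivalently, it is the Hecke-theoretic index $[\Sp(L)\cap\operatorname{Stab}\dots]$. Concretely, summing over $\lambda$ gives
\[
\#\{\Lambda\le L:\text{type }\mu\}=\alpha_n(\mu;q^2)\cdot\#\{\text{Lagrangians in }(\lri/\mfp^\mu)^2\}.
\]
The key point is that, because of transitivity, the quotient of the two counts is independent of $\lambda$. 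It is a ratio of Haar measures $\vol(\Sp_{2n}(\lri))/\vol(\operatorname{Stab}(\Lambda))$ relative to the intersection stabilizer. In other words, it is a double-coset count in the spherical Hecke algebra of $(\Sp_{2n}(K),\Sp_{2n}(\lri))$. So I will establish
\[
N_\lri(\lambda,\mu)=N'_\lri(\lambda,\mu)\cdot\frac{|\Sp_{2n}(\lri)\cdot\Lambda_\mu|}{1},
\]
normalized appropriately, where $\Lambda_\mu$ is a fixed standard lattice of type $\mu$ inside $L$.

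The main obstacle is identifying this factor with the Birkhoff number $\alpha_n(\mu;q^2)$ as given in \eqref{eq:Birk-n-closed}. I would compute the $\Sp_{2n}(\lri)$-orbit size of $\Lambda_\mu$ via the standard Cartan decomposition, as an index of a parahoric-type subgroup. Equivalently, I would count alternating bases: this is the number of symplectic-type flags, i.e.\ $|\Sp_{2n}(\lri/\mfp^{m})|$ divided by $|\operatorname{Aut}|$ of the alternating module of type $\mu$ with appropriate correction. I would then check that this matches the closed form of $\alpha_n(\mu;q^2)$, which should be the number of subgroups of type $\mu$ in $(\lri/\mfp^{N})^{n}$ with $q$ replaced by $q^2$. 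A cleaner route, if available, is to verify that the factor satisfies the same recursion in $\mu$ as $\alpha_n(\mu;q^2)$. One can also check the factor directly in the extreme case $\lambda=\mu$, where $L/\Lambda$ is Lagrangian, so $N'=1$ when a unique Lagrangian of that type exists, and then invoke the $\lambda$-independence established above.
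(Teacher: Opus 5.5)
Your double count over $\Sp_{2n}(k_{\mathfrak p})$ is a genuinely different idea from the paper's, but it only delivers half of the statement. Made precise, it says the following. (As written, the phrase ``ratio of the two fibrations'' is circular.) Each $\Sp_{2n}(k_{\mathfrak p})$-orbit of pairs $(\Lambda,\Lambda')$ meets both the fibre over $L$ and the fibre over a fixed lattice $\Lambda_\mu$ of alternating type $\mu$. It meets them in $[\Sp_{2n}(\lri):S]$ and $[\operatorname{Stab}(\Lambda_\mu):S']$ points respectively, where $S,S'$ are conjugate pair-stabilizers. By unimodularity, summing over orbits gives $N_\lri(\lambda,\mu)=c_\mu\,N'_\lri(\lambda,\mu)$ with $c_\mu=\vol(\Sp_{2n}(\lri))/\vol(\operatorname{Stab}(\Lambda_\mu))$, which is independent of $\lambda$.

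\textbf{The gap} is the identification $c_\mu=\alpha_n(\mu;q^2)$. This is the real content of the proposition (it is where $q^2$ enters), and you leave it as ``the main obstacle''. None of your candidate routes closes it:
\begin{itemize}
\item Your formula $N_\lri(\lambda,\mu)=N'_\lri(\lambda,\mu)\cdot|\Sp_{2n}(\lri)\cdot\Lambda_\mu|$ is false. For $n=1$ and $\mu=(1)$, the orbit of $\Lambda_\mu=\langle\pi e_1,f_1\rangle$ consists of all $q+1$ sublattices of index $q$. But $N_\lri((1),(1))=N'_\lri((1),(1))=q+1$, so $c_{(1)}=1$. The correct expression is $|\Sp_{2n}(\lri)\cdot\Lambda_\mu|/|\operatorname{Stab}(\Lambda_\mu)\cdot L|$.
\item Your fallback of choosing $\lambda$ with $N'_\lri(\lambda,\mu)=1$ is unavailable in general. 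For $\mu=(1,1)$, every Lagrangian in $M_{(1,1)}\cong\mathbb F_q^4$ has type $(1,1)$, and there are $(q+1)(q^2+1)$ of them.
\item The displayed aggregated identity is simply asserted, and the ``recursion'' is never specified.
\end{itemize}

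To finish along your lines, apply the same unimodular double count to the relation $\Lambda\subset\tilde\Lambda$ with $[\tilde\Lambda:\Lambda]=q$. Here $\Lambda$ has type $\mu$ and $\tilde\Lambda$ has type $\mu'$, where $\mu'$ is obtained by lowering a suitable part of $\mu$ by $1$. This expresses $c_\mu/c_{\mu'}$ as a quotient of two counts:
\begin{itemize}
\item the number of index-$q$ sublattices of type $\mu$ in a fixed type-$\mu'$ lattice, divided by
\item the number of integral index-$q$ overlattices of type $\mu'$ of a fixed type-$\mu$ lattice.
\end{itemize}
Both are linear-algebra counts modulo $\pi$. You would then have to match the quotient with $\alpha_n(\mu;q^2)/\alpha_n(\mu';q^2)$, starting from $c_{\mathbf 0}=1$. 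For example, $n=2$, $\mu=(1,0)$, $\mu'=\mathbf 0$ gives $(q^3+q^2+q+1)/(q+1)=q^2+1=\alpha_2((1,0);q^2)$.

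The paper avoids this computation altogether. The alternating Satake transform intertwines the Hecke action and sends $\phi_\nu$ to the $\GL_n$ Satake image with $q$ replaced by $q^2$. This yields $N_\lri(\lambda,\mu)=\sum_\nu N'_\lri(\lambda,\nu)\,g_{\nu,-\mu}^{\mathbf 0_n}(q^2)$. These Hall numbers vanish unless $\nu=\mu$, in which case they equal $\alpha_n(\mu;q^2)$.
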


It follows that \(N_\lri(\lambda,\mu)\neq0\) iff
\(N'_\lri(\lambda,\mu)\neq0\), which occurs only if~\(|\lambda|=|\mu|\).

\subsection{Related work: zeta functions associated with discrete Heisenberg groups}
It has become somewhat of a cottage industry in recent years to
compute local and global subobject zeta functions of groups, rings,
and modules.  Many generalize the prototypical subgroup zeta function
of the discrete Heisenberg group $H_1(\Z)$, first computed in
\cite{GSS/88}.

Zeta functions of semidirect products generalizing the discrete
Heisenberg group are treated in~\cite{voll2006}. Zeta functions of
free class-\(2\) nilpotent groups \(F_{2,d}\)---including the smallest one
\(H_1(\Z)=F_{2,2}\)---are computed in~\cite{SVV/24}. Numerous further
examples are amenable to the computer algebra package
\textsf{\textup{Zeta}} (\cite{RossmannZeta, R/18}), written and
maintained by Rossmann.

While the \emph{normal subgroup zeta functions} of the higher Heisenberg
groups are rather trivial (see~\eqref{equ:Heisenberg.normal}), enumerating
normal subgroups of direct products of Heisenberg groups gives rise to
interesting combinatorial generating functions \cite{SV1/15}, leading to
generalizations of the type-\(\mathsf{A}\) Igusa functions featuring in the
present paper in \cite{CSV/24}.

The higher Heisenberg groups considered in the present paper are some of the
relatively few finitely generated nilpotent groups for which explicit formulae
for the (local) \emph{pro-isomorphic zeta functions} are available. These zeta
functions enumerate subgroups whose profinite completion is isomorphic to the
ambient group's completion. Theorem~5.10 in \cite{BGS/22} yields an expression for
the local pro-isomorphic zeta functions of groups of the form \(H_n(\Z)\) in
terms of what we call augmented Igusa functions of degree~\(n\);
cf.~\Cref{def:igusa.A}.

Relatives of the discrete Heisenberg group \(H_1(\Z)\) were among the
first finitely generated nilpotent groups for which
\emph{representation zeta functions}---enumerating twist-isoclasses of
finite-dimensional irreducible complex representations---were
computed. Generalizing \cite{E/14}, Stasinski and the second author
developed a framework for computing such zeta functions
in~\cite{SV/14}. Their general results use methods from \(p\)-adic
integration; their more specific ones introduce combinatorial
generating functions akin to the ones featuring in the present paper;
cf.\ \Cref{rem:SV14.1,rem:SV14.2}. In particular,
\cite[Thm.~B]{SV/14} gives formulae for the representation zeta
functions of three infinite families generalizing the integral
Heisenberg group \(H_1(\Z)\). (As in the case of normal subgroup
growth, the representation growth of the higher Heisenberg groups
\(H_n(\Z)\) is near-trivial.)

Recent instances of applications of Hecke-theoretic methods to enumeration
problems associated with \(p\)-adic lattices are \cite{AMV/25, MV2}.

\subsection{Organization}

In \Cref{sec:counting-lam-mu} we introduce the counting functions
\(N_\lri(\lambda,\mu)\) and~\(N'_\lri(\lambda,\mu)\). In
\Cref{sec:hecke} we develop the Hecke-theoretic machinery necessary to
prove~\Cref{prop:fact}. In \Cref{sec:counting-symp} we compute the
aggregate count \(N'_\lri(\mu):=
\sum_{\lambda\in\mcP_{2n}}N'_\lri(\lambda,\mu)\);
cf.\ \Cref{prop:closed.N'}. In \Cref{sec:heis} we relate these results
to the subalgebra zeta functions of the higher Heisenberg algebras. In
\Cref{sec:Wn-parametrized-formula},
\Cref{sec:simplifying-zeta-formula}, and \Cref{sec:Bn}, we prove
\Cref{thm:A}, \Cref{thm:B}, and \Cref{thm:C}, respectively. Finally,
in \Cref{sec:properties} we study reduced zeta functions and further
local and global analytic properties.

\subsection{Further notation}\label{subsec:setup}

We write \(\N=\{1,2,\dots\}\) and \(\N_0=\N\cup\{0\}\). Throughout,
\(n\in\N\), and we set \([n]:=\{1,\dots,n\}\) and
\([n]_0:=\{0,1,\dots,n\}\).

We let \(k_{\mathfrak p}\) be a non-archimedean local field with ring of
integers \(\lri\), maximal ideal \(\mathfrak p=(\pi)\), and residue field of
cardinality \(q\), a prime power. Throughout we write \(T=q^{-s}\).

We write \(\mcP_r\) for the set of partitions with at most \(r\) parts. For
\(\lambda=(\lambda_1,\dots,\lambda_r)\in\mcP_r\), padded with trailing zeros if
necessary, set \(|\lambda|=\sum_{i=1}^r\lambda_i\).

\bigskip
We record some further notation used throughout the paper in the following
table.

\begingroup
\renewcommand\arraystretch{1.14}
\begin{longtable}{l|l|l}
  Symbol & Description & Reference \\ \hline \hline
  $\lri_\lambda$ & finite $\lri$-module of type $\lambda$ & \Cref{sec:counting-lam-mu} \\
  $M_\mu$ & alternating module of type $\mu$ & \Cref{subsec:alt-module} \\
  $L$ & ambient symplectic lattice $\lri^{2n}$ & \Cref{def:lat} \\
  $J$, $J_\mu$ & standard and type-$\mu$  Gram matrices & \Cref{def:gram} \\
  $N'_\lri(\lambda,\mu)$, $N_\lri(\lambda,\mu)$ & intrinsic and lattice counts & \Cref{def:int,def:lat} \\
  $N'_\lri(\mu)$, $N_\lri(\mu)$ & counts by alternating type only & \Cref{def:sym-comp,def:sym} \\
  $[r]_t$, $\binom{n}{r}_t$, $\binom{n}{I}_t$ & $t$-integer and Gaussian coefficients & \Cref{subsec:relate-counts} \\
  $\alpha_n(\mu;q)$ & Birkhoff number & \Cref{eq:Birk-n-closed} \\
  $n(\lambda)$ & $\sum_{i\ge1}(i-1)\lambda_i$ & \Cref{eq:n-lambda} \\
  $\mathcal{S}, \mathcal{S}^{\textup{alt}}$ & (alternating) Satake transform & \Cref{subsec:satake-algebra}, \Cref{subsec:alt-satake}\\
  $\mcW_n$, $C_n(\mathbf w)$ & index set and weights for $N'_\lri(\mu)$ & \Cref{def:Wn} \\
  $\igm{n},\ig{n},\igu{n}$ & truncated, usual, augmented Igusa functions & \Cref{def:igusa.A} \\
  $\mcW_{k,r}$, $\mathcal I_n^{k,r}$ & fibres and fibre sums in the simplification & \Cref{subsec:Wn-structure,def:Inkr} \\
  $\ig{B_n}$, $\igm{B_n}$ & type-$\mathsf{B}$ Igusa functions & \Cref{def:igusa.B} \\
  $Z^{\mathrm{red}}_{\mfh_n}(T)$ & reduced zeta function &
  \Cref{subsec:red}
\end{longtable}
\endgroup

\section{Counting lattices by quotient and symplectic types}
\label{sec:counting-lam-mu}
Let $\lri$ be a cDVR with maximal ideal $\mfp$, uniformizer $\pi$, field of
fractions $k_\mfp$, and residue field of cardinality $q$. Let $n\in\N$. In
this section we address the enumeration of finite-index sublattices of a
non-degenerate $\lri$-lattice $(L,\langle\,,\rangle)$, whose underlying
$\lri$-module is isomorphic to $\lri^{2n}$, by the two invariants sketched in
\Cref{subsec:latt}, namely their quotient type $\lambda\in\mcP_{2n}$ and
symplectic type $\mu\in\mcP_n$.

In \Cref{prop:fact}, the main result of this section, we express the
`lattice count' \(N_\lri(\lambda,\mu)\), i.e.\ the number of such sublattices
(cf.\ \Cref{def:lat}), in terms of an `intrinsic count'
\(N'_\lri(\lambda,\mu)\) of Lagrangian submodules of type \(\lambda\) inside a
finite alternating module of type~\(\mu\) (cf.\ \Cref{def:int}) and the
\emph{Birkhoff number} of lattices of given quotient type in a free
$\lri$-lattice of given rank. To prepare this result, we introduce some
notation. For $r\ge1$, set
\[
  \Z^r_{\mathrm{dom}}
  =
  \{\lambda=(\lambda_1,\dots,\lambda_r)\in\Z^r \mid
    \lambda_1\ge\cdots\ge\lambda_r\},
\]
so that $\mcP_r=\{\lambda\in\Z^r_{\mathrm{dom}} \mid \lambda_r\ge0\}$. We set
$|\lambda|=\sum_{i=1}^r\lambda_i$. For
$\lambda=(\lambda_1,\dots,\lambda_r)\in\mcP_r$, define the finite
$\lri$-module
\[
  \lri_\lambda
  :=
  (\lri/\pi^{\lambda_1})\oplus\cdots\oplus(\lri/\pi^{\lambda_r}).
\]
An $\lri$-module is said to be \emph{of type $\lambda$} if it is isomorphic to
$\lri_\lambda$. Clearly $|\lri_\lambda| = q^{|\lambda|}$.

\subsection{Alternating modules}
\label{subsec:alt-module}
Set $M_0:=0$. For $k\ge1$, consider
\[
  M_k
  =
  (\lri/\mfp^k)e
  \oplus
  (\lri/\mfp^k)f,
\]
a free $\lri/\mfp^k$-module of rank two with abstract generators $e$ and~$f$.
We equip $M_k$ with an $\lri$--bilinear \emph{perfect alternating pairing}
\[
  (\ ,\ ) \colon M_k \times M_k \longrightarrow k_{\mathfrak p}/\lri,
  \qquad
  (e,f)=\pi^{-k}\bmod\lri,
  \qquad
  (e,e)=(f,f)=0.
\]

For $n\geq 0$ and $\mu=(\mu_1,\dots,\mu_n)\in\N_0^n$, set
\[
  M_\mu := M_{\mu_1}\oplus\cdots\oplus M_{\mu_n}.
\]
This endows $M_\mu$ with the structure of an alternating $\lri$--module in the
sense of \cite[\S3.1]{SV25}. We refer to $\mu$ as the \emph{alternating type}
of $M_\mu$.

A submodule $N\subseteq M_\mu$ is called \emph{Lagrangian} if $N=N^\perp$ with
respect to~$(\,,)$.

\begin{definition}[Intrinsic count]\label{def:int}
  Given $\lambda\in\mcP_{2n}$ and $\mu\in\mcP_n$, we define
  \[
    N'_\lri(\lambda,\mu)
    =
    \#\bigl\{
      N\subseteq M_\mu \mid N=N^\perp,\ N\simeq\lri_\lambda
    \bigr\},
  \]
  the number of Lagrangian submodules of \(M_\mu\) of type \(\lambda\).
\end{definition}
Note that the type $\lambda\in\mcP_{2n}$ of a Lagrangian submodule satisfies
$|\lambda|=|\mu|$, reflecting the fact that~$|N| = \sqrt{|M_\mu|}$. Hence
$N'_\lri(\lambda,\mu)=0$ unless $|\lambda|=|\mu|$.

\subsection{Invariants of symplectic lattices}
\label{subsec:inv.symp.lat}
Let $\beta=(e_1,\dots,e_n,f_1,\dots,f_n)$ be a symplectic basis for the
non-degenerate symplectic $\lri$-lattice $(L,\langle\,,\rangle)$. That is, for
$i,j\in[n]$,
\[
  \langle e_i,f_j\rangle=\delta_{ij},
  \qquad
  \langle e_i,e_j\rangle=\langle f_i,f_j\rangle=0.
\]
The Gram matrix of $\langle\,,\rangle$ with respect to $\beta$ is
\[
  J=
  \begin{pmatrix}0_n&I_n\\-I_n&0_n\end{pmatrix}
  \in\Mat_{2n}(\lri).
\]
Let $\Lambda\subseteq L$ be a finite-index sublattice. Then $L/\Lambda$ is a
finite $\lri$-module, hence $L/\Lambda\cong\lri_\lambda$ for a unique
$\lambda\in\mcP_{2n}$. We call $\lambda$ the \emph{quotient type} of
$\Lambda$.

The restriction $\langle\,,\rangle|_\Lambda$ is an alternating
$\lri$-bilinear form which is non-degenerate over $k_{\mathfrak p}$. By
elementary divisor theory, there exists an $\lri$-basis
$\beta_\Lambda=(e_1',\dots,e_n',f_1',\dots,f_n')$ of $\Lambda$ such that the
Gram matrix of $\langle\,,\rangle|_\Lambda$ with respect to $\beta_\Lambda$ is
\begin{equation}\label{def:gram}
  J_\mu
  =
  \begin{pmatrix}
    0_n & \pi^\mu\\
    -\pi^\mu & 0_n
  \end{pmatrix},
  \qquad
  \pi^\mu=\diag(\pi^{\mu_1},\dots,\pi^{\mu_n}),
\end{equation}
for a unique $\mu\in\mcP_n$. We call $\mu$ the \emph{alternating type} of
$\Lambda$.

\begin{definition}[Lattice count]\label{def:lat}
  Given $\lambda\in\mcP_{2n}$ and $\mu\in\mcP_n$, we define
  \[
    N_\lri(\lambda,\mu)
    =
    \#\bigl\{
      \Lambda\subseteq L \mid
      L/\Lambda\simeq\lri_\lambda,\
      \langle\,,\rangle|_\Lambda \sim J_\mu
    \bigr\},
  \]
  the number of finite-index sublattices in $L$ with quotient type $\lambda$
  and alternating type $\mu$.
\end{definition}
\begin{remark}\label{rem:restore-parameters}
  When necessary, we write \(N_{n,\lri}(\lambda,\mu)\) and
  \(N'_{n,\lri}(\lambda,\mu)\) to emphasize the dependence on the rank
  parameter \(n\) and the discrete valuation ring \(\lri\). Throughout most of
  the paper, \(\lri\) is fixed and \(n\) is implicitly determined by the
  conditions \(\lambda\in\mcP_{2n}\) and \(\mu\in\mcP_n\). Accordingly, we
  usually suppress these parameters from the notation.
\end{remark}

\subsection{Relating lattice and intrinsic counts}
\label{subsec:relate-counts}
In \Cref{prop:fact} we relate the numbers \(N_\lri(\lambda,\mu)\) and
\(N'_\lri(\lambda,\mu)\). The proof uses Hecke-theoretic methods and is carried
out in the subsequent section, concluding in \Cref{subsec:factorization}.

We state this result in terms of the \emph{Birkhoff number}
\(\alpha_n(\mu;q)\), the number of finite-index sublattices of $\lri^n$ of
quotient type $\mu$. For a partition \(\mu=(\mu_1,\dots,\mu_n)\in\mcP_n\), let
\(m_j(\mu)\) denote the multiplicity of the part \(j\). For an indeterminate
\(Y\), define integral polynomials
\[
  [r]_Y := \frac{1-Y^{\,r}}{1-Y},
  \qquad
  [r]_Y! := \prod_{k=1}^r [k]_Y,
  \qquad
  \binom{n}{r}_Y := \frac{[n]_Y!}{[r]_Y!\,[n-r]_Y!}.
\]
More generally, for a subset \(I=\{i_1<\cdots<i_\ell\}\subseteq[n]_0\), define
the \emph{Gaussian multinomial coefficient}
\[
  \binom{n}{I}_Y
  :=
  \binom{n}{i_\ell}_Y
  \binom{i_\ell}{i_{\ell-1}}_Y
  \cdots
  \binom{i_2}{i_1}_Y \in \Z[Y].
\]
Set
\[
  \rho := \tfrac12\,(n-1,n-3,\dots,1-n)\in\Q^n.
\]
By \cite[Eq.~V.2.9]{Macdonald}, the Birkhoff number may be written in the form
\begin{equation}\label{eq:Birk-n-closed}
  \alpha_n(\mu;q)
  =
  q^{\langle \mu,2\rho\rangle}
  \,
  \frac{[n]_{q^{-1}}!}{
    \displaystyle\prod_{j\ge 0}[\,m_j(\mu)\,]_{q^{-1}}!
  }.
\end{equation}

We define the difference vector \(\mathbf d=d(\mu)\in\N_0^n\) by
\(d_i=\mu_i-\mu_{i+1}\) for \(i<n\) and \(d_n=\mu_n\). Set
\(\boldsymbol{\rho}'=(\rho'_1,\dots,\rho'_n)\), where \(\rho'_k=k(n-k)\), and
define the restricted positive support
\[
  \Supp^+_{n-1}(\mathbf d)
  :=
  \{\,i\in[n-1] \mid d_i>0\,\}.
\]
Then \(\langle \mu,2\rho\rangle=\mathbf d\cdot\boldsymbol{\rho}'\). With this
notation, the Birkhoff number admits the following reformulation, which will be
useful in \Cref{subsec:Z-as-igusa}:
\begin{equation}\label{eq:Birk-n-support}
  \alpha_n(\mu;q)
  =
  q^{\mathbf d\cdot\boldsymbol{\rho}'}
  \binom{n}{\Supp^+_{n-1}(\mathbf d)}_{q^{-1}}.
\end{equation}

\begin{remark}
  Appending zeros to \(\lambda\) and \(\mu\) changes neither the alternating
  module \(M_\mu\) nor the intrinsic count \(N'_\lri(\lambda,\mu)\). However,
  $\alpha_n(\mu;q^2)$ depends on the number of parts~$n$. Hence
  $N_{n,\lri}(\lambda,\mu)$ varies with $n$, even though the partitions
  $\lambda$ and $\mu$ are extended only by zeros.
\end{remark}
\begin{remark}\label{rem:sum-over-mu-birk}
  For every $\lambda\in\mcP_{2n}$, summing the lattice counts over all possible
  alternating types $\mu$ recovers the Birkhoff number of sublattices of
  quotient type $\lambda$:
  \[
    \sum_{\mu\in\mcP_n} N_\lri(\lambda,\mu)
    =
    \alpha_{2n}(\lambda;q).
  \]
  By contrast, summing over all quotient types yields the invariant
  \(N_\lri(\mu):=\sum_{\lambda\in\mcP_{2n}}N_\lri(\lambda,\mu)\),
  cf.\ \Cref{def:sym}, which will be investigated in \Cref{sec:counting-symp}.
\end{remark}

\section{Hecke algebras, alternating modules, and counting problems}\label{sec:hecke}
In this section we develop the Hecke-theoretic framework underlying
our counting problems. In \Cref{subsec:spherical-hecke} we introduce
the spherical Hecke algebra of $\GL_{2n}$, in
\Cref{subsec:alt-hecke-module} its alternating Hecke module. We
interpret their structure constants in terms of Hall numbers and
lattice counts in \Cref{subsec:hecke-hall} and
\Cref{subsec:hecke-to-counting}. The Satake and alternating Satake
transforms are recalled in \Cref{subsec:satake-algebra} and
\Cref{subsec:alt-satake}, where Hecke convolution is translated into
identities of Hall--Littlewood polynomials. In
\Cref{subsec:factorization} we bring these tools together to
prove~\Cref{prop:fact}.

\subsection{The spherical Hecke algebra}\label{subsec:spherical-hecke}
We begin by recalling the spherical Hecke algebra of
$G=\GL_{2n}(k_{\mathfrak p})$ with respect to $K=\GL_{2n}(\lri)$,
together with its standard basis and structure constants; see
\cite[Chap.~V, \S2]{Macdonald} for details.  Let
\[
  G:=\GL_{2n}(k_{\mathfrak p}),\qquad
  K:=\GL_{2n}(\lri),
\]
and let
\[
  G^{+}:=G\cap\Mat_{2n}(\lri)
\]
be the subsemigroup of integral matrices. The spherical Hecke algebra
$\mathcal H(G,K)$ is the $\C$-algebra of compactly supported,
$K$-bi-invariant functions on~$G$, equipped with convolution
\[
  (f_1*f_2)(g)
  =
  \int_G f_1(h)\,f_2(h^{-1}g)\,dh,
\]
where the Haar measure is normalized by $\vol(K)=1$. We denote by
$\mathcal H(G^{+},K)$ the subalgebra consisting of functions supported on
$G^{+}$. By the Cartan decomposition,
\[
  G=\bigsqcup_{\lambda\in\Z^{2n}_{\mathrm{dom}}}K\pi^\lambda K,
  \qquad
  \pi^\lambda:=\diag(\pi^{\lambda_1},\dots,\pi^{\lambda_{2n}}).
\]
Setting $c_\lambda:=\mathbf 1_{K\pi^\lambda K}$ yields a $\C$-basis
$\{c_\lambda\}_{\lambda\in\Z^{2n}_{\mathrm{dom}}}$ of $\mathcal
H(G,K)$. The convolution is given by
\[
  c_\lambda*c_\eta
  =
  \sum_{\theta\in\Z^{2n}_{\mathrm{dom}}}
  g_{\lambda,\eta}^{\ \theta}\,c_\theta,
\]
where only finitely many of the \emph{Hecke algebra structure
constants} $g_{\lambda,\eta}^{\ \theta}\in\C$ are nonzero.
\subsection{The alternating Hecke module}
\label{subsec:alt-hecke-module}
Following \cite{HironakaSato}, we introduce a module $\mathcal{M}$ for
the Hecke algebra $\mathcal H(G,K)$ and its realization via
alternating matrices. Let
\[
  H:=\Sp_{2n}(k_{\mathfrak p})=\{g\in G:gJg^t=J\},
\]
where
\(
  J=
  \begin{pmatrix}0&I_n\\-I_n&0
  \end{pmatrix}.
\) We consider the (\emph{alternating}) \emph{Hecke module}
\[
  \mathcal M:=\C_c(K\backslash G/H),
\]
consisting of left \(K\)- and right \(H\)-invariant functions
\(f:G\to\C\) with compact support modulo \(H\). The spherical Hecke algebra
$\mathcal H(G,K)$ acts on $\mathcal M$ by convolution:
\[
  (f\star\varphi)(g)
  =
  \int_G f(h)\,\varphi(h^{-1}g)\,dh,
  \qquad
  f\in\mathcal H(G,K),\ \varphi\in\mathcal M.
\]

Equivalently, let $X$ be the space of nondegenerate alternating
$2n\times2n$ matrices over~$k_{\mathfrak p}$. The group $G$ acts
transitively on $X$ by $g\cdot x:=g\,x\,g^t$, and the stabilizer of
$J$ is precisely~$H$.  Thus the map
\begin{equation}\label{eq:G-over-H-to-X}
  G/H\longrightarrow X,\qquad gH\longmapsto gJg^t.
\end{equation}
is a $G$-equivariant bijection. Under this identification, $\mathcal
M$ may be viewed as the space of compactly supported $K$-invariant
functions on~$X$.

By the elementary divisor theory for alternating forms, each $K$-orbit
in $X$ contains a unique representative of the form
\[
  J_\mu=
  \begin{pmatrix}
    0&\pi^\mu\\
    -\pi^\mu&0
  \end{pmatrix},
  \qquad
  \mu=(\mu_1\ge\cdots\ge\mu_n)\in\Z^n_{\mathrm{dom}},
\]
where $\pi^\mu=\diag(\pi^{\mu_1},\dots,\pi^{\mu_n})$.
Note that
\begin{equation}\label{def:a.mu}
  J_\mu = a_\mu\, J\, a_\mu^{\,t},
  \qquad
  a_\mu:=\diag(\pi^{\mu_1},\dots,\pi^{\mu_n},1,\dots,1).
\end{equation}
Consequently,
\[
  X=\bigsqcup_{\mu\in\Z^n_{\mathrm{dom}}} K\cdot J_\mu,
  \qquad
  G=\bigsqcup_{\mu\in\Z^n_{\mathrm{dom}}} K a_\mu H.
\]
Setting $\phi_\mu:=\mathbf 1_{K a_\mu H}$ yields a $\C$-basis
$\{\phi_\mu\}_{\mu\in\Z^n_{\mathrm{dom}}}$ of~$\mathcal M$. The
Hecke action is given by
\begin{equation}\label{def:hecke.act}
  c_\lambda\star\phi_\mu
  =
  \sum_{\nu\in\Z^n_{\mathrm{dom}}}
  b_{\lambda,\mu}^{\ \nu}\,\phi_\nu,
\end{equation}
where only finitely many of the \emph{Hecke module structure constants}
$b_{\lambda,\mu}^{\ \nu}\in\C$ are nonzero.

\begin{remark}
  We also use \(c_\lambda\) and \(\phi_\mu\) for arbitrary
  \(\lambda\in\Z^{2n}\) and \(\mu\in\Z^n\), by the same formulas as
  above.  Permuting the entries of \(\lambda\) or \(\mu\) does not
  change the corresponding function. Thus these symbols always denote
  the basis element indexed by the dominant rearrangement. In
  particular, \(\phi_{-\mu}\) is unambiguous.
\end{remark}

\begin{remark}\label{rem:dep-on-o}
  When it is helpful to emphasize the base ring, we write
  $g_{\lambda,\eta}^{\ \theta}(\mathfrak o)$ and
  $b_{\lambda,\mu}^{\ \nu}(\mathfrak o)$ for the structure constants
  defined above.  By \Cref{prop:hecke-poly} below, they depend on
  $\mathfrak o$ only through the residue field cardinality $q$, and we
  may therefore also write $g_{\lambda,\eta}^{\ \theta}(q)$ and
  $b_{\lambda,\mu}^{\ \nu}(q)$.
\end{remark}

\subsection{Hecke algebra coefficients and Hall numbers}
\label{subsec:hecke-hall}

We use Macdonald's identification of the structure constants of the
spherical Hecke algebra of \(\GL_{2n}\) with Hall numbers: for
\(\lambda,\eta\in\mcP_{2n}\), \cite[Ch.~V, (2.6)]{Macdonald} gives
\[
  c_\lambda*c_\eta
  =
  \sum_{\theta\in\mcP_{2n}}
  g_{\lambda,\eta}^{\ \theta}(q)\,c_\theta,
\]
where \(g_{\lambda,\eta}^{\ \theta}(q)\) is the Hall polynomial,
enumerating submodules \(N\subseteq \lri_\theta\) such that
\(N\simeq\lri_\eta\) and \(\lri_\theta/N\simeq\lri_\lambda\).  For
arbitrary dominant integral weights, one reduces to partitions by a
central shift, as in \cite[Ch.~V, (2.5)]{Macdonald}.  In particular,
\(g_{\lambda,\eta}^{\ \theta}=0\) unless
\(|\theta|=|\lambda|+|\eta|\), and $ g_{\lambda,\eta}^{\ \theta} =
g_{\eta,\lambda}^{\ \theta}$ by the symmetry of Hall numbers
\cite[Ch.~II, (1.5)]{Macdonald}.

For later use we record the lattice interpretation underlying
\cite[Ch.~V, (2.6)]{Macdonald}. Recall that \(G^+\) is the integral semigroup
defined above. Then
\begin{equation}\label{eq:GplusK-lattices}
  G^{+}/K
  \;\longleftrightarrow\;
  \{\text{finite-index sublattices of }\lri^{2n}\},
  \qquad
  gK\longmapsto L(g),
\end{equation}
where \(L(g)\) is the \(\lri\)-span of the columns of \(g\). Under this
correspondence,
\[
  g\in K\pi^\lambda K
  \qquad\Longleftrightarrow\qquad
  \lri^{2n}/L(g)\simeq \lri_\lambda .
\]

\subsection{Hecke module coefficients and lattice enumerations}
\label{subsec:hecke-to-counting}
We analyze the Hecke module coefficients and specialize them to obtain
$N_\lri(\lambda,\mu)$ and $N'_\lri(\lambda,\mu)$.

\subsubsection*{Hecke module coefficients}
We now turn to the Hecke module $\mathcal M$ with basis
$\{\phi_\mu\}_{\mu\in\Z^n_{\mathrm{dom}}}$.
Recall the Hecke action~\eqref{def:hecke.act}.
Evaluating the convolution at the coset $a_\nu H$, we obtain
\[
  b_{\lambda,\mu}^{\ \nu}
  =
  (c_\lambda\star\phi_\mu)(a_\nu)
  =
  \int_G
  \mathbf 1_{K\pi^\lambda K}(g)\,
  \mathbf 1_{K a_\mu H}(g^{-1}a_\nu)\,dg.
\]
Equivalently,
\begin{equation}\label{eq:b-lambda-mu-nu-cosets}
  b_{\lambda,\mu}^{\ \nu}
  =
  \bigl|\,(K\pi^\lambda K\cap a_\nu H a_\mu^{-1}K)/K\,\bigr|.
\end{equation}
\subsubsection*{Specialization to the lattice count $N_\lri(\lambda,\mu)$}
We now specialize \eqref{eq:b-lambda-mu-nu-cosets} to recover the lattice counting
function $N_\lri(\lambda,\mu)$. By
\eqref{eq:GplusK-lattices}, left \(K\)-cosets in \(G^+\) correspond to
finite-index sublattices of \(L=\lri^{2n}\), via \(gK\mapsto L(g)\), and
\[
  g\in K\pi^\lambda K
  \qquad\Longleftrightarrow\qquad
  L/L(g)\simeq\lri_\lambda .
\]

For $\mu\in\Z^n_{\mathrm{dom}}$ let $a_\mu$ be as in
\eqref{def:a.mu}. Note that the alternating form induced on $L(g)$ has
Gram matrix $g^t J g$. It has alternating type $\mu$ if and only if
\[
  k^tg^tJgk=J_\mu
  \qquad\text{for some }k\in K,
\]
equivalently, $g^t \in K a_\mu H$, or
$g \in H a_\mu K$.

Taking $\nu=\mathbf 0_n$ and replacing \(\mu\) by \(-\mu\) in
\eqref{eq:b-lambda-mu-nu-cosets}, we obtain
\[
  b_{\lambda,-\mu}^{\mathbf 0_n}
  =
  \bigl|\,(K\pi^\lambda K\cap H a_\mu K)/K\,\bigr|.
\]
The right-hand side counts precisely those lattices $\Lambda\subseteq
L$ with type $\lambda$ and alternating type~$\mu$.  By definition,
this is $N_\lri(\lambda,\mu)$, and hence
\[
  \boxed{N_\lri(\lambda,\mu)=b_{\lambda,-\mu}^{\mathbf 0_n}.}
\]

\subsubsection*{Specialization to the intrinsic count \(N'_\lri(\lambda,\mu)\)}

By \cite[Thm.~3.3]{SV25}, the Hecke module coefficients admit the intrinsic
description
\[
  b_{\lambda,\nu}^{\mu}
  =
  \#\bigl\{
    N\subseteq M_\mu \mid
    N\supseteq N^\perp,\
    N/N^\perp\simeq M_\nu,\
    M_\mu/N\simeq\lri_\lambda
  \bigr\}.
\]

In the special case \(\nu=\mathbf 0_n\), we have \(N=N^\perp\), so \(N\) is
Lagrangian. Moreover, the perfect alternating pairing identifies \(M_\mu/N\) with the
dual of \(N^\perp\), hence with a module isomorphic to \(N\). Thus the
condition \(M_\mu/N\simeq\lri_\lambda\) is equivalent to
\(N\simeq\lri_\lambda\), and hence
\[
  \boxed{N'_\lri(\lambda,\mu)=b_{\lambda,\mathbf 0_n}^{\mu}.}
\]

\subsubsection*{Polynomiality of structure constants}

We record the uniform polynomial dependence of all Hecke-theoretic structure
constants introduced above.

\begin{prop}\label{prop:hecke-poly}
  For all $\lambda,\eta,\theta\in\Z^{2n}$ and $\mu,\nu\in\Z^n$, the
  structure constants $g_{\lambda,\eta}^{\ \theta}(\mathfrak o)$ and
  $b_{\lambda,\mu}^{\ \nu}(\mathfrak o)$ depend on $\mathfrak o$ only
  through the residue field cardinality $q$. More precisely, there
  exist polynomials $g_{\lambda,\eta}^{\ \theta}(T),\,
  b_{\lambda,\mu}^{\ \nu}(T)\in\Z[T]$ such that
  \[
    g_{\lambda,\eta}^{\ \theta}(\mathfrak o)=g_{\lambda,\eta}^{\ \theta}(q),
    \qquad
    b_{\lambda,\mu}^{\ \nu}(\mathfrak o)=b_{\lambda,\mu}^{\ \nu}(q).
  \]
\end{prop}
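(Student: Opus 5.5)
We treat the two families of structure constants separately, reducing each to a counting problem whose dependence on $\mathfrak o$ is visibly only through $q$.

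\emph{Hall constants.} For $g_{\lambda,\eta}^{\ \theta}$ we first reduce to partitions. Multiplication by the central element $\pi^{m}I_{2n}$ identifies $c_{\lambda+(m^{2n})}$ with the translate of $c_\lambda$, and convolution is compatible with these translates. Hence
\[
g_{\lambda,\eta}^{\ \theta}=g_{\lambda+(m^{2n}),\eta+(m'^{2n})}^{\ \theta+((m+m')^{2n})},
\]
as in \cite[Ch.~V, (2.5)]{Macdonald}. Choosing $m,m'$ large makes all three weights partitions. By \cite[Ch.~V, (2.6)]{Macdonald}, the constant is then the Hall number $g_{\lambda\eta}^{\theta}(\mathfrak o)$. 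Macdonald's theory of Hall polynomials \cite[Ch.~II, (4.3)]{Macdonald} applies to any cDVR $\mathfrak o$ with finite residue field. It shows this number equals $g_{\lambda\eta}^\theta(q)$ for a polynomial with integer coefficients, independent of $\mathfrak o$. The proof goes by induction via the LR-sequence filtration and the counting of submodules of given cotype in $\lri_\theta$. The only input is counts of $\mathfrak o/\mathfrak p$-subspaces and of lifts, which depend only on $q$.

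\emph{Hecke module constants.} For $b_{\lambda,\mu}^{\ \nu}$ we perform an analogous central shift. Replacing $g$ by $\pi^m g$ multiplies $gJg^t$ by $\pi^{2m}$, so $\mu\mapsto\mu+(2m)^n$. Similarly, shifting $\lambda$ by $(m^{2n})$ shifts $\nu$ by $(2m)^n$. This reduces us to $\lambda\in\mcP_{2n}$ and $\mu,\nu\in\mcP_n$ with $\nu\ge$ the relevant bounds. We then invoke the intrinsic description \cite[Thm.~3.3]{SV25}: $b_{\lambda,\nu}^{\mu}$ counts submodules $N\subseteq M_\mu$ with $N\supseteq N^\perp$, $N/N^\perp\simeq M_\nu$ and $M_\mu/N\simeq\lri_\lambda$. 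Equivalently, it counts isotropic submodules $I=N^\perp$ of $M_\mu$ of type $\lambda$ (by duality) with $I^\perp/I\simeq M_\nu$.

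To get polynomiality we stratify the count. First we count submodules $I\subseteq M_\mu$ of given type, which is polynomial in $q$ since it is a sum of Hall polynomials. For each such $I$, we must impose isotropy and the type of $I^\perp/I$. The plan is to argue that the whole configuration is classified by data over $\mathfrak o/\mathfrak p^{k}$, where $k=\mu_1$. The number of such configurations is then the number of $\mathfrak o/\mathfrak p^k$-points of a scheme defined over $\Z$, built from Grassmannian-type data and a quadratic (isotropy) condition. Such counts are a priori only quasi-polynomial.

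\emph{Main obstacle.} The difficult step is exactly this: upgrading ``depends only on $q$'' to ``is a polynomial in $q$ with integer coefficients''. Isotropy is a quadratic condition, and the point count of a general such variety need not be polynomial.

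The first route to try is to bypass it via Hironaka--Sato's explicit spherical function on alternating matrices. The alternating Satake transform $\Sat^{\mathrm{alt}}$ sends $\phi_\mu$ to a Hall--Littlewood-type polynomial $P_\mu(x;q^{2})$ in $\C[q^{\pm1/2}][x^{\pm1}]$, and $\mathcal S$ sends $c_\lambda$ to $q^{\langle\lambda,\rho\rangle}P_\lambda(x;q^{-1})$; both transforms intertwine the actions (\cite{HironakaSato}). Expanding the product $P_\lambda P_\mu$ in the basis $P_\nu$ then yields $b_{\lambda,\mu}^{\ \nu}$ as a Laurent polynomial in $q^{1/2}$ with rational coefficients. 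Three checks finish the argument:
\begin{itemize}[label={}]
\item[(i)] half-integral powers cancel, by parity of $|\lambda|$ versus $|\nu|-|\mu|$;
\item[(ii)] there are no negative powers, because the value is a nonnegative integer for infinitely many $q$;
\item[(iii)] integrality follows because a rational-coefficient polynomial taking integer values at infinitely many prime powers, combined with the denominator-free expansion of Hall--Littlewood structure constants, lies in $\Z[T]$.
\end{itemize}
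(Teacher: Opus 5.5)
For the Hall constants your argument is the paper's: a central shift, then Macdonald's Hall polynomials. For $b_{\lambda,\mu}^{\ \nu}$ the paper gives no argument of its own and simply cites \cite[Cor.~3.4]{SV24}. You instead derive polynomiality from the Satake intertwining, which the paper records as \eqref{eq:intertwining} and uses in \Cref{subsec:factorization}.

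This route works. $P_\lambda(y;t)$ has monomial coefficients in $\Z[t]$, so $\Sat'(c_\lambda)\,\Sat^{\mathrm{alt}}(\phi_\mu)$ has coefficients in $\Z[q^{\pm1}]$. The basis $\{q^{-2n(\nu)}P_\nu(x;q^{-2})\}$ is triangular against the monomial symmetric functions, with diagonal entries powers of $q$ and other entries in $\Z[q^{\pm1}]$. Hence $b_{\lambda,\mu}^{\ \nu}(\lri)=f(q)$ for a single $f\in\Z[T^{\pm1}]$ independent of $\lri$. Since $f(q)\in\Z$ for all prime powers $q$, $f$ has no negative powers: if $c_{-a}T^{-a}$ with $a>0$ were its lowest term, then $q\mid c_{-a}$ for all large $q$.

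Compared with the bare citation, your route makes the $q$-dependence and the integrality visible. On the other hand, all the depth now sits in the intertwining theorem, which must hold for every cDVR. The paper takes it from \cite[Thm.~3.3]{SV24} and \cite[Thm.~3.5]{SV25}; make sure whatever you cite covers every cDVR, including those of positive characteristic.

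Your write-up of this step needs repair, though nothing is fatal.
\begin{itemize}
\item The intertwining is not with the $2n$-variable transform $\Sat(c_\lambda)$ but with its specialization $\Sat'(c_\lambda)=q^{|\lambda|-n(\lambda)}P_\lambda(x,x/q;q^{-1})$. As written, ``the product $P_\lambda P_\mu$'' mixes $2n$ and $n$ variables, and this specialization is exactly the content of the cited theorem.
\item The alternating parameter is $q^{-2}$, not $q^{2}$.
\item With the paper's normalizations no half-integral powers of $q$ occur, so check (i) is unnecessary.
\item Check (iii) is false as phrased: $\binom{T}{2}$ is integer-valued at all integers but does not lie in $\Z[T]$. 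Integrality of the coefficients must come from the $\Z[t]$-triangularity above, and then it holds from the outset.
\item In (ii), nonnegativity is irrelevant; what matters is integrality with $q$ unbounded.
\item The intrinsic description and the stratification discussion play no role in the Satake argument and can be dropped.
\end{itemize}
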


\begin{proof}
  For \(\lambda,\eta,\theta\in\mcP_{2n}\), the description in
  \Cref{subsec:hecke-hall} identifies
  \(g_{\lambda,\eta}^{\ \theta}(\mathfrak o)\) with a Hall number. By
  \cite[\S II.(4.1)]{Macdonald}, these Hall numbers are given by
  integral polynomials in~\(q\). The general case
  \(\lambda,\eta,\theta\in\Z^{2n}\) follows by shifting indices by a
  sufficiently large multiple of \(\mathbf 1_{2n}\).  Polynomiality of
  \(b_{\lambda,\mu}^{\ \nu}(\mathfrak o)\) in $q$ was proved
  in~\cite[Cor.~3.4]{SV24}.
\end{proof}

\begin{cor}\label{cor:N-poly}
  For all $\lambda\in\mcP_{2n}$ and $\mu\in\mcP_n$, the lattice counts
  $N_\lri(\lambda,\mu)$ and $N'_\lri(\lambda,\mu)$ depend polynomially
  on the residue field cardinality $q$: there exist
  polynomials
  \[
    N(\lambda,\mu;T),\; N'(\lambda,\mu;T)\in\Z[T]
  \]
  such that for every compact discrete valuation ring $\mathfrak o$ with residue
  field of size $q$,
  \[
    N_{\lri}(\lambda,\mu)=N(\lambda,\mu;q),
    \qquad
    N'_{\lri}(\lambda,\mu)=N'(\lambda,\mu;q).
  \]
\end{cor}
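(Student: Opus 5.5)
This is an immediate consequence of the two boxed identities of \Cref{subsec:hecke-to-counting} combined with \Cref{prop:hecke-poly}. The plan is to set
\[
  N(\lambda,\mu;T):=b_{\lambda,-\mu}^{\ \mathbf 0_n}(T),
  \qquad
  N'(\lambda,\mu;T):=b_{\lambda,\mathbf 0_n}^{\ \mu}(T),
\]
where the right-hand sides are the integral polynomials produced by \Cref{prop:hecke-poly}. We may apply that proposition because $\lambda\in\mcP_{2n}\subseteq\Z^{2n}$ and $-\mu,\mathbf 0_n,\mu\in\Z^n$, and it imposes no dominance or positivity hypothesis.

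First, for a fixed cDVR $\lri$ with residue field of size $q$, the boxed identity $N_\lri(\lambda,\mu)=b_{\lambda,-\mu}^{\ \mathbf 0_n}(\lri)$ holds. It comes from the coset description \eqref{eq:b-lambda-mu-nu-cosets} together with the lattice correspondence \eqref{eq:GplusK-lattices}. \Cref{prop:hecke-poly} then gives $b_{\lambda,-\mu}^{\ \mathbf 0_n}(\lri)=b_{\lambda,-\mu}^{\ \mathbf 0_n}(q)$. Second, the boxed identity $N'_\lri(\lambda,\mu)=b_{\lambda,\mathbf 0_n}^{\ \mu}(\lri)$ comes from \cite[Thm.~3.3]{SV25}, using the self-duality argument that identifies $M_\mu/N$ with $N$ for Lagrangian $N$. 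The same proposition turns this into $b_{\lambda,\mathbf 0_n}^{\ \mu}(q)$.

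The only point needing care is that the polynomials must be independent of $\lri$ and must be the same for all $\lri$ sharing $q$. This is exactly what \Cref{prop:hecke-poly} asserts: the structure constants depend on $\lri$ only through $q$, via fixed polynomials in $\Z[T]$. No genuine obstacle arises, since all the substantive work has already been done in \Cref{prop:hecke-poly} and the boxed identities.
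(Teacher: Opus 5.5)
Your proposal is correct and is essentially the paper's own proof. The paper likewise deduces the corollary directly from \Cref{prop:hecke-poly}, using the two boxed identities $N_\lri(\lambda,\mu)=b_{\lambda,-\mu}^{\mathbf 0_n}(\lri)$ and $N'_\lri(\lambda,\mu)=b_{\lambda,\mathbf 0_n}^{\mu}(\lri)$.
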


\begin{proof}
  Immediate from \Cref{prop:hecke-poly} as, by the discussion above, we have
  \[
    N_{\lri}(\lambda,\mu)=b_{\lambda,-\mu}^{\mathbf 0_n}(\mathfrak o),
    \qquad
    N'_{\lri}(\lambda,\mu)=b_{\lambda,\mathbf 0_n}^{\mu}(\mathfrak o).\qedhere
  \]
\end{proof}

\subsection{Satake transform for the spherical Hecke algebra}
\label{subsec:satake-algebra}
We first recall the Satake transform for the spherical Hecke algebra
$\mathcal H(G,K)$, following~\cite[Chap.~V, \S3]{Macdonald}, together
with the necessary background on Hall--Littlewood polynomials from
\cite[Chap.~III, \S3]{Macdonald}.

\subsubsection*{Hall--Littlewood polynomials}
For $\lambda=(\lambda_1,\dots,\lambda_{2n})\in\Z^{2n}_{\mathrm{dom}}$, let
$P_\lambda(x;t)$ denote the Hall--Littlewood polynomial with parameter $t$,
defined by
\[
  P_\lambda(x;t)
  =
  \frac{1}{v_\lambda(t)}
  \sum_{w\in S_{2n}}
  w\!\left(
    x_1^{\lambda_1}\cdots x_{2n}^{\lambda_{2n}}
    \prod_{i<j}\frac{x_i-tx_j}{x_i-x_j}
  \right),
  \qquad
  v_\lambda(t)=
  \prod_{r\in \Z}\prod_{j=1}^{m_r(\lambda)}
  \frac{1-t^j}{1-t},
\]
where $m_r(\lambda)=\#\{i \mid \lambda_i=r\}$.
The polynomial $P_\lambda(x;t)$ is symmetric in $x_1,\dots,x_{2n}$.
In what follows we specialize to $t=q^{-1}$.

\subsubsection*{The Satake transform}
For $\lambda\in\Z^{2n}_{\mathrm{dom}}$, set
\begin{equation}\label{eq:n-lambda}
  n(\lambda):=\sum_{i\ge1}(i-1)\lambda_i .
\end{equation}

The \emph{Satake transform}---denoted $\theta$ in \cite[Chap.~V,
  \S2]{Macdonald}---identifies the spherical Hecke algebra $\mathcal
H(G,K)$ with the algebra of symmetric Laurent polynomials by
\begin{align}
  \Sat\colon \mathcal H(G,K)
  &\xrightarrow{\;\sim\;}  \C[x_1^{\pm1},\ldots,x_{2n}^{\pm1}]^{S_{2n}},\nonumber\\
  c_\lambda &\mapsto
  q^{-n(\lambda)}\,P_\lambda(x;\,q^{-1})\label{eq:satake-def}.
\end{align}
Under this identification, the subalgebra
$\mathcal H(G^{+},K)\subseteq\mathcal H(G,K)$ corresponds to the subalgebra
spanned by $\{P_\lambda(x;\,q^{-1}) \mid \lambda\in\mcP_{2n}\}$.

\noindent
\emph{Hecke multiplication and Hall numbers.} Under the Satake
transform, the convolution product in $\mathcal H(G,K)$ corresponds to
multiplication of Hall--Littlewood polynomials:
\begin{equation*}
  q^{-n(\lambda)}P_\lambda(x;\,q^{-1})\,
  q^{-n(\eta)}P_\eta(x;\,q^{-1})
  =
  \sum_{\theta}
  g_{\lambda,\eta}^{\ \theta}(q)\,
  q^{-n(\theta)}P_\theta(x;\,q^{-1}).
\end{equation*}
Consequently, the structure constants $g_{\lambda,\eta}^{\ \theta}(q)$ coincide
with the Hall numbers of finite $\lri$-modules in the sense of
\cite[Chap.~II]{Macdonald}.
In particular, they are polynomials in~$q$, vanish unless
$|\theta|=|\lambda|+|\eta|$ because $\deg(P_\lambda)=|\lambda|$, and satisfy
the symmetry
\(
  g_{\lambda,\eta}^{\ \theta}
  =
  g_{\eta,\lambda}^{\ \theta}.
\)

\begin{remark}\label{rem:satake-vs-fourier}
  In \cite[Chap.~V, \S3]{Macdonald}, the \emph{spherical Fourier transform}
  is defined via spherical functions $\omega_s$.
  More precisely, for $\lambda\in\Z^{2n}_{\mathrm{dom}}$ we have
  \[
    \widehat{c}_\lambda(\omega_s)
    =
    q^{\rho_{2n}\cdot\lambda}\,
    P_\lambda\!\bigl(q^{-s_1},\dots,q^{-s_{2n}};\,q^{-1}\bigr),
  \]
  where $\rho_{2n}=\tfrac12(2n-1,2n-3,\ldots,1-2n)$. Equivalently,
  $\widehat{c}_\lambda(\omega_s)$ is obtained from $\Sat(c_\lambda)$
  by the specialization $x_i \mapsto q^{\frac{2n-1}{2}-s_i}$ for
  $i\in[2n]$.

  While some authors use the term ``Satake transform'' for the map
  $c_\lambda\mapsto\widehat{c}_\lambda$, we reserve this terminology
  for the algebra isomorphism \eqref{eq:satake-def}, viewing the
  spherical Fourier transform as its specialization.
\end{remark}

\subsection{Alternating Satake transform and Hecke module coefficients}
\label{subsec:alt-satake}
We describe the action of the spherical Hecke algebra $\mathcal
H(G^{+},K)$ on the Hecke module $\mathcal M=\C_c(K\backslash G/H)$ in
terms of Hall--Littlewood polynomials, via a specialization of the
Satake transform recalled in \Cref{subsec:satake-algebra}, following
\cite{HironakaSato} and \cite{SV25}.

\subsubsection*{Specialized Satake transform}
While the Satake transform identifies the Hecke algebra $\mathcal
H(G,K)$ with $\C[x_1^{\pm1},\ldots,x_{2n}^{\pm1}]^{S_{2n}}$ via
$\Sat(c_\lambda)=q^{-n(\lambda)}P_\lambda(x;\,q^{-1})$, we now
consider the algebra homomorphism
\begin{align*}
  \Sat' \colon \mathcal H(G,K)
  &\rightarrow
  \C[x_1^{\pm1},\ldots,x_n^{\pm1}]^{S_n},\\
  c_\lambda &\mapsto
  q^{|\lambda|-n(\lambda)}\,P_\lambda(x,x/q;\,q^{-1}),
\end{align*}
where $x,x/q$ denotes the specialization
$(x_1,x_1/q,\;x_2,x_2/q,\;\ldots,\;x_n,x_n/q)$.

\subsubsection*{Alternating Satake transform}
The Hecke module $\mathcal M$ has $\C$-basis
$\{\phi_\mu\}_{\mu\in\Z^{n}_{\mathrm{dom}}}$ indexed by $K$-orbits in
$G/H$. Define a linear map
\begin{align}
  \Sat^{\mathrm{alt}}\colon \mathcal{M}
  &\rightarrow
  \C[x_1^{\pm1},\ldots,x_n^{\pm1}]^{S_n},\nonumber\\
  \phi_\mu &\mapsto
  q^{-2n(\mu)}\,P_\mu(x;\,q^{-2})\label{eq:alt-satake-def}.
\end{align}
This map is a linear isomorphism.

\subsubsection*{Compatibility with the Hecke action}
By \cite[Thm.~3.3]{SV24} and \cite[Thm.~3.5]{SV25}, the transforms
$\Sat'$ and $\Sat^{\mathrm{alt}}$ are compatible with the Hecke
action: for $f\in\mathcal H(G,K)$ and $\varphi\in\mathcal M$,
\begin{equation}\label{eq:intertwining}
  \Sat^{\mathrm{alt}}(f\star\varphi)
  =
  \Sat'(f)\cdot \Sat^{\mathrm{alt}}(\varphi).
\end{equation}

Applying \(\Sat^{\mathrm{alt}}\) to \eqref{def:hecke.act} and using
\eqref{eq:intertwining}, we obtain \begin{equation*}
  \bigl(q^{|\lambda|-n(\lambda)}P_\lambda(x,x/q;\,q^{-1})\bigr)
  \cdot
  \bigl(q^{-2n(\mu)}P_\mu(x;\,q^{-2})\bigr)
  =
  \sum_{\nu\in\Z^{n}_{\mathrm{dom}}}
  b_{\lambda,\mu}^{\ \nu}(q)\,
  q^{-2n(\nu)}P_\nu(x;\,q^{-2}),
\end{equation*}
which characterizes the Hecke module coefficients~$b_{\lambda,\mu}^{\ \nu}(q)$.

\subsection{Proof of \texorpdfstring{\Cref{prop:fact}}{Proposition~\ref{prop:fact}}}
\label{subsec:factorization}

Recall from \Cref{subsec:hecke-to-counting} that
\(N_\lri(\lambda,\mu)=b_{\lambda,-\mu}^{\mathbf 0_n}(q)\) and
\(N'_\lri(\lambda,\mu)=b_{\lambda,\mathbf 0_n}^{\mu}(q)\). Thus
\Cref{prop:fact} is equivalent to
\begin{equation}\label{eq:b-factorization-target}
  b_{\lambda,-\mu}^{\mathbf 0_n}(q)
  =
  b_{\lambda,\mathbf 0_n}^{\mu}(q)\,\alpha_n(\mu;q^2).
\end{equation}

By the definition \eqref{def:hecke.act} of the Hecke action,
\(c_\lambda\star\phi_{-\mu}
=\sum_{\rho}b_{\lambda,-\mu}^{\ \rho}(q)\phi_\rho\). Applying
\(\Sat^{\mathrm{alt}}\) and using \eqref{eq:intertwining} gives
\begin{equation}\label{eq:satake-step1}
  \Sat'(c_\lambda)\,\Sat^{\mathrm{alt}}(\phi_{-\mu})
  =\sum_{\rho}b_{\lambda,-\mu}^{\ \rho}(q)\,\Sat^{\mathrm{alt}}(\phi_\rho).
\end{equation}
Similarly, applying \(\Sat^{\mathrm{alt}}\) to
$c_\lambda\star\phi_{\mathbf 0_n}$ gives
\begin{equation}\label{eq:satake-step2}
  \Sat'(c_\lambda)=\sum_{\nu}b_{\lambda,\mathbf 0_n}^{\ \nu}(q)\,\Sat^{\mathrm{alt}}(\phi_\nu),
\end{equation}
as \(\Sat^{\mathrm{alt}}(\phi_{\mathbf 0_n})=1\). Substitution into
\eqref{eq:satake-step1} gives
\begin{equation}\label{eq:satake-step3}
  \sum_{\nu}b_{\lambda,\mathbf 0_n}^{\ \nu}(q)\,
  \Sat^{\mathrm{alt}}(\phi_\nu)\,\Sat^{\mathrm{alt}}(\phi_{-\mu})
  =\sum_{\rho}b_{\lambda,-\mu}^{\ \rho}(q)\,\Sat^{\mathrm{alt}}(\phi_\rho).
\end{equation}

For the spherical Hecke algebra of \(\GL_n\), the Satake isomorphism gives
\(\Sat_n(c_\nu)\Sat_n(c_{-\mu})=\sum_\rho g_{\nu,-\mu}^{\ \rho}(q)\Sat_n(c_\rho)\).
Since, by \eqref{eq:satake-def} and \eqref{eq:alt-satake-def},
\(\Sat^{\mathrm{alt}}(\phi_\nu)=\Sat_n(c_\nu)|_{q\mapsto q^2}\), it follows that
\[
  \Sat^{\mathrm{alt}}(\phi_\nu)\,\Sat^{\mathrm{alt}}(\phi_{-\mu})
  =\sum_{\rho}g_{\nu,-\mu}^{\ \rho}(q^2)\,\Sat^{\mathrm{alt}}(\phi_\rho).
\]
Substituting this into \eqref{eq:satake-step3} and comparing coefficients in
the basis $\{\Sat^{\mathrm{alt}}(\phi_\rho)\}$ yields
\begin{equation}\label{eq:key-identity}
  b_{\lambda,-\mu}^{\ \mathbf 0_n}(q)
  =
  \sum_{\nu}
  b_{\lambda,\mathbf 0_n}^{\ \nu}(q)\,
  g_{\nu,-\mu}^{\mathbf 0_n}(q^2).
\end{equation}

It remains to evaluate the Hall numbers
$g_{\nu,-\mu}^{\mathbf 0_n}(q^2)$.
Let $\lri'$ be a complete DVR with residue field of size $q^2$ and
uniformizer $\pi$.
For $k\gg0$ (e.g.\ $k=\mu_1$) and $\mathbf 1_n=(1,\dots,1)$, stability of Hall
numbers gives
\[
  g_{\nu,-\mu}^{\mathbf 0_n}(q^2)
  =
  g_{\nu,\;k\mathbf 1_n-\mu}^{\;k\mathbf 1_n}(q^2).
\]
By the geometric interpretation of Hall numbers (\Cref{subsec:hecke-hall}),
this counts sublattices $\pi^k(\lri')^n \subseteq L' \subseteq (\lri')^n$ with
\[
  L'/\pi^k(\lri')^n \simeq \lri'_{k\mathbf 1_n-\mu},
  \qquad
  (\lri')^n/L' \simeq \lri'_\nu .
\]
The type of a submodule of $(\lri'/\pi^k)^n$ uniquely determines the
type of its quotient; hence such $L'$ exist only if $\nu=\mu$.  In
this case, the number of choices equals the number of sublattices of
$(\lri')^n$ of quotient type $\mu$, namely the Birkhoff number
$\alpha_n(\mu;q^2)$.  Therefore
\[
  g_{\nu,-\mu}^{\mathbf 0_n}(q^2)
  =
  \begin{cases}
    \alpha_n(\mu;q^2), & \textup{ if }\nu=\mu,\\
    0, & \textup{ if }\nu\neq\mu.
  \end{cases}
\]

Substituting this into \eqref{eq:key-identity} yields
\eqref{eq:b-factorization-target}, completing the proof of \Cref{prop:fact}.

\section{Counting lattices by symplectic type}\label{sec:counting-symp}

We now aggregate the counting functions \(N'_\lri(\lambda,\mu)\) and
\(N_\lri(\lambda,\mu)\) over all \(\lambda\), obtaining counts
\(N'_\lri(\mu)\) and \(N_\lri(\mu)\), depending only on the symplectic
type~\(\mu\).  The count \(N_\lri(\mu)\) will be applied to the
subalgebra zeta functions of higher Heisenberg algebras in
\Cref{sec:heis}.

\subsection{Aggregated counts by symplectic type}
\begin{definition}\label{def:sym-comp}
  For a composition \(\mu=(\mu_1,\dots,\mu_n)\in\N_0^n\), define
  \[
    N'_\lri(\mu)
    :=
    \#\{\,N\subseteq M_\mu \mid N=N^\perp\,\}.
  \]
\end{definition}

Thus \(N'_\lri(\mu)\) counts all Lagrangian submodules of the alternating
\(\lri\)-module \(M_\mu\), independently of their \(\lri\)-module types.  For
\(\mu\in\mcP_n\), \Cref{def:int} gives
\[
  N'_\lri(\mu)
  =
  \sum_{\lambda\in\mcP_{2n}} N'_\lri(\lambda,\mu),
\]
since every Lagrangian submodule of \(M_\mu\) has a well-defined
\(\lri\)-module type.

\begin{definition}\label{def:sym}
  For a partition \(\mu\in\mcP_n\), define
  \[
    N_\lri(\mu)
    :=
    \sum_{\lambda\in\mcP_{2n}} N_\lri(\lambda,\mu).
  \]
\end{definition}

Thus \(N_\lri(\mu)\) counts all finite-index sublattices \(\Lambda\leq L\)
of alternating type~\(\mu\).

\begin{remark}\label{rem:sym-poly}
  By \Cref{cor:N-poly}, the quantities \(N'_\lri(\mu)\) and
  \(N_\lri(\mu)\) are polynomials in~\(q\).  We write the corresponding
  polynomials as \(N'_\lri(\mu;t),\; N_\lri(\mu;t)\in\Z[t]\).
\end{remark}

The following is an immediate consequence of~\Cref{prop:fact}.

\begin{cor}\label{cor:N.N'}
  \[
    N_\lri(\mu)
    =
    N'_\lri(\mu)\,\alpha_n(\mu;q^2).
  \]
\end{cor}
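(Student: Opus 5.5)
The corollary follows by summing \Cref{prop:fact} over all quotient types $\lambda\in\mcP_{2n}$ while keeping the symplectic type $\mu\in\mcP_n$ fixed. The Birkhoff factor $\alpha_n(\mu;q^2)$ depends only on $\mu$, not on $\lambda$, so it comes out of the sum.

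By \Cref{def:sym}, we have $N_\lri(\mu)=\sum_{\lambda\in\mcP_{2n}}N_\lri(\lambda,\mu)$. This is a finite sum: $N_\lri(\lambda,\mu)=0$ unless $|\lambda|=|\mu|$, and there are only finitely many $\lambda\in\mcP_{2n}$ of fixed size. Applying \Cref{prop:fact} termwise gives
\[
  N_\lri(\mu)=\sum_{\lambda\in\mcP_{2n}}N'_\lri(\lambda,\mu)\,\alpha_n(\mu;q^2)
  =\alpha_n(\mu;q^2)\sum_{\lambda\in\mcP_{2n}}N'_\lri(\lambda,\mu).
\]

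It remains to identify the last sum with $N'_\lri(\mu)$. Every Lagrangian submodule $N\subseteq M_\mu$ is a finite $\lri$-module. Since $|N|=\sqrt{|M_\mu|}$ and $M_\mu$ is generated by $2n$ elements, $N$ has at most $2n$ cyclic factors. Hence $N\simeq\lri_\lambda$ for a unique $\lambda\in\mcP_{2n}$. So the sets counted by $N'_\lri(\lambda,\mu)$ partition the set of all Lagrangians counted by $N'_\lri(\mu)$ in \Cref{def:sym-comp}, which is exactly the identity recorded after that definition.

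There is no real obstacle here. The only point needing care is that $\lambda$ is a partition with at most $2n$ parts, so that the sum over $\mcP_{2n}$ captures every Lagrangian. This holds because submodules of a module with $2n$ generators over the DVR $\lri$ need at most $2n$ generators.
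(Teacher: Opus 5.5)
Your argument is correct and is exactly the paper's: sum \Cref{prop:fact} over $\lambda\in\mcP_{2n}$, pull out the $\lambda$-independent factor $\alpha_n(\mu;q^2)$, and use the identity $N'_\lri(\mu)=\sum_{\lambda\in\mcP_{2n}}N'_\lri(\lambda,\mu)$ recorded after \Cref{def:sym-comp}. One small point: the fact $|N|=\sqrt{|M_\mu|}$ plays no role in bounding the number of cyclic factors of $N$, and the reason in your last paragraph is the correct one (submodules of a $2n$-generated module over the PID $\lri$ are $2n$-generated).
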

In the remainder of this section, we compute \(N'_\lri(\mu)\).

\subsection{Properties of \texorpdfstring{$N'_\lri(\mu)$}{N'_\lri(mu)}}\label{subsec:property-N'}

\begin{prop}\label{prop:N}
The function \(N'_\lri\) satisfies the following properties:
\begin{enumerate}[label=\textup{(N\arabic*)},leftmargin=*]
  \setcounter{enumi}{-1}
  \item \(N'_\lri(\varnothing)=1\).
  \item \(N'_\lri(\mu)=N'_\lri(\mu,0)\) for all \(\mu\in\N_0^n\).
  \item \(N'_\lri(\mu)=N'_\lri(\mu')\) for any permutation \(\mu'\) of \(\mu\).
  \item If \(\mu=(\mu_1,\dots,\mu_n)\in\mcP_n\) and \(\mu_1\ge1\), then
  \[
    N'_\lri(\mu)
    =
    N'_\lri(\mu_1-1,\mu_2,\dots,\mu_n)
    +
    q^{|\mu|}N'_\lri(\mu_2,\dots,\mu_n).
  \]
\end{enumerate}
\end{prop}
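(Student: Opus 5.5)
(N0)–(N2) follow directly from the definitions; the substance is the recursion (N3), which we prove by a direct count of Lagrangians in $M_\mu$. For (N0), $M_\varnothing=0$ has exactly one Lagrangian submodule, namely $0$. For (N1), $M_0=0$, so $M_{(\mu,0)}=M_\mu\oplus 0=M_\mu$ as alternating modules. For (N2), permuting the entries of $\mu$ permutes the orthogonal summands $M_{\mu_i}$, which gives an isometric isomorphism $M_\mu\cong M_{\mu'}$. Such an isometry carries Lagrangians bijectively onto Lagrangians.

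For (N3), write $k=\mu_1\ge1$ and $M_\mu=M_k\perp M'$ with $M'=M_{(\mu_2,\dots,\mu_n)}$, where $M_k=(\lri/\mfp^k)e\oplus(\lri/\mfp^k)f$. We use a symplectic-reduction argument with respect to the isotropic element $v=\pi^{k-1}e$. Since $\mfp v=0$ and $(v,v)=0$, the line $\ell=\lri v$ is isotropic of order $q$. We then split the Lagrangians $N$ of $M_\mu$ according to whether $\ell\subseteq N$.

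In the first case, $\ell\subseteq N$. Then $\ell\subseteq N=N^\perp\subseteq\ell^\perp$, and $N\mapsto N/\ell$ is a bijection onto the Lagrangians of $\ell^\perp/\ell$. A direct computation gives $\ell^\perp=\{ae+bf+m' : \pi^{k-1}b\equiv 0\}=\langle e,\pi f\rangle\oplus M'$, hence $\ell^\perp/\ell\cong M_{k-1}\perp M'$ with the induced pairing. Here $e$ and $\pi f$ generate a copy of $M_{k-1}$ with $(e,\pi f)=\pi^{-(k-1)}$, which we need to check. This case therefore contributes $N'_\lri(\mu_1-1,\mu_2,\dots,\mu_n)$.

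In the second case, $\ell\not\subseteq N$. We must show that the number of such Lagrangians equals $q^{|\mu|}N'_\lri(\mu_2,\dots,\mu_n)$; this is the step I expect to be the main obstacle. Since $N=N^\perp$, the condition $v\notin N$ is equivalent to $v\notin N^\perp$, i.e.\ some $x\in N$ has $(v,x)\ne0$, meaning the $f$-coordinate of $x$ is a unit. The plan is to show that $N\cap M_k$-projection behaves like a graph. The projection of $N$ to the $f$-coordinate is surjective onto $\lri/\mfp^k$. I would try to prove that $N$ contains an element $x_0=ce+f+m_0$ and that $N=\lri x_0\oplus\{\text{stuff orthogonal to }x_0\}$. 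Reduction along the hyperbolic pair generated by $x_0$ and a partner would identify $N$ with a pair consisting of a Lagrangian $N_0$ of $M'$ together with a parameter choice. The expected parameters are $c\in\lri/\mfp^k$ ($q^{k}$ choices) and the $M'$-component $m_0$, where only the class of $m_0$ modulo $N_0$ matters, a torsor of size $|M'|/|N_0|=q^{|\mu|-k}$. This gives $q^{k}\cdot q^{|\mu|-k}=q^{|\mu|}$ choices per $N_0$.

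To make this rigorous, I would define explicitly a map from the set of triples $(N_0,c,\bar m_0)$ with $m_0\in M'/N_0$ to Lagrangians, namely
$N=\lri(ce+f+m_0)+\{n+(m_0',n)\,\pi^{k}\!\cdot\text{(correction in }e)\}$. That is, for each $n\in N_0$ we adjust by a suitable multiple of $e$ to force orthogonality to $x_0$: the element $n+a_n e$ with $a_n$ chosen so that $(n+a_ne,x_0)=0$. The pairing values lie in $\pi^{-k}\lri/\lri$, so such $a_n$ exists uniquely. We then verify three things: the result is isotropic; its order is $q^{k}\cdot|N_0|=q^{|\mu|/1\cdot}$ of the right size $\sqrt{|M_\mu|}$; and it does not contain $v$. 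Conversely, we recover $N_0$ as the image in $M'$ of $N\cap\langle e\rangle^{\perp\!\text{-adjusted}}$. The delicate points are checking that this recovery is inverse to the construction and that isotropy plus the correct order forces $N=N^\perp$; for the latter we use $|N|\,|N^\perp|=|M_\mu|$. Summing the two cases gives (N3).
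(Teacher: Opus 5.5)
Your proposal is correct. Your treatment of (N0)--(N2) and of the first case of (N3) is the paper's argument. The paper's set $\mathcal A$ consists of the Lagrangians contained in $\pi U_1\oplus V_1\oplus U_2\oplus\cdots\oplus V_n$. These are exactly the Lagrangians containing the perp of that submodule, namely $\pi^{\mu_1-1}V_1$. This is your $\ell$ with $e$ and $f$ interchanged, and both arguments pass to $\ell^\perp/\ell\cong M_{(\mu_1-1,\mu_2,\dots,\mu_n)}$.

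The routes differ in the second case.

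The paper double counts pairs $(v,N)$ with $v\in N$ outside that index-$q$ submodule. An isometry moves any such $v$ to $e_1$, so $\langle v\rangle^\perp/\langle v\rangle\cong M_{(\mu_2,\dots,\mu_n)}$, and $v$ lies in $N'_\lri(\mu_2,\dots,\mu_n)$ Lagrangians. There are $q^{2|\mu|}(1-q^{-1})$ such $v$, and each $N\in\mathcal B$ contains $q^{|\mu|}(1-q^{-1})$ of them. This is very short, but it rests on that transitivity (Witt-type) statement.

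You instead build an explicit bijection. With your $M'=M_{(\mu_2,\dots,\mu_n)}$, you split $N=\lri x_0\oplus N^{(0)}$, where $N^{(0)}=N\cap\langle e\rangle^\perp=N\cap(\lri e\oplus M')$. This $N^{(0)}$ is the graph over $N_0$ of $m\mapsto a(m)e$, with $a(m)\pi^{-k}=-(m,m_0)$. Isotropy of $N^{(0)}$ makes $N_0$ isotropic of order $q^{|\mu|-k}$, hence Lagrangian. Conversely, each datum gives an isotropic submodule of order $q^{|\mu|}$, which avoids $v$ because $(v,x_0)=\pi^{-1}\neq0$. This is self-contained and constructive. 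It uses only the fixed orthogonal splitting and $\mu_1=\max_i\mu_i$ (to solve for $a(m)$ and to get $|\lri x_0|=q^k$), never an isometry moving $v$. The price is more bookkeeping.

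One piece of that bookkeeping is misstated: $c$ is not a free parameter alongside the class of $m_0$ modulo $N_0$. Replacing $x_0$ by $x_0+a(m)e+m$ with $m\in N_0$ sends $(c,m_0)$ to $(c+a(m),m_0+m)$. So $N$ determines the pair only up to this twisted $N_0$-action, and $(c,m_0)$ and $(c,m_0+m)$ give different Lagrangians whenever $(m,m_0)\neq0$.

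The count survives. Either fix a transversal of $N_0$ in $M'$, so that each $N$ contains a unique $x_0$ with $f$-coordinate $1$ and $M'$-component in the transversal. Or count the free $N_0$-orbits on $\lri/\mfp^k\times M'$, of which there are $q^k|M'|/|N_0|=q^{|\mu|}$. This is exactly what your ``recovery is inverse to the construction'' step has to verify.
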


\begin{proof} \textup{(N0)} follows from \(M_\varnothing=0\), \textup{(N1)} is
immediate from \Cref{def:sym-comp}, and \textup{(N2)} follows from
\(M_\mu\simeq M_{\mu'}\) as alternating \(\lri\)-modules. It remains
to prove \textup{(N3)}.

Write \(M_\mu=\bigoplus_{i=1}^n (U_i\oplus V_i)\), where \(U_i\cong
V_i\cong\lri/\pi^{\mu_i}\lri\), the summands \(U_i\oplus V_i\) are
mutually orthogonal, and the alternating pairing induces a perfect
pairing
  \[
    (\ ,\ ) \colon U_i \times V_i \longrightarrow \pi^{-\mu_i}\lri/\lri
  \]
  for each \(i\), while \((U_i,U_j)=(V_i,V_j)=0\) for all \(i,j\).
  Define
  \[
    M'=(\pi U_1)\oplus V_1\oplus U_2\oplus V_2\oplus\cdots\oplus U_n\oplus V_n.
  \]
  Then \(M'/M'^\perp\simeq M_{(\mu_1-1,\mu_2,\dots,\mu_n)}\).
  Partition the set of Lagrangians \(N\subseteq M_\mu\) into
  \[
    \mathcal{A}=\{\,N \mid N\subseteq M'\,\},
    \qquad
    \mathcal{B}=\{\,N \mid N\not\subseteq M'\,\}.
  \]

  If \(N\subseteq M'\), then \(M'^\perp\subseteq N\), and \(N/M'^\perp\) is
  Lagrangian in \(M'/M'^\perp\simeq
  M_{(\mu_1-1,\mu_2,\dots,\mu_n)}\). Hence
  \(|\mathcal{A}|=N'_\lri(\mu_1-1,\mu_2,\dots,\mu_n)\).

  If \(N\not\subseteq M'\), then \(N\) contains a vector
  \(v\in M_\mu\setminus M'\).  By extending \(v\) to a symplectic basis and
  conjugating by an isometry, we may assume \(v=e_1\in U_1\).  Therefore
  \(\langle v\rangle^\perp/\langle v\rangle\cong M_{(\mu_2,\dots,\mu_n)}\).
  Thus the number of Lagrangians \(N\) containing any fixed such \(v\) is
  \(N'_\lri(\mu_2,\dots,\mu_n)\).

  The number of admissible vectors is \(|M_\mu|-|M'|=q^{2|\mu|}(1-q^{-1})\),
  and each \(N\in\mathcal{B}\) contains exactly
  \(|N|-|N\cap M'|=q^{|\mu|}(1-q^{-1})\) such vectors.  Double counting the
  pairs \((v,N)\) with \(v\in N\setminus M'\) gives
  \[
    |\mathcal{B}|\,q^{|\mu|}(1-q^{-1})
    =
    q^{2|\mu|}(1-q^{-1})\,N'_\lri(\mu_2,\dots,\mu_n),
  \]
  hence \(|\mathcal{B}|=q^{|\mu|}\,N'_\lri(\mu_2,\dots,\mu_n)\). Finally,
  \[
    N'_\lri(\mu)
    =
    |\mathcal{A}|+|\mathcal{B}|
    =
    N'_\lri(\mu_1-1,\mu_2,\dots,\mu_n)
    +
    q^{|\mu|}\,N'_\lri(\mu_2,\dots,\mu_n). \qedhere
  \]
  \end{proof}

\begin{remark}\label{rem:permanence}
  By \Cref{rem:sym-poly}, write \(N'_\lri(\mu)=N'(\mu;q)\) with
  \(N'(\mu;t)\in\Z[t]\). Since \textup{(N0)--(N3)} hold for all prime powers
  \(q\), they also hold as polynomial identities in \(t\).
\end{remark}

\subsection{A closed formula for \texorpdfstring{$N'_\lri(\mu)$}{N'_\lri(mu)}}
The properties \textup{(N0)--(N3)} in \Cref{prop:N} uniquely determine
\(N'_\lri(\mu)\).  We record a closed formula for this quantity.

\begin{definition}\label{def:Wn}
  For \(n\in\N\), set
  \begin{equation}\label{def:C}
    \mathcal{W}_n :=\bigl\{
    (w_1,\dots,w_n)\in\N_0^n \mid w_0=0,\
    w_i\in\{\,w_{i-1},\,2i-1-w_{i-1}\,\}
    \text{ for } i\in[n] \bigr\}.
  \end{equation}

  For \(\bfw \in \mcW_n\), set
  \[
    C_n(\bfw) =\prod_{i=1}^n \frac{1}{1-q^{2i-1-2w_i}}.
  \]
\end{definition}

Thus \(\mcW_n\) is obtained from \(\mcW_{n-1}\) by appending
\(w_n\in\{w_{n-1},2n-1-w_{n-1}\}\), and hence \(|\mcW_n|=2^n\). For example,
\(\mcW_1=\{(0),(1)\}\),
\(\mcW_2=\{(0,0),(0,3),(1,1),(1,2)\}\), and
\[
  \mathcal{W}_3=\{(0,0,0),(0,0,5),(0,3,2),(0,3,3),
  (1,1,1),(1,1,4),(1,2,2),(1,2,3)\}.
\]

\begin{prop}[Closed formula for \(N'_\lri(\mu)\)]\label{prop:closed.N'}
  Let \(\mu\in\mcP_n\) be a partition. Then
  \begin{equation}\label{eq:Nprime-closed}
    N'_\lri(\mu)
    =
    \sum_{\bfw\in\mathcal{W}_n}
    C_n(\bfw)\,q^{\bfw\cdot\mu}.
  \end{equation}
\end{prop}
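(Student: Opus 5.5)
We prove \eqref{eq:Nprime-closed} by showing that its right-hand side satisfies the characterizing properties (N0)--(N3) of \Cref{prop:N}. For \(\mu\in\mcP_n\) write \(F_n(\mu):=\sum_{\bfw\in\mcW_n}C_n(\bfw)\,q^{\bfw\cdot\mu}\), and extend \(F_n\) to compositions by sorting, so that (N2) holds by construction. The recursion (N3), together with (N1) and (N2), expresses \(N'_\lri(\mu)\) through values with either smaller \(|\mu|\) or fewer parts. Hence, by induction on \((n,|\mu|)\), it suffices to check that \(F\) satisfies (N0), (N1) and (N3). (N0) is the empty-product convention.

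\emph{(N1).} For \(\bfw\in\mcW_n\), the two extensions \(w_{n+1}\in\{w_n,\,2n+1-w_n\}\) contribute the same monomial \(q^{\bfw\cdot\mu}\), because the appended part of \((\mu,0)\) is zero. Set \(e=2n+1-2w_n\); the exponents of \(q\) in the two new factors of \(C_{n+1}\) are then \(e\) and \(-e\). Since
\[
\frac{1}{1-q^{e}}+\frac{1}{1-q^{-e}}=1,
\]
the sum over the two extensions collapses, and \(F_{n+1}(\mu,0)=F_n(\mu)\).

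\emph{(N3), strict case \(\mu_1>\mu_2\).} Here \((\mu_1-1,\mu_2,\dots)\) is still a partition, and
\[
F_n(\mu)-F_n(\mu_1-1,\mu_2,\dots)=\sum_{\bfw}C_n(\bfw)\,q^{\bfw\cdot\mu}\bigl(1-q^{-w_1}\bigr).
\]
Since \(w_1\in\{0,1\}\), only the terms with \(w_1=1\) survive. For these, the factor \(1-q^{-1}\) cancels the first factor \(1/(1-q^{-1})\) of \(C_n(\bfw)\). Define \(w'_{i-1}:=w_i-1\) for \(i\ge2\). The rule \(w_i\in\{w_{i-1},\,2i-1-w_{i-1}\}\) becomes exactly the \(\mcW_{n-1}\)-rule for \(\bfw'\), so \(\bfw\mapsto\bfw'\) is a bijection from \(\{\bfw\in\mcW_n: w_1=1\}\) onto \(\mcW_{n-1}\). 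Moreover \(2i-1-2w_i=2(i-1)-1-2w'_{i-1}\), and
\[
q^{\bfw\cdot\mu}=q^{|\mu|}\,q^{\bfw'\cdot(\mu_2,\dots,\mu_n)}.
\]
Hence the difference equals \(q^{|\mu|}F_{n-1}(\mu_2,\dots,\mu_n)\), as required.

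\emph{(N3), tie case.} This is the main obstacle. If \(\mu_1=\dots=\mu_k=m>\mu_{k+1}\), the sorted form of \((\mu_1-1,\mu_2,\dots)\) lowers the \(k\)-th coordinate instead of the first. This matters because \(F_n\) is not symmetric in \(\mu\): already for \(n=2\) the monomial \(q^{3\mu_2}\) occurs. We therefore have to show
\[
\sum_{\bfw}C_n(\bfw)\,q^{\bfw\cdot\mu}\bigl(1-q^{-w_k}\bigr)=q^{|\mu|}F_{n-1}(\mu_2,\dots,\mu_n).
\]
On the block of equal parts, \(q^{\bfw\cdot\mu}\) depends on the prefix \((w_1,\dots,w_k)\) only through \(m(w_1+\dots+w_k)\) and through \(w_k\), which governs the tail. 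The plan is to fix \(w_k\) and the tail, and then evaluate the prefix sums
\[
\sum_{(w_1,\dots,w_{k-1})}\ \prod_{i<k}\frac{1}{1-q^{2i-1-2w_i}}\;q^{m(w_1+\dots+w_{k-1})}
\]
by induction on \(k\). The aim is to show that these prefix sums satisfy the same two-term recursion as in the strict case, again via the shift \(w'_{i-1}=w_i-1\) on the branch \(w_1=1\). I would first verify the case \(k=2\) by hand, where the four terms of \(\mcW_2\) can be compared directly. If the prefix-sum induction becomes unwieldy, the fallback is a different route to (N3): one shows that \(\mathcal{A}\) in the proof of \Cref{prop:N} may be taken with respect to any index \(i\) with \(\mu_i=\mu_1\). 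Then, after reordering, the lowered part can always be placed at position \(k\), and only the strict-case computation above (suitably generalized to position \(k\)) is needed.
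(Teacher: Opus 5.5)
Your framework matches the paper's: show that the right-hand side obeys the recursion (N0)--(N3) that determines $N'_\lri$. Your (N1) check and your strict-case computation for (N3) are exactly the paper's (P1) and (P3).

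\textbf{The gap.} There is a genuine gap in the tie case $\mu_1=\dots=\mu_k=m>\mu_{k+1}$. You correctly identify it as the crux, but you do not prove it, and neither of your suggestions closes it as stated.

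\emph{The fallback is circular.} Choosing which of the equal parts to lower is free on the $N'$ side by (N2). On the $F$ side, however, it returns exactly the identity $\sum_{\bfw}C_n(\bfw)q^{\bfw\cdot\mu}(1-q^{-w_k})=q^{|\mu|}F_{n-1}(\mu_2,\dots,\mu_n)$ that you started from. There is also no ``strict-case computation at position $k$''. For $k\ge2$ the entry $w_k$ is not confined to $\{0,1\}$, so the factor $1-q^{-w_k}$ does not cut out a subfamily in bijection with $\mathcal W_{n-1}$. For instance, when $n=k=2$, three of the four elements of $\mathcal W_2$ have $w_2\neq0$, while $F_1$ has only two terms.

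\emph{The prefix-sum plan is too vague, and as formulated it cannot match termwise.} With $w_k$ fixed, the tails issuing from $w_k$ and from $2k+1-w_k$ coincide, so only fibre-grouped sums can be compared. Concretely, put $c_k(w):=\sum_{\bfw\in\mathcal W_k,\,w_k=w}C_k(\bfw)$. Then $c_2(2)(1-q^{-2})\neq c_1(1)$, and equality holds only after adding $c_2(3)(1-q^{-3})$.

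That route can in fact be completed. Since $\sum_{i\le k}w_i=w_k(2k+1-w_k)/2$, the block monomial depends only on the fibre $\{w_k,2k+1-w_k\}$. The tie case then becomes a three-term identity for the fibre sums $s_k(r)$, which follows from the closed form $s_k(r)=\frac{(-q)^r(q^2;q^2)_k(1-q^{2k-2r+1})}{(q;q)_{2k-r+1}(q;q)_r}$. That closed form needs its own induction, however, and none of this is in your proposal.

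\textbf{The missing idea.} The paper's ingredient is simpler. Do not sort. Instead define $P_n(\mathbf v):=\sum_{\bfw\in\mathcal W_n}C_n(\bfw)q^{\bfw\cdot\mathbf v}$ for every composition $\mathbf v\in\N_0^n$.
\begin{itemize}
\item Your strict-case computation never uses $\mu_1>\mu_2$. It therefore proves $P_n(\mathbf v)-P_n(\mathbf v-\mathbf e_1)=q^{|\mathbf v|}P_{n-1}(v_2,\dots,v_n)$ for every composition with $v_1\ge1$.
\item The tie case then only requires $P_n(m-1,m,\dots,m,\mu_{k+1},\dots)=P_n(\mu^\downarrow)$, where $\mu^\downarrow$ is the weakly decreasing rearrangement of $(\mu_1-1,\mu_2,\dots,\mu_n)$. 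This sorting uses only adjacent swaps of entries differing by $1$.
\item So it suffices to show $P_n(\mathbf v+\mathbf e_i)=P_n(\mathbf v+\mathbf e_{i+1})$ whenever $v_i=v_{i+1}$.
\end{itemize}

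The difference $P_n(\mathbf v+\mathbf e_i)-P_n(\mathbf v+\mathbf e_{i+1})$ equals $\sum_{\bfw}C_n(\bfw)(q^{w_i}-q^{w_{i+1}})q^{\bfw\cdot\mathbf v}$. Only $\bfw$ with $(w_i,w_{i+1})=(y,2i+1-y)$ contribute. On these, use the involution $(\dots,y,2i+1-y,\dots)\mapsto(\dots,2i-1-y,y+2,\dots)$. It has three properties:
\begin{itemize}
\item it stays inside $\mathcal W_n$, since the constraints linking to the neighbouring entries are unchanged;
\item it preserves $w_i+w_{i+1}$, and hence $q^{\bfw\cdot\mathbf v}$ because $v_i=v_{i+1}$;
\item the two paired contributions cancel by a short rational-function identity in $q$.
\end{itemize}

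This partial symmetry under adjacent swaps of entries differing by $1$ is what your argument lacks. It is needed precisely because $F_n$ is not fully symmetric in $\mu$.
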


\begin{remark}\label{rem:poly-despite-form}
  Its presentation as a sum of rational functions notwithstanding, the
  right-hand side of \eqref{eq:Nprime-closed} is a polynomial
  \(N'(\mu;q)\) in~\(q\), by \Cref{rem:sym-poly}. Note further that
  \[
    N'(\mu;1)=\prod_{i=1}^n(\mu_i+1).
  \]
  This follows from \textup{(N0)--(N3)}. Indeed, for \(t=1\) the recursion
  becomes
  \(N'(\mu;1)=N'(\mu_1-1,\mu_2,\dots,\mu_n;1)+N'(\mu_2,\dots,\mu_n;1)\).
\end{remark}

\begin{proof}
  Set \(\mcW_0=\{\varnothing\}\), \(C_0(\varnothing)=1\), and, for \(n\ge0\),
  \[
    F_n(\bfw;\mathbf{v}):=C_n(\bfw)q^{\bfw\cdot\mathbf{v}},
    \qquad
    P_n(\mathbf{v}):=\sum_{\bfw\in\mcW_n}F_n(\bfw;\mathbf{v}).
  \]
  Thus \(P_0(\varnothing)=1\).  We prove \(P_n(\mu)=N'_\lri(\mu)\) by showing
  that \(P_n\) satisfies the following analogues of \textup{(N0)--(N3)}.  Note
  that \textup{(P2)} is weaker than \textup{(N2)}.

  \begin{enumerate}[label=\textup{(P\arabic*)},leftmargin=*]
  \item For \(n\ge0\) and \(\mathbf{v}\in\N_0^n\),
  \(P_{n+1}(\mathbf{v},0)=P_n(\mathbf{v})\).
  \item If \(\mathbf{v}'\) is obtained from \(\mathbf{v}\) by swapping adjacent
  entries \(v_i,v_{i+1}\) with \(v_i=v_{i+1}\pm1\), then
  \(P_n(\mathbf{v})=P_n(\mathbf{v}')\).
  \item For \(n\ge1\) and \(v_1\ge1\),
  \[
    P_n(v_1,\dots,v_n)-P_n(v_1-1,v_2,\dots,v_n)
    =
    q^{v_1+\cdots+v_n}\,P_{n-1}(v_2,\dots,v_n).
  \]
  \end{enumerate}
  These properties suffice by strong induction on \(|\mu|\), uniformly in
  \(n\). If \(|\mu|=0\), then \(\mu=(0,\dots,0)\). By \textup{(N0), (N1)} and
  \textup{(P1)}, \(N'_\lri(\mu)=P_n(\mu)=1\).

  Now assume \(|\mu|>0\), and assume the claim known for all partitions of
  smaller size. Then \(\mu_1\ge1\). Let \(\mu^\downarrow\) be the partition
  obtained by sorting \((\mu_1-1,\mu_2,\dots,\mu_n)\) into weakly decreasing
  order. This sorting uses only adjacent swaps of entries differing by \(1\);
  hence \textup{(P2)} gives
  \(
    P_n(\mu_1-1,\mu_2,\dots,\mu_n)=P_n(\mu^\downarrow).
  \)
  Applying \textup{(P3)} and \textup{(N2), (N3)}, we obtain
  \[
    P_n(\mu)=P_n(\mu^\downarrow)+q^{|\mu|}P_{n-1}(\mu_2,\dots,\mu_n),
    \qquad
    N'_\lri(\mu)=N'_\lri(\mu^\downarrow)
    +q^{|\mu|}N'_\lri(\mu_2,\dots,\mu_n).
  \]
  The right-hand sides coincide by induction hypothesis. Therefore
  \(P_n(\mu)=N'_\lri(\mu)\). It remains to prove \textup{(P1)--(P3)}.

  \emph{Proof of \textup{(P1)}.}
  The case \(n=0\) is immediate, since \(P_1(0)=1=P_0(\varnothing)\).
  Assume \(n\ge1\) and fix \(\bfw\in\mathcal{W}_n\).
  There are exactly two extensions to \(\mathcal{W}_{n+1}\), namely
  \[
    \bfw^{(0)}=(\bfw,w_n),
    \qquad
    \bfw^{(1)}=(\bfw,2n+1-w_n).
  \]
  Writing \(x:=2n+1-2w_n\), we get
    \begin{equation*}
      F_{n+1}(\bfw^{(0)};(\mathbf{v},0))
      +F_{n+1}(\bfw^{(1)};(\mathbf{v},0)) = F_n(\bfw;\mathbf{v})
      \left(\frac{1}{1-q^{x}}+\frac{1}{1-q^{-x}}\right)
      =F_n(\bfw;\mathbf{v}).
    \end{equation*}  
      Summing over all \(\bfw\in\mathcal{W}_n\) yields
  \(P_{n+1}(\mathbf{v},0)=P_n(\mathbf{v})\).
  
  \emph{Proof of \textup{(P3)}.}
  Let \(n\ge1\) and \(\mathbf{v}=(v_1,\dots,v_n)\in\N_0^n\) with \(v_1\ge1\).
  By definition,
  \[
    P_n(\mathbf{v})-P_n(\mathbf{v}-\mathbf{e}_1)
    =
    \sum_{\bfw\in\mathcal{W}_n}F_n(\bfw;\mathbf{v})(1-q^{-w_1}),
  \]
  where \(\mathbf{e}_1=(1,0,\dots,0)\). Since \(w_1\in\{0,1\}\), only terms
  with \(w_1=1\) contribute. Let
  \(\mathcal{W}_n^{(1)}=\{\bfw\in\mathcal{W}_n:w_1=1\}\).
  For \(\bfw=(1,w_2,\dots,w_n)\in\mathcal{W}_n^{(1)}\), define
  \(\bfw'=(w_2-1,\dots,w_n-1)\); this gives a bijection
  \(\mathcal{W}_n^{(1)}\simeq\mathcal{W}_{n-1}\).

  Writing \(\mathbf{v}'=(v_2,\dots,v_n)\), we have
  \(\bfw\cdot\mathbf{v}=|\mathbf{v}|+\bfw'\cdot\mathbf{v}'\) and
  \[
    \begin{aligned}
      F_n(\bfw;\mathbf{v})
      &=C_n(\bfw)\,q^{\bfw\cdot\mathbf{v}} \\
      &=\frac{C_{n-1}(\bfw')}{1-q^{-1}}
        q^{|\mathbf{v}|+\bfw'\cdot\mathbf{v}'}
      =
      \frac{q^{|\mathbf{v}|}}{1-q^{-1}}\,
      F_{n-1}(\bfw';\mathbf{v}').
    \end{aligned}
  \]
  Therefore
  \(F_n(\bfw;\mathbf{v})(1-q^{-1})=q^{|\mathbf{v}|}F_{n-1}(\bfw';\mathbf{v}')\).
  Summing over \(\mathcal{W}_n^{(1)}\) gives
  \[
    P_n(\mathbf{v})-P_n(\mathbf{v}-\mathbf{e}_1)
    =q^{|\mathbf{v}|}P_{n-1}(\mathbf{v}').
  \]

  \emph{Proof of \textup{(P2)}.}
  It is enough to prove that, if \(v_i=v_{i+1}\), then
  \(P_n(\mathbf{v}+\mathbf e_i)=P_n(\mathbf{v}+\mathbf e_{i+1})\).

  Recall that \(F_n(\bfw;\mathbf{v})=C_n(\bfw)\,q^{\bfw\cdot\mathbf{v}}\).
  Therefore
  \[
    P_n(\mathbf{v}+\mathbf{e}_i)-P_n(\mathbf{v}+\mathbf{e}_{i+1})
    =
    \sum_{\bfw\in\mathcal{W}_n}
    C_n(\bfw)(q^{w_i}-q^{w_{i+1}})q^{\bfw\cdot\mathbf{v}}.
  \]
  If \(w_i=w_{i+1}\), the summand vanishes. Thus only
  \(\bfw\in\mathcal{W}_n^{\ne}:=\{\bfw:w_i\ne w_{i+1}\}\) contribute.
  For such \(\bfw\), write \((w_i,w_{i+1})=(y,2i+1-y)\), and define
  \[
    \tau_i(\dots,y,2i+1-y,\dots)
    =
    (\dots,2i-1-y,y+2,\dots).
  \]
  This is an involution on the contributing set; the possible previous and
  next entries are unchanged. It also preserves \(w_i+w_{i+1}\), so
  \(q^{\bfw\cdot\mathbf{v}}=q^{\tau_i(\bfw)\cdot\mathbf{v}}\).

  Since \(C_n\) is multiplicative in \(j\) and \(\tau_i\) only affects the
  \(i\)th and \((i+1)\)st factors, a direct computation with
  \(t:=2y-2i+1\) gives
  \[
    \begin{aligned}
      &q^{\bfw\cdot\mathbf{v}}
      \Bigl(
      C_n(\bfw)\,(q^{w_i}-q^{w_{i+1}})
      +
      C_n(\tau_i\bfw)\,(q^{(\tau_i\bfw)_i}-q^{(\tau_i\bfw)_{i+1}})
      \Bigr)\\
      &\qquad
      =C_{\mathrm{rest}}\,q^{\bfw\cdot\mathbf{v}}
      \Biggl(
        \frac{q^{y}(1-q^{2-t})}{(1-q^{-t})(1-q^{t-2})}
        +
        \frac{q^{y}(q^{-t}-q^{2})}{(1-q^{t})(1-q^{-t-2})}
      \Biggr)
      =0.
    \end{aligned}
  \]
  Thus the sum cancels in \(\tau_i\)-pairs and the difference vanishes.
  
  \medskip
  This completes the proof that \(P_n(\mu)=N'_\lri(\mu)\) for all partitions
  \(\mu\in\mcP_n\).
\end{proof}

\subsection{Examples}\label{subsec:Nprime-examples}
We conclude by recording several examples. For \(a\ge b\ge c\ge 0\), we have
\[
  N'_\lri(a)=1+q+q^2+\cdots+q^a
  =\frac{q^{a+1}-1}{q-1}
  =\frac{1}{1-q}+\frac{q^a}{1-q^{-1}}.
\]

\[
  \begin{aligned}
    N'_\lri(a,b)
    &=\frac{1}{(1-q)(1-q^{3})}
    +\frac{q^{3b}}{(1-q)(1-q^{-3})}
    +\frac{q^{a+b}}{(1-q^{-1})(1-q)}
    +\frac{q^{a+2b}}{(1-q^{-1})^2}\\
    &=\frac{1-q^{1+a+b}(1+q+q^2)+q^{2+a+2b}(1+q+q^2)-q^{3+3b}}
    {(1-q)(1-q^3)}.
  \end{aligned}
\]
\[
  \begin{gathered}
    N'_\lri(a,b,c)
    =
    \sum_{(w_1,w_2,w_3)\in\mathcal{W}_3}
    \frac{q^{w_1 a+w_2 b+w_3 c}}
    {(1-q^{1-2w_1})(1-q^{3-2w_2})(1-q^{5-2w_3})},\\
    \mathcal{W}_3=\{(0,0,0),(0,0,5),(0,3,2),(0,3,3),
    (1,1,1),(1,1,4),(1,2,2),(1,2,3)\}.
  \end{gathered}
\]

\section{Subalgebra zeta functions of higher Heisenberg algebras over cDVRs}
\label{sec:heis}

In this section we connect the Hecke-theoretic ideas developed so far with
subalgebra zeta functions of higher Heisenberg algebras over cDVRs.

\subsection{Higher Heisenberg algebras and symplectic spaces}
\label{subsec:heis-lie-zeta}

Recall that \(\lri\) is a cDVR with residue field cardinality
\(q\). Recall further that \(\mfh_n(\lri)=\mfh_n(\Z)\otimes_\Z\lri\)
is the Heisenberg algebra over \(\lri\) of degree~\(n\). As an
\(\lri\)-module, it is free of rank~\(2n+1\).  To bring the
Hecke-theoretic ideas from the previous sections to bear, set
\(Z_n(\lri):=Z(\mfh_n(\lri))\cong\lri\) for the cyclic centre of
\(\mfh_n(\lri)\), and set
\[
  V_n(\lri):=\mfh_n(\lri)/Z_n(\lri)\cong \lri^{2n} \]
for the commutator quotient. The Lie bracket on \(\mfh_n(\lri)\) induces on
\(V_n(\lri)\) the structure of a non-degenerate symplectic space via the form
\[
  \omega_n: V_n(\lri)\times V_n(\lri)\rarr Z_n(\lri),
  \qquad
  (\overline g,\overline h)\mapsto [g,h].
\]

Identifying \(V_n(\lri)\) with \(\lri^n\oplus \lri^n\), where both copies of
\(\lri^n\) are viewed as spaces of column vectors, the form is given by
\[
  \omega_n\bigl((\mathbf x_1,\mathbf y_1),(\mathbf x_2,\mathbf y_2)\bigr)
  = \mathbf x_1^T\mathbf y_2-\mathbf x_2^T\mathbf y_1
  \in Z_n(\lri)\cong \lri.
\]

\subsection{Subalgebras of \texorpdfstring{\(\mfh_n(\lri)\)}{hn(o)}}

We express the subalgebra zeta function of \(\mfh_n(\lri)\) in terms
of the invariants \(N'_\lri(\mu)\) (cf.\ \Cref{def:sym-comp}) and
\(\alpha_n(\mu;q^2)\) (cf.\ \eqref{eq:Birk-n-closed}), culminating in
\Cref{prop:zeta.formula}. We write \(\phi:\mfh_n(\lri)\rarr
V_n(\lri)\) for the quotient map modulo~\(Z_n(\lri)\).

Let \(\mathfrak a\le\mfh_n(\lri)\) be an \(\lri\)-subalgebra of finite index.
Setting \(\Lambda:=\phi(\mathfrak a)\) and
\(K:=\mathfrak a\cap Z_n(\lri)\), we obtain an exact sequence of
\(\lri\)-modules
\[
  0\to K\to \mathfrak a\to \Lambda\to 0.
\]
Here \(\Lambda\le V_n(\lri)\) is a finite-index \(\lri\)-lattice,
\(K\le Z_n(\lri)\) is an ideal, and the subalgebra condition is equivalent to
\begin{equation}\label{cond:subalgebra}
  \omega_n(\Lambda,\Lambda)\subseteq K.
\end{equation}

Let \(\mu(\Lambda)\in\mcP_n\) denote the alternating type of
\(\omega_n|_\Lambda\) in the sense of~\Cref{subsec:inv.symp.lat}, with
smallest part \(\mu_n(\Lambda)\in\N_0\).  Then
\(\omega_n(\Lambda,\Lambda)=\pi^{\mu_n(\Lambda)}Z_n(\lri)\).  By the
subalgebra condition~\eqref{cond:subalgebra}, there exists an integer
\(c\in[\mu_n(\Lambda)]_0\) such that \(K=\pi^cZ_n(\lri)\).

Conversely, given a finite-index sublattice \(\Lambda\le V_n(\lri)\) and
\(c\in[\mu_n(\Lambda)]_0\), the number of \(\lri\)-subalgebras
\(\mathfrak a\le\mfh_n(\lri)\) satisfying
\[
  \phi(\mathfrak a)=\Lambda
  \qquad\text{and}\qquad
  \mathfrak a\cap Z_n(\lri)=\pi^cZ_n(\lri)
\]
is \(q^{2nc}\). Indeed, let \(\{\overline e_1,\dots,\overline e_{2n}\}\) be
an \(\lri\)-basis of \(\Lambda\). Each \(\overline e_i\) admits exactly \(q^c\)
lifts to \(\phi^{-1}(\Lambda)\) modulo \(\pi^cZ_n(\lri)\), and these choices
are mutually independent.

The index of any such subalgebra \(\mathfrak a\) is
\[
  [\mfh_n(\lri):\mathfrak a]
  = [Z_n(\lri):\pi^cZ_n(\lri)]\,[V_n(\lri):\Lambda]
  = q^{c+|\mu|}.
\]
Hence, for a fixed finite-index lattice \(\Lambda\leq V_n(\lri)\) of
alternating type~\(\mu\), the contribution to
\(\zeta_{\mfh_n(\lri)}(s)\) from the \(\lri\)-subalgebras \(\mathfrak
a\le\mfh_n(\lri)\) with \(\phi(\mathfrak a)=\Lambda\) is
\[
  \sum_{c=0}^{\mu_n} q^{2nc-s(c+|\mu|)}
  =
  q^{-s|\mu|}
  \frac{1-q^{(\mu_n+1)(2n-s)}}{1-q^{2n-s}}.
\]

Recall \eqref{def:sym} that \(N_\lri(\mu)\) counts the finite-index
sublattices \(\Lambda\le\lri^{2n}\) of alternating type~\(\mu\). Thus
the \(\lri\)-subalgebras \(\mathfrak a\le\mfh_n(\lri)\) whose image
\(\phi(\mathfrak a)\subseteq \lri^{2n}\) has alternating type \(\mu\)
contribute
\[
  N_\lri(\mu)\,q^{-s|\mu|}
  \frac{1-q^{(\mu_n+1)(2n-s)}}{1-q^{2n-s}}
\]
to \(\zeta_{\mfh_n(\lri)}(s)\). Summing over all \(\mu\in\mcP_n\) yields
\[
  \zeta_{\mfh_n(\lri)}(s)
  =
  \frac{1}{1-q^{2n-s}}
  \sum_{\mu\in\mcP_n}
  N_\lri(\mu)\,q^{-s|\mu|}
  \bigl(1-q^{(\mu_n+1)(2n-s)}\bigr).
\]

Finally, using \Cref{cor:N.N'} we obtain the following formula.
\begin{prop}\label{prop:zeta.formula}
  We have
  \[
    \zeta_{\mfh_n(\lri)}(s)
    =
    \frac{1}{1-q^{2n-s}}
    \sum_{\mu\in\mcP_n}
    N'_\lri(\mu)\,\alpha_n(\mu;q^2)\,q^{-s|\mu|}
    \bigl(1-q^{(\mu_n+1)(2n-s)}\bigr).
  \]
\end{prop}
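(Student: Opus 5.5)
The formula is essentially assembled in the discussion immediately preceding the statement, so the plan is to organize that discussion into a proof in three steps.

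\textbf{Step 1: fibre the subalgebras over their images.} Consider the map $\mathfrak a\mapsto(\Lambda,K)=(\phi(\mathfrak a),\mathfrak a\cap Z_n(\lri))$. We check that a finite-index $\lri$-submodule $\mathfrak a$ is a subalgebra if and only if $\omega_n(\Lambda,\Lambda)\subseteq K$. Since $[\mathfrak a,\mathfrak a]\subseteq Z_n(\lri)$ and the bracket only depends on images mod the centre, $[\mathfrak a,\mathfrak a]=\omega_n(\Lambda,\Lambda)$. Thus closure under the bracket means exactly $\omega_n(\Lambda,\Lambda)\subseteq\mathfrak a\cap Z_n(\lri)=K$. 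Finite index of $\mathfrak a$ forces $\Lambda$ to have finite index and $K=\pi^c Z_n(\lri)$ for some $c\ge0$. Choosing a basis of $\Lambda$ realizing $J_\mu$, the ideal $\omega_n(\Lambda,\Lambda)$ is generated by $\pi^{\mu_n}$. So the admissible $c$ are exactly $0\le c\le\mu_n(\Lambda)$.

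\textbf{Step 2: count the lifts and compute the index.} For fixed $(\Lambda,c)$, the submodules $\mathfrak a$ with $\phi(\mathfrak a)=\Lambda$ and $\mathfrak a\cap Z=\pi^cZ$ correspond to $\lri$-linear sections $\Lambda\to\phi^{-1}(\Lambda)/\pi^cZ$. These form a torsor under $\mathrm{Hom}_\lri(\Lambda,Z/\pi^cZ)\cong(\lri/\pi^c)^{2n}$, which has cardinality $q^{2nc}$. This is the step needing care: we must make sure distinct sections give distinct $\mathfrak a$ and that every $\mathfrak a$ arises. Both follow because $\mathfrak a$ is recovered as the preimage of the graph of the section. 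The index is multiplicative in the exact sequence, giving $q^{c}\cdot q^{|\lambda|}$. Since $|\lambda|=|\mu|$ by Proposition~\ref{prop:fact} (equivalently, the determinant of the Gram matrix gives $[L:\Lambda]^2=q^{2|\mu|}$), this equals $q^{c+|\mu|}$.

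\textbf{Step 3: sum the series.} Summing $q^{2nc}T^{c+|\mu|}$ over $c\in[\mu_n]_0$ gives a finite geometric series, producing the factor
\[
\frac{1-q^{(\mu_n+1)(2n-s)}}{1-q^{2n-s}}.
\]
Grouping the lattices $\Lambda$ by alternating type yields the multiplicity $N_\lri(\mu)$. Corollary~\ref{cor:N.N'} then replaces $N_\lri(\mu)$ by $N'_\lri(\mu)\alpha_n(\mu;q^2)$. Interchanging the sums is justified for $\operatorname{Re}(s)$ large, since all terms are nonnegative there.
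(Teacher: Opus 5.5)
Your proposal is correct and follows the paper's argument essentially step for step. You fibre the subalgebras over $(\phi(\mathfrak a),\mathfrak a\cap Z_n(\lri))$ with $c\in[\mu_n]_0$, count $q^{2nc}$ lifts, compute the index $q^{c+|\mu|}$, sum the geometric series, group by alternating type, and apply \Cref{cor:N.N'}, exactly as the paper does. Your torsor description of the lifts and your Gram-determinant argument for $[V_n(\lri):\Lambda]=q^{|\mu|}$ make explicit two points the paper only states briefly.
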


\section{Proof of \Cref{thm:A}}\label{sec:Wn-parametrized-formula}

\Cref{prop:zeta.formula} expresses the Heisenberg subgroup zeta function in
terms of the polynomials \(N'_\lri(\mu)\) and~\(\alpha_n(\mu;q^2)\). In this
section we isolate the contribution of \(N'_\lri(\mu)\) and derive a first
explicit formula for the local zeta function in terms of augmented Igusa
functions. This formula is indexed by the finite set \(\mcW_n\) (see
\Cref{def:Wn}) and will serve as the starting point for the later
simplifications in \Cref{sec:simplifying-zeta-formula} and the
type-\(\mathsf{B}\) reformulation in~\Cref{sec:Bn}.

Recall from \Cref{prop:closed.N'} that
\(N'_\lri(\mu)=\sum_{\mathbf{w}\in\mathcal{W}_n}
C_n(\mathbf{w})q^{\mathbf{w}\cdot\mu}\) is a polynomial in the
quantities~\(q^{\mu_1},q^{\mu_2},\dots,q^{\mu_n}\), featuring the
rational expressions \(C_n(\bfw)\). To isolate the analytic
contribution coming from these monomials we set, for
\(\mathbf{w}=(w_1,\dots,w_n)\in\Z^n\),
\begin{equation}\label{eq:Z-w-Birkhoff}
  Z(\mathbf{w})
  =
  \frac{1}{1-q^{2n-s}}
  \sum_{\mu\in\mcP_n}
  q^{\mu \cdot \bfw}\,
  \alpha_n(\mu;q^2)\,
  q^{-s|\mu|}\,
  \bigl(1-q^{\,(\mu_n+1)(2n-s)}\bigr)\in \Z(q,T).
\end{equation}

Then \Cref{prop:zeta.formula} states that
\begin{equation}
  \label{eq:zeta-formula-2}
  \zeta_{\mfh_{n}(\lri)}(s)
  =
  \sum_{\mathbf{w}\in\mathcal{W}_n} C_n(\mathbf{w})Z(\mathbf{w}).
\end{equation}

Our aim is to rewrite each \(Z(\mathbf{w})\) as an \emph{augmented Igusa
  function} (cf.\ \Cref{def:igusa.A}) and thereby obtain an explicit
\(\mcW_n\)-parametrized formula for~\(\zeta_{\mfh_n(\lri)}(s)\)
in~\Cref{thm:final-zeta}.

\subsection{Rewriting \texorpdfstring{$Z(\mathbf{w})$}{Z(w)} as Igusa functions}\label{subsec:Z-as-igusa}

Let \(\mu=(\mu_1,\dots,\mu_n)\in\mcP_n\), and let
\(\mathbf d=d(\mu)\in\N_0^n\) be its difference vector, defined by
\(d_i=\mu_i-\mu_{i+1}\) for \(i<n\) and \(d_n=\mu_n\). Conversely, every
\(\mathbf d\in\N_0^n\) determines a unique partition
\(\mu(\bfd)=\bigl(\sum_{j=i}^n d_j\bigr)_{i=1}^n\in\mcP_n\). Given
\(\mathbf{w}=(w_1,\dots,w_n)\in\Z^n\), define
\(u_k:=\sum_{i=1}^k w_i\) for \(k\in[n]\) and
\(\mathbf u=(u_1,\dots,u_n)\in\Z^n\). Then
\(\mu\cdot\mathbf w=\mathbf d\cdot\mathbf u\).

Recall from \Cref{eq:Birk-n-support} that
\[
  \alpha_n(\mu;q^2)
  =
  q^{2\,\mathbf d\cdot\boldsymbol{\rho}'}
  \binom{n}{\Supp^+_{n-1}(\mathbf d)}_{q^{-2}},
  \qquad
  \rho'_k=k(n-k).
\]
Substituting these expressions into \eqref{eq:Z-w-Birkhoff} yields
\begin{equation}\label{eq:Z.rewrite}
  Z(\mathbf{w})
  =
  \frac{1}{1-q^{2n-s}}
  \sum_{\mathbf d\in\N_0^n}
  q^{\mathbf d\cdot(\mathbf u+2\boldsymbol{\rho}')}
  q^{-s|\mu(\mathbf d)|}
  \bigl(1-q^{(d_n+1)(2n-s)}\bigr)
  \binom{n}{\Supp^+_{n-1}(\mathbf d)}_{q^{-2}}.
\end{equation}

We further define vectors \(\boldsymbol u_1,\boldsymbol u_2\in\Z[s]^n\) by setting, for \(k\in[n]\),
  \[
    u_{1,k}
    =
    u_k+2\rho'_k-sk
    =
    \sum_{i=1}^k w_i+2k(n-k)-ks,
    \qquad
    u_{2,k}
    =
    u_{1,k}+(2n-s)\delta_{k,n}.
  \]
Note that \(u_{2,i}=u_{1,i}\) for \(i\in[n-1]\) and \(u_{2,n}=u_{1,n}+2n-s\).

\begin{prop}\label{prop:Z-w-simplified}
  We have
  \[
    Z(\mathbf w)
    =
    \frac{1}{(1-q^{u_{1,n}})(1-q^{u_{1,n}+2n-s})}
    \sum_{I\subseteq[n-1]}
    \binom{n}{I}_{q^{-2}}
    \prod_{i\in I}\frac{q^{u_{1,i}}}{1-q^{u_{1,i}}}.
  \]
\end{prop}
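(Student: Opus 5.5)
Starting from \eqref{eq:Z.rewrite}, we first rewrite the exponent. Since \(|\mu(\mathbf d)|=\sum_k k\,d_k\), we have
\[
  q^{\mathbf d\cdot(\mathbf u+2\boldsymbol\rho')}\,q^{-s|\mu(\mathbf d)|}=q^{\mathbf d\cdot\boldsymbol u_1}=\prod_{k=1}^n q^{d_k u_{1,k}},
\]
with \(T=q^{-s}\) treated formally. The factor \(1-q^{(d_n+1)(2n-s)}\) and the Gaussian multinomial coefficient \(\binom{n}{\Supp^+_{n-1}(\mathbf d)}_{q^{-2}}\) involve disjoint coordinates. The former depends only on \(d_n\); the latter depends only on the support of \((d_1,\dots,d_{n-1})\). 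So the sum over \(\mathbf d\in\N_0^n\) factors as a sum over \(d_n\in\N_0\) times a sum over \((d_1,\dots,d_{n-1})\in\N_0^{n-1}\).

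For the \(d_n\)-part we compute the geometric series directly:
\[
  \sum_{d_n\ge0} q^{d_nu_{1,n}}\bigl(1-q^{(d_n+1)(2n-s)}\bigr)
  =\frac{1}{1-q^{u_{1,n}}}-\frac{q^{2n-s}}{1-q^{u_{1,n}+2n-s}}
  =\frac{1-q^{2n-s}}{(1-q^{u_{1,n}})(1-q^{u_{2,n}})}.
\]
The numerator \(1-q^{2n-s}\) cancels the prefactor \(\frac{1}{1-q^{2n-s}}\) in \eqref{eq:Z.rewrite}. This produces exactly the denominator \((1-q^{u_{1,n}})(1-q^{u_{1,n}+2n-s})\) in the statement. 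The combination of the two fractions is the only step requiring a small check, and it is a routine calculation.

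For the remaining part we stratify \((d_1,\dots,d_{n-1})\) by its positive support \(I=\Supp^+_{n-1}(\mathbf d)\subseteq[n-1]\). On each stratum the Gaussian multinomial \(\binom{n}{I}_{q^{-2}}\) is constant. The coordinates \(d_i\) with \(i\in I\) then range independently over \(\N\), while those with \(i\notin I\) are zero. Hence each stratum contributes
\[
  \binom{n}{I}_{q^{-2}}\prod_{i\in I}\sum_{d_i\ge1}q^{d_iu_{1,i}}=\binom{n}{I}_{q^{-2}}\prod_{i\in I}\frac{q^{u_{1,i}}}{1-q^{u_{1,i}}}.
\]
Summing over \(I\subseteq[n-1]\) and multiplying by the \(d_n\)-factor yields the claimed formula.

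There is no real obstacle. The only point needing care is convergence: all identities hold as formal power series in \(T\), since every \(u_{1,k}\) contains \(-ks\) with \(k\ge1\). They therefore hold as identities in \(\Z(q,T)\), and numerically for \(\operatorname{Re}(s)\) large.
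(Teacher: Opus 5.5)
Your proof is correct and follows essentially the same route as the paper: you stratify by the support $I=\Supp^+_{n-1}(\mathbf d)$, sum the geometric series, combine the two $d_n$-fractions, and cancel the prefactor $1-q^{2n-s}$. The only difference is cosmetic. You factor off the $d_n$-sum before stratifying, whereas the paper first splits into the $\boldsymbol u_1$- and $\boldsymbol u_2$-sums and then uses $u_{2,i}=u_{1,i}$ for $i<n$ to pull out the common $I$-sum.
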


\begin{proof}
  Using \(|\mu(\mathbf d)|=\sum_{k=1}^n k\,d_k\), we may rewrite \eqref{eq:Z.rewrite} as
  \[
    (1-q^{2n-s})\,Z(\mathbf w)
    =
    \sum_{\mathbf d\in\N_0^n}
    \Bigl(
      q^{\mathbf d\cdot\boldsymbol u_1}
      -
      q^{2n-s}\,q^{\mathbf d\cdot\boldsymbol u_2}
    \Bigr)
    \binom{n}{\Supp^+_{n-1}(\mathbf d)}_{q^{-2}}.
  \]

  We sort the summands by their support
  \(I=\Supp^+_{n-1}(\mathbf d)\subseteq[n-1]\).
  For \(I\subseteq[n-1]\), set
  \[
    \mathcal D(I)
    =
    \bigl\{\mathbf d\in\N_0^n :
      d_i\ge1 \textup{ for } i\in I,\;
      d_j=0 \textup{ for } j\notin I,\ j<n,\;
      d_n\ge0
    \bigr\}.
  \]
  The claimed identity follows from the fact that
  \begin{align*}
    (1-q^{2n-s})Z(\mathbf w)
    &=
    \sum_{I\subseteq[n-1]}
    \binom{n}{I}_{q^{-2}}
    \sum_{\mathbf d\in\mathcal D(I)}
    \Bigl(
      q^{\mathbf d\cdot\boldsymbol u_1}
      -
      q^{2n-s}\,q^{\mathbf d\cdot\boldsymbol u_2}
    \Bigr)
    \\
    &=
    \sum_{I\subseteq[n-1]}
    \binom{n}{I}_{q^{-2}}
    \Biggl[
      \prod_{i\in I}\frac{q^{u_{1,i}}}{1-q^{u_{1,i}}}\cdot
      \frac{1}{1-q^{u_{1,n}}}
      -
      q^{2n-s}
      \prod_{i\in I}\frac{q^{u_{2,i}}}{1-q^{u_{2,i}}}\cdot
      \frac{1}{1-q^{u_{2,n}}}
    \Biggr]\\
    &=
    \left(
      \sum_{I\subseteq[n-1]}
      \binom{n}{I}_{q^{-2}}
      \prod_{i\in I}\frac{q^{u_{1,i}}}{1-q^{u_{1,i}}}
    \right)
    \left(
      \frac{1}{1-q^{u_{1,n}}}
      -
      \frac{q^{2n-s}}{1-q^{u_{1,n}+2n-s}}
    \right).
  \end{align*}
  Cancelling the prefactor \(1-q^{2n-s}\) gives the claim.
\end{proof}

This allows us to express
\(Z(\bfw)\) in terms of classical generating functions known as \emph{Igusa
  functions}. Let \(Y\) and \(X_0,X_1,X_2,\dots\) be indeterminates.

\begin{definition}[Igusa functions]\label{def:igusa.A}
  For \(n\ge 1\) we define the
  \begin{alignat*}{2}
    \textup{truncated Igusa function} &&\quad    \igm{n}(Y;X_1,\dots,X_{n-1})
    &:=
      \sum_{I\subseteq [n-1]}
      \binom{n}{I}_Y
      \prod_{i\in I}\frac{X_i}{1-X_i},\\
    \textup{Igusa function} &&\quad     \ig{n}(Y;X_1,\dots,X_n)
    &:=
      \sum_{I\subseteq [n]}
      \binom{n}{I}_Y
      \prod_{i\in I}\frac{X_i}{1-X_i},\\
    \textup{augmented Igusa function} && \quad     \igu{n}(Y;X_0,X_1,\dots,X_n)
    &:=
      \sum_{I\subseteq [n]_0}
      \binom{n}{I}_Y
      \prod_{i\in I}\frac{X_i}{1-X_i}
  \end{alignat*}
  of degree \(n\), respectively. For \(n=0\) we define $\ig{0}(Y;-):=1$, 
    $\igu{0}(Y;X_0):=\frac{1}{1-X_0}$.
\end{definition}

\begin{remark}
  The augmented Igusa function admits a Coxeter-theoretic description
  in terms of descent sets and the Coxeter length
  function~$\ell$. More precisely, for $g\in S_n$ let $\Des(g):=\{\,
  i\in[n-1]\mid g(i)>g(i+1)\,\}$ and \(\des(g):=|\Des(g)|\). Then
  (e.g.~\cite[Rem.~3.12]{SV2/16})
  \begin{equation}\label{eq:igu-descent}
    \igu{n}(Y;X_0,X_1,\dots,X_n)
    =
    \frac{\displaystyle\sum_{g\in S_n}
    Y^{\ell(g)} \prod_{j\in \Des(g)} X_j}{\displaystyle\prod_{i=0}^n(1-X_i)}.
\end{equation}
\end{remark}

\begin{remark}
  Igusa functions feature widely in the literature on enumerative
  algebra. Going back to work of Igusa on $p$-adic integrals
  associated with representations of $p$-adic Lie groups
  \cite{Igusa/89}, they occur (in Lie type $\mathsf{A}$) in various
  enumerative contexts, including \cite{Voll2005, SV1/15}. Analogues
  and generalizations were studied, for instance, in \cite{KV/09,
    CSV/24, MV2}. See \cite{Voll/25} for a short, informal overview of
  recent applications of Igusa-type functions in lattice enumeration.
\end{remark}

The well-known symmetry $\binom{n}{n-I}_Y = \binom{n}{I}_Y$ of $Y$-multinomial
coefficients implies the following easy lemma.

\begin{lemma}\label{lem:Igusa-reversal}
  For \(n\ge 1\) we have
  \[
    \igm{n}(Y;X_1,\dots,X_{n-1})
    =
    \igm{n}(Y;X_{n-1},\dots,X_1).
  \]
\end{lemma}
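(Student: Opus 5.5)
The plan is to reindex the sum defining \(\igm{n}\) by the reversal \(i\mapsto n-i\) on \([n-1]\) and to use the symmetry of the Gaussian multinomial coefficients under this reversal.

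First, I will make that symmetry precise. For \(I=\{i_1<\dots<i_\ell\}\subseteq[n-1]\), put \(n-I:=\{n-i \mid i\in I\}\subseteq[n-1]\). Since \(0=i_0<i_1<\dots<i_\ell<i_{\ell+1}=n\), the defining product telescopes:
\[
  \binom{n}{I}_Y=\frac{[n]_Y!}{\prod_{j=0}^{\ell}[\,i_{j+1}-i_j\,]_Y!}.
\]
So the coefficient depends only on the multiset of gaps \(i_{j+1}-i_j\). Reflecting \(i\mapsto n-i\) reverses the order of these gaps, which gives \(\binom{n}{n-I}_Y=\binom{n}{I}_Y\). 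Because \(I\subseteq[n-1]\) avoids \(0\) and \(n\), this is the only bookkeeping point to check: \(n-I\) again lies in \([n-1]\), and \(I\mapsto n-I\) is an involution on the subsets of \([n-1]\).

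Second, I will expand the right-hand side. The variable in position \(i\) of \(\igm{n}(Y;X_{n-1},\dots,X_1)\) is \(X_{n-i}\), so
\[
  \igm{n}(Y;X_{n-1},\dots,X_1)=\sum_{I\subseteq[n-1]}\binom{n}{I}_Y\prod_{i\in I}\frac{X_{n-i}}{1-X_{n-i}}.
\]
Substituting \(J=n-I\) and applying the symmetry from the first step turns this into \(\sum_{J}\binom{n}{J}_Y\prod_{j\in J}X_j/(1-X_j)\), which is the left-hand side.

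There is no real obstacle here. The only step needing care is the telescoping identity for \(\binom{n}{I}_Y\); everything else is reindexing a finite sum. The degenerate case \(n=1\) is trivial, since then \([n-1]=\varnothing\) and both sides equal \(1\).
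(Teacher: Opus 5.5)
Your proposal is correct and follows the paper's approach. The paper likewise derives the lemma from the symmetry $\binom{n}{n-I}_Y=\binom{n}{I}_Y$ by reindexing $I\mapsto n-I$; you additionally prove that symmetry via the telescoped gap formula $\binom{n}{I}_Y=[n]_Y!/\prod_{j}[i_{j+1}-i_j]_Y!$, which the paper simply cites as well known.
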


We also omit the proof of the following elementary lemma.
\begin{lemma}\label{lem:bridge-Igusa}
  For \(n\ge 1\) we have
  \[
    \frac{\igm{n}(Y;X_1,\dots,X_{n-1})}{(1-X_0)(1-X_n)}
    =
    \frac{\ig{n}(Y;X_1,\dots,X_n)}{1-X_0}
    =
    \igu{n}(Y;X_0,X_1,\dots,X_n).
  \]
\end{lemma}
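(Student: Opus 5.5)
Both equalities in \Cref{lem:bridge-Igusa} come from splitting the index set of the augmented Igusa function according to membership of the extremal indices $0$ and $n$. The one input is the behaviour of the Gaussian multinomial coefficient $\binom{n}{I}_Y$ under adding or removing $0$ or $n$. By definition, for $I=\{i_1<\dots<i_\ell\}\subseteq[n]_0$, the coefficient is the product $\binom{n}{i_\ell}_Y\binom{i_\ell}{i_{\ell-1}}_Y\cdots\binom{i_2}{i_1}_Y$. I will first record that
\[
\binom{n}{I\cup\{0\}}_Y=\binom{n}{I}_Y,\qquad \binom{n}{I\cup\{n\}}_Y=\binom{n}{I}_Y
\]
for all $I\subseteq[n-1]$. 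Adjoining $0$ adds the factor $\binom{i_1}{0}_Y=1$. Adjoining $n$ turns the leading factor $\binom{n}{i_\ell}_Y$ into $\binom{n}{n}_Y\binom{n}{i_\ell}_Y$. The degenerate cases $I=\varnothing$, $I=\{0\}$, $I=\{n\}$ and $I=\{0,n\}$ all give $1$. Note that $\binom{n}{\varnothing}_Y=1$ is the natural convention for the empty product.

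With this in hand, I write each subset $J\subseteq[n]$ uniquely as $I$ or $I\cup\{n\}$ with $I\subseteq[n-1]$. Then
\[
\ig{n}(Y;X_1,\dots,X_n)=\Bigl(1+\frac{X_n}{1-X_n}\Bigr)\,\igm{n}(Y;X_1,\dots,X_{n-1}).
\]
Since $1+\frac{X_n}{1-X_n}=\frac{1}{1-X_n}$, dividing by $1-X_0$ gives the first equality.

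In the same way, I write each $J\subseteq[n]_0$ uniquely as $I$ or $I\cup\{0\}$ with $I\subseteq[n]$. This gives
\[
\igu{n}(Y;X_0,\dots,X_n)=\Bigl(1+\frac{X_0}{1-X_0}\Bigr)\ig{n}(Y;X_1,\dots,X_n)=\frac{\ig{n}(Y;X_1,\dots,X_n)}{1-X_0},
\]
which is the second equality.

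There is no real obstacle here. The only point needing care is the bookkeeping of the multinomial coefficient at the endpoints, including the conventions for $I=\varnothing$ and for sets containing $0$ or $n$. The case $n=1$, where $[n-1]=\varnothing$ and $\igm{1}=1$, is covered by the same computation.
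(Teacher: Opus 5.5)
Your proof is correct. The paper omits the proof of this lemma as elementary, and your argument is exactly the intended verification: split the subset sums according to membership of $n$ and of $0$, using that $\binom{n}{I}_Y$ is unchanged when $0$ or $n$ is adjoined.

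One bookkeeping point: in the second step you need $\binom{n}{I\cup\{0\}}_Y=\binom{n}{I}_Y$ for all $I\subseteq[n]$, including sets with $n\in I$, not only for $I\subseteq[n-1]$ as you stated it. Your justification via the extra factor $\binom{i_1}{0}_Y=1$ covers this case verbatim.
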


Combining \Cref{prop:Z-w-simplified} with \eqref{eq:zeta-formula-2} gives a
first explicit expression for the local subalgebra zeta function of the higher
Heisenberg Lie algebras. For \(\mathbf w\in\mcW_n\), define
\begin{equation}\label{def:Xk}
    X_k(\mathbf w)
    =
    \begin{cases}
      q^{u_{1,k}}
      =
      q^{\sum_{i=1}^k w_i+2k(n-k)-ks},
      & \textup{for } k\in[n],\\[4pt]
      q^{2n-s}X_n(\mathbf w),
      & \textup{for } k=0.
    \end{cases}
\end{equation}

\begin{theorem}\label{thm:final-zeta}
We have
  \begin{equation}\label{eq:final-zeta}
    \zeta_{\mfh_{n}(\lri)}(s)
    =
    \sum_{\mathbf{w}\in\mcW_n}
    C_n(\mathbf{w})\,
    \igu{n}\!\bigl(q^{-2};X_0(\mathbf w),X_1(\mathbf w),\dots,X_n(\mathbf w)\bigr).
  \end{equation}
\end{theorem}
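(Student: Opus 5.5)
The plan is to combine \eqref{eq:zeta-formula-2} with \Cref{prop:Z-w-simplified} and \Cref{lem:bridge-Igusa}. By \eqref{eq:zeta-formula-2} it is enough to show, for each fixed \(\mathbf w\in\mcW_n\), that
\[
  Z(\mathbf w)=\igu{n}\!\bigl(q^{-2};X_0(\mathbf w),X_1(\mathbf w),\dots,X_n(\mathbf w)\bigr).
\]
Multiplying by \(C_n(\mathbf w)\) and summing over \(\mathcal W_n\) then gives \eqref{eq:final-zeta}.

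First I rewrite the right-hand side of \Cref{prop:Z-w-simplified} using the definition \eqref{def:Xk}. For \(i\in[n-1]\) we have \(q^{u_{1,i}}=X_i(\mathbf w)\). In the prefactor, \(q^{u_{1,n}}=X_n(\mathbf w)\) and \(q^{u_{1,n}+2n-s}=q^{2n-s}X_n(\mathbf w)=X_0(\mathbf w)\). So \Cref{prop:Z-w-simplified} reads
\[
  Z(\mathbf w)=\frac{\igm{n}\bigl(q^{-2};X_1(\mathbf w),\dots,X_{n-1}(\mathbf w)\bigr)}{(1-X_0(\mathbf w))(1-X_n(\mathbf w))},
\]
by the definition of the truncated Igusa function with \(Y=q^{-2}\). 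Now \Cref{lem:bridge-Igusa} identifies this with \(\igu{n}(q^{-2};X_0,\dots,X_n)\).

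The only step with content is \Cref{lem:bridge-Igusa}, whose proof the paper omits. I would verify it directly from the definitions. Splitting \(I\subseteq[n]_0\) according to whether it contains \(0\) factors the sum as \(\bigl(1+\tfrac{X_0}{1-X_0}\bigr)\) times a sum over \(I\subseteq[n]\), because the Gaussian multinomial \(\binom{n}{I}_Y\) is unchanged by adding or removing \(0\) (the factor \(\binom{i_1}{0}_Y=1\)). This gives \(\ig{n}/(1-X_0)\). Similarly, adding or removing \(n\) does not change the coefficient, since \(\binom{n}{n}_Y=1\). Hence \(\ig{n}=\igm{n}\cdot\bigl(1+\tfrac{X_n}{1-X_n}\bigr)=\igm{n}/(1-X_n)\).

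Some care is needed with the convention \(I\subseteq[n]_0\) in the Gaussian multinomial when \(i_1=0\) or \(i_\ell=n\), and with the degenerate case \(n=1\), where \([n-1]=\varnothing\) and \(\igm{1}=1\). Beyond that I expect no real obstacle. The identities hold as rational functions in \(q\) and \(T=q^{-s}\), and the formal convergence issues were already handled in deriving \Cref{prop:Z-w-simplified}.
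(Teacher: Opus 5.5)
Your proposal is correct and is exactly the paper's argument: \eqref{eq:zeta-formula-2} reduces the claim to the termwise identity for $Z(\mathbf w)$, which follows from \Cref{prop:Z-w-simplified} once you identify $q^{u_{1,k}}=X_k(\mathbf w)$ and $q^{u_{1,n}+2n-s}=X_0(\mathbf w)$, and then from \Cref{lem:bridge-Igusa}. Your check of that omitted lemma is also sound; strictly, the paper's multinomial has no factor $\binom{i_1}{0}_Y$, so when $0\in I$ the factor that disappears on removing $0$ is $\binom{i_2}{0}_Y=1$ (or $\binom{n}{0}_Y=1$ if $I=\{0\}$), and the conclusion is unchanged.
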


\begin{proof}
  By \Cref{prop:Z-w-simplified} and \Cref{lem:bridge-Igusa}, we have
  \[
    Z(\mathbf w)
    =
    \frac{\igm{n}\!\bigl(q^{-2};X_1(\mathbf w),\dots,X_{n-1}(\mathbf w)\bigr)}
    {\bigl(1-X_n(\mathbf w)\bigr)\bigl(1-X_0(\mathbf w)\bigr)}
    =
    \igu{n}\!\bigl(q^{-2};X_0(\mathbf w),X_1(\mathbf w),\dots,X_n(\mathbf w)\bigr).\qedhere
  \]
\end{proof}

To summarize, \Cref{thm:final-zeta} expresses the subalgebra zeta function of
the higher Heisenberg algebra of degree $n$ as a sum of $|\mcW_n|=2^n$
instantiations of augmented Igusa functions. Each summand is a rational
function in \(q^{-s}\) with coefficients in \(\Q(q)\); by the descent
formula \eqref{eq:igu-descent}, each Igusa numerator comprises \(n!\)
monomials, indexed by the elements of \(S_n\), and is defined via descent
data and the numerical data specified in \eqref{def:Xk}.

\subsection{Low-degree examples}\label{subsec:zeta-examples}
We spell out \Cref{thm:final-zeta} for \(n\in\{1,2,3\}\). This serves
two purposes. First, we recover the known formulas in these
degrees. Second, we reveal additional structure hidden by the explicit
formula given in \Cref{eq:final-zeta}. This motivates the further
simplification carried out in \Cref{sec:simplifying-zeta-formula}.

\subsubsection{\(n=1\)}
Here \(\mathcal{W}_1=\{(0),(1)\}\), 
\(
  \igu{1}(Y;X_0,X_1) = \frac{1}{(1-X_0)(1-X_1)}
\),
and
\[
  C_1(w_1)=\frac{1}{1-q^{1-2w_1}}, \qquad X_1(w_1)=q^{w_1}T, \qquad X_0(w_1) =
  q^2T\cdot X_1(w_1).
\]
Thus
\[
  \zeta_{\mfh_1(\lri)}(s)
  =
  \sum_{w_1\in\{0,1\}}
  \frac{1}{1-q^{1-2w_1}}\cdot
  \frac{1}{(1-q^{w_1}T)\,(1-q^{2}T\cdot q^{w_1}T)}.
\]
Simplifying gives the well-known subalgebra zeta function
\[
  \zeta_{\mfh_1(\lri)}(s)
  =
  \frac{1-q^3T^3}{(1-T)(1-qT)(1-q^3T^2)(1-q^2T^2)},
\]
of the Heisenberg Lie algebra \(\mfh_1(\lri)\) as first established by
\cite[Proposition~8.1]{GSS/88}.

\subsubsection{\(n=2\)}
\Cref{thm:final-zeta} gives
\[
  \zeta_{\mfh_2(\lri)}(s)
  =
  \sum_{(w_1,w_2)\in\mathcal{W}_2} C_2(w_1,w_2)\cdot
  \frac{1+q^{-2}X_1(w_1,w_2)}
  {(1-X_1(w_1,w_2))(1-X_2(w_1,w_2))(1-q^{4}T\,X_2(w_1,w_2))},
\]
where \(\mathcal{W}_2=\{(0,0),(0,3),(1,1),(1,2)\}\) and
\[
  \igu{2}(q^{-2};X_0,X_1,X_2)
  =
  \frac{1+q^{-2}X_1}{(1-X_0)(1-X_1)(1-X_2)}.
\]
Moreover,
\[
  C_2(w_1,w_2)
  =
  \frac{1}{(1-q^{1-2w_1})(1-q^{3-2w_2})},
\]
and
\[
  X_1(w_1,w_2)=q^{w_1+2}T, \qquad
  X_2(w_1,w_2)=q^{w_1+w_2}T^{2}, \qquad
  X_0(w_1,w_2)=q^{4}T\,X_2(w_1,w_2).
\]

Grouping the four summands $\Sigma_1,\dots,\Sigma_4$ according to
the factor $(1-q^aT^3)^{-1}$, with $a=4+w_1+w_2\in\{4,6,7\}$, we
obtain three blocks
\begin{align*}
  \Sigma_1= B_0&=
  \frac{1}
  {(1-q)(1-q^3)(1-T)(1-q^2T)(1-q^4T^3)},\\
  \Sigma_3=B_1&=
  -\frac{q}
  {(1-q)^2(1-qT)(1-q^3T)(1-q^6T^3)},\\
  \Sigma_2+\Sigma_4=B_2&=
  \frac{q^2(1+q^2)}
  {(1-q)(1-q^3)(1-q^2T)(1-q^3T)(1-q^7T^3)}.
\end{align*}

Simplifying yields
\[
  \zeta_{\mfh_2(\lri)}(s)
  =
  \frac{
    1+q^{5}T^{3}-(q^{5}+q^{6}+q^{7}+q^{8})T^{4}+q^{8}T^{5}+q^{13}T^{8}
  }
  {(1-T)(1-qT)(1-q^{2}T)(1-q^{3}T)(1-q^{4}T^{3})(1-q^{6}T^{3})(1-q^{7}T^{3})}.
\]
This confirms the formula in \cite[Thm.~2.22]{duSW/08}, presumably due
to Woodward. Moreover, it realizes the form of \Cref{thm:Bn-target} in
the case \(n=2\).

\subsubsection{\(n=3\)}
For \(n=3\), \Cref{thm:final-zeta} gives
\[
  \zeta_{\mfh_3(\lri)}(s)
  =
  \sum_{w=(w_1,w_2,w_3)\in\mathcal W_3}
  C_3(w)\,
  \igu{3}\!\left(q^{-2};X_0(w),X_1(w),X_2(w),X_3(w)\right),
\]
where
\[
  \mathcal W_3=
  \{(0,0,0),(0,0,5),(0,3,3),(0,3,2),
    (1,1,1),(1,1,4),(1,2,2),(1,2,3)\},
\]
\[
  \igu{3}(q^{-2};X_0,X_1,X_2,X_3)
  =
  \frac{
    1+(q^{-2}+q^{-4})X_1+(q^{-2}+q^{-4})X_2+q^{-6}X_1X_2
  }
  {(1-X_0)(1-X_1)(1-X_2)(1-X_3)},
\]
\[
  C_3(w)
  =
  \frac{1}{(1-q^{1-2w_1})(1-q^{3-2w_2})(1-q^{5-2w_3})},
\]
\[
  X_1(w)=q^{w_1+4}T,\quad
  X_2(w)=q^{w_1+w_2+4}T^2,\quad
  X_3(w)=q^{w_1+w_2+w_3}T^3,\quad
  X_0(w)=q^6T\,X_3(w).
\]

For example, the second summand, corresponding to \(\mathbf w=(0,0,5)\), is
\[
  \Sigma_2
  =
  \frac{1}{(1-q)(1-q^{3})(1-q^{-5})}\cdot
  \frac{1+T+q^{2}T+T^{2}+q^{2}T^{2}+q^{2}T^{3}}
  {(1-q^{4}T)(1-q^{4}T^{2})(1-q^{5}T^{3})(1-q^{11}T^{4})}.
\]
Grouping the eight summands \(\Sigma_1,\dots,\Sigma_8\) according
to the factor \((1-X_0)^{-1}=(1-q^aT^4)^{-1}\), with
\(a=6+w_1+w_2+w_3\in \{6,9,11,12\}\), we obtain four blocks
\begin{align*}
  \Sigma_1=B_0&=
  \frac{1}
  {(1-q)(1-q^3)(1-q^5)(1-T)(1-q^2T)(1-q^4T)(1-q^6T^4)},\\
  \Sigma_5=B_1&=
  -\frac{q}
  {(1-q)^2(1-q^3)(1-qT)(1-q^3T)(1-q^5T)(1-q^9T^4)},\\
  \Sigma_2+\Sigma_4+\Sigma_7=B_2&=
  \frac{q^2(1+q^2+q^4)}
  {(1-q)^2(1-q^5)(1-q^2T)(1-q^4T)(1-q^5T)(1-q^{11}T^4)},\\
  \Sigma_3+\Sigma_6+\Sigma_8=B_3&=
  -\frac{q^3(1+q^3)}
  {(1-q)(1-q^2)(1-q^3)(1-q^3T)(1-q^4T)(1-q^5T)(1-q^{12}T^4)}.
\end{align*}

Combining the four block terms gives
\[
  \zeta_{\mfh_3(\lri)}(s)
  =
  \frac{M_3(q,T)}
  {\left(\prod_{i=0}^{5}(1-q^{i}T)\right)(1-q^{6}T^{4})(1-q^{9}T^{4})(1-q^{11}T^{4})(1-q^{12}T^{4})},
\]
where
\begin{align*}
  M_3(q,T) = &\, 1 + q^{7}T^4 + q^{8}T^4 + q^{9}T^4 + q^{10}T^4 -
  q^{7}T^5 - q^{8}T^5 - 2q^{9}T^5 - 2q^{10}T^5 \\
  &- 2q^{11}T^5 -
  2q^{12}T^5 - q^{13}T^5 - q^{14}T^5 + q^{10}T^6 + q^{11}T^6 +
  q^{12}T^6 + q^{13}T^6 \\
  &+ q^{14}T^6 + q^{15}T^6 - q^{15}T^7 +
  q^{17}T^8 - q^{17}T^9 - q^{18}T^9 - q^{19}T^9 - q^{20}T^9 -
  q^{21}T^9 \\
  &- q^{22}T^9 + q^{18}T^{10} + q^{19}T^{10} +
  2q^{20}T^{10} + 2q^{21}T^{10} + 2q^{22}T^{10} + 2q^{23}T^{10} \\
  &+
  q^{24}T^{10} + q^{25}T^{10} - q^{22}T^{11} - q^{23}T^{11} -
  q^{24}T^{11} - q^{25}T^{11} - q^{32}T^{15}.
\end{align*}
This recovers the formula in \cite[Sec.\ 3.3.13.9]{Berman2005}, due to Klopsch
and the second author. Moreover, it realizes the form of \Cref{thm:Bn-target}
in the case \(n=3\).

These examples suggest that the \(2^n\) summands of the
\(\mcW_n\)-parametrized sum formula given by \Cref{thm:final-zeta} naturally
cluster into larger blocks, according to certain denominator
factors~\((1-q^aT^{n+1})^{-1}\). In \Cref{sec:simplifying-zeta-formula} we
exploit this to prove \Cref{thm:B}.

\section{Proof of \Cref{thm:B}}\label{sec:simplifying-zeta-formula}
Motivated by the computations in \Cref{subsec:zeta-examples}, we
collect the terms in \Cref{thm:final-zeta} with the same factor
\((1-q^aT^{n+1})^{-1}\). Equivalently, we group the summands by the
terminal fibre \(\{w_n,2n+1-w_n\}\), indexed by \(r\in[n]_0\). This
leads to the \((n+1)\)-term formula of \Cref{thm:simplified-zeta}
(=\Cref{thm:B}), compactifying the original \(2^n\)-term expression in
\Cref{thm:final-zeta}.

\begin{theorem}\label{thm:simplified-zeta}
  We have
  \[
    \zeta_{\mfh_n(\lri)}(s)
    =
    \sum_{r=0}^{n}
    \frac{1}{\left(1-q^{\,2n+\frac{r(2n+1-r)}{2}}\,T^{n+1}\right)}
    \cdot
    \frac{
      (-q)^r(1-q^{2n-2r+1})(q^2;q^2)_n
    }{
      (q;q)_{2n-r+1}(q;q)_r
      (q^{r}T;q^{2})_{n-r}
      (q^{2n-r}T;q)_{r}
    }.
  \]
\end{theorem}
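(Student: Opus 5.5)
We start from \Cref{thm:final-zeta}, writing $\zeta_{\mfh_n(\lri)}(s)=\sum_{\bfw\in\mcW_n}C_n(\bfw)Z(\bfw)$. We then group the $2^n$ summands according to the factor $(1-X_0(\bfw))^{-1}=(1-q^{2n+u_n}T^{n+1})^{-1}$, where $u_n=w_1+\cdots+w_n$. The recursion $w_i\in\{w_{i-1},2i-1-w_{i-1}\}$ shows that the pair $\{w_i,2i+1-w_i\}$ is determined by its smaller element. One checks inductively that the attainable values of $u_n$ are exactly $r(2n+1-r)/2$ for $r\in[n]_0$. Let $\mcW_{k,r}$ be the fibre of prefixes of length $k$ that lead to block $r$; this is the fibre from \Cref{subsec:Wn-structure}. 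Then $\mcW_n=\bigsqcup_r\mcW_{n,r}$. Each fibre has a common value of $X_0$, which matches the factor $1-q^{2n+r(2n+1-r)/2}T^{n+1}$ in the statement. It remains to show that the fibre sum
\[
\mathcal I_n^{n,r}:=\sum_{\bfw\in\mcW_{n,r}}C_n(\bfw)\,\frac{\igm{n}(q^{-2};X_1(\bfw),\dots,X_{n-1}(\bfw))}{1-X_n(\bfw)}
\]
equals the second factor in the $r$-th term, $(-q)^r(1-q^{2n-2r+1})(q^2;q^2)_n/\bigl((q;q)_{2n-r+1}(q;q)_r(q^rT;q^2)_{n-r}(q^{2n-r}T;q)_r\bigr)$.

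The identity should be proved by induction on $n$ with $r$ fixed, or more precisely on the prefix length $k$. To do this we generalise to fibre sums $\mathcal I_n^{k,r}$ over prefixes of length $k$ in a fixed fibre, keeping the $n$-dependent data $2k(n-k)$ in $X_k$. The induction uses a recursion for the truncated Igusa function that peels off the last index, obtained by splitting $I\subseteq[n-1]$ according to $\max I$:
\[
\igm{n}(Y;X_1,\dots,X_{n-1})=\sum_{j}\binom{n}{j}_Y\,\igm{j}(Y;X_1,\dots,X_{j-1})\,\frac{X_j}{1-X_j}\quad(\text{plus the }I=\varnothing\text{ term}).
\]
With this in hand, extending $\bfw\in\mcW_{k-1}$ by its two choices of $w_k$ produces a two-term recurrence for the fibre sums. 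Its structure is the same as the identity $\frac{1}{1-q^x}+\frac{1}{1-q^{-x}}=1$ used in (P1) and the $\tau_i$-cancellation used in (P2). The target closed forms should then satisfy the same recurrence, which is a $q$-Pochhammer identity checkable by clearing denominators. As a guide and a check, we first verify that the recurrence reproduces the blocks $B_0,\dots,B_3$ computed for $n=2,3$ in \Cref{subsec:zeta-examples}.

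An alternative that avoids the Igusa recursion is to work one level earlier. Instead of \Cref{thm:final-zeta}, sum the generating function over $\mu$ directly, grouping $N'_\lri(\mu)$ by fibres. Then $\sum_{\bfw\in\mcW_{n,r}}C_n(\bfw)q^{\bfw\cdot\mu}$ should be expressible through Hall–Littlewood or $q$-binomial data, after which the $\mu$-sum against $\alpha_n(\mu;q^2)$ factorises into the Pochhammer denominators $(q^rT;q^2)_{n-r}(q^{2n-r}T;q)_r$. The $q^2$-step and $q$-step Pochhammers suggest splitting the first $n-r$ and the last $r$ parts of $\mu$.

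The main obstacle is identifying and proving the correct closed form for the intermediate fibre sums $\mathcal I_n^{k,r}$ with $k<n$, so that the induction closes. The first thing to try is guessing these forms from small cases, computing $k\le 3$ symbolically. A natural guess is that each is a product of Pochhammer symbols of the same shape as the final answer, with $n$ replaced by $k$ in the numerator data while the shift $2k(n-k)$ enters only through $T\mapsto q^{2(n-k)}T$-type substitutions. Once the guess is fixed, verifying the two-term recurrence is a finite rational-function identity.
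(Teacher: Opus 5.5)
Your reduction matches the paper's. Grouping the summands of \Cref{thm:final-zeta} by $X_0(\bfw)$, i.e.\ by the terminal fibre $\mcW_{n,r}$, gives \eqref{eq:zeta-Innr}, and everything then rests on evaluating the fibre sums $\mathcal I_n^{n,r}$. That evaluation is the whole content of the theorem, however, and your plan for it does not close, because its two ingredients are incompatible.

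A genuine two-term recurrence from the two choices of $w_{k+1}$ exists only if the intermediate sums are built from the full degree-$n$ function $\ig{n}$. One substitutes $Y_j(w_j,T)$ for $j\le k$ and keeps $X_{k+1},\dots,X_n$ as free variables. The step $k\to k+1$ is then just the specialisation $X_{k+1}=Y_{k+1}(r,T)$, which is the same for $u=r$ and $u=2k+3-r$ (\Cref{lem:In-recursion}), and no Igusa recursion is used.

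The $\max I$ splitting you write down does something different: it expresses $\igm{n}$ through all $\igm{j}$ with $j<n$. Suppose your intermediate objects are degree-$k$ Igusa sums of the prefix data. Those are indeed Pochhammer products: they are the rank-$k$ case of the statement with $T$ rescaled, so your guessed shape is right for them. But then passing from $k$ to $k+1$ involves every level $j\le k$. Each level is weighted by multi-step sums of $\prod_{i>j}(1-q^{2i-1-2w_i})^{-1}$ over continuations into the fibre $r$, which is not a two-term relation.

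With the free variables kept, the intermediate sums are not Pochhammer products, so guessing a product shape from small cases cannot close the induction. After normalising by $P_{k,r}(q)$, $E_{k,r}(-T)$ and $\prod_{j>k}(1-X_j)$, what remains is the polynomial
$\mathcal K_n^{k,r}=\sum_{gS_k\in S_n/S_k}B_{k,r}^{(t_k(g))}q^{-2\ell_k^+(g)}\mathbf X^{\Des_{>k}(g)}T^{t_k(g)}$.
For instance, $\mathcal K_4^{2,2}$ genuinely involves $X_3$. Its $X_3$-free part is $(1+q^2)\bigl(1+q^{-6}(1+q)T+q^{-13}(1+q^2+q^4)T^2\bigr)$, and the quadratic factor has negative discriminant, so it admits no factorisation into factors of the form $1-q^aT$.

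The missing ideas are the following:
\begin{enumerate}
\item the coset model on $S_n/S_k$, built from insertion rank, tail inversions, and descents beyond $k$;
\item the auxiliary polynomials $E_{k,r}=F_rF_{2k+1-r}$, whose coefficients give the $B_{k,r}^{(t)}$;
\item the cross-difference identity \Cref{lem:crossdiff-with-T};
\item the telescoping in \Cref{prop:K-block-recursion}, where the top coefficient $e_{k+1,r}^{(k+1)}$ from \Cref{lem:top-coeff-ek+1} makes the boundary terms cancel.
\end{enumerate}
Only at $k=n$ does $\mathcal K_n^{n,r}$ collapse to $[n]_{q^2}!$, and that is where the product formula comes from.

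Your alternative route, through Hall--Littlewood data for $\sum_{\bfw\in\mcW_{n,r}}C_n(\bfw)q^{\bfw\cdot\mu}$, is a heuristic with no mechanism attached.
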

Here and throughout this section, for \(m\in\Z\) we use
\[
  (a;q)_m
  :=
  \begin{cases}
    \prod_{i=0}^{m-1}(1-aq^i), & m\ge 0,\\[2mm]
    \bigl((aq^m;q)_{-m}\bigr)^{-1}, & m<0.
  \end{cases}
\]
The whole section is devoted to the proof of
\Cref{thm:simplified-zeta}. In \Cref{subsec:Wn-structure}, we group
the \(2^n\) summands in \Cref{thm:final-zeta} according to the
terminal value \(w_n\). This gives the decomposition
\eqref{eq:zeta-Innr} into \emph{terminal fibre sums} \(\mathcal
I_n^{n,r}\). We then state the main intermediate identity
\Cref{thm:I-equals-K}, which relates the fibre sums \(\mathcal
I_n^{k,r}\) to a coset model \(\mathcal K_n^{k,r}\) on \(S_n/S_k\),
and use its terminal case \(k=n\) to prove \Cref{thm:simplified-zeta}.
In the remaining subsections we prove \Cref{thm:I-equals-K}:
\Cref{subsec:Jn} derives the recursion for a normalised version of
\(\mathcal I_n^{k,r}\), \Cref{subsec:Ekr} establishes the coefficient
identities for \(E_{k,r}\), and \Cref{subsec:Kn} applies these
identities to show that \(\mathcal K_n^{k,r}\) satisfies the same
initial condition and recursion.

\subsection{Fibre sums and the coset model}
\label{subsec:Wn-structure}

Recall the finite set \(\mcW_k\) from \Cref{def:Wn}.
The following identity for partial sums is immediate by induction.

\begin{lemma}\label{lem:Wn-partial-sum}
  For \(\mathbf w=(w_1,\dots,w_k)\in\mcW_k\) and \(j\in[k]\), we have
  $
    \sum_{i=1}^j w_i
    =
    \frac{w_j(2j+1-w_j)}{2}.
  $
\end{lemma}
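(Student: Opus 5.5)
We argue by induction on \(j\), for fixed \(k\) and \(\mathbf w\in\mcW_k\), using only the recursive description of \(\mcW_k\) in \Cref{def:Wn}: \(w_0=0\) and \(w_i\in\{w_{i-1},\,2i-1-w_{i-1}\}\) for \(i\in[k]\). It is convenient to set \(S_j:=\sum_{i=1}^j w_i\) and to prove the equivalent integral identity \(2S_j=w_j(2j+1-w_j)\).

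For the base case \(j=1\), we have \(w_1\in\{0,1\}\), and the right-hand side \(w_1(3-w_1)/2\) takes the values \(0\) and \(1\) respectively, matching \(S_1=w_1\).

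For the inductive step, assume \(2S_{j-1}=w_{j-1}(2j-1-w_{j-1})\) and write \(a:=w_{j-1}\). There are two cases according to the choice of \(w_j\).
\begin{enumerate}
\item If \(w_j=a\), then \(2S_j=a(2j-1-a)+2a=a(2j+1-a)=w_j(2j+1-w_j)\).
\item If \(w_j=2j-1-a\), then \(2S_j=a(2j-1-a)+2(2j-1-a)=(2j-1-a)(a+2)\). Since \(2j+1-w_j=a+2\), this equals \(w_j(2j+1-w_j)\).
\end{enumerate}
In both cases the claim holds at \(j\), which completes the induction.

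The only point that needs care is the second case. There one must notice that the involution \(a\mapsto 2j-1-a\) swaps the two factors of the quadratic \(a(2j-1-a)\). This is what makes the product form survive the reflected step. Once that observation is made, both cases are one-line algebraic checks, so this step is not a real obstacle.
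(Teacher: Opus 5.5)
Your proof is correct and is the same induction on \(j\) that the paper has in mind (the paper states the lemma is ``immediate by induction'' without further detail). Your two-case check, using that \(2j+1-w_j=a+2\) in the reflected case, is exactly the computation that makes that induction work.
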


We partition \(\mcW_k\) according to the last entry. For \(k\in\N_0\) and
\(r\in\Z\), set
\begin{equation}\label{def:Wkr}
  \mcW_{k,r}
  :=
  \{\mathbf w\in\mcW_k \mid w_k\in\{r,2k+1-r\}\}.
\end{equation}
Thus \(\mcW_{k,r}=\mcW_{k,2k+1-r}\), and
\(\mcW_{k,r}=\varnothing\) unless \(r\in[2k+1]_0\). For \(k=0\), we use the convention
\(\mcW_{0,0}=\mcW_{0,1}=\{\varnothing\}\). Then
\(\mcW_k=\bigsqcup_{r=0}^{k}\mcW_{k,r}\).

The fibres satisfy the following elementary recursion.

\begin{lemma}\label{lem:Wn-recursion}
  Let \(k\in\N_0\) and \(r\in\Z\), and set \(r':=2k+3-r\). Then
  \[
    \mcW_{k+1,r}
    =
    \{(\mathbf w,r)\mid \mathbf w\in\mcW_{k,r}\}
    \sqcup
    \{(\mathbf w,r')\mid \mathbf w\in\mcW_{k,r'}\}.
  \]
\end{lemma}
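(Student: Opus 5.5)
The plan is to unfold \Cref{def:Wn} one step and compare last entries directly; no input beyond the definitions \eqref{def:C} and \eqref{def:Wkr} is needed.

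First I rewrite both sides. By construction, every element of \(\mcW_{k+1}\) is uniquely of the form \((\mathbf w,x)\) with \(\mathbf w\in\mcW_k\) and \(x\in\{w_k,\,2k+1-w_k\}\), where \(w_0=0\) when \(k=0\). By \eqref{def:Wkr}, such a vector lies in \(\mcW_{k+1,r}\) if and only if \(x\in\{r,\,2(k+1)+1-r\}=\{r,r'\}\). So \(\mcW_{k+1,r}\) consists of the pairs \((\mathbf w,x)\) with \(\mathbf w\in\mcW_k\) and \(x\in\{r,r'\}\cap\{w_k,2k+1-w_k\}\). I then treat the two possible values of \(x\) separately.

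\emph{Case \(x=r\).} Here \(r\in\{w_k,2k+1-w_k\}\). This is equivalent to \(w_k\in\{r,2k+1-r\}\), which by \eqref{def:Wkr} says exactly \(\mathbf w\in\mcW_{k,r}\).

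\emph{Case \(x=r'\).} Here \(r'\in\{w_k,2k+1-w_k\}\). This is equivalent to \(w_k\in\{r',2k+1-r'\}\), which says \(\mathbf w\in\mcW_{k,r'}\). (Note that \(2k+1-r'=r-2\), but this value is not needed.)

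Conversely, every pair appearing on the right-hand side of the lemma arises in one of these two cases, so it lies in \(\mcW_{k+1,r}\). The union is disjoint because \(r+r'=2k+3\) is odd, so \(r\neq r'\) and the last entries of the two families differ.

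The only point needing care is \(k=0\), where \(\mcW_{0,r}\) is defined by the convention \(\mcW_{0,0}=\mcW_{0,1}=\{\varnothing\}\) rather than through a last entry. I check this case by hand. Since \(w_0=0\), the admissible last entries are \(x\in\{0,1\}\). The value \(x=r\) is admissible iff \(r\in\{0,1\}\), which is iff \(\varnothing\in\mcW_{0,r}\). Likewise, \(x=r'=3-r\) is admissible iff \(3-r\in\{0,1\}\), which is iff \(\varnothing\in\mcW_{0,r'}\). This matches the general argument, so the recursion holds for all \(k\in\N_0\) and \(r\in\Z\). I do not expect any genuine obstacle; the verification is purely mechanical.
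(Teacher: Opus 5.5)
Your proposal is correct and follows the paper's own proof, which just says the lemma follows directly from the defining condition \(w_{k+1}\in\{w_k,2k+1-w_k\}\). Your case split on the last entry, the disjointness argument via the parity of \(r+r'\), and the check of the \(k=0\) convention spell out details the paper leaves implicit.
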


\begin{proof}
  This follows directly from the defining condition
  \(w_{k+1}\in\{w_k,\,2k+1-w_k\}\).
\end{proof}

This is the combinatorial input for regrouping the \(2^n\)-term formula of
\Cref{thm:final-zeta} into \(n+1\) terminal fibre sums. Recall from
\eqref{def:C} that
\(C_k(\mathbf w)=\prod_{i=1}^k(1-q^{2i-1-2w_i})^{-1}.\)

For \(j\in\N\) and \(r\in\Z\), set
\begin{equation*}
  Y_j(r,T)
  :=
  q^{\frac{r(2j+1-r)}{2}-2j^2}\,T^j.
\end{equation*}
By \Cref{lem:Wn-partial-sum}, the expressions in \Cref{def:Xk} may be
written as
\[
  X_k(\mathbf w)
  =
  q^{\frac{w_k(2k+1-w_k)}{2}+2k(n-k)-ks}
  =
  Y_k(w_k,q^{2n}T),
  \qquad k\in[n].
\]

\begin{definition}[Fibre sums]\label{def:Inkr}
  For \(k\in[n]_0\) and \(r\in\Z\), define
  \[
    \mathcal I_n^{k,r}(X_{k+1},\dots,X_n;T)
    :=
    \sum_{\mathbf w\in\mcW_{k,r}}
    C_k(\mathbf w)\,
    \ig{n}\bigl(q^{-2};
      Y_1(w_1,T),\dots,Y_k(w_k,T),
      X_{k+1},\dots,X_n
    \bigr).
  \]
\end{definition}
This is the partial substitution of the Igusa function appearing in
\Cref{thm:final-zeta}, summed over the fibre \(\mcW_{k,r}\). For
\(k=0\), this gives \( \mathcal I_n^{0,0} = \mathcal I_n^{0,1} =
\ig{n}(q^{-2};X_1,\dots,X_n), \) and \(\mathcal I_n^{0,r}=0\) for
\(r\notin\{0,1\}\).

In \eqref{eq:final-zeta}, the factor
\(
  X_0(\mathbf w)
  =
  q^{\,2n+\frac{w_n(2n+1-w_n)}{2}}\,T^{n+1}
\)
depends only on \(w_n\). Grouping the summands in \eqref{eq:final-zeta}
according to
\(\mcW_n=\bigsqcup_{r=0}^{n}\mcW_{n,r}\) gives
\begin{equation}\label{eq:zeta-Innr}
  \zeta_{\mfh_n(\lri)}(s)
  =
  \sum_{r=0}^{n}
  \frac{\mathcal I_n^{n,r}(-;q^{2n}T)}
  {1-q^{\,2n+\frac{r(2n+1-r)}{2}}\,T^{n+1}}.
\end{equation}

We next introduce the coset model \(\mathcal K_n^{k,r}\).

\begin{definition}\label{def:Ekr-Bkrt}
  For \(s,r\in\Z\) and \(k, t\in\N_0\), set
  \[
    F_s(x):=\bigl(-q^{\,1-s}x;q^2\bigr)_{\lfloor s/2\rfloor},
    \qquad
    E_{k,r}(x):=F_r(x)F_{2k+1-r}(x).
  \]
  Let \(\varepsilon_{k,r}^{(t)}:=[x^t]E_{k,r}(x)\), where \(E_{k,r}\) is
  expanded at \(x=0\).
  Put \(e_{k,r}^{(t)}=\varepsilon_{k,r}^{(t)}\) for
  \(r\in[2k+1]_0\), and \(e_{k,r}^{(t)}=0\) otherwise. For \(t\in[k]_0\), set
  \[
    B_{k,r}^{(t)}
    :=
    q^{-t(t-1)}[t]_{q^2}![k-t]_{q^2}!\,e_{k,r}^{(t)}.
  \]
\end{definition}

For \(k\in[n]_0\) and \(gS_k\in S_n/S_k\), put \(g(n+1)=n+1\). We define
the following statistics: the insertion rank
\[
  t_k(g):=\#\{\,i\in[k]\mid g(i)>g(k+1)\,\},
\]
the tail inversion number
\[
  \ell_k^+(g):=\#\{(i,j)\mid 1\le i<j\le n,\ j\ge k+1,\ g(i)>g(j)\},
\]
and
\[
  \Des_{>k}(g):=\Des(g)\cap\{k+1,\dots,n-1\},
  \qquad
  \mathbf X^{\Des_{>k}(g)}:=\prod_{j\in\Des_{>k}(g)}X_j.
\]
They depend only on the coset \(gS_k\).

\begin{definition}\label{def:Knkr}
  For \(k\in[n]_0\), \(r\in\Z\), and \(gS_k\in S_n/S_k\), define
  \begin{align*}
    \mathcal K_n^{k,r}\big|_{gS_k}
    &:=
    B_{k,r}^{(t_k(g))}\,
    q^{-2\ell_k^+(g)}\,
    \mathbf X^{\Des_{>k}(g)}\,
    T^{t_k(g)}, \\
    \mathcal K_n^{k,r}
    &:=
    \sum_{gS_k\in S_n/S_k}
    \mathcal K_n^{k,r}\big|_{gS_k}.
  \end{align*}
\end{definition}

We show that the fibre sums \(\mathcal I_n^{k,r}\) are given by the
preceding coset model after multiplication by an explicit
prefactor. Set
\begin{equation*}
  P_{k,r}(q)
  :=
  \frac{(-q)^r(1-q^2)^k(1-q^{2k-2r+1})}
  {(q;q)_{2k-r+1}(q;q)_r}.
\end{equation*}

\begin{theorem}\label{thm:I-equals-K}
  Let \(k\in[n]_0\) and \(r\in\Z\). Then
  \begin{equation*}
    \mathcal I_n^{k,r}(X_{k+1},\dots,X_n;T)
    =
    P_{k,r}(q)\,
    \frac{\mathcal K_n^{k,r}(X_{k+1},\dots,X_n;T)}
    {E_{k,r}(-T)\prod_{j=k+1}^{n}(1-X_j)}.
  \end{equation*}
\end{theorem}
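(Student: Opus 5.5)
Since the paper announces the proof structure, I would argue by induction on \(k\), showing that both sides satisfy the same initial condition at \(k=0\) and the same recursion in \(k\). It is convenient to work with the normalised fibre sum
\[
  \mathcal J_n^{k,r}:=\frac{E_{k,r}(-T)\prod_{j=k+1}^{n}(1-X_j)}{P_{k,r}(q)}\,\mathcal I_n^{k,r},
\]
so the claim becomes \(\mathcal J_n^{k,r}=\mathcal K_n^{k,r}\). The case \(r\notin[2k+1]_0\) is trivial: \(\mcW_{k,r}=\varnothing\) and \(e^{(t)}_{k,r}=0\), so both sides vanish. The symmetry \(r\leftrightarrow 2k+1-r\) is built into \(\mcW_{k,r}\), into \(E_{k,r}\), and, as I would check directly, into \(P_{k,r}\). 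For \(k=0\) we have \(\mathcal I_n^{0,r}=\ig{n}(q^{-2};X_1,\dots,X_n)\), \(E_{0,r}=1\) and \(P_{0,r}=1\) for \(r\in\{0,1\}\). Moreover \(\mathcal K_n^{0,r}=\sum_{g\in S_n}q^{-2\ell(g)}\mathbf X^{\Des(g)}\), since \(t_0=0\), \(\ell_0^+=\ell\) and \(B_{0,r}^{(0)}=1\). Thus the base case is exactly the descent formula \eqref{eq:igu-descent}, in its non-augmented form via \Cref{lem:bridge-Igusa}.

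For the recursion on the \(\mathcal I\)-side, I would use \Cref{lem:Wn-recursion}: \(\mathcal I_n^{k+1,r}\) is the sum of \(\mathcal I_n^{k,r}\) and \(\mathcal I_n^{k,r'}\), with \(r'=2k+3-r\), after substituting \(X_{k+1}=Y_{k+1}(r,T)\) or \(Y_{k+1}(r',T)\) respectively and multiplying by \(C\)-factors \((1-q^{2k+1-2r})^{-1}\) and \((1-q^{2k+1-2r'})^{-1}\). Note that \(Y_{k+1}(r,T)=Y_{k+1}(r',T)\), because \(r(2k+3-r)\) is symmetric under \(r\leftrightarrow r'\). Hence both terms use the same substitution value \(x:=Y_{k+1}(r,T)\). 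Expressing this in terms of \(\mathcal J\) gives a linear recursion
\[
  \mathcal J^{k+1,r}=a\,\mathcal J^{k,r}\big|_{X_{k+1}=x}+b\,\mathcal J^{k,r'}\big|_{X_{k+1}=x},
\]
with explicit rational coefficients \(a,b\). These coefficients involve ratios \(P_{k,r}/P_{k+1,r}\) and \(E_{k+1,r}(-T)/E_{k,r}(-T)\), together with the factor \(1-x\). The ratios of \(E\)'s are single \(q\)-Pochhammer factors, since \(F_s\) gains one factor when \(s\) increases by \(2\).

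On the \(\mathcal K\)-side, I would write \(S_n/S_k\to S_n/S_{k+1}\): each coset \(gS_{k+1}\) splits into \(k+1\) cosets \(gS_k\), according to the position of \(g(k+1)\) among \(g(1),\dots,g(k+1)\). That position is exactly the insertion rank \(t_k\), while the tail data \(\ell^+_{k+1}\) and \(\Des_{>k+1}\) are unchanged. The extra inversions counted by \(\ell^+_k\) equal \(t_k\) plus those involving the new tail entry, and the presence of \(k+1\in\Des\) depends on whether \(t_k\) exceeds a threshold determined by \(t_{k+1}\). Substituting \(X_{k+1}=x\) and summing over the fibre then reduces the \(\mathcal K\)-recursion to coefficient identities of the form
\[
  B^{(t)}_{k+1,r}=\sum_{t'}(\text{explicit weights in }q,\,T,\,x)\bigl(\alpha B^{(t')}_{k,r}+\beta B^{(t')}_{k,r'}\bigr).
\]
Since \(x\) contains \(T^{k+1}\), matching powers of \(T\) must be done carefully.

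I expect the main obstacle to be proving these coefficient identities for \(e^{(t)}_{k,r}\), together with the verification that the factor \(1-x\) cancels correctly. My first attempt would be at the level of generating functions in an auxiliary variable. The weighted sums over \(t_k\) of \(q^{-2(\cdot)}[t]_{q^2}![k-t]_{q^2}!\) are \(q^2\)-binomial convolutions, and the identities should reduce to \(E_{k+1,r}(x)\) being obtained from \(E_{k,r}\) and \(E_{k,r'}\) by multiplication by a single linear factor \(1+q^{c}x\). From there, a \(q\)-Pascal recurrence should give the claim. If this becomes unwieldy, I would fall back on checking the identity coefficientwise in \(t\) by induction, using \(q\)-Pascal for \([k+1]_{q^2}!\).
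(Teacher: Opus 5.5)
Your outline matches the paper's: normalise to $\mathcal J_n^{k,r}$, induct on $k$, take the base case from the descent formula, and derive the $\mathcal J$-recursion from the fibre recursion for $\mcW_{k+1,r}$ together with $Y_{k+1}(r,T)=Y_{k+1}(r',T)$. On the $\mathcal K$-side you split each $S_{k+1}$-coset into its $k+1$ lifts indexed by $t=t_k(g)$. One correction: $\ell_k^+(g)-\ell_{k+1}^+(h)$ is exactly $t$, because pairs $(k+1,j)$ with $j\ge k+2$ are already counted in $\ell^+_{k+1}$. Also, the descent criterion is $k+1\in\Des(g)\iff t<t_{k+1}(h)$.

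The gap is that the inductive step itself is left as an expectation, and the mechanism you propose would not close it. What is needed is the per-$t$ identity
\[
  (1-q^{1-r'}T)A_{r'}e^{(t)}_{k,r}-(1-q^{1-r}T)A_re^{(t)}_{k,r'}
  =(q^{-r'}-q^{-r})\bigl((q^{2t}-q^{2k+2})e^{(t)}_{k+1,r}+(q^{2t+2}-1)e^{(t+1)}_{k+1,r}T\bigr),
\]
where $A_u=(1-q^{u-1})(q^{-u}-1)$.

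\begin{itemize}
\item Its $T$-free part is the coefficientwise form of the dilation identity $A_{r'}E_{k,r}(x)-A_rE_{k,r'}(x)=(q^{-r'}-q^{-r})\bigl(E_{k+1,r}(q^2x)-q^{2k+2}E_{k+1,r}(x)\bigr)$.
\item The $T$-part follows from that identity together with the Pieri relations $E_{k+1,r}=E_{k,r}(1+q^{1-r'}x)=E_{k,r'}(1+q^{1-r}x)$.
\end{itemize}

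The Pieri relations alone, the ``single linear factor'' you expect everything to reduce to, say nothing about $E_{k+1,r}(q^2x)$. So they cannot produce the $t$-dependent factor $q^{2t}$. For that you need the $q^2$-chain structure of the $F_s$, namely $E_{k+1,r}(q^2x)=F_{r-2}(x)F_{r'-2}(x)(1+x)(1+qx)$, which uses that $r+r'$ is odd.

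The cancellation of $1-Y$ that you flag, with $Y:=Y_{k+1}(r,T)$ (your $x$), is likewise not a $q$-Pascal matter. Insert the identity above into a block with $u=t_{k+1}(h)$. Use $(q^{2t+2}-1)[t]_{q^2}!=-(1-q^2)[t+1]_{q^2}!$ and $[k-t]_{q^2}!(q^{2t}-q^{2k+2})=q^{2t}(1-q^2)[k-t+1]_{q^2}!$. The block numerator then becomes a constant times $\sum_{t=0}^{k}\bigl(B^{(t)}_{k+1,r}-B^{(t+1)}_{k+1,r}T\bigr)Y^{\mathbf 1_{t<u}}T^t$. This telescopes to $YB^{(0)}_{k+1,r}+(1-Y)B^{(u)}_{k+1,r}T^u-B^{(k+1)}_{k+1,r}T^{k+1}$.

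The two boundary terms cancel exactly because $e^{(k+1)}_{k+1,r}=q^{rr'/2-(k+1)(k+2)}$, that is, $B^{(k+1)}_{k+1,r}T^{k+1}=Y\,B^{(0)}_{k+1,r}$. This top-coefficient evaluation is the precise link between the substitution value $Y$ and the $e$'s, and your proposal does not identify it.

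Finally, your normalisation divides by $P_{k,r}$, which vanishes for $r\notin[2k+1]_0$. The recursion at level $k+1$ with $r\in\{0,1,2k+2,2k+3\}$ involves one such index at level $k$. This is harmless only because the relevant coefficient, $A_0$ or $A_1$, is zero, and that needs to be stated rather than folded into the ``trivial'' case.
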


The proof is given in \Cref{subsec:Jn,subsec:Ekr,subsec:Kn}:
\emph{normalised fibre sums} \(\mathcal J_n^{k,r}\) are introduced in
\eqref{def:Jnkr}, and \Cref{thm:J=K} proves \(\mathcal
J_n^{k,r}=\mathcal K_n^{k,r}\).

\begin{ex}\label{exa:422}
  Take \((n,k,r)=(4,2,2)\). Then
  \[
    P_{2,2}(q)=\frac{q^2}{(1-q)(1-q^3)},\qquad
    E_{2,2}(x)=(1+q^{-2}x)(1+q^{-1}x),
  \]
  so
  \[
    e_{2,2}^{(0)}=1,\qquad
    e_{2,2}^{(1)}=q^{-2}+q^{-1},\qquad
    e_{2,2}^{(2)}=q^{-3}.
  \]
  The coset model gives
  \begin{align*}
    \mathcal K_4^{2,2}(X_3,X_4;T)
    &=
    1+q^2
    +(q^{-6}+q^{-5}+q^{-4}+q^{-3})T \\
    &\quad
    +(q^{-13}+2q^{-11}+2q^{-9}+q^{-7})T^2
    +(q^{-6}+2q^{-4}+2q^{-2}+1)X_3 \\
    &\quad
    +(q^{-10}+q^{-9}+q^{-8}+q^{-7})TX_3
    +(q^{-15}+q^{-13})T^2X_3.
  \end{align*}
  Hence
  \[
    \mathcal I_4^{2,2}
    =
    \frac{q^2\,\mathcal K_4^{2,2}}
    {(1-q)(1-q^3)(1-q^{-2}T)(1-q^{-1}T)(1-X_3)(1-X_4)},
  \]
  in agreement with the direct computation from \Cref{def:Inkr}.
\end{ex}

We now deduce \Cref{thm:simplified-zeta} from the terminal case of
\Cref{thm:I-equals-K}.
\begin{proof}[Proof of \Cref{thm:simplified-zeta}]
  Specialising \Cref{thm:I-equals-K} to \(k=n\), we use that \(S_n/S_n\) is a
  singleton. Hence \(t_n(g)=0\), \(\ell_n^+(g)=0\), and
  \(\Des_{>n}(g)=\varnothing\), so
  \[
    \mathcal K_n^{n,r}(-;T)
    =
    B_{n,r}^{(0)}
    =
    [n]_{q^2}!\,e_{n,r}^{(0)}
    =
    [n]_{q^2}!.
  \]
  Thus, for \(r\in[n]_0\),
  \[
    \mathcal I_n^{n,r}(-;T)
    =
    \frac{(-q)^r(q^2;q^2)_n(1-q^{2n-2r+1})}
    {(q;q)_{2n-r+1}(q;q)_r}\,
    \frac{1}{E_{n,r}(-T)}.
  \]
  Moreover,
  \begin{align*}
    E_{n,r}(-T)
    &=
    (q^{r-2n}T;q^2)_{\lfloor(2n+1-r)/2\rfloor}
    (q^{1-r}T;q^2)_{\lfloor r/2\rfloor} \\
    &=
    (q^{r-2n}T;q^2)_{n-r}
    (q^{-r}T;q^2)_{\lceil r/2\rceil}
    (q^{1-r}T;q^2)_{\lfloor r/2\rfloor} \\
    &=
    (q^{r-2n}T;q^2)_{n-r}(q^{-r}T;q)_r.
  \end{align*}
  Therefore
  \[
    \mathcal I_n^{n,r}(-;q^{2n}T)
    =
    \frac{(-q)^r(q^2;q^2)_n(1-q^{2n-2r+1})}
    {(q;q)_{2n-r+1}(q;q)_r
     (q^rT;q^2)_{n-r}(q^{2n-r}T;q)_r}.
  \]
  Substitution into \eqref{eq:zeta-Innr} gives the formula in
  \Cref{thm:simplified-zeta}.
\end{proof}

\subsection{Normalised fibre sums}\label{subsec:Jn}
To start the proof of \Cref{thm:I-equals-K} we first normalise the
fibre sums and derive their recursion. For \(k\in[n]_0\) and
\(r\in\Z\), define
\begin{equation}\label{def:Jnkr}
  \mathcal J_n^{k,r}(X_{k+1},\dots,X_n;T)
  :=
  \frac{\mathcal I_n^{k,r}(X_{k+1},\dots,X_n;T)}
       {P_{k,r}(q)}
  E_{k,r}(-T)\prod_{j=k+1}^{n}(1-X_j).
\end{equation}

The fibre sums satisfy the following two-term recursion.

\begin{lemma}\label{lem:In-recursion}
  Let \(k\in[n-1]_0\), \(r\in\Z\), and \(r':=2k+3-r\). Then
  \[
    \mathcal I_n^{k+1,r}(X_{k+2},\dots,X_n;T)
    =
    \sum_{u\in\{r,r'\}}
    \frac{
      \mathcal I_n^{k,u}
      \bigl(Y_{k+1}(r,T),X_{k+2},\dots,X_n;T\bigr)
    }
    {1-q^{2k+1-2u}} .
  \]
\end{lemma}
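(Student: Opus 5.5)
This is an unwinding of the definitions, using the fibre decomposition of \Cref{lem:Wn-recursion}. Start from \Cref{def:Inkr} with \(k+1\) in place of \(k\):
\[
  \mathcal I_n^{k+1,r}(X_{k+2},\dots,X_n;T)
  =\sum_{\mathbf v\in\mcW_{k+1,r}} C_{k+1}(\mathbf v)\,
  \ig{n}\bigl(q^{-2};Y_1(v_1,T),\dots,Y_{k+1}(v_{k+1},T),X_{k+2},\dots,X_n\bigr).
\]
By \Cref{lem:Wn-recursion}, \(\mcW_{k+1,r}\) is the disjoint union of \(\{(\mathbf w,r):\mathbf w\in\mcW_{k,r}\}\) and \(\{(\mathbf w,r'):\mathbf w\in\mcW_{k,r'}\}\). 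So we can write \(\mathbf v=(\mathbf w,u)\) with \(u\in\{r,r'\}\) and \(\mathbf w\in\mcW_{k,u}\), and split the sum accordingly.

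In each piece, \(C_{k+1}\) factors multiplicatively as \(C_{k+1}(\mathbf w,u)=C_k(\mathbf w)\,(1-q^{2(k+1)-1-2u})^{-1}=C_k(\mathbf w)/(1-q^{2k+1-2u})\). This gives the denominators in the claimed formula.

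The only genuine check concerns the \((k+1)\)-st Igusa argument. It equals \(Y_{k+1}(u,T)\), whereas the statement uses \(Y_{k+1}(r,T)\) in both summands. From the definition, the exponent of \(q\) in \(Y_{k+1}(u,T)\) is \(\tfrac{u(2k+3-u)}{2}-2(k+1)^2\). This expression is invariant under \(u\mapsto 2k+3-u\), so \(Y_{k+1}(r',T)=Y_{k+1}(r,T)\). Hence in both pieces the Igusa function is \(\ig{n}(q^{-2};Y_1(w_1,T),\dots,Y_k(w_k,T),Y_{k+1}(r,T),X_{k+2},\dots,X_n)\).

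Summing over \(\mathbf w\in\mcW_{k,u}\) with weights \(C_k(\mathbf w)\) then gives exactly \(\mathcal I_n^{k,u}\bigl(Y_{k+1}(r,T),X_{k+2},\dots,X_n;T\bigr)\) by \Cref{def:Inkr}, with \(Y_{k+1}(r,T)\) in the slot of \(X_{k+1}\). This yields the stated two-term identity.

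Two degenerate cases remain. If \(r\notin[2k+3]_0\), both sides vanish, since the fibres are empty. For \(k=0\), the convention \(\mcW_{0,0}=\mcW_{0,1}=\{\varnothing\}\) together with \(\mathcal I_n^{0,u}=\ig{n}(q^{-2};X_1,\dots,X_n)\) for \(u\in\{0,1\}\) (and \(0\) otherwise) makes the same computation go through. When \(u=r=r'\) cannot occur, since \(2k+3\) is odd, so the union in \Cref{lem:Wn-recursion} is genuinely disjoint and there is no double counting. The only step I would double-check is this \(Y_{k+1}\)-symmetry; everything else is bookkeeping.
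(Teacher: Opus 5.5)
Your proposal is correct and is essentially the paper's own proof: you split \(\mcW_{k+1,r}\) by the fibre recursion, factor \(C_{k+1}(\mathbf w,u)=C_k(\mathbf w)(1-q^{2k+1-2u})^{-1}\), use the symmetry \(Y_{k+1}(r,T)=Y_{k+1}(r',T)\), and substitute into the definition of the fibre sums. Your remarks on the degenerate cases and on disjointness (since \(r\neq r'\)) are accurate but not strictly needed, because the fibre recursion as stated already covers every \(r\in\mathbb{Z}\).
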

\begin{proof}
  By \Cref{lem:Wn-recursion},
  \[
    \mcW_{k+1,r}
    =
    \{(\mathbf w,r)\mid \mathbf w\in\mcW_{k,r}\}
    \sqcup
    \{(\mathbf w,r')\mid \mathbf w\in\mcW_{k,r'}\}.
  \]
  Also
  \[
    C_{k+1}(w_1,\dots,w_{k+1})
    =
    C_k(w_1,\dots,w_k)(1-q^{2k+1-2w_{k+1}})^{-1},
  \]
  and \(Y_{k+1}(r,T)=Y_{k+1}(r',T)\). Substitution in
  \Cref{def:Inkr} gives the formula.
\end{proof}

After normalisation, this recursion takes the following form.

\begin{lemma}\label{lem:Jn-recursion}
  Let \(k\in[n-1]_0\), \(r\in\Z\), and \(r':=2k+3-r\). Then
  \[
    \mathcal J_n^{k+1,r}
    =
    \frac{
      \left(
        (1-q^{1-r'}T)A_{r'}\mathcal J_n^{k,r}
        -
        (1-q^{1-r}T)A_r\mathcal J_n^{k,r'}
      \right)\big|_{X_{k+1}=Y_{k+1}(r,T)}
    }
    {(q^{-r'}-q^{-r})(1-q^2)(1-Y_{k+1}(r,T))},
  \]
  where \(A_u:=(1-q^{u-1})(q^{-u}-1)\).
\end{lemma}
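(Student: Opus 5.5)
The plan is to substitute the definition \eqref{def:Jnkr} of \(\mathcal J_n^{k,r}\) directly into the two-term recursion of \Cref{lem:In-recursion}, and then to verify two elementary ratio identities: one for the \(E\)-factors and one for the prefactors \(P\). Write \(Y:=Y_{k+1}(r,T)\) for brevity.

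\emph{Rewriting both sides in terms of \(\mathcal J\).} By \eqref{def:Jnkr}, for \(u\in\{r,r'\}\),
\[
  \mathcal I_n^{k,u}(Y,X_{k+2},\dots,X_n;T)
  =\frac{P_{k,u}(q)\,\mathcal J_n^{k,u}\big|_{X_{k+1}=Y}}
        {E_{k,u}(-T)\,(1-Y)\prod_{j=k+2}^n(1-X_j)} .
\]
Again by \eqref{def:Jnkr}, \(\mathcal J_n^{k+1,r}\) equals \(\mathcal I_n^{k+1,r}\) multiplied by \(E_{k+1,r}(-T)\prod_{j\ge k+2}(1-X_j)/P_{k+1,r}(q)\). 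Inserting \Cref{lem:In-recursion}, the products \(\prod_{j\ge k+2}(1-X_j)\) cancel and the factor \(1/(1-Y)\) survives. We are left with
\[
  \mathcal J_n^{k+1,r}=\frac{1}{1-Y}\sum_{u\in\{r,r'\}}
  \frac{E_{k+1,r}(-T)}{E_{k,u}(-T)}\cdot
  \frac{P_{k,u}(q)}{(1-q^{2k+1-2u})P_{k+1,r}(q)}\;
  \mathcal J_n^{k,u}\big|_{X_{k+1}=Y}.
\]

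\emph{The \(E\)-ratios.} With the extended Pochhammer convention, peeling off the first factor gives
\[
  F_s(x)=(1+q^{1-s}x)\,F_{s-2}(x)\qquad\text{for all } s\in\Z .
\]
Indeed, \((a;q^2)_m=(1-a)(aq^2;q^2)_{m-1}\) holds for every \(m\in\Z\). Since \(E_{k+1,r}=F_rF_{r'}\), \(E_{k,r}=F_rF_{r'-2}\) and \(E_{k,r'}=F_{r'}F_{r-2}\), we obtain
\[
  \frac{E_{k+1,r}(-T)}{E_{k,r}(-T)}=1-q^{1-r'}T,\qquad
  \frac{E_{k+1,r}(-T)}{E_{k,r'}(-T)}=1-q^{1-r}T .
\]
These are exactly the factors appearing in the claim.

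\emph{The \(P\)-ratios.} This is the step needing most care. For \(u=r\), cancelling Pochhammer symbols and using \(2k+3-r=r'\) gives
\[
  \frac{P_{k,r}}{(1-q^{2k+1-2r})P_{k+1,r}}
  =\frac{(1-q^{r'-1})(1-q^{r'})}{(1-q^2)(1-q^{r'-r})}.
\]
This agrees with \(A_{r'}/\bigl((q^{-r'}-q^{-r})(1-q^2)\bigr)\), because \((q^{-r'}-1)/(q^{-r'}-q^{-r})=(1-q^{r'})/(1-q^{r'-r})\). For \(u=r'\), the same computation with the roles of \(r\) and \(r'\) swapped yields
\[
  A_r/\bigl((q^{-r}-q^{-r'})(1-q^2)\bigr),
\]
which accounts for the minus sign in the claim. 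Here one checks that \(P_{k,r'}\) and \(P_{k+1,r}\) share the same \((q;q)_r\)-type factors up to the displayed quotient, using \(2k+1-r'=r-2\).

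\emph{Main obstacle.} The main obstacle is bookkeeping at the boundary. When \(r\notin[2k+3]_0\), or when some Pochhammer index is negative, the fibres are empty and \(P\) may vanish or have a pole. I would treat these degenerate \(r\) separately. Both sides vanish by the conventions \(e^{(t)}_{k,r}=0\) and \(\mathcal W_{k,r}=\varnothing\). Alternatively, one can read the identity as one of rational functions in which all cancellations are legitimate, since \(r\neq r'\) always because \(r+r'\) is odd. Combining the three ingredients gives the stated formula.
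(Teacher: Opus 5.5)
Your proposal is correct and matches the paper's proof. Both arguments substitute \Cref{lem:In-recursion} into \eqref{def:Jnkr}, obtain the $E$-ratios $1-q^{1-r'}T$ and $1-q^{1-r}T$ from $F_s=(1+q^{1-s}x)F_{s-2}$, and identify the two $P$-ratios with $A_{r'}$ and $-A_r$; the swapped case relies on $P_{k+1,r}=P_{k+1,r'}$, which holds because $r'-r$ is odd.

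One small correction to your boundary remark: when $r$ or $r'$ lies in $\{2k+2,2k+3\}$, the two sides do not both vanish. Instead, the term with the empty level-$k$ fibre drops out because its coefficient $A_{r'}$ (resp.\ $A_r$) is zero, since $r'$ (resp.\ $r$) is then $0$ or $1$.
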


\begin{proof}
  Substitute \Cref{lem:In-recursion} into \Cref{def:Jnkr}. The ratios
  \(E_{k+1,r}(-T)/E_{k,r}(-T)\) and
  \(E_{k+1,r}(-T)/E_{k,r'}(-T)\) are \(1-q^{1-r'}T\) and
  \(1-q^{1-r}T\), respectively. Also
  \[
    \frac{P_{k,r}(q)}{P_{k+1,r}(q)}
    \frac{1}{1-q^{2k+1-2r}}
    =
    \frac{A_{r'}}{(1-q^2)(q^{-r'}-q^{-r})},
  \]
  while, using \(P_{k+1,r}(q)=P_{k+1,r'}(q)\), the same computation with
  \(r\) and \(r'\) interchanged gives
  \[
    \frac{P_{k,r'}(q)}{P_{k+1,r}(q)}
    \frac{1}{1-q^{2k+1-2r'}}
    =
    -\frac{A_r}{(1-q^2)(q^{-r'}-q^{-r})}.
  \]
  These identities give the stated recursion.
\end{proof}

\subsection{Coefficient identities for \texorpdfstring{$E_{k,r}$}{Ekr}}
\label{subsec:Ekr}

We record the identities for the polynomials \(E_{k,r}\) needed in
the coset recursion. First,
\begin{equation}\label{eq:Fs-ratio}
  \frac{F_s(x)}{F_{s-2}(x)}=1+q^{-(s-1)}x
  \qquad (s\in \Z).
\end{equation}
Next, \(E_{k,r}(x)=E_{k,2k+1-r}(x)\), and \(E_{k,r}(x)\) is a polynomial of
degree \(k\) for \(r\in[2k+1]_0\).

For the rest of this subsection set \(r':=2k+3-r\), and write
\[
  A_r:=(1-q^{r-1})(q^{-r}-1).
\]

\begin{lemma}\label{lem:top-coeff-ek+1}
  Let \(k\in\N_0\) and \(r\in[2k+3]_0\). Then
  \[
    e_{k+1,r}^{(k+1)}
    =
    q^{\frac{rr'}2-(k+1)(k+2)}.
  \]
\end{lemma}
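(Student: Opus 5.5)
The top coefficient of $E_{k+1,r}(x)=F_r(x)F_{2k+3-r}(x)$ is the product of the leading coefficients of the two factors. For $r\in[2k+3]_0$ both $r$ and $r'=2k+3-r$ lie in $[2k+3]_0$, so both are nonnegative. Since $r+r'$ is odd, exactly one of them is odd, and therefore
\[
\lfloor r/2\rfloor+\lfloor r'/2\rfloor=\frac{r+r'-1}{2}=k+1 .
\]
This confirms that $\deg E_{k+1,r}=k+1$, and that the coefficient of $x^{k+1}$ is
\[
e^{(k+1)}_{k+1,r}=\mathrm{lc}(F_r)\,\mathrm{lc}(F_{r'}).
\]
Note that $e$ and $\varepsilon$ agree here because $r$ lies in the admissible range.

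To compute $\mathrm{lc}(F_s)$ for $s\ge0$, write $m=\lfloor s/2\rfloor$ and expand
\[
F_s(x)=\prod_{i=0}^{m-1}\bigl(1+q^{1-s+2i}x\bigr).
\]
Its leading coefficient is
\[
q^{\sum_{i=0}^{m-1}(1-s+2i)}=q^{m(1-s)+m(m-1)}=q^{m(m-s)} .
\]
Since $s-m=\lceil s/2\rceil$, we get $\mathrm{lc}(F_s)=q^{-\lfloor s/2\rfloor\lceil s/2\rceil}$. In both parities this equals $q^{-\lfloor s^2/4\rfloor}$.

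It remains to verify the exponent identity
\[
\left\lfloor \tfrac{r^2}{4}\right\rfloor+\left\lfloor \tfrac{r'^2}{4}\right\rfloor=(k+1)(k+2)-\tfrac{rr'}{2}.
\]
Exactly one of $r,r'$ is odd, so the left side is $\frac{r^2+r'^2-1}{4}$. Using $r^2+r'^2=(r+r')^2-2rr'=(2k+3)^2-2rr'$, this becomes
\[
\frac{(2k+3)^2-1}{4}-\frac{rr'}{2}=(k+1)(k+2)-\frac{rr'}{2},
\]
as claimed. The only subtle point is to check that both floors are taken of nonnegative arguments, so that no negative-index Pochhammer conventions intervene. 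This is guaranteed by the hypothesis $r\in[2k+3]_0$.
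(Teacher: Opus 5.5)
Your proof is correct and follows the paper's argument. Both proofs read off the coefficient of $x^{k+1}$ in $E_{k+1,r}=F_rF_{r'}$ as the product of the leading coefficients of the two $q$-Pochhammer factors, then simplify the exponent using $r+r'=2k+3$. The only cosmetic difference is that the paper reduces to $r$ odd via the symmetry $E_{k+1,r}=E_{k+1,r'}$, whereas you treat both parities at once using $\mathrm{lc}(F_s)=q^{-\lfloor s^2/4\rfloor}$.
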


\begin{proof}
  By symmetry, we may assume that \(r\) is odd. Then
  \[
    E_{k+1,r}(x)
    =
    (-q^{1-r}x;q^2)_{\frac{r-1}{2}}
    (-q^{1-r'}x;q^2)_{\frac{r'}2}.
  \]
  Taking leading coefficients gives
  \[
    [x^{k+1}]E_{k+1,r}(x)
    =
    q^{-\frac{r^2-1}{4}}q^{-\frac{{r'}^2}{4}}
    =
    q^{\frac{rr'}2-(k+1)(k+2)},
  \]
  since \(r+r'=2k+3\).
\end{proof}

\begin{prop}\label{prop:Ekr-identity}
  Let \(k\in\N_0\) and \(r\in\Z\). Then
  \[
    A_{r'}E_{k,r}(x)-A_rE_{k,r'}(x)
    =
    (q^{-r'}-q^{-r})
    \bigl(E_{k+1,r}(q^2x)-q^{2k+2}E_{k+1,r}(x)\bigr).
  \]
\end{prop}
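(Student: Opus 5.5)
The plan is to factor out the common part of all four $E$-polynomials, after which the identity becomes an elementary comparison of polynomials of degree at most one in $x$.

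\emph{Step 1: common factorisation.} Since $r'=2k+3-r$, we have $2k+1-r=r'-2$ and $2k+1-r'=r-2$. Hence
\[
E_{k,r}=F_rF_{r'-2},\qquad E_{k,r'}=F_{r'}F_{r-2},\qquad E_{k+1,r}=F_rF_{r'}.
\]
Put $a:=F_{r-2}(x)$ and $b:=F_{r'-2}(x)$. By \eqref{eq:Fs-ratio}, which the paper states for all $s\in\Z$ using the extended Pochhammer convention, we get $F_r=a(1+q^{1-r}x)$ and $F_{r'}=b(1+q^{1-r'}x)$. Therefore
\[
E_{k,r}=ab(1+q^{1-r}x),\qquad E_{k,r'}=ab(1+q^{1-r'}x),\qquad E_{k+1,r}(x)=ab(1+q^{1-r}x)(1+q^{1-r'}x).
\]

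\emph{Step 2: the $q^2$-shift.} I need a shift formula for $F_s$. From the definition,
\[
\frac{F_s(q^2x)}{F_s(x)}=\frac{1+q^{1-s+2\lfloor s/2\rfloor}x}{1+q^{1-s}x}.
\]
For $\lfloor s/2\rfloor\ge0$ this is a telescoping of the Pochhammer product. For negative index it follows from the convention $(a;q)_m=((aq^m;q)_{-m})^{-1}$ by the same telescoping applied to the reciprocal. In both cases the numerator is $1+qx$ if $s$ is even and $1+x$ if $s$ is odd.

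Since $r+r'$ is odd, exactly one of $r,r'$ is even. Consequently
\[
E_{k+1,r}(q^2x)=ab(1+x)(1+qx).
\]
I have to make sure that both $a$ and $b$, not just the full products, transform correctly. The cleanest way is to apply the shift formula to $F_r$ and $F_{r'}$ directly, which is what the computation above does.

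\emph{Step 3: reduce to a polynomial identity.} After cancelling $ab$, the claim becomes
\[
A_{r'}(1+q^{1-r}x)-A_r(1+q^{1-r'}x)=(q^{-r'}-q^{-r})\bigl[(1+x)(1+qx)-q^{2k+2}(1+q^{1-r}x)(1+q^{1-r'}x)\bigr].
\]
For the $x^2$ terms on the right, the coefficient is $q-q^{2k+2}q^{2-r-r'}=q-q=0$, so both sides are linear in $x$. It remains to match the constant terms and the $x$-coefficients, after expanding $A_u=(1-q^{u-1})(q^{-u}-1)$ and eliminating $q^{2k}$ via $q^{2k+3}=q^{r+r'}$. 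These two checks are routine Laurent-polynomial identities in the variables $Q=q^r$ and $Q'=q^{r'}$.

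The main obstacle is Step 2, namely confirming that the shift formula and \eqref{eq:Fs-ratio} hold uniformly for all $r\in\Z$, including the degenerate ranges where a Pochhammer index is negative. The statement is for all $r\in\Z$, so the proof cannot be restricted to $r\in[2k+1]_0$. If a direct check there is awkward, an alternative is to argue that both sides, viewed as functions of $r$, are governed by the same ratio rules, so it suffices that the identity holds for every $r$, which Step 3 establishes once the factorisations are valid.
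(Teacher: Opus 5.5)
Your proposal is correct and takes essentially the same route as the paper. You factor out $F_{r-2}(x)F_{r'-2}(x)$ via \eqref{eq:Fs-ratio}, use the parity-dependent shift $F_s(q^2x)=F_{s-2}(x)(1+qx)$ or $F_{s-2}(x)(1+x)$ together with the oddness of $r+r'$, and reduce to an identity between polynomials of degree at most one in $x$. The constant and linear coefficients do match once you use $q^{r+r'}=q^{2k+3}$, and your check that the shift formula survives negative Pochhammer indices is a detail the paper leaves implicit.
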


\begin{proof}
  Using \eqref{eq:Fs-ratio}, we write
  \[
    E_{k,r}(x)
    =
    F_{r-2}(x)F_{r'-2}(x)(1+q^{-(r-1)}x),
    \quad
    E_{k,r'}(x)
    =
    F_{r-2}(x)F_{r'-2}(x)(1+q^{-(r'-1)}x),
  \]
  and
  \[
    E_{k+1,r}(x)
    =
    F_{r-2}(x)F_{r'-2}(x)
    (1+q^{-(r-1)}x)(1+q^{-(r'-1)}x).
  \]
  Moreover,
  \[
    F_s(q^2x)=F_{s-2}(x)\times
    \begin{cases}
      1+qx,& s\ \text{even},\\
      1+x,& s\ \text{odd}.
    \end{cases}
  \]
  Since \(r+r'=2k+3\) is odd, it follows that
  \[
    E_{k+1,r}(q^2x)=F_r(q^2x)F_{r'}(q^2x)
    =F_{r-2}(x)F_{r'-2}(x)(1+x)(1+qx).
  \]
  After factoring out \(F_{r-2}(x)F_{r'-2}(x)\), the claim reduces to a direct
  expansion using \(r+r'=2k+3\).
\end{proof}

Taking coefficients gives the following identities; \Cref{lem:crossdiff-with-T}
is the main one used in \Cref{subsec:Kn}.

\begin{prop}\label{prop:ekr-relations}
  Let \(k\in\N_0\) and \(r\in\Z\). For all \(t\in\Z\),
  \begin{equation}\label{eq:ekr-Pieri}
    \varepsilon_{k+1,r}^{(t)}
    =
    \varepsilon_{k,r}^{(t)}+q^{1-r'}\varepsilon_{k,r}^{(t-1)},
    \qquad
    \varepsilon_{k+1,r}^{(t)}
    =
    \varepsilon_{k,r'}^{(t)}+q^{1-r}\varepsilon_{k,r'}^{(t-1)}.
  \end{equation}
  Moreover,
  \begin{equation}\label{eq:ekr-crossdiff}
    A_{r'}\,\varepsilon_{k,r}^{(t)}-A_r\,\varepsilon_{k,r'}^{(t)}
    =
    (q^{-r'}-q^{-r})(q^{2t}-q^{2k+2})\,\varepsilon_{k+1,r}^{(t)}.
  \end{equation}
\end{prop}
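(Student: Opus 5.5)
The plan is to extract all three identities by comparing coefficients of \(x^t\) in polynomial identities for the \(E_{k,r}\). The Pieri-type relations \eqref{eq:ekr-Pieri} come from the factorisations already used in the proof of \Cref{prop:Ekr-identity}.

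For the first relation in \eqref{eq:ekr-Pieri}, I would use \eqref{eq:Fs-ratio} with \(s=r'\). This gives \(F_{r'}(x)=F_{r'-2}(x)(1+q^{1-r'}x)\). Since \(r'-2=2k+1-r\), we have \(F_r(x)F_{r'-2}(x)=E_{k,r}(x)\). Hence
\[
  E_{k+1,r}(x)=F_r(x)F_{r'}(x)=E_{k,r}(x)\,(1+q^{1-r'}x).
\]
Symmetrically, \(F_r(x)=F_{r-2}(x)(1+q^{1-r}x)\) and \(F_{r-2}(x)F_{r'}(x)=E_{k,r'}(x)\), because \(2k+1-r'=r-2\). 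This gives
\[
  E_{k+1,r}(x)=E_{k,r'}(x)\,(1+q^{1-r}x).
\]
Taking \([x^t]\) of both identities yields \eqref{eq:ekr-Pieri} for all \(t\in\Z\), with \(\varepsilon^{(t)}=0\) for \(t<0\).

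One caveat concerns \(s<0\) or \(s\) outside the natural range, where \(F_s\) is defined through the extended Pochhammer convention and may be a rational function. Even then, \eqref{eq:Fs-ratio} holds as stated for all \(s\in\Z\), and the coefficient extraction is performed on power series expansions at \(x=0\). So the argument goes through verbatim, and I would remark this in one line. The same remark covers the edge cases \(r\notin[2k+3]_0\): all identities hold for the \(\varepsilon\)'s defined via power series expansions.

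For \eqref{eq:ekr-crossdiff}, I would take \([x^t]\) in \Cref{prop:Ekr-identity}. On the right-hand side, \([x^t]E_{k+1,r}(q^2x)=q^{2t}\varepsilon_{k+1,r}^{(t)}\) and \([x^t]q^{2k+2}E_{k+1,r}(x)=q^{2k+2}\varepsilon_{k+1,r}^{(t)}\). Together these give exactly \((q^{-r'}-q^{-r})(q^{2t}-q^{2k+2})\varepsilon_{k+1,r}^{(t)}\). The left-hand side is visibly \(A_{r'}\varepsilon_{k,r}^{(t)}-A_r\varepsilon_{k,r'}^{(t)}\).

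The only real work is therefore \Cref{prop:Ekr-identity} itself, in particular its final "direct expansion". If I had to verify it, I would set \(a=q^{1-r}\) and \(b=q^{1-r'}\), so that \(ab=q^{2-2k-3+2}\cdot q^{0}\)-type relations hold; concretely, \(r+r'=2k+3\) gives \(ab=q^{-(2k+1)}\). I would then check that the quadratic identity
\[
  A_{r'}(1+ax)-A_r(1+bx)=(q^{-r'}-q^{-r})\bigl((1+x)(1+qx)-q^{2k+2}(1+ax)(1+bx)\bigr)
\]
holds coefficientwise in \(x^0,x^1,x^2\). This is routine algebra and is the main place where an error could hide.
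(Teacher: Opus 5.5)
Your proposal is correct and is essentially the paper's proof: the Pieri relations \eqref{eq:ekr-Pieri} come from \(E_{k+1,r}(x)=E_{k,r}(x)(1+q^{1-r'}x)=E_{k,r'}(x)(1+q^{1-r}x)\) via \eqref{eq:Fs-ratio}, which holds for all \(s\in\Z\), and \eqref{eq:ekr-crossdiff} is \([x^t]\) applied to \Cref{prop:Ekr-identity}. Your optional check of the underlying quadratic identity with \(a=q^{1-r}\), \(b=q^{1-r'}\), \(ab=q^{-(2k+1)}\) also works, since the \(x^2\)-terms cancel because \(q^{2k+2}ab=q\); you should delete the garbled intermediate expression for \(ab\).
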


\begin{proof}
  The first identities follow from
  \[
    E_{k+1,r}(x)
    =
    E_{k,r}(x)(1+q^{1-r'}x)
    =
    E_{k,r'}(x)(1+q^{1-r}x).
  \]
  The second is obtained by applying \([x^t]\) to
  \Cref{prop:Ekr-identity}.
\end{proof}

\begin{prop}\label{lem:crossdiff-with-T}
  Let \(k\in\N_0\) and \(r\in\Z\). For all \(t\in[k]_0\),
  \[
    \begin{aligned}
      &(1-q^{1-r'}T)A_{r'}e_{k,r}^{(t)}
      -
      (1-q^{1-r}T)A_re_{k,r'}^{(t)} \\
      &\qquad =
      (q^{-r'}-q^{-r})
      \Bigl(
        (q^{2t}-q^{2k+2})e_{k+1,r}^{(t)}
        +(q^{2t+2}-1)e_{k+1,r}^{(t+1)}T
      \Bigr).
    \end{aligned}
  \]
\end{prop}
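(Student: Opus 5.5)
We expand the left-hand side in powers of $T$ and reduce everything to the two identities of \Cref{prop:ekr-relations}. First assume $r\in[2k+1]_0$ and $r'=2k+3-r\in[2k+1]_0$, so that all $e$'s coincide with the corresponding $\varepsilon$'s. Expanding, the left-hand side equals
\[
  \bigl(A_{r'}\varepsilon_{k,r}^{(t)}-A_r\varepsilon_{k,r'}^{(t)}\bigr)
  -T\bigl(q^{1-r'}A_{r'}\varepsilon_{k,r}^{(t)}-q^{1-r}A_r\varepsilon_{k,r'}^{(t)}\bigr).
\]
The constant part is exactly \eqref{eq:ekr-crossdiff}, which gives $(q^{-r'}-q^{-r})(q^{2t}-q^{2k+2})\varepsilon_{k+1,r}^{(t)}$. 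It therefore remains to prove
\[
  q^{1-r}A_r\varepsilon_{k,r'}^{(t)}-q^{1-r'}A_{r'}\varepsilon_{k,r}^{(t)}
  =(q^{-r'}-q^{-r})(q^{2t+2}-1)\varepsilon_{k+1,r}^{(t+1)}.
\]

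For this $T$-coefficient we use \eqref{eq:ekr-Pieri} at index $t+1$. It gives $q^{1-r'}\varepsilon_{k,r}^{(t)}=\varepsilon_{k+1,r}^{(t+1)}-\varepsilon_{k,r}^{(t+1)}$ and $q^{1-r}\varepsilon_{k,r'}^{(t)}=\varepsilon_{k+1,r}^{(t+1)}-\varepsilon_{k,r'}^{(t+1)}$. Substituting, the left-hand side becomes
\[
  (A_r-A_{r'})\varepsilon_{k+1,r}^{(t+1)}
  +\bigl(A_{r'}\varepsilon_{k,r}^{(t+1)}-A_r\varepsilon_{k,r'}^{(t+1)}\bigr).
\]
Applying \eqref{eq:ekr-crossdiff} at $t+1$ turns the second bracket into $(q^{-r'}-q^{-r})(q^{2t+2}-q^{2k+2})\varepsilon_{k+1,r}^{(t+1)}$. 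Everything is now a multiple of $\varepsilon_{k+1,r}^{(t+1)}$, and we are reduced to the scalar identity
\[
  A_r-A_{r'}=(q^{-r'}-q^{-r})(q^{2k+2}-1).
\]
To check it, expand $A_u=q^{-u}-1-q^{-1}+q^{u-1}$. This gives $A_r-A_{r'}=(q^{-r}-q^{-r'})+q^{-1}(q^{r}-q^{r'})$. Using $q^{r}=q^{2k+3}q^{-r'}$ and $q^{r'}=q^{2k+3}q^{-r}$, this equals $(q^{-r'}-q^{-r})(q^{2k+2}-1)$, as required.

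The main obstacle is the boundary cases where $r$ or $r'$ lies outside $[2k+1]_0$; there $e$ and $\varepsilon$ differ, and we handle them separately.
\begin{enumerate}
\item If $r\notin[2k+3]_0$, then $r'\notin[2k+3]_0$ as well, so all $e$'s vanish and both sides are zero.
\item If $r\in\{0,2k+3\}$, say $r=0$ so $r'=2k+3$: then $A_0=0$, and $e_{k,r'}^{(t)}=0$ by convention. On the left only the terms with $e_{k,0}^{(t)}$ survive. Here $E_{k,0}=F_{2k+1}=E_{k,2k+1}$ and $E_{k+1,0}=F_{2k+3}$, so $\varepsilon$ and $e$ agree for the indices that appear, and the genuine-index computation above goes through. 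The only changes are that the terms multiplied by $A_0=0$ drop out, and in \eqref{eq:ekr-Pieri} we use the first form, which involves only $\varepsilon_{k,0}$.
\item If $r\in\{1,2k+2\}$, then $r'\in\{2k+2,1\}$ and $A_1=0$. The same check applies, with one term vanishing via $A_1=0$.
\end{enumerate}
Finally, $t\le k$ ensures that $\varepsilon_{k+1,r}^{(t+1)}$ has $t+1\le k+1$, so it is a legitimate coefficient of the degree-$(k+1)$ polynomial $E_{k+1,r}$.
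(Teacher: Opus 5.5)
Your proof is correct and follows the paper's argument essentially step for step. The constant term in $T$ is \eqref{eq:ekr-crossdiff}; the $T$-coefficient follows from \eqref{eq:ekr-Pieri} at index $t+1$, \eqref{eq:ekr-crossdiff} at $t+1$, and the scalar identity $A_r-A_{r'}=(q^{-r'}-q^{-r})(q^{2k+2}-1)$; and the boundary discrepancies between $e$ and $\varepsilon$ are killed by $A_0=A_1=0$, exactly as in the paper's one-line treatment.
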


\begin{proof}
  If \(r\notin[2k+3]_0\), both sides vanish. Otherwise the \(e\)'s may be
  replaced by the corresponding \(\varepsilon\)'s: the only possible boundary
  discrepancies occur for \(r\in\{2k+2,2k+3\}\) or \(r'\in\{2k+2,2k+3\}\),
  where the relevant \(A\)-factor is zero. Thus it remains to prove the
  identity with the \(e\)'s replaced by the corresponding
  \(\varepsilon\)'s.

  The constant term in \(T\) is \eqref{eq:ekr-crossdiff}. For the coefficient
  of \(T\), use \eqref{eq:ekr-Pieri} to write
  \[
    q^{1-r'}\varepsilon_{k,r}^{(t)}
    =
    \varepsilon_{k+1,r}^{(t+1)}-\varepsilon_{k,r}^{(t+1)},
    \qquad
    q^{1-r}\varepsilon_{k,r'}^{(t)}
    =
    \varepsilon_{k+1,r}^{(t+1)}-\varepsilon_{k,r'}^{(t+1)}.
  \]
  Together with \eqref{eq:ekr-crossdiff} at \(t+1\) and
  \(A_r-A_{r'}=-(q^{-r'}-q^{-r})(1-q^{2k+2})\), this gives the stated
  \(T\)-coefficient.
\end{proof}

\subsection{The coset recursion}\label{subsec:Kn}

It remains to show that the functions \(\mathcal K_n^{k,r}\) satisfy
the same initial condition and recursion as the normalised fibre
sums~\(\mathcal J_n^{k,r}\).

\begin{lemma}\label{lem:J=K-base}
  For all \(r\in\Z\),
  \[
    \mathcal J_n^{0,r}(X_1,\dots,X_n;T)
    =
    \mathcal K_n^{0,r}(X_1,\dots,X_n;T).
  \]
\end{lemma}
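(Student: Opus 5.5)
The plan is to evaluate both sides directly at $k=0$ and compare them. No recursion is needed here; the identity reduces to the descent formula \eqref{eq:igu-descent} for the Igusa function. I will treat the cases $r\in\{0,1\}$ and $r\notin\{0,1\}$ separately.

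\emph{The left-hand side.} By the remark after \Cref{def:Inkr}, $\mathcal I_n^{0,r}=\ig{n}(q^{-2};X_1,\dots,X_n)$ for $r\in\{0,1\}$ and $\mathcal I_n^{0,r}=0$ otherwise. Next I compute the normalising data in \eqref{def:Jnkr}.
\begin{itemize}
\item Since $\lfloor 0/2\rfloor=\lfloor 1/2\rfloor=0$, we have $F_0=F_1=1$, so $E_{0,0}=E_{0,1}=1$.
\item Directly, $P_{0,0}(q)=(1-q)/(q;q)_1=1$ and $P_{0,1}(q)=(-q)(1-q^{-1})/(q;q)_1=1$.
\end{itemize}
Hence $\mathcal J_n^{0,r}=\ig{n}(q^{-2};X_1,\dots,X_n)\prod_{j=1}^n(1-X_j)$ for $r\in\{0,1\}$. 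Setting $X_0=0$ in \eqref{eq:igu-descent} and using \Cref{lem:bridge-Igusa} (so that $\igu{n}=\ig{n}$ when $X_0=0$) gives
\[
  \mathcal J_n^{0,r}=\sum_{g\in S_n}q^{-2\ell(g)}\prod_{j\in\Des(g)}X_j .
\]

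For $r\notin\{0,1\}$ we have $\mathcal I_n^{0,r}=0$, so $\mathcal J_n^{0,r}=0$ provided $P_{0,r}(q)$ and $E_{0,r}(-T)$ are finite and nonzero. I will check this with the extended Pochhammer convention stated before the section.
\begin{itemize}
\item For $r\ge2$, the factor $(q;q)_{1-r}=1/(q^{1-r};q)_{r-1}$ involves only factors $1-q^{m}$ with $m\in\{1-r,\dots,-1\}$, so it is finite and nonzero.
\item The factor $1-q^{1-2r}$ is nonzero.
\item The case $r<0$ is symmetric under $r\leftrightarrow 1-r$.
\item $E_{0,r}(-T)$ is a product of (possibly inverted) factors $1-q^aT$, hence nonzero.
\end{itemize}

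\emph{The right-hand side.} For $k=0$ we have $S_n/S_0=S_n$ and the coset statistics specialise as follows:
\begin{itemize}
\item $t_0(g)=0$, since $[0]=\varnothing$;
\item $\ell_0^+(g)=\ell(g)$ is the full inversion number;
\item $\Des_{>0}(g)=\Des(g)$.
\end{itemize}
Also $B_{0,r}^{(0)}=e_{0,r}^{(0)}$. This equals $[x^0]E_{0,r}=1$ for $r\in[1]_0$ and $0$ otherwise, by the definition of $e$. Hence
\[
  \mathcal K_n^{0,r}=\sum_{g\in S_n}q^{-2\ell(g)}\mathbf X^{\Des(g)}
\]
for $r\in\{0,1\}$, and $\mathcal K_n^{0,r}=0$ otherwise. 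This agrees with $\mathcal J_n^{0,r}$ in every case.

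The only mildly delicate point is the vanishing case $r\notin\{0,1\}$. There one must confirm that the normalisation in \eqref{def:Jnkr} does not produce a $0\cdot\infty$ ambiguity, which is why I check that $P_{0,r}(q)$ and $E_{0,r}(-T)$ are finite and nonzero. Everything else is a direct specialisation of the definitions and of \eqref{eq:igu-descent}.
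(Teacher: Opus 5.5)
Your treatment of \(r\in\{0,1\}\) is correct and is the paper's argument; the paper reduces to \(r=0\) by the symmetry \(r\leftrightarrow 1-r\) instead of checking \(P_{0,1}(q)=1\) directly. The problem is the step you flag as delicate, where your claim about \(P_{0,r}(q)\) is false.

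Under the stated convention \((a;q)_m=((aq^m;q)_{-m})^{-1}\) for \(m<0\), the base point is \(aq^m=q\cdot q^{1-r}=q^{2-r}\), not \(q^{1-r}\). Hence for \(r\ge 2\)
\[
  (q;q)_{1-r}=\Bigl(\prod_{m=2-r}^{0}\bigl(1-q^{m}\bigr)\Bigr)^{-1},
\]
and the product contains the factor \(1-q^{0}=0\). So \((q;q)_{1-r}\) is infinite; this is the usual convention that makes \(1/(q;q)_m\) vanish for \(m<0\). It follows that \(P_{0,r}(q)=0\) for \(r\ge2\), and likewise for \(r<0\) via \((q;q)_r\). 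Consequently \eqref{def:Jnkr} is literally of the form \(0/0\) for \(r\notin\{0,1\}\), which is exactly the ambiguity you meant to exclude. You cannot derive \(\mathcal J_n^{0,r}=0\) from the defining formula. Your claim about \(E_{0,r}(-T)\) is fine, e.g.\ \(E_{0,2}(-T)=(1-q^{-1}T)/(1-T)\).

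The correct treatment is a convention: set \(\mathcal J_n^{k,r}:=0\) whenever \(\mcW_{k,r}=\varnothing\), matching \(\mathcal I_n^{k,r}=0\). This is what the paper's ``both sides vanish'' tacitly does. The right-hand side then also vanishes because \(e_{0,r}^{(0)}=0\), as you correctly note.

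The convention is harmless for the induction built on \Cref{lem:Jn-recursion}. At level \(k+1\) with \(r\in[2k+3]_0\), an out-of-range entry \(\mathcal J_n^{k,u}\) has \(u\in\{2k+2,2k+3\}\). It then occurs only paired with the index \(2k+3-u\in\{1,0\}\), so it is multiplied by \(A_1=0\) or \(A_0=0\). Its value therefore never matters.
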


\begin{proof}
  Both sides vanish for \(r\notin\{0,1\}\). By symmetry it suffices to take
  \(r=0\). Then \(P_{0,0}(q)=1\), \(E_{0,0}(x)=1\), and
  \(\mathcal I_n^{0,0}=\ig{n}(q^{-2};X_1,\dots,X_n)\). Hence, by \eqref{def:Jnkr} and
  \eqref{eq:igu-descent},
  \[
    \mathcal J_n^{0,0}
    =
    \ig{n}(q^{-2};X_1,\dots,X_n)\prod_{j=1}^n(1-X_j)
    =
    \sum_{g\in S_n}q^{-2\ell(g)}\mathbf X^{\Des(g)}.
  \]
  For \(k=0\), the cosets \(S_n/S_0\) are elements of \(S_n\), and
  \(t_0(g)=0\), \(\ell_0^+(g)=\ell(g)\), \(\Des_{>0}(g)=\Des(g)\). Thus
  \(\mathcal K_n^{0,0}\) is the same sum.
\end{proof}

To compare the recursions, we work blockwise with respect to the projection
\(S_n/S_k\to S_n/S_{k+1}\). If \(k<n\) and
\(hS_{k+1}\in S_n/S_{k+1}\), define the \(hS_{k+1}\)-block by
\begin{equation*}
  \mathcal K_n^{k,r}\big|_{hS_{k+1}}
    :=
  \sum_{\substack{gS_k\in S_n/S_k\\ gS_k\subseteq hS_{k+1}}}
  \mathcal K_n^{k,r}\big|_{gS_k}.
\end{equation*}
\begin{lemma}\label{lem:K-block-structure}
  Let \(k<n\), \(hS_{k+1}\in S_n/S_{k+1}\), and set
  \(u:=t_{k+1}(h)\). Then the lifts
  \(gS_k\subseteq hS_{k+1}\) are indexed bijectively by \(t=t_k(g)\in[k]_0\).
  For the lift with parameter \(t\),
  \[
    \ell_k^+(g)=\ell_{k+1}^+(h)+t,
    \qquad
    \Des_{>k}(g)=
    \begin{cases}
      \Des_{>k+1}(h)\cup\{k+1\}, & t<u,\\
      \Des_{>k+1}(h), & t\ge u.
    \end{cases}
  \]
\end{lemma}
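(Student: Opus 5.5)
The plan is to make the cosets concrete and then read off all three statistics directly. A coset \(gS_k\in S_n/S_k\) is determined by the unordered set \(g([k])\) together with the ordered tuple \((g(k+1),\dots,g(n))\), and all statistics in \Cref{def:Knkr} depend only on these data. Likewise \(hS_{k+1}\) is determined by the set \(h([k+1])\) and the tuple \((h(k+2),\dots,h(n))\). The cosets \(gS_k\subseteq hS_{k+1}\) are obtained by choosing which element \(a\in h([k+1])\) becomes \(g(k+1)\), the remaining \(k\) elements forming \(g([k])\). This gives \(k+1\) lifts, one for each choice of \(a\). Since \(t_k(g)=\#\{x\in h([k+1])\setminus\{a\}\mid x>a\}\) is the rank of \(a\) from the top within \(h([k+1])\), the map \(a\mapsto t\) is a bijection onto \([k]_0\). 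This settles the first claim.

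For the inversion statement, I will split the pairs \((i,j)\) counted by \(\ell_k^+(g)\) according to whether \(j\ge k+2\) or \(j=k+1\).

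\begin{itemize}
\item Pairs with \(j\ge k+2\) and \(i\le n\): these depend only on the set \(g([k+1])=h([k+1])\) and the tail, so they are exactly the pairs counted by \(\ell_{k+1}^+(h)\).
\item Pairs with \(j=k+1\), \(i\le k\) and \(g(i)>g(k+1)=a\): these are counted by \(t_k(g)=t\).
\end{itemize}

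Summing the two contributions gives \(\ell_k^+(g)=\ell_{k+1}^+(h)+t\).

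For the descent statement, the positions \(j\ge k+2\) with \(j\le n-1\) only involve tail entries, so \(\Des_{>k}(g)\cap\{k+2,\dots,n-1\}=\Des_{>k+1}(h)\). It then remains to decide whether \(k+1\in\Des(g)\), that is, whether \(a>g(k+2)=h(k+2)\). When \(k+1=n\), the convention \(g(n+1)=n+1\) is not needed because \(k+1\notin\{k+1,\dots,n-1\}\); I will check that \(u\) is still interpreted consistently in that case. For \(k+1\le n-1\), recall that \(u=t_{k+1}(h)=\#\{x\in h([k+1])\mid x>h(k+2)\}\). The elements of \(h([k+1])\) that exceed \(h(k+2)\) are precisely the top \(u\) elements. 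Now \(a\) is the element of rank \(t\) from the top, with rank \(0\) being the largest. Hence \(a>h(k+2)\) if and only if \(t<u\), which yields the stated case split.

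The main obstacle is the boundary case \(k=n-1\). There \(\Des_{>k}(g)\) is empty, while \(u=t_n(h)=\#\{x\in[n]\mid x>n+1\}=0\). Since \(u=0\), the condition \(t<u\) never holds, so the formula correctly gives \(\Des_{>k+1}(h)=\varnothing\). I will verify this case and the indexing conventions carefully, but apart from this the argument is pure bookkeeping.
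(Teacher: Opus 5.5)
Your proposal is correct and follows essentially the same route as the paper: lifts are indexed by the choice of \(g(k+1)\in h([k+1])\), the extra inversions are exactly those with second index \(k+1\), and the descent at \(k+1\) is decided by comparing \(g(k+1)\) with \(h(k+2)\). Where the paper counts via a set inclusion, you use the rank of \(g(k+1)\) within \(h([k+1])\); you also check explicitly the boundary case \(k=n-1\) (where \(u=0\) by the convention \(h(n+1)=n+1\)), which the paper leaves implicit.
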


\begin{proof}
  Fix a representative \(h\). A lift is obtained by choosing \(g(k+1)\) among
  \(h(1),\dots,h(k+1)\), giving \(k+1\) lifts. As this value varies, the number
  of earlier entries larger than it runs through \(0,\dots,k\).

  The difference \(\ell_k^+(g)-\ell_{k+1}^+(h)\) counts precisely the
  inversions with second index \(k+1\), hence equals \(t\). Finally, \(k+1\in\Des(g)\) iff \(g(k+1)>h(k+2)\). If
  \(g(k+1)>h(k+2)\), then
  \[
    \{\,i\in[k]\mid g(i)>g(k+1)\,\}\sqcup\{k+1\}
    \subseteq
    \{\,i\in[k+1]\mid h(i)>h(k+2)\,\},
  \]
  so \(t+1\le u\), hence \(t<u\). If \(g(k+1)<h(k+2)\), the reverse inclusion
  gives \(u\le t\). This gives the claimed descent condition.
  
\end{proof}

\begin{prop}\label{prop:K-block-recursion}
  Let \(k<n\), \(r\in\Z\), and \(r':=2k+3-r\). For every
  \(hS_{k+1}\in S_n/S_{k+1}\),
  \begin{equation}\label{eq:K-block-recursion}
    \mathcal K_n^{k+1,r}\big|_{hS_{k+1}}
    =
    \frac{
      \left(
        (1-q^{1-r'}T)A_{r'}\mathcal K_n^{k,r}\big|_{hS_{k+1}}
        -
        (1-q^{1-r}T)A_r\mathcal K_n^{k,r'}\big|_{hS_{k+1}}
      \right)\big|_{X_{k+1}=Y_{k+1}(r,T)}
    }
    {(q^{-r'}-q^{-r})(1-q^2)(1-Y_{k+1}(r,T))}.
  \end{equation}
  Consequently the same recursion holds for the total sums
  \(\mathcal K_n^{k,r}\).
\end{prop}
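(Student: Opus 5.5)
We fix a block \(hS_{k+1}\) and compare both sides of \eqref{eq:K-block-recursion} coefficient by coefficient in \(T\), using \Cref{lem:K-block-structure} to parametrise the lifts \(gS_k\subseteq hS_{k+1}\) by \(t\in[k]_0\). Put \(u:=t_{k+1}(h)\), \(\ell:=\ell_{k+1}^+(h)\) and \(\mathbf X_h:=\mathbf X^{\Des_{>k+1}(h)}\). By \Cref{def:Knkr} and \Cref{lem:K-block-structure}, the left-hand side is the single term
\[
  \mathcal K_n^{k+1,r}\big|_{hS_{k+1}}=B_{k+1,r}^{(u)}q^{-2\ell}\mathbf X_h T^{u}.
\]
On the right-hand side we have, for \(v\in\{r,r'\}\),
\[
  \mathcal K_n^{k,v}\big|_{hS_{k+1}}
  =q^{-2\ell}\mathbf X_h\Bigl(\sum_{t<u}B_{k,v}^{(t)}q^{-2t}X_{k+1}T^t+\sum_{t\ge u}B_{k,v}^{(t)}q^{-2t}T^t\Bigr).
\]
After cancelling the common factor \(q^{-2\ell}\mathbf X_h\) and clearing the denominator, the claim becomes a polynomial identity in \(T\) and \(Y:=Y_{k+1}(r,T)\). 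Since \(B\) is linear in \(e\), we substitute \Cref{lem:crossdiff-with-T} into the numerator. The combination of \(e_{k,r}^{(t)}\) and \(e_{k,r'}^{(t)}\) then becomes \((q^{-r'}-q^{-r})\) times \((q^{2t}-q^{2k+2})e_{k+1,r}^{(t)}+(q^{2t+2}-1)e_{k+1,r}^{(t+1)}T\), multiplied by the factorial prefactors \(q^{-t(t-1)}[t]_{q^2}![k-t]_{q^2}!\). The factor \((q^{-r'}-q^{-r})\) cancels against the denominator.

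Next we convert the \(k\)-factorials into \((k+1)\)-factorials, using
\[
  (q^{2t}-q^{2k+2})[k-t]_{q^2}!=-q^{2t}(1-q^2)[k+1-t]_{q^2}!,
  \qquad
  (q^{2t+2}-1)[t]_{q^2}!=-(1-q^2)[t+1]_{q^2}!.
\]
The numerator then telescopes into a sum over \(t\) of differences \(D_t-D_{t+1}T\)-type terms, where \(D_t:=B_{k+1,r}^{(t)}q^{-2t}T^t\) up to a common constant (the factor \((1-q^2)\) cancels, and we track the powers \(q^{-t(t-1)}\) against \(q^{-t(t+1)}\)). Summing over \(t<u\) with weight \(X_{k+1}=Y\), and over \(t\ge u\) with weight \(1\), the sums collapse to boundary terms. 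The terms at \(t=0\) and \(t=k+1\) should cancel or vanish: \(e_{k+1,r}^{(k+1)}\) is given explicitly by \Cref{lem:top-coeff-ek+1}, and it should combine with the factor \(Y\) at the top boundary. What remains is \((1-Y)\) times a term at \(t=u\), which is precisely \(B_{k+1,r}^{(u)}T^u(1-Y)\) after dividing by the denominator \((1-Y)\).

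The main obstacle is the boundary bookkeeping. The top coefficient \(e_{k+1,r}^{(k+1)}\) appears with the factor \(T^{k+1}\), and this must match \(Y_{k+1}(r,T)=q^{rr'/2-\ldots}T^{k+1}\) times a lower boundary term. The point is that \Cref{lem:top-coeff-ek+1} gives \(e_{k+1,r}^{(k+1)}=q^{rr'/2-(k+1)(k+2)}\), which matches the \(q\)-exponent of \(Y_{k+1}(r,T)=q^{r(2k+3-r)/2-2(k+1)^2}T^{k+1}\) up to the factorial and \(q^{-t(t\pm1)}\) normalisations. We will check this exponent match first, carefully, treating the cases \(u=0\) and \(u=k+1\) separately. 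Summing the block identity over \(hS_{k+1}\in S_n/S_{k+1}\) then gives the recursion for the total sums \(\mathcal K_n^{k,r}\).
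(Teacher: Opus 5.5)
Your route is the paper's route: fix the block, parametrise the lifts by $t$ via \Cref{lem:K-block-structure}, substitute \Cref{lem:crossdiff-with-T}, convert to $(k+1)$-factorials, telescope, and cancel the two boundary terms using \Cref{lem:top-coeff-ek+1}. As written, though, two bookkeeping steps are wrong, and the decisive boundary check is only promised. Here is what needs fixing.

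First, your first factorial identity has the wrong sign. Since $q^{2t}-q^{2k+2}=q^{2t}(1-q^{2(k+1-t)})$, one has
\[
  (q^{2t}-q^{2k+2})[k-t]_{q^2}!=+\,q^{2t}(1-q^2)[k+1-t]_{q^2}!.
\]
With your minus sign the two pieces of each summand add rather than subtract, and nothing telescopes.

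Second, your $D_t$ should not contain $q^{-2t}$. In the first piece, the factor $q^{-2t}$ coming from $\ell_k^+(g)=\ell_{k+1}^+(h)+t$ cancels exactly against this $q^{2t}$. In the second piece, $q^{-2t}q^{-t(t-1)}=q^{-t(t+1)}$ is precisely the normalisation of $B_{k+1,r}^{(t+1)}$. So after removing the common factor $q^{-2\ell}\mathbf X_h(q^{-r'}-q^{-r})$, the $t$-th summand of the numerator is
\[
  (1-q^2)\bigl(D_t-D_{t+1}\bigr)Y^{\mathbf 1_{t<u}},\qquad D_t:=B_{k+1,r}^{(t)}T^t .
\]
Summing over $t\in[k]_0$ gives $YD_0+(1-Y)D_u-D_{k+1}$ uniformly for all $u\in[k+1]_0$. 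No separate treatment of $u=0$ or $u=k+1$ is needed.

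Third, the identity you defer is $D_{k+1}=YD_0$, and it follows directly. By \Cref{lem:top-coeff-ek+1},
\[
  B_{k+1,r}^{(k+1)}=q^{-k(k+1)}[k+1]_{q^2}!\,q^{rr'/2-(k+1)(k+2)}=[k+1]_{q^2}!\,q^{rr'/2-2(k+1)^2}.
\]
Also $B_{k+1,r}^{(0)}=[k+1]_{q^2}!$, since $e_{k+1,r}^{(0)}=1$ for $r\in[2k+3]_0$. Hence $B_{k+1,r}^{(k+1)}T^{k+1}=Y_{k+1}(r,T)\,B_{k+1,r}^{(0)}$. For $r\notin[2k+3]_0$ every term on both sides vanishes.

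With these corrections your argument is complete and coincides with the paper's.
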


\begin{proof}
  Fix \(hS_{k+1}\), and set
  \[
    u:=t_{k+1}(h),\qquad
    L:=\ell_{k+1}^+(h),\qquad
    D:=\Des_{>k+1}(h),\qquad
    Y:=Y_{k+1}(r,T).
  \]
  By definition,
  \[
    \mathcal K_n^{k+1,r}\big|_{hS_{k+1}}
    =
    B_{k+1,r}^{(u)}q^{-2L}\mathbf X^D T^u.
  \]
  Using \Cref{lem:K-block-structure}, the \(k\)-level block is
  \[
    \mathcal K_n^{k,r}\big|_{hS_{k+1}}
    =
    q^{-2L}\mathbf X^D
    \sum_{t=0}^{k}
    q^{-2t}B_{k,r}^{(t)}
    X_{k+1}^{\mathbf 1_{t<u}}T^t.
  \]
  Insert this expression, and the analogous one with \(r'\), into the
  numerator of the claimed recursion. By \Cref{lem:crossdiff-with-T}, the
  numerator of \eqref{eq:K-block-recursion} becomes
  \[
    q^{-2L}\mathbf X^D(q^{-r'}-q^{-r})
    \sum_{t=0}^{k}
    q^{-2t}[t]_{q^2}![k-t]_{q^2}!
    q^{-t(t-1)}
  \]
  \[
    {}\times
    \Bigl(
      (q^{2t}-q^{2k+2})e_{k+1,r}^{(t)}
      +(q^{2t+2}-1)e_{k+1,r}^{(t+1)}T
    \Bigr)
    Y^{\mathbf 1_{t<u}}T^t.
  \]
  The elementary identities
  \[
    (q^{2t+2}-1)[t]_{q^2}!=-(1-q^2)[t+1]_{q^2}!,
  \]
  \[
    [k-t]_{q^2}!(q^{2t}-q^{2k+2})
    =
    q^{2t}(1-q^2)[k-t+1]_{q^2}!
  \]
  rewrite this as
  \[
    q^{-2L}\mathbf X^D(q^{-r'}-q^{-r})(1-q^2)
    \sum_{t=0}^{k}
    \bigl(B_{k+1,r}^{(t)}-B_{k+1,r}^{(t+1)}T\bigr)
    Y^{\mathbf 1_{t<u}}T^t.
  \]
  The sum telescopes:
  \[
    \sum_{t=0}^{k}
    (B^{(t)}-B^{(t+1)}T)Y^{\mathbf 1_{t<u}}T^t
    =
    YB^{(0)}+(1-Y)B^{(u)}T^u-B^{(k+1)}T^{k+1}.
  \]
  Here \(B^{(t)}=B_{k+1,r}^{(t)}\). Since
  \(B^{(0)}=[k+1]_{q^2}!\) and, by \Cref{lem:top-coeff-ek+1},
  \[
    B^{(k+1)}T^{k+1}
    =
    [k+1]_{q^2}!
    q^{\frac{rr'}2-(k+1)(k+2)}T^{k+1}
    =
    YB^{(0)},
  \]
  the first and last terms cancel. The numerator is therefore
  \[
    (q^{-r'}-q^{-r})(1-q^2)(1-Y)
    B_{k+1,r}^{(u)}q^{-2L}\mathbf X^D T^u,
  \]
  which is exactly the denominator times
  \(\mathcal K_n^{k+1,r}\big|_{hS_{k+1}}\).

  Summing over \(hS_{k+1}\in S_n/S_{k+1}\) gives the recursion for the total
  sums.
\end{proof}

\begin{theorem}\label{thm:J=K}
  Let \(k\in[n]_0\) and \(r\in\Z\). Then
  \[
    \mathcal J_n^{k,r}(X_{k+1},\dots,X_n;T)
    =
    \mathcal K_n^{k,r}(X_{k+1},\dots,X_n;T).
  \]
\end{theorem}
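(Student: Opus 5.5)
We argue by induction on \(k\in[n]_0\), for all \(r\in\Z\) simultaneously, comparing the two families through the common recursion established above. The base case \(k=0\) is \Cref{lem:J=K-base}, which holds for every \(r\in\Z\). For the induction step, fix \(k<n\) and assume \(\mathcal J_n^{k,u}=\mathcal K_n^{k,u}\) as polynomials in \(X_{k+1},\dots,X_n,T\) for every \(u\in\Z\). In particular this holds for \(u=r\) and for \(u=r'=2k+3-r\).

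By \Cref{lem:Jn-recursion}, \(\mathcal J_n^{k+1,r}\) is obtained from the pair \((\mathcal J_n^{k,r},\mathcal J_n^{k,r'})\) by a fixed operation. One forms the combination
\[
  (1-q^{1-r'}T)A_{r'}\,\cdot\;-\;(1-q^{1-r}T)A_r\,\cdot\,,
\]
specialises \(X_{k+1}=Y_{k+1}(r,T)\), and divides by \((q^{-r'}-q^{-r})(1-q^2)(1-Y_{k+1}(r,T))\). By \Cref{prop:K-block-recursion}, summed over all blocks \(hS_{k+1}\in S_n/S_{k+1}\), the total sums \(\mathcal K_n^{k+1,r}\) arise from \((\mathcal K_n^{k,r},\mathcal K_n^{k,r'})\) by exactly the same operation. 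Substituting the induction hypothesis then gives \(\mathcal J_n^{k+1,r}=\mathcal K_n^{k+1,r}\).

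The only delicate point is that this operation involves a division by \(q^{-r'}-q^{-r}\), which vanishes when \(r=r'\). Since \(r+r'=2k+3\) is odd, \(r\neq r'\) always, so the denominator is a nonzero element of \(\Q(q)\). The remaining factor \(1-Y_{k+1}(r,T)\) is a nonzero rational function. The recursions are therefore genuine identities of rational functions, and no case distinction is needed.

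We must also check that both recursions hold for all \(r\in\Z\), including \(r\notin[2k+3]_0\). Here both sides vanish. On the \(\mathcal J\) side, \(\mcW_{k+1,r}=\varnothing\) gives \(\mathcal I_n^{k+1,r}=0\). On the \(\mathcal K\) side, the coefficients \(e\) vanish by definition, and \Cref{lem:crossdiff-with-T} already accounts for this range. So the recursions agree uniformly in \(r\).

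For \(r\notin[2k+3]_0\) there is one more subtlety: the normalisation by \(P_{k,r}(q)\) may be degenerate, since \(P_{k,r}\) could be zero or undefined. The safest route is to observe that \Cref{lem:Jn-recursion} and the definition \eqref{def:Jnkr} are only used for \(r\in[2k+3]_0\). For all other \(r\), one proves \(\mathcal J=\mathcal K=0\) directly, as both are defined to be zero there. The case \(r\) with \(r'\notin[2k+1]_0\) but \(r\in[2k+3]_0\) (i.e.\ \(r\in\{0,1\}\) or \(r'\in\{0,1\}\)) has the partner term vanishing on both sides consistently, because the relevant \(A\)-factor or fibre is zero.

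Checking these boundary conventions is the main, if minor, obstacle. Once they are settled, the induction closes, since the two families share the initial condition and the recursion.
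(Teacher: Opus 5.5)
Your proposal is correct and follows the paper's proof: induction on \(k\) with base case \Cref{lem:J=K-base}, and the inductive step obtained by noting that \Cref{lem:Jn-recursion} and \Cref{prop:K-block-recursion} produce \(\mathcal J_n^{k+1,r}\) and \(\mathcal K_n^{k+1,r}\) by the same operation on the level-\(k\) entries with parameters \(r\) and \(2k+3-r\). Your extra boundary bookkeeping matches the conventions the paper uses implicitly in \Cref{lem:J=K-base} and \Cref{lem:crossdiff-with-T}: there \(P_{k,r}=0\) makes \eqref{def:Jnkr} a formal \(0/0\) that is read as \(0\), and the vanishing of \(A_0\) and \(A_1\) kills the partner term.
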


\begin{proof}
  Induct on \(k\). The case \(k=0\) is \Cref{lem:J=K-base}. Assume the claim
  at level \(k<n\). By \Cref{lem:Jn-recursion,prop:K-block-recursion}, both
  \(\mathcal J_n^{k+1,r}\) and \(\mathcal K_n^{k+1,r}\) are obtained from the
  \(k\)-level entries with parameters \(r\) and \(2k+3-r\) by the same
  recursion. The induction hypothesis therefore gives equality at level
  \(k+1\).
\end{proof}

\begin{proof}[Proof of \Cref{thm:I-equals-K}]
  Immediate from \Cref{def:Jnkr,thm:J=K}.
\end{proof}

\section{Proof of \Cref{thm:C}}\label{sec:Bn}

In this section we define type-\(\mathsf{B}\) Igusa functions and express the
zeta function \(\zeta_{\mfh_n(\lri)}(s)\) as a specialization of these
functions; cf.\ \Cref{thm:Bn-target}(=\Cref{thm:C})
Let \(B_n\) denote the hyperoctahedral group of rank \(n\), i.e.\ the Coxeter
group of type \(\mathsf{B}_n\), realized as the group of signed permutations of
\(\{\pm1,\dots,\pm n\}\). Let \(S=\{s_0,s_1,\dots,s_{n-1}\}\) be the standard
generating set, where \(s_0\) changes the sign of \(1\) and \(s_i=(i,i+1)\) for
\(i\in[n-1]\). For \(g\in B_n\), we set \(g(0)=0\). We utilize a number of classical
statistics on~\(B_n\):
\begin{itemize}
  \item \(\ell(g)\) is the Coxeter length of \(g\) with respect to \(S\),
  \item \(\Des_B(g)=\{\,i\in[n-1]_0\mid g(i)>g(i+1)\,\}\) is the (type-\(\mathsf{B}\)) descent set of~\(g\),
  \item \(\des_B(g):=|\Des_B(g)|\) is the number of type-\(\mathsf{B}\) descents of \(g\),
  \item \(\negg(g)\) is the number of negative entries of \(g\) in one-line notation.
\end{itemize}

Let \(Z\), \(Y\), and \(X_0,X_1,\dots\) be indeterminates.

\begin{definition}[Type-\(\mathsf{B}\) Igusa functions]\label{def:igusa.B}
  For \(n\geq 1\), we define the \emph{type-\(\mathsf{B}\) Igusa function}
  \[
    \ig{B_n}(Y,Z;X_0,\dots,X_n)
    :=
    \frac{
      \displaystyle
      \sum_{g\in B_n}
      Y^{\ell(g)}\,Z^{\negg(g)}
      \prod_{i\in\DesB(g)} X_i
    }{
      \displaystyle
      \prod_{i=0}^{n}(1-X_i)
    },
  \]
  and the \emph{truncated type-\(\mathsf{B}\) Igusa function}
  \[
    \igm{B_n}(Y,Z;X_0,\dots,X_{n-1})
    =(1-X_n)\ig{B_n}(Y,Z;X_0,\dots,X_n).
  \]
\end{definition}

Recall that we set \(c_i=\frac{n(n+5)-i(i+1)}{2}\) for \(i\in[n]\). For \(g\in B_n\), set
\begin{equation}\label{eq:Cg}
    C(g)
    =
    n\,\negg(g)
    -\ell(g)
    +\sum_{i\in\DesB(g)}c_i.
\end{equation}

\begin{theorem}\label{thm:Bn-target}
  We have
  \begin{equation}\label{eq:Bn-Igusa-zeta}
    \zeta_{\mfh_n(\lri)}(s)
    =
    \frac{1}{(T;q)_{2n}}\,
    \ig{B_n}\!\left(
      q^{-1},-q^nT;\,
      q^{c_0}T^{n+1},\dots,q^{c_n}T^{n+1}
    \right).
  \end{equation}
  Equivalently,
  \begin{equation*}
    \zeta_{\mfh_n(\lri)}(s)
    =
    \frac{
      \displaystyle
      \sum_{g\in B_n}
      (-1)^{\negg(g)}\,
      q^{C(g)}\,
      T^{(n+1)\desB(g)+\negg(g)}
    }{
      \displaystyle
      (T;q)_{2n}
      \prod_{m=0}^{n}
      \bigl(1-q^{c_m}T^{n+1}\bigr)
    }
  \end{equation*}
\end{theorem}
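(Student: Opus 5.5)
The natural route is to start from the $(n+1)$-term formula of \Cref{thm:simplified-zeta} rather than the $2^n$-term formula. Both sides of \eqref{eq:Bn-Igusa-zeta} are rational functions in $q$ and $T$, so it suffices to prove an identity in $\Q(q,T)$. First I would check that the two displayed forms in \Cref{thm:Bn-target} agree. Substituting $Y=q^{-1}$, $Z=-q^nT$ and $X_i=q^{c_i}T^{n+1}$ into \Cref{def:igusa.B} turns the monomial of $g$ into $(-1)^{\negg(g)}q^{n\negg(g)-\ell(g)+\sum_{i\in\DesB(g)}c_i}T^{\negg(g)+(n+1)\desB(g)}$, which is exactly $(-1)^{\negg(g)}q^{C(g)}T^{D(g)}$. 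So only the first form needs proof.

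Next I would match the denominators. In \Cref{thm:simplified-zeta}, the $r$-th summand has the factor $1-q^{2n+r(2n+1-r)/2}T^{n+1}$. Writing $i=n-r$, a short computation shows $2n+\tfrac{r(2n+1-r)}{2}=c_{n-r}$, so the $T^{n+1}$-factors are exactly $\prod_{m=0}^n(1-q^{c_m}T^{n+1})$. The remaining factor $(q^rT;q^2)_{n-r}(q^{2n-r}T;q)_r$ consists of $n$ distinct linear factors $1-q^jT$ with $j\in[2n-1]_0$, hence it divides $(T;q)_{2n}$. Multiplying the sum of \Cref{thm:simplified-zeta} by $(T;q)_{2n}\prod_m(1-q^{c_m}T^{n+1})$ therefore gives the polynomial
\[
\mathcal N_n(q,T)=\sum_{r=0}^n \frac{(-q)^r(1-q^{2n-2r+1})(q^2;q^2)_n}{(q;q)_{2n-r+1}(q;q)_r}\,\frac{(T;q)_{2n}}{(q^rT;q^2)_{n-r}(q^{2n-r}T;q)_r}\prod_{m\ne n-r}(1-q^{c_m}T^{n+1}).
\]
The task is then to show that $\mathcal N_n$ equals the $B_n$-sum.

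To prove that, I would mirror the strategy of \Cref{sec:simplifying-zeta-formula}: decompose $B_n$ according to a parabolic structure and prove a recursion. Concretely, I would factor $B_n$ over the parabolic subgroup $S_n=\langle s_1,\dots,s_{n-1}\rangle$. Each $g\in B_n$ factors as a minimal coset representative determined by the set of negated values, composed with an element of $S_n$. Restricted to the part governed by $S_n$, the statistics $\ell$ and $\DesB\cap[n-1]$ reproduce the type-$\mathsf A$ descent expansion \eqref{eq:igu-descent}. Grouping the $B_n$-numerator by the value of $\negg(g)$, or alternatively by the position of the last descent, should then give $n+1$ blocks. Each block should be compared with a single summand of \Cref{thm:simplified-zeta} times its complementary denominator factors. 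One could instead induct on $n$ via the maximal parabolic $B_{n-1}$, using a standard insertion recursion for $\ig{B_n}$ analogous to \Cref{lem:K-block-structure}.

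The main obstacle is the verification itself. The ratio $(T;q)_{2n}/\bigl((q^rT;q^2)_{n-r}(q^{2n-r}T;q)_r\bigr)$ and the cross-products of $T^{n+1}$-factors have to recombine into signed $B_n$ statistics. My first attempt would be a partial-fraction argument. Both sides, divided by $(T;q)_{2n}\prod_m(1-q^{c_m}T^{n+1})$, are rational in $T$ that vanish at $T=\infty$ (I would check the degree bound: $\deg_T$ of the numerator is less than $n+n(n+1)+2n$, using $D(g)\le(n+1)n+n$). One then compares residues at the poles $T^{n+1}=q^{-c_m}$ and $T=q^{-j}$. At $T^{n+1}=q^{-c_{n-r}}$ the left side has only the $r$-th term, so one needs an evaluation of the $B_n$-Igusa function at that pole. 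That evaluation should follow from the standard factorisation: the residue of $\ig{B_n}$ at $X_i=1$ equals an $S_i\times B_{n-i}$-type product of smaller Igusa functions. The poles at $T=q^{-j}$ must then cancel on the right side, which I expect to require the most delicate bookkeeping. If that cancellation is not transparent, I would fall back to the recursion in $n$ and verify it against the cases $n=2,3$ computed in \Cref{subsec:zeta-examples}.
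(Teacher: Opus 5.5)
\textbf{There is a genuine gap in the residue comparison.} Your preliminary steps are fine: the two displayed forms agree, $2n+\tfrac{r(2n+1-r)}{2}=c_{n-r}$, and the Pochhammer denominators divide $(T;q)_{2n}$.

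The trouble is that you do partial fractions in the single variable $T$. This entangles the poles of $(T;q)_{2n}^{-1}$ with those of $\prod_m(1-q^{c_m}T^{n+1})^{-1}$.

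\begin{itemize}
\item The poles at $T=q^{-j}$, $j\in[2n-1]_0$, do not cancel. They are genuine poles of $\zeta_{\mfh_n(\lri)}(s)$ (cf.\ \Cref{prop:local-real-poles}), so you must also match their residues. You have no way to evaluate the $B_n$-side there, i.e.\ $\ig{B_n}$ at $Z=-q^{n-j}$, $X_i=q^{c_i-(n+1)j}$. Matching residues only at the roots of $T^{n+1}=q^{-c_m}$ leaves a difference $\sum_j d_j/(1-q^jT)$ undetermined.
\item The two families of poles collide structurally. For $n=3$ one has $c_0=12=4\cdot 3$, so $1-q^{12}T^4$ vanishes at $T=q^{-3}$. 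But $1-q^3T$ also divides the Pochhammer denominator $(q^3T;q)_3$ of the $r=3$ summand, and that of the $r=1$ summand. So your claim that ``only the $r$-th term'' contributes there, with a simple pole, fails. This is the double-pole case $m(m+1)=4n$.
\item Checking $n=2,3$ is not a proof.
\item Grouping $B_n$ into $n+1$ blocks has no reason to match the summands. Each summand times its complementary denominator is not a sub-sum of the $B_n$-numerator.
\end{itemize}

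\textbf{The missing idea is to decouple $T^{n+1}$.} Replace $T^{n+1}$ by an independent variable $Y$ in the special factors on both sides. That is, use $1-q^{c_m}Y$ in \Cref{thm:simplified-zeta} and $X_i=q^{c_i}Y$ in $\ig{B_n}$, keeping $Z=-q^nT$. Then prove the stronger identity in $\Q(q,T)(Y)$.

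Both sides are then proper in $Y$ (since $\desB\le n$), with only simple poles at $Y=q^{-c_m}$. The $T$-poles are absorbed into the coefficients and never need comparing. It suffices to show, identically in $T$, that the residue of $\ig{B_n}(q^{-1},-q^nT;q^{c_0}Y,\dots,q^{c_n}Y)$ at $Y=q^{-c_m}$ equals $(T;q)_{2n}$ times the coefficient of $(1-q^{c_m}Y)^{-1}$ in the $r=n-m$ summand.

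Your residue factorization is the right tool for this, with the roles as follows:
\[
\binom{n}{m}_{Y}(-Y^nZ;Y^{-1})_{n-m}\,\igm{B_m}(X_0,\dots,X_{m-1})\,\ig{n-m}(X_{m+1},\dots,X_n).
\]
So it is type $\mathsf B$ on the low indices and type $\mathsf A$ on the high ones, not $S_i\times B_{n-i}$. At the specialization, the prefactor becomes $\binom{n}{m}_{q^{-1}}(T;q)_{n-m}$.

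You still need closed forms for the two specialized factors, which the paper gets from $q$-Chu--Vandermonde:
\begin{itemize}
\item $\igm{B_m}$ at $X_i=q^{\binom{m+1}2-\binom{i+1}2}$ equals $(q^{n-m+1}T;q^2)_m/(q;q^2)_m$;
\item $\ig{n-m}$ at $X_{m+r}=q^{-mr-\binom{r+1}2}$ equals $(-q^{-m-1};q^{-1})_{n-m}/(q^{-2m-2};q^{-1})_{n-m}$.
\end{itemize}
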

Let
\[
  \zeta^{\triangleleft}_{\mfh_n(\lri)}(s) :=
  \sum_{I\triangleleft \mfh_n} |\mfh_n:I|^{-s}
\]
denote the local ideal zeta function of \(\mfh_n\). By the same argument as in
\cite[\S 8.1]{GSS/88} we have
\begin{equation}\label{equ:Heisenberg.normal}
  \zeta^{\triangleleft}_{\mfh_n(\lri)}(s) =
  \frac{1}{(T;q)_{2n}(1-q^{2n}T^{n+1})}.
\end{equation}

\begin{cor}\label{cor:subalg-vs-normal-typeB}
   We have
  \[
    \zeta_{\mfh_n(\lri)}(s)
    =
    \zeta^{\triangleleft}_{\mfh_n(\lri)}(s)\,
    \igm{B_n}\!\left(
      q^{-1},-q^nT;\,
      q^{c_0}T^{n+1},\dots,q^{c_{n-1}}T^{n+1}
    \right).
  \]
\end{cor}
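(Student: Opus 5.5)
This corollary should follow directly from \Cref{thm:Bn-target}, \eqref{equ:Heisenberg.normal} and \Cref{def:igusa.B}; there is no new combinatorics to do, only a comparison of denominators. The plan has three steps.

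First, rewrite the full type-$\mathsf B$ Igusa function of \Cref{thm:Bn-target} in terms of the truncated one. \Cref{def:igusa.B} gives
\[
  \ig{B_n}(Y,Z;X_0,\dots,X_n)=\frac{\igm{B_n}(Y,Z;X_0,\dots,X_{n-1})}{1-X_n}.
\]
Here the truncated function $\igm{B_n}$ does not involve $X_n$. This holds because the descent set $\DesB(g)\subseteq[n-1]_0$ never contains $n$. So the numerator of $\ig{B_n}$ is free of $X_n$, and $X_n$ enters only through the factor $(1-X_n)$ in the denominator.

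Second, identify that last denominator factor. We specialize $X_i=q^{c_i}T^{n+1}$, and
\[
  c_n=\tfrac{n(n+5)-n(n+1)}{2}=2n.
\]
Hence $1-X_n=1-q^{2n}T^{n+1}$. Substituting into \eqref{eq:Bn-Igusa-zeta} gives
\[
  \zeta_{\mfh_n(\lri)}(s)=\frac{1}{(T;q)_{2n}(1-q^{2n}T^{n+1})}\,\igm{B_n}\!\left(q^{-1},-q^nT;\,q^{c_0}T^{n+1},\dots,q^{c_{n-1}}T^{n+1}\right).
\]

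Third, recognize the prefactor. It is exactly the ideal zeta function $\zeta^{\triangleleft}_{\mfh_n(\lri)}(s)$ from \eqref{equ:Heisenberg.normal}, which gives the claimed formula.

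The only point I would check with any care is the justification of \eqref{equ:Heisenberg.normal} itself, which the paper takes from \cite[\S8.1]{GSS/88}. I would treat it as given, with the following sketch as a sanity check. An ideal $I$ of finite index contains $[I,\mfh_n]$. Let $\pi^m$ be the minimal valuation of $\phi(I)$, i.e.\ the largest $m$ with $\phi(I)\subseteq\pi^m V_n(\lri)$. Non-degeneracy of the form then forces $I\supseteq \pi^m Z_n(\lri)$. Conversely, every lattice $\Lambda\le V_n(\lri)$ together with an ideal $K$ of $Z_n(\lri)$ containing $\omega_n(\Lambda,V_n(\lri))$ lifts in $q^{2nc}$ ways. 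Summing over $\Lambda$ in the usual way recovers the stated product, with $(T;q)_{2n}^{-1}$ being the zeta function of $\lri^{2n}$.
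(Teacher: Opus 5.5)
Your three steps reproduce the paper's proof exactly, and the first two are correct: \Cref{thm:Bn-target}, $c_n=2n$ and $\igm{B_n}=(1-X_n)\ig{B_n}$ give
\[
  \zeta_{\mfh_n(\lri)}(s)=\frac{1}{(T;q)_{2n}\,(1-q^{2n}T^{n+1})}\,\igm{B_n}\!\left(q^{-1},-q^nT;\,q^{c_0}T^{n+1},\dots,q^{c_{n-1}}T^{n+1}\right).
\]
The gap is in the third step. Your own sanity check exposes it once you carry it out. Use $K=\pi^cZ_n(\lri)$ with $c\le m(\Lambda)$, $q^{2nc}$ lifts, and index $q^c\,|V_n(\lri):\Lambda|$. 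Interchange the sums, and note that $\Lambda\subseteq\pi^cV_n(\lri)$ means $\Lambda=\pi^c\Lambda'$, with the index multiplied by $q^{2nc}$:
\[
  \zeta^{\triangleleft}_{\mfh_n(\lri)}(s)=\sum_{c\ge0}(q^{2n}T)^c\sum_{\Lambda\subseteq\pi^cV_n(\lri)}|V_n(\lri):\Lambda|^{-s}=\sum_{c\ge0}\frac{(q^{2n}T^{2n+1})^c}{(T;q)_{2n}}=\frac{1}{(T;q)_{2n}\,(1-q^{2n}T^{2n+1})}.
\]
The exponent is $2n+1$, not $n+1$. For $n=1$ this is the classical $\zeta(s)\zeta(s-1)\zeta(3s-2)$ of \cite{GSS/88}. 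A direct count confirms it: $\mfh_1(\Zp)$ has exactly $1+p+p^2$ ideals of index $p^2$, all containing the centre. By contrast, $1/((1-T)(1-qT)(1-q^2T^2))$ predicts $1+p+2p^2$. So your assertion that ``summing over $\Lambda$ in the usual way recovers the stated product'' is false. The formula \eqref{equ:Heisenberg.normal}, on which both your third step and the paper's one-line proof rely, is misprinted.

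Consequently the statement, read with the genuine ideal zeta function, is not proved by this argument, and in fact it fails. For $n=1$ one has $\igm{B_1}(q^{-1},-qT;q^3T^2)=\frac{1-q^3T^3}{1-q^3T^2}$. The right-hand side therefore has the factor $1-q^2T^3$ in its denominator where $\zeta_{\mfh_1(\lri)}(s)$ has $1-q^2T^2$. What your first two steps do establish is the first display above. Equivalently, they prove the corollary with $\zeta^{\triangleleft}_{\mfh_n(\lri)}(s)$ replaced by $\zeta^{\triangleleft}_{\mfh_n(\lri)}(s)\,\frac{1-q^{2n}T^{2n+1}}{1-q^{2n}T^{n+1}}$, and that is the correct form of the statement.
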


\begin{proof}
  Since \(c_n=2n\), the identity
  \(
    \igm{B_n}(Y,Z;X_0,\dots,X_{n-1})
    =
    (1-X_n)\,\ig{B_n}(Y,Z;X_0,\dots,X_n)
  \)
  gives
  \[
    \igm{B_n}\!\left(
      q^{-1},-q^nT;\,
      q^{c_0}T^{n+1},\dots,q^{c_{n-1}}T^{n+1}
    \right)
    =
    (1-q^{2n}T^{n+1})\,
    \ig{B_n}\!\left(
      q^{-1},-q^nT;\,
      q^{c_0}T^{n+1},\dots,q^{c_n}T^{n+1}
    \right).
  \]
  The result follows from \Cref{eq:Bn-Igusa-zeta}.
\end{proof}

The proof of \Cref{thm:Bn-target} occupies the remainder of this section. In
\Cref{subsec:residue-reduction} we reduce \Cref{eq:Bn-Igusa-zeta} to the
residue identity \eqref{eq:residue-identity}. In
\Cref{subsec:Bn-Igusa} we study the subset expansion and residue
factorization for type-\(\mathsf{B}\) Igusa functions. In
\Cref{subsec:q-hypergeometric-Igusa} we establish the
\(q\)-hypergeometric identities needed to evaluate the resulting factors.
Finally, in \Cref{subsec:residue-proof} we combine these evaluations to prove
\eqref{eq:residue-identity}, and hence \Cref{thm:Bn-target}.

\subsection{Reduction to a residue identity}\label{subsec:residue-reduction}

We begin the proof of \Cref{thm:Bn-target} by introducing an auxiliary
variable \(Y\). This reduces the desired identity to a residue computation. 

Define
\begin{equation}\label{eq:ZTY}
  Z(T,Y)
  :=
  \sum_{r=0}^{n}
  \frac{1}{1-q^{\,2n+\frac{r(2n+1-r)}{2}}\,Y}
  \cdot
  \frac{
    (-q)^r(1-q^{2n-2r+1})(q^2;q^2)_n
  }{
    (q;q)_{2n-r+1}(q;q)_r
    (q^{r}T;q^{2})_{n-r}
    (q^{2n-r}T;q)_{r}
  }.
\end{equation}
Then \(\zeta_{\mfh_n(\lri)}(s)=Z(T,T^{n+1})\) by
\Cref{thm:simplified-zeta}. Likewise, set
\begin{equation*}
  Z_2(T,Y)
  :=
  \frac{1}{(T;q)_{2n}}\,
  \ig{B_n}\!\left(
    q^{-1},-q^nT;\,
    q^{c_0}Y,\dots,q^{c_n}Y
  \right).
\end{equation*}
Thus the right-hand side of \eqref{eq:Bn-Igusa-zeta} is the specialization at
\(Y=T^{n+1}\) of \(Z_2(T,Y)\).  It suffices to show
\(Z(T,Y)=Z_2(T,Y)\). Substituting \(r=n-m\) into \eqref{eq:ZTY} gives
\begin{equation*}
  Z(T,Y)=\sum_{m=0}^{n}\frac{\mathcal A_m(T)}{1-q^{c_m}Y},
\end{equation*}
where
\begin{equation*}
  \mathcal A_m(T)
  :=
  \frac{
    (-q)^{n-m}(1-q^{2m+1})(q^2;q^2)_n
  }{
    (q;q)_{n+m+1}(q;q)_{n-m}
    (q^{\,n-m}T;q^2)_m
    (q^{\,n+m}T;q)_{n-m}
  }.
\end{equation*}

On the other hand,
\[
  Z_2(T,Y)
  =\frac{\ig{B_n}\!\left(
      q^{-1},-q^nT;\,
      q^{c_0}T^{n+1},\dots,q^{c_n}T^{n+1}
    \right)}{(T;q)_{2n}}
    =
  \frac{
    \sum_{g\in B_n}
    (-T)^{\negg(g)}q^{C(g)}Y^{\desB(g)}
  }{
    (T;q)_{2n}\prod_{m=0}^n(1-q^{c_m}Y)
  }.
\]

Thus \(Z(T,Y)\) and \(Z_2(T,Y)\) have the same possible simple poles
in \(Y\), namely at \(Y=q^{-c_m}\) for \(m\in[n]_0\). Since both
rational functions have no polynomial part in \(Y\), it suffices to
compare their residues at these poles. Therefore \Cref{thm:Bn-target}
follows once we prove
\[
  \lim_{Y\to q^{-c_m}}(1-q^{c_m}Y)Z_2(T,Y)=\mathcal A_m(T)
  \qquad(m\in[n]_0).
\]
Equivalently, with
\begin{equation}\label{eq:Lnm}
  \mathcal L_{n,m}
  :=
  \lim_{Y\to q^{-c_m}}
  (1-q^{c_m}Y)\,
  \ig{B_n}\!\left(
    q^{-1},-q^nT;\,
    q^{c_0}Y,\dots,q^{c_n}Y
  \right),
\end{equation}
it remains to prove the residue identities
\begin{equation}\label{eq:residue-identity}
  \mathcal L_{n,m}=(T;q)_{2n}\mathcal A_m(T)
  \qquad \textup{ for } m\in[n]_0.
\end{equation}

In the following subsection we prove a residue factorization for
\(\ig{B_n}\), which will be used to evaluate \(\mathcal L_{n,m}\).

\subsection{Subset expansion and residue factorization for type-\(\mathsf{B}\) Igusa functions}\label{subsec:Bn-Igusa}

We first present a subset expansion formula for \(\ig{B_n}\) and
\(\igm{B_n}\).

\begin{prop}[Subset expansions for type-\(\mathsf{B}\) Igusa functions]
  \label{prop:IBn-subset-expansion}
  We have   \begin{equation}\label{eq:IBn-explicit-subset}
    \ig{B_n}(Y,Z;X_0,\dots,X_n)
    =
    \sum_{I\subseteq [n]_0}
    \binom{n}{I}_{Y}\,
    \bigl(-Y^{n}Z;\,Y^{-1}\bigr)_{\,n-\min(I\cup\{n\})}
    \prod_{i\in I}\frac{X_i}{1-X_i}.
  \end{equation}   \begin{equation}\label{eq:IBnminus-explicit-subset}
    \igm{B_n}(Y,Z;X_0,\dots,X_{n-1})
    =
    \sum_{I\subseteq [n-1]_0}
    \binom{n}{I}_{Y}\,
    \bigl(-Y^{n}Z;\,Y^{-1}\bigr)_{\,n-\min(I\cup\{n\})}
    \prod_{i\in I}\frac{X_i}{1-X_i}.
  \end{equation}
\end{prop}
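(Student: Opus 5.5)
Both expansions follow from the standard "descent-set to subset" inversion. We therefore reduce to a statement about the generating function of descent classes in $B_n$. Expanding $\prod_{i\in I}\frac{X_i}{1-X_i}$ and multiplying through by $\prod_{i=0}^n(1-X_i)$ shows that the numerator of the right-hand side of \eqref{eq:IBn-explicit-subset} is
\[
  \sum_{J\subseteq[n]_0} \prod_{j\in J}X_j\prod_{j\notin J}(1-X_j)\sum_{I\subseteq J} c(I),
\]
where $c(I)$ denotes the coefficient $\binom{n}{I}_Y(-Y^nZ;Y^{-1})_{n-\min(I\cup\{n\})}$. Comparing coefficients of monomials, it suffices to prove
\[
  \sum_{\substack{g\in B_n\\ \DesB(g)\subseteq J}} Y^{\ell(g)}Z^{\negg(g)} = c(J)\qquad\text{for all } J\subseteq[n-1]_0,
\]
and to check that for $J\ni n$ the contribution is consistent. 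Since $n\notin\DesB(g)$ always, we need $c(J)=c(J\setminus\{n\})$. This holds because the multinomial $\binom{n}{I}_Y$ ignores $i_\ell=n$, and the Pochhammer index only sees $\min(I\cup\{n\})$. Once this identity is in place, \eqref{eq:IBnminus-explicit-subset} follows immediately: multiply by $(1-X_n)$ and note that the terms with $n\in I$ pair off with those with $n\notin I$, via $\frac{X_n}{1-X_n}+1=\frac1{1-X_n}$.

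The core identity is a parabolic-coset computation. The set $\{g:\DesB(g)\subseteq J\}$ consists of the minimal-length representatives of $B_n/W_{S\setminus J}$, with $W_{S\setminus J}$ the standard parabolic subgroup generated by $\{s_i: i\notin J\}$. Equivalently, via $g\mapsto g^{-1}$, they index the cosets in the other order; we use whichever convention makes $\ell$ and $\negg$ additive. Write $m=\min(J\cup\{n\})$.

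If $0\in J$, then $m=0$ and $W_{S\setminus J}$ is a product of symmetric groups of block sizes $j_1, j_2-j_1,\dots,n-j_\ell$. We then use the factorization $B_n=(\text{sign choices})\cdot S_n$, the $\ell/\negg$ generating function $\sum_{g\in B_n}Y^{\ell(g)}Z^{\negg(g)}=[n]_Y!\,(-Y^nZ;Y^{-1})_n$, and divide by the Young subgroup's Poincaré polynomial. This yields $\binom{n}{J}_Y(-Y^nZ;Y^{-1})_n$, matching $c(J)$.

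If $0\notin J$, then $W_{S\setminus J}$ contains $B_m$ times symmetric groups on the later blocks. The minimal representatives are those $g$ whose values at positions $1,\dots,m$ are increasing and positive (since $g(0)=0<g(1)$). The weighted count factors as the $B_n$ total divided by the $B_m$ total, then divided by the symmetric-group factors. Signs are free only among the remaining $n-m$ letters, which gives $(-Y^nZ;Y^{-1})_{n-m}$ times $\binom{n}{J}_Y$.

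The main obstacle is proving that $\negg$ behaves multiplicatively on the parabolic factorization $g=g^Jg_J$. Length is additive for such factorizations, but $\negg$ is not a priori. I would first try the inverse convention, so that $g_J\in B_m$ acts on the values of smallest absolute value, and verify $\negg(g)=\negg(g^J)+\negg(g_J)$ directly. Failing that, I would instead prove the weighted count by induction on $n$, inserting the letter $\pm n$ into a one-line word. This induction also recovers the known identity $\sum_{B_n}Y^{\ell}Z^{\negg}=[n]_Y!\prod_{i=1}^{n}(1+ZY^{i})$, since $\ell$ decomposes as inversions plus $\sum_{g(i)<0}|g(i)|$ plus a correction term. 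That same identity must match $(-Y^nZ;Y^{-1})_n=\prod_{i=1}^n(1+ZY^{i})$, which it does.
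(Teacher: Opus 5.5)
Your route is essentially the paper's. The paper substitutes $X^{\DesB(g)}/\prod_{i=0}^n(1-X_i)=\sum_{I\supseteq\DesB(g)}\prod_{i\in I}\frac{X_i}{1-X_i}$, swaps the sums, and then cites \cite[Lemma~3.1]{reiner1993} for
\[
\sum_{\DesB(g)\subseteq I}Y^{\ell(g)}Z^{\negg(g)}=\binom{n}{I}_Y(-Y^nZ;Y^{-1})_{n-\min(I\cup\{n\})}.
\]
You instead sketch a proof of that lemma via the parabolic factorization $g=g^Kg_K$ with $K=S\setminus J$. That is exactly the mechanism the paper's subsequent remark points to. Your derivation of the truncated identity is correct, and a bit more explicit than the paper's. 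It uses $c(J\cup\{n\})=c(J)$ and $1+\frac{X_n}{1-X_n}=\frac{1}{1-X_n}$.

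Two corrections are needed.

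First, your displayed numerator is wrong. Multiplying the right-hand side by $\prod_{i=0}^n(1-X_i)$ gives $\sum_J c(J)\,X^J\prod_{j\notin J}(1-X_j)$, with no inner sum. The inner sum belongs on the left-hand side. Since $X^S=\sum_{J\supseteq S}X^J\prod_{j\notin J}(1-X_j)$, the numerator of $\ig{B_n}$ becomes $\sum_J X^J\prod_{j\notin J}(1-X_j)\sum_{\DesB(g)\subseteq J}Y^{\ell(g)}Z^{\negg(g)}$. Comparing coefficients in the basis $\{X^J\prod_{j\notin J}(1-X_j)\}_J$ then gives your target, which is correct. As written, however, your display would instead pair $c(J)$ with the exact descent classes $\{\DesB(g)=J\}$.

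Second, the additivity of $\negg$ that you flag as the main obstacle is immediate in the direct convention. If $\DesB(g^K)\subseteq J$, then $0<g^K(1)<\cdots<g^K(m)$. Meanwhile $g_K$ maps $\{\pm1,\dots,\pm m\}$ to itself and only permutes positions within the later blocks, without signs. Hence $g(i)<0$ iff $g_K(i)<0$ for $i\le m$, and $\#\{i>m: g(i)<0\}=\negg(g^K)$. So $\negg(g)=\negg(g^K)+\negg(g_K)$.

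Likewise, no correction term is needed in the length formula: $\ell(g)=\operatorname{inv}(g(1),\dots,g(n))+\sum_{g(i)<0}|g(i)|$ exactly. Apply the same factorization with $J=\{0\}$; the representatives are increasing words, each determined by its set of negated values. This gives $\sum_{g\in B_n}Y^{\ell(g)}Z^{\negg(g)}=[n]_Y!\prod_{i=1}^n(1+ZY^i)$ at once, so your inductive fallback is unnecessary.
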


\begin{proof}
  For every \(S\subseteq[n]_0\), set \(X^S:=\prod_{i\in S}X_i\). Then
  \begin{equation}\label{eq:superset-expansion}
    \frac{X^S}{\displaystyle\prod_{i=0}^{n}(1-X_i)}
    =
    \sum_{\substack{I\subseteq[n]_0\\ S\subseteq I}}
    \prod_{i\in I}\frac{X_i}{1-X_i}.
  \end{equation}
  Substituting \eqref{eq:superset-expansion} with \(S=\DesB(g)\) into
  \Cref{def:igusa.B} yields
    \begin{align*}
      \ig{B_n}(Y,Z;X_0,\dots,X_n)
      &=
      \sum_{g\in B_n}Y^{\ell(g)}Z^{\negg(g)}
      \sum_{\substack{I\subseteq[n]_0\\ \DesB(g)\subseteq I}}
      \prod_{i\in I}\frac{X_i}{1-X_i} \\
      &=
      \sum_{I\subseteq[n]_0}
      \left(
        \sum_{\substack{g\in B_n\\ \DesB(g)\subseteq I}}
        Y^{\ell(g)}Z^{\negg(g)}
      \right)
      \prod_{i\in I}\frac{X_i}{1-X_i}.
    \end{align*}
    By \cite[Lemma~4.5]{SV/14}, equivalently
      \cite[Lemma~3.1]{reiner1993}, the inner sum is
      \begin{equation}\label{eq:Reiner-lemma-our-form}
        \sum_{\substack{g\in B_n\\ \DesB(g)\subseteq I}}
        Y^{\ell(g)}Z^{\negg(g)}
        =
        \binom{n}{I}_{Y}
        (-Y^{n}Z;Y^{-1})_{n-\min(I\cup\{n\})}.
      \end{equation}
      This proves \eqref{eq:IBn-explicit-subset}.
   The proof of
  \eqref{eq:IBnminus-explicit-subset} is identical, restricting to
  subsets \(I\subseteq[n-1]_0\).
\end{proof}

\begin{remark}
  The evaluation in \eqref{eq:Reiner-lemma-our-form} ultimately comes
  from the parabolic factorization of the bivariate generating
  function $\Phi(Y,Z;M):=\sum_{g\in M} Y^{\ell(g)}Z^{\negg(g)}$ over
  suitable subsets \(M\subseteq B_n\); compare
  \cite[Lem.~3.1]{reiner1993}.
\end{remark}

We next factor the residues needed for \(\mathcal L_{n,m}\).

\begin{definition}[\(m\)-th type-\(\mathsf{B}\) Igusa residue]\label{def:IBn-residue}
  For \(m\in[n]_0\), define
  \[
    \Res_m \ig{B_n}(Y,Z;X_0,\dots,\widehat{X_m},\dots,X_n)
    :=
    \lim_{X_m\to 1}
    (1-X_m)\,
    \ig{B_n}(Y,Z;X_0,\dots,X_n).
  \]
\end{definition}

Then by \eqref{eq:Lnm}, we have
\begin{equation}\label{eq:Lnm-as-IBn-residue}
  \mathcal L_{n,m}
  =
  \left.
  \Res_m \ig{B_n}(Y,Z;X_0,\dots,\widehat{X_m},\dots,X_n)
  \right|_{
    \substack{
      Y=q^{-1},\ Z=-q^nT,\\
      X_i=q^{c_i-c_m}\ (i\ne m)
    }
  }.
\end{equation}

\begin{prop}[Residue factorization for \(\ig{B_n}\)]
  \label{prop:IBn-residue-factorization}
  For \(m\in[n]_0\),
  \begin{multline*}
    \Res_m \ig{B_n}(Y,Z;X_{\ne m})
    =
    \binom{n}{m}_{Y}
    (-Y^{n}Z;Y^{-1})_{n-m}  \\
    {}\times
    \igm{B_m}(Y,Z;X_0,\dots,X_{m-1})
    \ig{n-m}(Y;X_{m+1},\dots,X_n).
  \end{multline*}
\end{prop}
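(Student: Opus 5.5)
We start from the subset expansion \eqref{eq:IBn-explicit-subset} of \Cref{prop:IBn-subset-expansion}. Multiplying by \(1-X_m\) and letting \(X_m\to1\), every summand indexed by \(I\) with \(m\notin I\) stays bounded and is killed by the factor \(1-X_m\). A summand with \(m\in I\) contributes \(X_m\cdot(\text{rest})\to(\text{rest})\). Hence
\[
  \Res_m\ig{B_n}
  =
  \sum_{\substack{I\subseteq[n]_0\\ m\in I}}
  \binom{n}{I}_Y\,(-Y^nZ;Y^{-1})_{n-\min(I\cup\{n\})}
  \prod_{i\in I\setminus\{m\}}\frac{X_i}{1-X_i}.
\]

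Next we write each such \(I\) uniquely as \(I=I_1\sqcup\{m\}\sqcup I_2\) with \(I_1\subseteq[m-1]_0\) and \(I_2\subseteq\{m+1,\dots,n\}\). Two factorizations then do the work.

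\emph{The multinomial.} Since \(m\in I\), the chain defining \(\binom{n}{I}_Y\) passes through \(m\). This gives
\[
  \binom{n}{I}_Y=\binom{n}{I_2\cup\{m\}}_Y\binom{m}{I_1}_Y,
  \qquad
  \binom{n}{I_2\cup\{m\}}_Y=\binom{n}{m}_Y\binom{n-m}{I_2-m}_Y,
\]
where \(I_2-m\subseteq[n-m]\) is the shifted set. The second identity holds because \(\binom{n}{i}_Y\binom{i}{m}_Y=\binom{n}{m}_Y\binom{n-m}{i-m}_Y\), iterated along the chain. If \(I_2=\varnothing\), the chain is just \(\binom{n}{m}_Y\), consistent with the convention. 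One should check the conventions when \(n\in I_2\): the factor \(\binom{n}{n}\) is trivial, and \(\ig{n-m}\) allows index \(n-m\).

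\emph{The Pochhammer factor.} We have \(\min(I\cup\{n\})=\min(I_1\cup\{m\})\) since \(m\in I\). Writing \(j:=\min(I_1\cup\{m\})\), split
\[
  (-Y^nZ;Y^{-1})_{n-j}=(-Y^nZ;Y^{-1})_{n-m}\,(-Y^{m}Z;Y^{-1})_{m-j},
\]
using \((a;t)_{p+r}=(a;t)_p(at^p;t)_r\) with \(a=-Y^nZ\), \(t=Y^{-1}\), \(p=n-m\).

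Summing over \(I_2\) then gives \(\binom{n}{m}_Y\,\ig{n-m}(Y;X_{m+1},\dots,X_n)\), by \Cref{def:igusa.A} after reindexing \(X_{m+i}\). Summing over \(I_1\subseteq[m-1]_0\) gives
\[
  \sum_{I_1}\binom{m}{I_1}_Y(-Y^mZ;Y^{-1})_{m-\min(I_1\cup\{m\})}\prod_{i\in I_1}\frac{X_i}{1-X_i},
\]
which is exactly \(\igm{B_m}(Y,Z;X_0,\dots,X_{m-1})\) by \eqref{eq:IBnminus-explicit-subset} with \(n\) replaced by \(m\). The remaining factor \((-Y^nZ;Y^{-1})_{n-m}\) is the one in the statement, and multiplying the pieces yields the claim.

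The edge cases need a separate check. For \(m=0\), \(\igm{B_0}\) should be read as \(1\); the sum over \(I_1\) ranges only over \(\varnothing\) and gives \((\cdot)_0=1\). For \(m=n\), \(\ig{0}=1\). The main (mild) obstacle is the bookkeeping for the Gaussian multinomial factorization through \(m\) together with these boundary conventions; the rest is routine.
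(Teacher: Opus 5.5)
Your proposal is correct and follows essentially the same route as the paper: keep only the terms with $m\in I$ in the subset expansion, split $I=I_1\sqcup\{m\}\sqcup I_2$, factor both the Gaussian multinomial and the Pochhammer symbol through $m$, and recognize the two resulting sums as $\igm{B_m}$ and $\ig{n-m}$. Your explicit check of the boundary cases $m=0$ (reading $\igm{B_0}$ as $1$, which the paper's definition for $n\ge1$ leaves implicit) and $m=n$ (where $\ig{0}=1$) is a small addition to the paper's argument.
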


\begin{proof}
  Multiplying \eqref{eq:IBn-explicit-subset} by \(1-X_m\) and letting \(X_m\to 1\), only the terms with
  \(m\in I\) survive. Thus
  \[
    \Res_m \ig{B_n}
    =
    \sum_{\substack{I\subseteq [n]_0\\ m\in I}}
    \binom{n}{I}_{Y}\,
    \bigl(-Y^{n}Z;\,Y^{-1}\bigr)_{\,n-\min(I\cup\{n\})}
    \prod_{i\in I\setminus\{m\}}\frac{X_i}{1-X_i}.
  \]

  Write \(I=I_1\sqcup\{m\}\sqcup I_2\), where
  \(I_1\subseteq [m-1]_0\) and \(I_2\subseteq \{m+1,\dots,n\}\). Then
  \[
    \min(I\cup\{n\})=\min(I_1\cup\{m\}), \quad 
    \binom{n}{I}_{Y}
    =
    \binom{n}{m}_{Y}
    \binom{m}{I_1}_{Y}
    \binom{n-m}{I_2-m}_{Y},
  \]
  and
  \[
    (-Y^nZ;Y^{-1})_{n-\min(I_1\cup\{m\})}
    =
    (-Y^nZ;Y^{-1})_{n-m}
    (-Y^mZ;Y^{-1})_{m-\min(I_1\cup\{m\})}.
  \]
  Then \(\Res_m \ig{B_n}\) factors as
  \[
    \binom{n}{m}_{Y}
    (-Y^nZ;Y^{-1})_{n-m}
    \left(
      \sum_{I_1\subseteq[m-1]_0}
      \binom{m}{I_1}_{Y}
      (-Y^mZ;Y^{-1})_{m-\min(I_1\cup\{m\})}
      \prod_{i\in I_1}\frac{X_i}{1-X_i}
    \right)
  \]
  \[
    {}\times
    \left(
      \sum_{I_2\subseteq\{m+1,\dots,n\}}
      \binom{n-m}{I_2-m}_{Y}
      \prod_{i\in I_2}\frac{X_i}{1-X_i}
    \right).
  \]
  The two parenthesized sums are
  \(\igm{B_m}(Y,Z;X_0,\dots,X_{m-1})\) and
  \(\ig{n-m}(Y;X_{m+1},\dots,X_n)\), respectively.
\end{proof}

Combining \eqref{eq:Lnm-as-IBn-residue} with
\Cref{prop:IBn-residue-factorization} yields
\begin{align}
  \mathcal L_{n,m}
  &=
  \binom{n}{m}_{q^{-1}}
  (T;q)_{n-m}\,
  \igm{B_m}\!\left(
    q^{-1},-q^nT;
    \left(
      q^{\frac{m(m+1)-i(i+1)}{2}}
    \right)_{i=0}^{m-1}
  \right) \notag\\
  &\qquad\times
  \ig{n-m}\!\left(
    q^{-1};
    q^{-m-1},
    q^{-2m-3},
    \dots,
    q^{\frac{m(m+1)-n(n+1)}{2}}
  \right).
  \label{eq:Lnm-factorized}
\end{align}
The next subsection evaluates the two Igusa factors in
\eqref{eq:Lnm-factorized}.

\subsection{\(q\)-hypergeometric identities for Igusa functions}\label{subsec:q-hypergeometric-Igusa}

We collect the specializations of Igusa functions needed for
\eqref{eq:residue-identity}. We use the classical \(q\)-Chu--Vandermonde summation (see \cite[Eq.~(1.5.2)]{GasperRahman2009})
\begin{equation}\label{eq:q-Chu-Vandermonde}
  {}_2\phi_1\!\left[
    \begin{matrix}
      Q^{-N},\, b\\
      c
    \end{matrix}
    ;Q,\frac{cQ^N}{b}
  \right]
  =
  \sum_{r=0}^N
  \frac{(Q^{-N};Q)_r(b;Q)_r}{(c;Q)_r(Q;Q)_r}
  \left(\frac{cQ^N}{b}\right)^r
  =
  \frac{(c/b;Q)_N}{(c;Q)_N}
\end{equation}

\subsubsection{Induction for type-\(\mathsf{A}\) Igusa functions}
The type-\(\mathsf{A}\) Igusa functions satisfy the following recursion, expressing \((1-X_n)\ig{n}\) in terms of lower-degree Igusa functions.

\begin{lemma}\label{lem:Igusa-induction}
  For \(n\ge 0\), with \(\ig{0}(Y;-):=1\) and \(X_0:=1\),
  \[
    \ig{n}(Y;X_1,\dots,X_n)
    =
    \sum_{j=0}^{n}
    \binom{n}{j}_Y X_j\,\ig{j}(Y;X_1,\dots,X_j).
  \]
\end{lemma}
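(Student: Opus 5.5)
The plan is to classify the subsets \(I\subseteq[n]\) in the defining sum of \(\ig{n}\) by their largest element \(j:=\max(I\cup\{0\})\in[n]_0\). With the conventions \(X_0:=1\) and \(\ig{0}(Y;-)=1\), this groups the terms according to where the support stops. The identity then follows once we can factor each group.

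The group \(j=0\) is just \(I=\varnothing\), so it contributes \(1\). For \(j\ge1\), write \(I=I'\sqcup\{j\}\) with \(I'\subseteq[j-1]\). The Gaussian multinomial coefficient factors along the largest element of \(I\):
\[
  \binom{n}{I}_Y=\binom{n}{j}_Y\binom{j}{I'}_Y ,
\]
which is immediate from the chain definition \(\binom{n}{i_\ell}_Y\binom{i_\ell}{i_{\ell-1}}_Y\cdots\) with \(i_\ell=j\). Summing over \(I'\) therefore gives
\[
  \binom{n}{j}_Y\,\frac{X_j}{1-X_j}\,\igm{j}(Y;X_1,\dots,X_{j-1}),
\]
and by \Cref{lem:bridge-Igusa}, \(\igm{j}(Y;X_1,\dots,X_{j-1})=(1-X_j)\ig{j}(Y;X_1,\dots,X_j)\). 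Thus the \(j\)-th group equals \(\binom{n}{j}_Y X_j\,\ig{j}(Y;X_1,\dots,X_j)\). This also holds for \(j=0\), because \(X_0=1\), \(\binom{n}{0}_Y=1\) and \(\ig{0}=1\). Summing over \(j\in[n]_0\) gives the claimed identity, including the \(j=n\) term \(X_n\ig{n}\).

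If \Cref{lem:bridge-Igusa} is not to be used, the only real step is the identity \(\igm{j}=(1-X_j)\ig{j}\) itself. It can be proved directly by splitting the sum defining \(\ig{j}\) according to whether \(j\in I\). Since \(\binom{j}{I}_Y=\binom{j}{I\setminus\{j\}}_Y\), this gives \(\ig{j}=\igm{j}\bigl(1+\tfrac{X_j}{1-X_j}\bigr)=\igm{j}/(1-X_j)\).

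The one point needing care is bookkeeping in the index convention for \(\binom{n}{I}_Y\). The chain definition is stated for \(I\subseteq[n]_0\), so one must check that removing the largest element, or treating \(0\) or \(n\) in \(I\), does not change the value. Since \(\binom{m}{0}_Y=\binom{m}{m}_Y=1\), these boundary elements contribute trivial factors, and the argument goes through. The result is really a statement of \((1-X_n)\ig{n}=\sum_{j<n}\binom{n}{j}_YX_j\ig{j}\), matching the description preceding the lemma.
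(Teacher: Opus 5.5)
Your proof is correct and follows the paper's argument: you partition by \(j=\max(I\cup\{0\})\), factor \(\binom{n}{I}_Y=\binom{n}{j}_Y\binom{j}{I\setminus\{j\}}_Y\), and then use \(\igm{j}=(1-X_j)\ig{j}\) from \Cref{lem:bridge-Igusa}. The only cosmetic difference is that the paper writes the \(j\)-th group as \(\binom{n}{j}_Y\bigl(\ig{j}-\igm{j}\bigr)\), whereas you write it as \(\binom{n}{j}_Y\frac{X_j}{1-X_j}\igm{j}\); both reduce to \(\binom{n}{j}_Y X_j\,\ig{j}\) by the same lemma.
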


\begin{proof}
  Partition the subset expansion of \(\ig{n}\) from \Cref{def:igusa.A}
  according to \(j=\max(I\cup\{0\})\). For such \(I\),
  \(\binom{n}{I}_Y=\binom{n}{j}_Y\binom{j}{I}_Y\). Hence the \(j\)-th part is
  \begin{align*}
    \binom{n}{j}_Y
    \sum_{\substack{I\subseteq[j]\\ \max(I\cup\{0\})=j}}
    \binom{j}{I}_Y
    \prod_{i\in I}\frac{X_i}{1-X_i}
    &=
    \binom{n}{j}_Y
    \bigl(\ig{j}(Y;X_1,\dots,X_j)
    -\igm{j}(Y;X_1,\dots,X_{j-1})\bigr) \\
    &=
    \binom{n}{j}_Y X_j\,\ig{j}(Y;X_1,\dots,X_j),
  \end{align*}
  where the last equality follows from
  \Cref{lem:bridge-Igusa}. Summing over \(j\) gives the claim.
\end{proof}

\begin{lemma}\label{lem:Igusa-specialization}
  For \(n\ge0\),
  \begin{equation*}
    \ig{n}\!\left(q^{-1};\left(q^{-\binom{r+1}{2}}U^r\right)_{r=1}^n\right)    =
    \frac{(-q^{-1}U;q^{-1})_n}{(q^{-2}U^2;q^{-1})_n}.
  \end{equation*}
\end{lemma}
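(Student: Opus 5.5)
Write $Q:=q^{-1}$ and $X_r:=Q^{\binom{r+1}{2}}U^r$. The statement then reads
\[
  \ig{n}(Q;X_1,\dots,X_n)=\frac{(-QU;Q)_n}{(Q^2U^2;Q)_n}.
\]
I would prove this by induction on $n$, using the recursion of \Cref{lem:Igusa-induction}. That lemma, rearranged, gives
\[
  (1-X_n)\ig{n}=\sum_{j=0}^{n-1}\binom{n}{j}_Q X_j\,\ig{j}.
\]
The case $n=0$ is trivial, since both sides equal $1$. Writing $G_j:=(-QU;Q)_j/(Q^2U^2;Q)_j$, it suffices to verify the identity
\[
  (1-Q^{\binom{n+1}{2}}U^n)\,G_n=\sum_{j=0}^{n-1}\binom{n}{j}_Q\,Q^{\binom{j+1}{2}}U^j\,G_j .
\]
Equivalently, after moving the $j=n$ term across, it suffices to show $\sum_{j=0}^{n}\binom{n}{j}_Q Q^{\binom{j+1}{2}}U^jG_j=G_n$. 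The identity is visibly true for $n=1$, as a quick check confirms: $1+QU\cdot\frac{1+QU}{1-Q^2U^2}=\frac{1}{1-QU}$ and $G_1=\frac{1}{1-QU}$.

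To prove the summation identity, I would multiply through by $(Q^2U^2;Q)_n$. Then $G_j\,(Q^2U^2;Q)_n=(-QU;Q)_j\,(Q^{2+j}U^2;Q)_{n-j}$. Next I would factor $1-Q^{k}U^2=(1-Q^{k/2}U)(1+Q^{k/2}U)$, which is awkward because of the half-integral powers. Alternatively, I would recognise the sum as a terminating basic hypergeometric series. Using $\binom{n}{j}_Q=(-1)^jQ^{nj-\binom{j}{2}}(Q^{-n};Q)_j/(Q;Q)_j$, the $j$-th term becomes
\[
  \frac{(Q^{-n};Q)_j(-QU;Q)_j}{(Q;Q)_j(Q^2U^2;Q)_j}\,(-Q^{n+1}U)^j .
\]
This is exactly a ${}_2\phi_1[Q^{-n},b;c;Q,z]$ with $b=-QU$, $c=Q^2U^2$ and $z=-Q^{n+1}U=cQ^n/b$. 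So the sum is precisely the $q$-Chu--Vandermonde sum \eqref{eq:q-Chu-Vandermonde}, which evaluates it to $(c/b;Q)_n/(c;Q)_n=(-QU;Q)_n/(Q^2U^2;Q)_n=G_n$.

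So the plan has three steps. First, apply \Cref{lem:Igusa-induction} together with the induction hypothesis to replace each $\ig{j}$, $j<n$, by $G_j$. Second, rewrite the resulting sum in ${}_2\phi_1$ form via the Gaussian binomial conversion. Third, invoke \eqref{eq:q-Chu-Vandermonde} with base $Q=q^{-1}$. In fact the recursion lets one solve for $\ig{n}$ directly, since the $j=n$ term on the right-hand side of \Cref{lem:Igusa-induction} contains $\ig{n}$ linearly. Concretely, $\ig{n}(1-X_n)=\sum_{j<n}(\cdots)$, and the Chu--Vandermonde identity shows that $G_n$ satisfies the same linear equation. Uniqueness holds because $1-X_n\neq0$ generically.

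The only delicate step is the sign and exponent bookkeeping in the conversion $\binom{n}{j}_Q=(Q^{-n};Q)_j(-1)^jQ^{nj-\binom j2}/(Q;Q)_j$. One must check that it combines with $Q^{\binom{j+1}{2}}U^j$ to give exactly $(cQ^n/b)^j$. I would verify this exponent identity, $nj-\binom j2+\binom{j+1}2=nj+j$, and I expect it to be the main point needing care.
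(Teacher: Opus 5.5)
Your proposal is correct and follows the paper's proof essentially verbatim. Both arguments induct via \Cref{lem:Igusa-induction}, convert the Gaussian binomial into ${}_2\phi_1$ form, and apply $q$-Chu--Vandermonde with base $q^{-1}$, $b=-q^{-1}U$, $c=q^{-2}U^2$. Your remark that the $j=n$ term contains $\ig{n}$ linearly matters: you compare $(1-X_n)\ig{n}=\sum_{j<n}\cdots$ with $(1-X_n)G_n=\sum_{j<n}\cdots$ and cancel the nonzero factor $1-X_n$, which makes precise a step the paper's write-up glosses over by substituting the claimed formula directly into the $j=n$ term.
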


\begin{proof}
  Set \(I(n,q,U):=\ig{n}\!\left(q^{-1};\left(q^{-\binom{r+1}{2}}U^r\right)_{r=1}^n\right)\).
  We argue by induction on \(n\). For \(n=0\), both sides equal \(1\).
  For \(n\ge 1\), by \Cref{lem:Igusa-induction} and the induction hypothesis, 
  \[
    I(n,q,U)
    =
    \sum_{j=0}^{n}
    \binom{n}{j}_{q^{-1}}
    q^{-j(j+1)/2}U^j\,
    \frac{(-q^{-1}U;q^{-1})_j}{(q^{-2}U^2;q^{-1})_j}.
  \]
  Using
  \[
    \binom{n}{j}_{q^{-1}}
    =
    \frac{(q^{-n};q)_j}{(q^{-1};q^{-1})_j}
    =
    (-1)^j q^{-nj+\binom{j}{2}}
    \frac{(q^{n};q^{-1})_j}{(q^{-1};q^{-1})_j},
  \]
  we obtain
  \begin{align*}
    I(n,q,U)
    &=
    \sum_{j=0}^{n}
    \frac{(q^{n};q^{-1})_j\,(-q^{-1}U;q^{-1})_j}
    {(q^{-2}U^2;q^{-1})_j\,(q^{-1};q^{-1})_j}\,
    (-q^{-n-1}U)^j \\
    &=
    {}_2\phi_1\!\left[
      \begin{matrix}
        q^{n},\, -q^{-1}U\\
        q^{-2}U^2
      \end{matrix}
      ;q^{-1},-q^{-n-1}U
    \right]     =
    \frac{(-q^{-1}U;q^{-1})_n}{(q^{-2}U^2;q^{-1})_n},
  \end{align*}
  by
  \eqref{eq:q-Chu-Vandermonde}, with \(Q=q^{-1}\),
  \(b=-q^{-1}U\), and \(c=q^{-2}U^2\).
\end{proof}

\begin{remark}\label{rem:SV14.1}
  \Cref{lem:Igusa-specialization} is a triangular-number analogue of the
  square-number specialization implicit in \cite[Prop.~4.2]{SV/14}.
  In our notation, that result gives
  \[
    \igm{n}\!\left(X^{-1};(X^{i(n-i)}Z^i)_{i=1}^{n-1}\right)
    =
    \frac{1-Z^n}{(Z;X)_n}.
  \]
  Since \(\igm{n}=(1-X_n)\ig{n}\), the substitution \(Z=X^{-n}U\) yields
  \begin{equation*}
    \ig{n}\!\left(X^{-1};X^{-i^2}U^i\ (1\le i\le n)\right)
    =
    \frac{1}{(X^{-n}U;X)_n}
    =
    \frac{1}{(X^{-1}U;X^{-1})_n}.
  \end{equation*}
\end{remark}

\subsubsection{Relation between truncated type-\(\mathsf{B}\) and type-\(\mathsf{A}\) Igusa functions}

\begin{lemma}\label{lem:IBnminus-to-Igusa}
  For \(n\ge 0\), with \(X_n:=1\), we have
  \begin{equation}\label{eq:IBnminus-to-Igusa}
    \igm{B_n}(Y,Z;X_0,\dots,X_{n-1})
    =
    \sum_{j=0}^n
    \binom{n}{j}_Y\,
    X_{n-j}\,
    \ig{j}\bigl(Y;X_{n-1},X_{n-2},\dots,X_{n-j}\bigr)\,
    (-Y^nZ;Y^{-1})_j.
  \end{equation}
\end{lemma}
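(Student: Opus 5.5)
The plan is to start from the subset expansion \eqref{eq:IBnminus-explicit-subset} of \Cref{prop:IBn-subset-expansion} and regroup its terms according to the minimum of the index set, in the same way that \Cref{lem:Igusa-induction} regrouped the type-\(\mathsf A\) expansion by the maximum. For \(I\subseteq[n-1]_0\), put \(m:=\min(I\cup\{n\})\) and \(j:=n-m\in[n]_0\). The factor \((-Y^nZ;Y^{-1})_{n-\min(I\cup\{n\})}\) then depends only on \(j\) and equals \((-Y^nZ;Y^{-1})_j\). The remaining task is to show that the sum over all \(I\) with \(\min(I\cup\{n\})=n-j\) of \(\binom{n}{I}_Y\prod_{i\in I}X_i/(1-X_i)\) equals \(\binom{n}{j}_Y X_{n-j}\,\ig{j}(Y;X_{n-1},\dots,X_{n-j})\), under the convention \(X_n:=1\).

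The case \(j=0\) is the single term \(I=\varnothing\), which contributes \(1\). Since \(X_n=1\) and \(\ig{0}=1\), this matches the right-hand side. For \(j\ge1\), write \(I=\{n-j\}\sqcup I'\) with \(I'\subseteq\{n-j+1,\dots,n-1\}\). The multinomial coefficient factors as \(\binom{n}{I}_Y=\binom{n}{n-j}_Y\binom{n}{I'}\)-type product; more precisely, using the chain definition of \(\binom{n}{I}_Y\), we have \(\binom{n}{I}_Y=\binom{n}{I'\cup\{n-j\}}_Y=\binom{n}{j}_Y\cdot\binom{j}{(I'-(n-j))\cup\{0\}}_Y\), where the last factor equals \(\binom{j}{I'-(n-j)}_Y\) because a \(0\) in the index set contributes \(\binom{i_1}{0}=1\).

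Next, reindex \(I'\) by the reversal \(i\mapsto n-i\). This sends \(\{n-j+1,\dots,n-1\}\) onto \([j-1]\), and \(X_{n-i}\) becomes the \(i\)-th variable in the reversed list \((X_{n-1},\dots,X_{n-j+1})\). By the symmetry \(\binom{j}{j-I}_Y=\binom{j}{I}_Y\) (the fact behind \Cref{lem:Igusa-reversal}), the sum over \(I'\) becomes \(\igm{j}(Y;X_{n-1},\dots,X_{n-j+1})\). The \(j\)-th block is therefore
\[
\binom{n}{j}_Y\,\frac{X_{n-j}}{1-X_{n-j}}\,\igm{j}(Y;X_{n-1},\dots,X_{n-j+1})(-Y^nZ;Y^{-1})_j .
\]
Applying \Cref{lem:bridge-Igusa}, which gives \(\igm{j}/(1-X_{n-j})=\ig{j}\) once \(X_{n-j}\) is placed as the \(j\)-th argument, turns this into the \(j\)-th summand of \eqref{eq:IBnminus-to-Igusa}. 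Summing over \(j\in[n]_0\) completes the argument.

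The main obstacle is purely bookkeeping. One must check that the multinomial coefficient factors correctly when \(0\in I\) (that is, \(j=n\)), and that the reversal of indices matches the argument order \(X_{n-1},\dots,X_{n-j}\) of \(\ig{j}\), with \(X_{n-j}\) in the last slot. I would settle both by verifying small cases \(n=1,2\) directly.
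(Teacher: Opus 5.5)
Your proposal is correct and follows essentially the same route as the paper. Both arguments partition the subset expansion \eqref{eq:IBnminus-explicit-subset} by \(\min(I\cup\{n\})\), split off the minimum via \(\binom{n}{I}_Y=\binom{n}{j}_Y\binom{j}{I'-(n-j)}_Y\), and then convert the resulting truncated Igusa function using the symmetry behind \Cref{lem:Igusa-reversal} together with \Cref{lem:bridge-Igusa}; your handling of the edge cases \(j=0\) and \(j=n\) is already correct, so the small-case checks you mention are unnecessary.
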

\begin{proof}
  Starting from \eqref{eq:IBnminus-explicit-subset}, we partition the sum over \(I\subseteq [n-1]_0\) according to
  \(r=\min(I\cup\{n\})\). For fixed \(r\),
  the \(r\)-th part is
  \begin{align*}
    (-Y^nZ;Y^{-1})_{n-r}
    \sum_{\substack{I\subseteq [n-1]_0\\ \min(I\cup\{n\})=r}}
    \binom{n}{I}_Y
    \prod_{i\in I}\frac{X_i}{1-X_i}
  \end{align*}

  If \(r=n\), then \(I=\varnothing\), and the sum equals~\(1\). If \(r\in[n-1]_0\), set \(J=I\setminus\{r\}\). Then
  \(\binom{n}{I}_Y=\binom{n}{r}_Y\binom{n-r}{J-r}_Y\), and hence the
  sum is
  \begin{align*}
    \binom{n}{r}_Y
    \frac{X_r}{1-X_r}
    \sum_{J\subseteq\{r+1,\dots,n-1\}}
    \binom{n-r}{J-r}_Y
    \prod_{i\in J}\frac{X_i}{1-X_i}
    &=
    \binom{n}{r}_Y
    \frac{X_r}{1-X_r}\,
    \igm{n-r}(Y;X_{r+1},\dots,X_{n-1}) \\
    &=
    \binom{n}{r}_Y
    X_r\,
    \ig{n-r}(Y;X_{n-1},\dots,X_{r+1},X_r),
  \end{align*}
  where the last equality uses \Cref{lem:Igusa-reversal,lem:bridge-Igusa}. Hence
  \[
    \igm{B_n}(Y,Z;X_0,\dots,X_{n-1})
    =
    \sum_{r=0}^n
    \binom{n}{r}_Y\,
    X_r\,
    \ig{\,n-r\,}(Y;X_{n-1},\dots,X_{r+1},X_r)\,
    (-Y^nZ;Y^{-1})_{n-r}.
  \]
  Substituting \(j=n-r\) gives \eqref{eq:IBnminus-to-Igusa}.
\end{proof}

\subsubsection{A specialization of truncated type-\(\mathsf{B}\) Igusa functions}
\begin{prop}\label{prop:IBnminus-specialization}
  Let \(k\in\N_0\). Then
  \begin{equation*}
    \igm{B_k}\!\left(
      q^{-1},Z;
      \left(
        X_r=q^{\frac{k(k+1)}{2}-\frac{r(r+1)}{2}}
      \right)_{r=0}^{k-1}
    \right)
    =
    \frac{(-q^{1-k}Z;q^2)_k}{(q;q^2)_k}.
  \end{equation*}
\end{prop}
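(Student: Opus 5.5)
The plan is to reduce the statement to a terminating one-variable $q$-series identity using \Cref{lem:IBnminus-to-Igusa}, and then evaluate that series.

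\emph{Reduction.} Apply \eqref{eq:IBnminus-to-Igusa} with $n=k$, $Y=q^{-1}$, $X_r=q^{\binom{k+1}{2}-\binom{r+1}{2}}$ for $r\in[k-1]_0$, and the convention $X_k=1$; this convention is consistent with the same formula at $r=k$. For $i\in[k]$ we have
\[
\binom{k+1}{2}-\binom{k-i+1}{2}=ik-\binom{i}{2}=-\binom{i+1}{2}+i(k+1),
\]
so the reversed argument list $(X_{k-1},\dots,X_{k-j})$ is exactly $\bigl(q^{-\binom{i+1}{2}}U^i\bigr)_{i=1}^{j}$ with $U=q^{k+1}$. \Cref{lem:Igusa-specialization} then gives
\[
\ig{j}(q^{-1};X_{k-1},\dots,X_{k-j})=\frac{(-q^{k};q^{-1})_j}{(q^{2k};q^{-1})_j}.
\]
Since $X_{k-j}=q^{jk-\binom j2}$ and $(-Y^kZ;Y^{-1})_j=(-q^{-k}Z;q)_j$, the left-hand side becomes
\[
\Sigma_k(Z):=\sum_{j=0}^k\binom{k}{j}_{q^{-1}}q^{jk-\binom j2}\frac{(-q^{k};q^{-1})_j}{(q^{2k};q^{-1})_j}\,(-q^{-k}Z;q)_j .
\]
It remains to show that $\Sigma_k(Z)=(-q^{1-k}Z;q^2)_k/(q;q^2)_k$.

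\emph{Evaluating the series.} Both sides are polynomials in $Z$ of degree at most $k$. My first attempt is to convert everything to the base $Q=q^{-1}$. Rewrite $\binom{k}{j}_{q^{-1}}$ as in the proof of \Cref{lem:Igusa-specialization}, and reverse $(-q^{-k}Z;q)_j$ into base $q^{-1}$, absorbing the resulting monomials. Then $\Sigma_k$ becomes a terminating balanced ${}_3\phi_2$ with numerator parameters $Q^{-k}$, $-Q^{-k}$ and one $Z$-dependent parameter. I would try to make the combination $(Q^{-k};Q)_j(-Q^{-k};Q)_j=(Q^{-2k};Q^2)_j$ visible and apply the $q$-Pfaff--Saalschütz summation (or a quadratic transformation, if Saalschütz does not apply directly), aiming for the product $(-q^{1-k}Z;q^2)_k/(q;q^2)_k$.

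\emph{Fallback, and the main obstacle.} The hard step is the bookkeeping in this hypergeometric matching: one base-$q$ factor has to be reversed, and the target product lives in base $q^2$. If it resists, I would instead prove the polynomial identity by interpolation. First, show that $\Sigma_k(Z)$ vanishes at the $k$ points $Z=-q^{k-1-2i}$, $i\in[k-1]_0$. At these points $(-q^{-k}Z;q)_j=(q^{-1-2i};q)_j$, and the sum should collapse by a $q$-Chu--Vandermonde evaluation \eqref{eq:q-Chu-Vandermonde}. Second, compare one further coefficient, either the constant term $\Sigma_k(0)$ or the leading $Z^k$ coefficient, which comes only from $j=k$ and is explicit, against $1/(q;q^2)_k$ or $q^{\binom{k}{2}\cdot 2-k(k-1)}\cdot(\text{const})/(q;q^2)_k$ respectively. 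A polynomial of degree at most $k$ with these $k$ roots is determined up to a scalar, so matching the leading coefficient completes the proof. As a last resort, induction on $k$ via a contiguity relation in $k$ would also be feasible.
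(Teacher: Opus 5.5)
\textbf{There is a genuine gap: the evaluation of $\Sigma_k(Z)$, which is the real content of the proposition, is not carried out by either of your routes.}

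Your reduction is correct and coincides with the paper's first step. You obtain the same series $\Sigma_k(Z)$. Your leading-coefficient check is also fine, since $[Z^k]\Sigma_k=(-q^k;q^{-1})_k/(q^{2k};q^{-1})_k=(-q;q)_k/(q^{k+1};q)_k=1/(q;q^2)_k$.

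\textbf{Primary route (Saalsch\"utz).} Converted to base $q$, your series is
\[
\Sigma_k(Z)=\sum_{j=0}^{k}\frac{(q^{-k};q)_j\,(-q^{-k};q)_j\,(-q^{-k}Z;q)_j}{(q;q)_j\,(q^{-2k};q)_j}\,q^{j}
={}_3\phi_2\bigl(q^{-k},-q^{-k},-q^{-k}Z;\,q^{-2k},0;\,q,q\bigr).
\]
It has a zero lower parameter, so it is not balanced and $q$-Pfaff--Saalsch\"utz does not apply. In base $q^{-1}$ a stray factor $q^{\binom{j}{2}}$ survives, so it is not a ${}_3\phi_2$ there either.

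\textbf{Fallback (interpolation).} This breaks at the same point. At $Z=-q^{k-1-2i}$ the sum is ${}_3\phi_2(q^{-(2i+1)},q^{-k},-q^{-k};\,q^{-2k},0;\,q,q)$, which still has three genuine upper parameters. Its vanishing is true, but it is a quadratic (Kummer-type) phenomenon: a $q$-analogue of ${}_2F_1(-N,b;2b;2)=0$ for odd $N$. It is not an instance of $q$-Chu--Vandermonde. So the step you describe as ``should collapse by $q$-Chu--Vandermonde'' is exactly what remains unproved.

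\textbf{The missing idea, used in the paper.} Split off the $Z$-dependence before summing.
\begin{enumerate}
\item Expand $(-q^{-k}Z;q)_j=\sum_{\ell}q^{\binom{\ell}{2}-k\ell}\binom{j}{\ell}_qZ^\ell$ and interchange the sums.
\item Rewrite $\binom{k}{j}_{q^{-1}}=q^{-j(k-j)}\binom{k}{j}_q$, then use $\binom{k}{j}_q\binom{j}{\ell}_q=\binom{k}{\ell}_q\binom{k-\ell}{j-\ell}_q$.
\item Each coefficient $[Z^\ell]\Sigma_k$ is now a ${}_2\phi_1$ in base $q^{-1}$ with upper parameters $q^{k-\ell},-q^{k-\ell}$ and lower parameter $q^{2k-\ell}$. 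This $q$-Chu--Vandermonde does sum, giving $q^{\ell(\ell-k)}\binom{k}{\ell}_{q^2}/(q;q^2)_k$.
\item That is precisely the $Z^\ell$-coefficient of $(-q^{1-k}Z;q^2)_k/(q;q^2)_k$.
\end{enumerate}

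\textbf{Alternative rescue of your ${}_3\phi_2$ viewpoint.} Use the Gasper--Rahman transformation
\[
{}_2\phi_1(q^{-n},b;c;q,z)=\frac{(c/b;q)_n}{(c;q)_n}\,{}_3\phi_2(q^{-n},b,bzq^{-n}/c;\,bq^{1-n}/c,0;\,q,q).
\]
Take $n=k$, $b=-q^{-k}$, $c=-q$, $z=-q^{k+1}Z$. This gives
\[
\Sigma_k(Z)=\frac{(-q;q)_k}{(q^{k+1};q)_k}\sum_j\frac{(q^{-2k};q^2)_j}{(q^2;q^2)_j}(-q^{k+1}Z)^j .
\]
The sum is a terminating $q^2$-binomial sum equal to $(-q^{1-k}Z;q^2)_k$, and $\frac{(-q;q)_k}{(q^{k+1};q)_k}=\frac{1}{(q;q^2)_k}$.
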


\begin{proof}
  Put
  \(
    F_k(Z):=
    \igm{B_k}\!\left(
      q^{-1},Z;
      X_r
    \right).
  \)
  Since \(X_{k-j}=q^{-\frac{j(j+1)}2}q^{(k+1)j}\), \Cref{lem:IBnminus-to-Igusa}
  and \Cref{lem:Igusa-specialization}, with \(U=q^{k+1}\), give
  \[
    F_k(Z)=
    \sum_{j=0}^k
    \binom{k}{j}_{q^{-1}}
    q^{-\frac{j(j+1)}2}q^{(k+1)j}
    \frac{(-q^k;q^{-1})_j}{(q^{2k};q^{-1})_j}
    (-q^{-k}Z;q)_j .
  \]

  We compare coefficients. By the \(q\)-binomial theorem,
  \[
    [Z^\ell](-q^{-k}Z;q)_j
    =
    q^{\binom{\ell}{2}-k\ell}\binom{j}{\ell}_q .
  \]
  Hence, using
  \(\binom{k}{j}_{q^{-1}}=q^{-j(k-j)}\binom{k}{j}_q\) and
  \(\binom{k}{j}_q\binom{j}{\ell}_q
  =\binom{k}{\ell}_q\binom{k-\ell}{j-\ell}_q\), we obtain
  \[
  \begin{aligned}
    [Z^\ell]F_k(Z)
    &=
    q^{\binom{\ell}{2}-k\ell}\binom{k}{\ell}_q
    \sum_{s=0}^{k-\ell}
    \binom{k-\ell}{s}_q
    q^{\frac{(s+\ell)(s+\ell+1)}2}
    \frac{(-q^k;q^{-1})_{s+\ell}}
         {(q^{2k};q^{-1})_{s+\ell}}  \\
    &=
    q^{\binom{\ell}{2}-k\ell}\binom{k}{\ell}_q
    q^{\frac{\ell(\ell+1)}2}
    \frac{(-q^k;q^{-1})_\ell}{(q^{2k};q^{-1})_\ell}
    {}_2\phi_1\!\left[
      \begin{matrix}
        q^{k-\ell},\, -q^{k-\ell}\\
        q^{2k-\ell}
      \end{matrix}
      ;q^{-1},-q^\ell
    \right].
  \end{aligned}
  \]
  The \(q\)-Chu--Vandermonde summation~\eqref{eq:q-Chu-Vandermonde}, with base \(q^{-1}\), expresses this as

  \[
  \begin{aligned}
    [Z^\ell]F_k(Z)
    &=
    q^{\ell(\ell-k)}
    \binom{k}{\ell}_q
    \frac{(-q^k;q^{-1})_\ell(-q^k;q^{-1})_{k-\ell}}
         {(q^{2k};q^{-1})_k} =
    q^{\ell(\ell-k)}
    \frac{\binom{k}{\ell}_{q^2}}{(q;q^2)_k}.
  \end{aligned}
  \]

 On the other hand,
  \[
    (-q^{1-k}Z;q^2)_k
    =
    \sum_{\ell=0}^k
    q^{\ell(\ell-k)}
    \binom{k}{\ell}_{q^2} Z^\ell .
  \]
  Hence, \(F_k(Z)=(-q^{1-k}Z;q^2)_k/(q;q^2)_k\).
\end{proof}

\begin{remark}\label{rem:SV14.2}
  This specialization is a triangular-number analogue to
  \cite[Prop.~1.5]{SV/14}, which in our notation reads
  \[
    \igm{B_n}\!\left(
      X^{-1},-Y; \left((X^iZ)^{\,n-i}\right)_{i=0}^{n-1}     \right)
    =
    \frac{(X^{-n}YZ;X)_n}{(Z;X)_n}.
  \]
  Equivalently, writing \(q=X\), \(Z'=-Y\), and \(U=X^nZ\), one obtains
  \begin{equation*}
    \igm{B_n}\!\left(
      q^{-1},Z'; \left(q^{-(n-i)^2}U^{\,n-i}\right)_{i=0}^{n-1}     \right)
    =
    \frac{(-q^{-2n}Z'U;q)_n}{(q^{-n}U;q)_n}.
  \end{equation*}
\end{remark}

\subsection{Proof of the residue identity}\label{subsec:residue-proof}

We now prove \eqref{eq:residue-identity}.  Recall from
\Cref{subsec:residue-reduction} that
\begin{equation}
    \label{eq:AmT}
    (T;q)_{2n}\mathcal A_m(T)
  =
  \frac{
    (-q)^{n-m}(1-q^{2m+1})(q^2;q^2)_n
  }{
    (q;q)_{n+m+1}(q;q)_{n-m}
  }\cdot
  \frac{(T;q)_{2n}}
  {(q^{n-m}T;q^2)_m(q^{n+m}T;q)_{n-m}}.
\end{equation}
It remains to simplify the factorization of \(\mathcal L_{n,m}\)
from \eqref{eq:Lnm-factorized}:
\begin{align*}
  \mathcal L_{n,m}
  &=
  \binom{n}{m}_{q^{-1}}
  (T;q)_{n-m}\,
  \igm{B_m}\!\left(
    q^{-1},-q^nT;
    \left(
      q^{\frac{m(m+1)-i(i+1)}{2}}
    \right)_{i=0}^{m-1}
  \right) \\
  &\qquad\times\;
  \ig{n-m}\!\left(
    q^{-1};
    q^{-m-1},
    \dots,
    q^{\frac{m(m+1)-n(n+1)}{2}}
  \right).
\end{align*}
The type-\(\mathsf{B}\) factor is evaluated by
\Cref{prop:IBnminus-specialization}. For the type-\(\mathsf{A}\) factor, note
that, for \(1\le r\le n-m\),
\[
  q^{\frac{m(m+1)-(m+r)(m+r+1)}2}
  =
  q^{-mr-\frac{r(r+1)}2}.
\]
Hence \Cref{lem:Igusa-specialization}, with \(U=q^{-m}\), together with
\Cref{prop:IBnminus-specialization}, gives
\begin{equation}\label{eq:Lnm-after-Igusa-evaluation}
  \mathcal L_{n,m}
  =
  \binom{n}{m}_{q^{-1}}
  (T;q)_{n-m}\,
  \frac{(q^{n-m+1}T;q^2)_m}{(q;q^2)_m}\,
  \frac{(-q^{-m-1};q^{-1})_{n-m}}
       {(q^{-2m-2};q^{-1})_{n-m}}.
\end{equation}

Set
\[
  Q_{n,m}:=
  \binom{n}{m}_{q^{-1}}
  \left(\frac{1}{(q;q^2)_m}
  \frac{(-q^{-m-1};q^{-1})_{n-m}}
       {(q^{-2m-2};q^{-1})_{n-m}}\right).
\]
Then \(Q_{n,m}\) simplifies as
\begin{align}
  Q_{n,m}
  &=
  (-q)^{n-m}
  \frac{(q;q)_n}{(q;q)_m(q;q)_{n-m}}\,
  \left(\frac{(q^2;q^2)_m}{(q;q)_{2m}}\,
  \frac{(-q;q)_n}{(-q;q)_m}\,
  \frac{(q;q)_{2m+1}}{(q;q)_{n+m+1}}\right) \notag  \\
  &=
  \frac{(-q)^{n-m}(1-q^{2m+1})(q^2;q^2)_n}
       {(q;q)_{n+m+1}(q;q)_{n-m}}\label{eq:Qnm}.
\end{align}

The \(T\)-dependent part is
\begin{align}
  (T;q)_{n-m}(q^{n-m+1}T;q^2)_m
  &=
  (T;q)_{n-m}
  \frac{(q^{n-m}T;q)_{2m}}{(q^{n-m}T;q^2)_m}\notag  \\
  &=
  \frac{(T;q)_{2n}}
       {(q^{n-m}T;q^2)_m(q^{n+m}T;q)_{n-m}}\label{eq:T-part}.
\end{align}

Combining \eqref{eq:Lnm-after-Igusa-evaluation}, \eqref{eq:Qnm},
\eqref{eq:T-part}, and \eqref{eq:AmT}, we obtain
\[
  \mathcal L_{n,m}=(T;q)_{2n}\mathcal A_m(T).
\]
This proves \eqref{eq:residue-identity}, and hence \Cref{thm:Bn-target}.

\section{Local poles and functional equations}\label{sec:local.props}

We record some structural consequences of the explicit formulas we
gave for the local zeta functions~\(\zeta_{\mfh_n(\lri)}(s)\). In
\Cref{subsec:poles} we consider the poles of the rational
functions~\(\zeta_{\mfh_n(\lri)}(s)\). The local functional equations
in \Cref{cor:funeq} are proved in~\Cref{subsec:funeq}.

\subsection{Local poles}\label{subsec:poles}

\Cref{thm:simplified-zeta} or \Cref{thm:Bn-target} allow us to
determine the real poles of the local zeta
functions~$\zeta_{\mfh_n(\lri)}(s)$. For $r\in[n]_0$, set $$
a_{n,r}:=2n+\frac{r(2n+1-r)}2, \quad \alpha_r = a_{n,r}/(n+1).$$ We
define sets of \emph{integral} resp.\ \emph{fractional} pole
candidates \[ \mathcal P_n^{\mathrm{int}}:= [2n-1]_0, \qquad \mathcal
P_n^{\mathrm{fra}}:= \left\{\alpha_r \mid r \in [n]_0 \right\}.
\]

\begin{cor}
The real poles of \(\zeta_{\mfh_n(\lri)}(s)\) lie in \(\mathcal
P_n^{\mathrm{int}}\cup\mathcal P_n^{\mathrm{fra}}\).
\end{cor}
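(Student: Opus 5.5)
The plan is to read the pole set off the compact $(n+1)$-term expression of \Cref{thm:simplified-zeta}, with $T=q^{-s}$. Since $\zeta_{\mfh_n(\lri)}(s)$ is a finite sum of rational functions in $T$, every pole in $s$ is a zero of the denominator of at least one summand. The coefficients $(-q)^r(1-q^{2n-2r+1})(q^2;q^2)_n/((q;q)_{2n-r+1}(q;q)_r)$ are nonzero constants independent of $T$, so only the $T$-dependent factors matter. These are
\[
  1-q^{a_{n,r}}T^{n+1},\qquad (q^rT;q^2)_{n-r}=\prod_{i=0}^{n-r-1}(1-q^{r+2i}T),\qquad (q^{2n-r}T;q)_r=\prod_{j=0}^{r-1}(1-q^{2n-r+j}T).
\]

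For real $s$, the condition $1-q^{a}T^{b}=0$ means $q^{a-bs}=1$, that is, $s=a/b$. The linear factors therefore give candidate poles at $s=r+2i$ with $0\le i\le n-r-1$, and at $s=2n-r+j$ with $0\le j\le r-1$. The first range has maximum $r+2(n-r-1)=2n-r-2\le 2n-2$. The second range lies in $[2n-r,2n-1]$. All these values are integers in $[2n-1]_0$, so they belong to $\mathcal P_n^{\mathrm{int}}$.

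The factor $1-q^{a_{n,r}}T^{n+1}$ vanishes for real $s$ only at $s=a_{n,r}/(n+1)=\alpha_r$, which lies in $\mathcal P_n^{\mathrm{fra}}$. Its other zeros in $s$ have nonzero imaginary part, $2\pi i k/((n+1)\log q)$, so they are not real. Taking the union over $r\in[n]_0$ gives the claim.

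As a cross-check, \Cref{thm:Bn-target} leads to the same conclusion. There the full denominator is $(T;q)_{2n}\prod_{m}(1-q^{c_m}T^{n+1})$, and $c_m=a_{n,n-m}$, so the candidate set is identical.

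I expect no real obstacle. The step needing the most care is the bookkeeping of the exponent ranges, in particular checking that every linear factor contributes a value between $0$ and $2n-1$. The statement only asserts containment, so no cancellation analysis is needed to decide which candidates are actual poles.
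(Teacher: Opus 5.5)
Your proposal is correct and follows the paper's own argument: both read the candidate poles off the $(n+1)$-term formula of Theorem~B, with the linear $q$-Pochhammer factors giving the integral candidates and the special factor $1-q^{a_{n,r}}T^{n+1}$ giving $s=\alpha_r$. Your explicit check that the exponent ranges lie in $[2n-1]_0$, and your observation that the remaining zeros of the special factor are non-real, only spell out what the paper leaves implicit.
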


\begin{proof}
  Write \(T=q^{-s}\).
  By \Cref{thm:simplified-zeta},
  \(\zeta_{\mfh_n(\lri)}(s)=\sum_{r=0}^n S_r(T)\), where
  \[
    S_r(T)=
    \frac{C_r(q)}
    {(1-q^{a_{n,r}}T^{n+1})(q^{r}T;q^{2})_{n-r}(q^{2n-r}T;q)_{r}}
  \]
  for a fractional expression \(C_r(q)\) in~\(q\).
 The factors in
\((q^{r}T;q^{2})_{n-r}(q^{2n-r}T;q)_r\) give the integral pole
candidates, while the ``special factor'' \(1-q^{a_{n,r}}T^{n+1}\)
yields \(s=\alpha_r\).
\end{proof}

For later use, note that the leading \(q\)-term of \(C_r(q)\)
is~\((-1)^{n+r}q^{-(n-r)^2}\).

\begin{prop}\label{prop:local-real-poles}
  For generic \(q\), every number in \(\mathcal
  P_n^{\mathrm{int}}\cup\mathcal P_n^{\mathrm{fra}}\) is a pole of
  \(\zeta_{\mfh_n(\lri)}(s)\). A double pole occurs precisely at
  \(s=m\), where \(m\in[n]\) and \(m(m+1)=4n\). All other poles are
  simple.
\end{prop}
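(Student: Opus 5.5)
The plan is to work from the $(n+1)$-term expression of \Cref{thm:simplified-zeta}, written as $\zeta_{\mfh_n(\lri)}(s)=\sum_{r=0}^n S_r(T)$ as in the proof of the preceding corollary. We treat $q$ as an indeterminate, so that "generic $q$" means that a rational function of $q$ is nonzero. For a real candidate $s_0$ we compute the order of the pole of $\sum_r S_r$ at $T_0=q^{-s_0}$ by isolating the principal parts of the $S_r$.

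\emph{Step 1 (bookkeeping of factors).} Inside $S_r$, the factor $(q^rT;q^2)_{n-r}$ vanishes at the integers $r,r+2,\dots,2n-r-2$. The factor $(q^{2n-r}T;q)_r$ vanishes at $2n-r,\dots,2n-1$. These sets are disjoint, so every integral zero of the $T$-denominator of $S_r$ is simple. Taking the union over $r\in[n]_0$ gives all of $[2n-1]_0$: the value $r=0$ supplies the even integers, $r=1$ the odd ones up to $2n-3$, and $r\ge1$ supplies $2n-1$.

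The special factor $1-q^{a_{n,r}}T^{n+1}$ has exactly one real root, $s=\alpha_r$. Since $a_{n,r}$ is strictly increasing in $r$, the $\alpha_r$ are pairwise distinct.

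Hence $S_r$ has a pole of order $2$ at $s_0$ only if $\alpha_r=s_0$ is an integer lying in the integral pole set of $S_r$. All other poles of $S_r$ are simple.

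\emph{Step 2 (classifying coincidences).} We solve $a_{n,r}=m(n+1)$ for $m$ in the integral pole set of $S_r$. Writing this out, $4n+r(2n+1-r)=2m(n+1)$. Comparing with the location of the integral poles of $S_r$, namely $m\ge r$ with parity constraints, or $m\ge 2n-r$, we expect the only solutions to be $r=m$ with $m(m+1)=4n$.

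Check: at $r=m$ we get $a_{n,m}=2n+m(2n+1-m)/2=m(n+1)$ exactly when $m(m+1)=4n$. In that case $m\le n$ and $m=r$ lies in the set $r,r+2,\dots$ of integral poles of $S_r$. The case $n=3$, $m=3$ is visible in the explicit formula above, via the factors $1-q^{12}T^4$ and $1-q^3T$.

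We also verify that no other $r$ produces an $\alpha_r$ equal to an integral pole of the \emph{same} $S_r$. This is an elementary inequality and parity argument on the quadratic in $r$.

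\emph{Step 3 (non-cancellation: the main obstacle).} It remains to show that the principal parts do not cancel in the sum over $r$. At a simple candidate $s_0$, the residue is $\sum_r\operatorname{Res}_{s_0}S_r$, summed over those $r$ for which $S_r$ has a pole at $s_0$. At the double candidate, the coefficient of the $(s-m)^{-2}$ term comes from $S_m$ alone, and is manifestly a nonzero rational function of $q$. So that case is easy, and it suffices to handle the simple residues.

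For these we will show that the residue is a nonzero element of $\Q(q)$ by extracting its leading (or lowest-order) $q$-term. We use the remark that the leading $q$-term of $C_r(q)$ is $(-1)^{n+r}q^{-(n-r)^2}$. To this we add the value at $T_0$ of the remaining denominator factors of $S_r$, and we expect the resulting exponents to be pairwise distinct in $r$, so that one term dominates.

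If the exponent comparison fails for some ties, the fallback is \Cref{thm:Bn-target}. There we would argue that the numerator $\sum_{g\in B_n}(-1)^{\negg(g)}q^{C(g)}T^{(n+1)\desB(g)+\negg(g)}$ is not divisible by $1-q^jT$, respectively by $1-q^{c_m}T^{n+1}$, in $\Q(q)[T]$. This would be checked by specializing $T=q^{-j}$ and identifying a unique element $g\in B_n$ that maximizes the resulting $q$-exponent. This dominance check is where I expect the real work to lie.
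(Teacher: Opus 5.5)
\textbf{Verdict.} There is a genuine gap: your Step~3 is not carried out, and both mechanisms you propose for it fail on an infinite family of points.

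\textbf{Steps 1 and 2.} These match the paper's treatment of the double and fractional poles. Two small remarks:
\begin{enumerate}
  \item The ``elementary inequality'' you leave implicit is $a_{n,r}=(n+1)(r+2)-2-\binom{r+1}{2}$. Hence an integral $\alpha_r$ is at most $r+1$. By parity, $\alpha_r=r+1$ would force $r=n$, which $a_{n,n}\ne(n+1)^2$ excludes.
  \item For $n=m=3$ the factor $(q^rT;q^2)_{n-r}$ is empty. The root $T=q^{-3}$ of $S_3$ comes from $(q^{2n-r}T;q)_r$.
\end{enumerate}

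\textbf{Step 3.} This step carries the real content of the statement that every integral candidate is a pole. You do not carry out the residue comparison.

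Consider any $i$ with $i(i-1)=2(n-1)$, for instance $(n,i)=(1,1),(2,2),(4,3),\dots$. Then $a_{n,i-1}=(n+1)i$, i.e.\ $\alpha_{i-1}=i$. At $T=q^{-i}$ the special factor of $S_{i-1}$ and a Pochhammer factor of $S_i$ both give simple residues, and these have the \emph{same} leading $q$-degree.
\begin{itemize}
  \item For $n=1$, $i=1$, the residues $\lim_{T\to q^{-1}}(1-qT)S_r$ for $r=0,1$ are $-\frac{q}{2(1-q)^2}$ and $-\frac{q}{(1-q)^2}$.
  \item For $n=2$, $i=2$, those of $S_1$ and $S_2$ are $\frac{q^2}{3(1-q)^4}$ and $\frac{q^2(1+q^2)}{(1-q)^3(1-q^3)}$. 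Both have leading term a positive multiple of $q^{-2}$.
\end{itemize}
So your expectation that the exponents are pairwise distinct is false.

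Your Theorem~C fallback fails at exactly these points. Since $c_{n-i+1}=a_{n,i-1}=(n+1)i$, the denominator $(T;q)_{2n}\prod_m(1-q^{c_m}T^{n+1})$ contains $(1-q^iT)^2$, while the pole is simple. So the numerator \emph{is} divisible by $1-q^iT$. For example, the numerator $1-q^3T^3$ vanishes at $T=q^{-1}$ when $n=1$, and the $n=2$ numerator vanishes at $T=q^{-2}$. Specializing $T=q^{-i}$ therefore just gives $0$ and says nothing about the pole.

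\textbf{What is missing.} You need the paper's explicit degree bookkeeping for the residues $\Phi_{i,r}=\lim_{T\to q^{-i}}(1-q^iT)S_r(T)$:
\begin{itemize}
  \item Among the Pochhammer contributions there is a unique one of maximal $q$-degree: at $r=i$ if $i\le n-1$, and at $r=n$ if $i\ge n$.
  \item A special contribution with $\alpha_u=i$ and $u>i$ has strictly larger degree than all of these.
  \item In the tie $u=i-1$ you must compare leading coefficients. One finds $\operatorname{lc}_q(\Phi_{i,i-1})=\frac{1}{n+1}\operatorname{lc}_q(\Phi_{i,i})$; the factor $\frac{1}{n+1}$ arises because $(1-(q^iT)^{n+1})/(1-q^iT)\to n+1$. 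So the two leading terms add rather than cancel.
\end{itemize}
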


\begin{proof}
  \emph{(1) Double poles.}
  A repeated pole inside \(S_r(T)\) can occur only if the special pole
  \(T=q^{-\alpha_r}\) coincides with one of the integral roots \(T=q^{-j}\),
  where
  \[
    j\in
    \{r,r+2,\dots,2n-r-2\}
    \sqcup
    \{2n-r,2n-r+1,\dots,2n-1\}.
  \]
  Since \(a_{n,r}=(n+1)(r+2)-\left(2+\frac{r(r+1)}2\right)\),
  integrality of \(\alpha_r=a_{n,r}/(n+1)\) implies \(\alpha_r\le
  r+1\). Hence a doubling-up can only occur at \(\alpha_r=r\) or
  \(\alpha_r=r+1\). The latter would force \(r=n\), but then
  \(a_{n,n}=(n+1)^2\) is equivalent to \(n^2-n+2=0\), which is
  impossible. Thus a doubling-up occurs exactly when \(\alpha_r=r\),
  equivalently \(r(r+1)=4n\). In this case \(S_r\) has a double pole
  at \(T=q^{-r}\), while all other summands have at most simple poles
  there; hence the local zeta function has a double pole at \(s=r\).

  \emph{(2) Simple fractional poles.}
  Suppose that \(\alpha_r\in
  \mathcal P_n^{\mathrm{fra}}\setminus\mathcal P_n^{\mathrm{int}}\).
  For \(r'\ne r\), the special factor
  \(1-q^{a_{n,r'}}T^{n+1}\) does not vanish at \(T=q^{-\alpha_r}\). Since
  \(\alpha_r\notin\mathcal P_n^{\mathrm{int}}\), the remaining denominator
  factors do not vanish there. Hence \(S_{r'}(T)\) is holomorphic at \(T=q^{-\alpha_r}\) for
  all \(r'\ne r\), while \(S_r(T)\) contributes a simple nonzero pole. Thus
  \(s=\alpha_r\) is a simple pole of
  \(\zeta_{\mfh_n(\lri)}(s)\).
  
  \emph{(3) Integral poles.}
  Now fix \(i\in\mathcal P_n^{\mathrm{int}}\) such that \(T=q^{-i}\) is not
  a repeated pole. By (1), this implies \(i(i+1)\ne4n\). Set, for
  \(r\in[n]_0\),
  \[
    \Phi_{i,r}(q):=\lim_{T\to q^{-i}}(1-q^iT)S_r(T).
  \]
  For nonzero \(\Phi_{i,r}\), let
  \(L_{i,r}:=\deg_q\Phi_{i,r}\), where \(\deg_q\) means numerator degree
  minus denominator degree. We prove that
  \(\sum_{r=0}^n\Phi_{i,r}(q)\ne0\) by comparing the highest \(q\)-degrees,
  and, when necessary, the leading \(q\)-coefficients \(\operatorname{lc}_q\).
  Since \(T=q^{-i}\) is not a repeated pole, each nonzero residue
  \(\Phi_{i,r}(q)\) comes from exactly one denominator factor.

  \emph{(3a) Residues from the \(q\)-Pochhammer factors.}
    Suppose first that \(0\le i\le n-1\). Then \(1-q^iT\) can occur only in
  \((q^rT;q^2)_{n-r}\), so \(r=i-2t\). For \(t\ge1\), the special factor
  contributes no degree, since \(a_{n,i-2t}\le a_{n,i-2}\) and
  \begin{equation}\label{eq:key-degree-drop}
    a_{n,i-2}-(n+1)i
    =
    2n-\frac{i(i+1)}2-\bigl(2(n-i)+3\bigr)
    =
    \frac{-i^2+3i-6}{2}<0.
  \end{equation}
  A degree count gives
  \[
    L_{i,i}
    =
    -\binom{2n-i}{2}
    -
    \max\left\{2n-\frac{i(i+1)}2,0\right\},
    \qquad
    L_{i,i-2t}
    =
    -\binom{2n-i}{2}
    -t\bigl(2(n-i)+3t\bigr)
  \]
  for \(t\ge1\). Hence \(L_{i,i-2t}<L_{i,i}\): if the maximum in \(L_{i,i}\) is zero this is
  immediate; otherwise \eqref{eq:key-degree-drop} gives this already for
  \(t=1\), and larger \(t\) only decreases \(L_{i,i-2t}\).

  Now suppose \(n\le i\le2n-1\). The roots from
  \((q^rT;q^2)_{n-r}\) again have \(r=i-2t\), now with
  \(t\ge i-n+1\), and the same degree count shows that these contributions
  are smaller than the one at \(r=n\). It remains to compare the roots from
  \((q^{2n-r}T;q)_r\), where \(2n-i\le r\le n\). For these roots, the first
  \(q\)-Pochhammer factor has degree \(0\), while the second contributes the
  constant degree \(\binom{2n-i}{2}\). Hence, for \(2n-i\le r<n\),
  \[
    L_{i,r+1}-L_{i,r}
    =
    2(n-r)-1-\Delta_r,
  \]
  where
  \[
    \Delta_r
    :=
    \max\{0,a_{n,r+1}-(n+1)i\}
    -
    \max\{0,a_{n,r}-(n+1)i\}.
  \]
  Since \(a_{n,r+1}-a_{n,r}=n-r\), while
  \[
    a_{n,r}-(n+1)(2n-r)
    =
    \frac{3r-(2n-r)^2}{2}<0
  \]
  for \(r\leq n-1\), and since \(i\ge2n-r\), we have
  \(a_{n,r}<(n+1)i\). Hence
  \(\Delta_r<n-r\). Thus \(L_{i,r+1}>L_{i,r}\), and the largest
  Pochhammer contribution is uniquely attained at \(r=n\).

  \emph{(3b) Residues from the special factor.}
  It remains to rule out cancellation by a possible special contribution.
  Suppose \(i=\alpha_u\). If \(u=i\), this is the excluded double-pole case.
  If \(u<i\), then necessarily \(u=i-1\), and
  \[
    \deg_q\Phi_{i,i-1}=\deg_q\Phi_{i,i},
    \qquad
    \operatorname{lc}_q(\Phi_{i,i-1})
    =
    \frac{1}{n+1}\operatorname{lc}_q(\Phi_{i,i}),
  \]
  so the leading terms do not cancel. If \(u>i\), then
  \[
    \deg_q\Phi_{i,u}-\deg_q\Phi_{i,i}
    =
    \frac{(u-i)(2n-i-u-1)}2>0,
  \]
  so the special contribution has strictly larger \(q\)-degree than all
  Pochhammer contributions.

  Therefore the leading \(q\)-term of
  \[
    \sum_{r=0}^n\Phi_{i,r}(q)
    =
    \lim_{s\to i}(1-q^{i-s})\zeta_{\mfh_n(\lri)}(s)
  \]
  is nonzero. Thus this residue is nonzero as a rational function of \(q\),
  hence nonzero for generic \(q\), and \(s=i\) is a simple pole.
\end{proof}

Thus repeated poles occur only for the sparse pairs \((n,m)\) satisfying
\(m(m+1)=4n\), for instance \((3,3)\), \((5,4)\), \((14,7)\), and so on.

\subsection{Local functional equations --- proof of \Cref{cor:funeq}}\label{subsec:funeq}
By \Cref{thm:simplified-zeta},
\(\zeta_{\mfh_n(\lri)}(s)=Z_n(q,q^{-s})\), where
  \[
    Z_n(q,T)
    :=
    \sum_{r=0}^{n}
    \frac{1}{1-q^{\,2n+\frac{r(2n+1-r)}{2}}T^{n+1}}
    \cdot
    \frac{
      (-q)^r(1-q^{2n-2r+1})(q^2;q^2)_n
    }{
      (q;q)_{2n-r+1}(q;q)_r
      (q^{r}T;q^{2})_{n-r}
      (q^{2n-r}T;q)_{r}
    }.
  \]
  A direct substitution \((q,T)\mapsto(q^{-1},T^{-1})\), using
  \(
    (a^{-1};q^{-1})_m
    =
    (-a^{-1})^m q^{-\binom{m}{2}}(a;q)_m
  \)
  and its \(q^2\)-analogue, shows that each summand transforms with the same
  factor:
  \[
    Z_n(q^{-1},T^{-1})
    =
    -q^{\binom{2n+1}{2}}T^{-(2n+1)}Z_n(q,T).
  \]
  Setting \(T=q^{-s}\) yields \Cref{cor:funeq}, recovering
  \cite[Thm.~A]{Voll/10} in the relevant cases.

  \section{Global analytic properties and reduced zeta functions}

We leverage our description of local subalgebra zeta functions of the
higher Heisenberg algebras to deduce key properties of global
subalgebra and subgroup zeta functions in \Cref{subsec:global}. In
\Cref{subsec:red} we consider reduced zeta functions.

\subsection{Global abscissa and subgroup growth --- proof of~\Cref{cor:global}}
\label{subsec:global}

As recalled in \Cref{subsec:intro-zeta}, the global subgroup zeta function
\(\zeta_{H_n(\Z)}(s)\) is the Euler product of the local subgroup zeta functions
\(\zeta_{H_n(\Z),p}(s)\), which in turn agree with
\(\zeta_{\mfh_n(\Zp)}(s)\). With \(N_n(X,Y)\) as defined in~\eqref{def:N}, our
\Cref{thm:C} thus gives
\begin{equation}\label{eq:global-product-growth}
  \zeta_{H_n(\Z)}(s)
  = \left( \prod_p N_n(p,p^{-s})\right)
  \prod_{i=0}^{2n-1}\zeta(s-i)
  \prod_{i=0}^{n}
  \zeta\left((n+1)s-\frac{n(n+5)}2+\frac{i(i+1)}2\right).
\end{equation}

\begin{lemma}\label{lem:max-C-minus-2nD}
  We have
  \[
    \max_{1\ne g \in B_n}\bigl(C(g)-2nD(g)\bigr)
    =-\frac{3n^2-n+4}{2}.
  \]
  The maximum is attained uniquely at \(s_0\) for \(n=1\) and at \(s_1\) for
  \(n\ge2\).
\end{lemma}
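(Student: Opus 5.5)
The plan is to rewrite the quantity \(C(g)-2nD(g)\) as a sum of individually nonpositive contributions. Then bound each contribution separately, and characterise when all the bounds are attained simultaneously.

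Substituting \eqref{eq:Cg} and \(D(g)=(n+1)\desB(g)+\negg(g)\), I get
\[
  C(g)-2nD(g)
  =
  -n\,\negg(g)-\ell(g)+\sum_{i\in\DesB(g)}\bigl(c_i-2n(n+1)\bigr).
\]
Using \(c_i=2n+\binom{n+1}{2}-\binom{i+1}{2}\), each descent weight is
\[
  c_i-2n(n+1)=\binom{n+1}{2}-\binom{i+1}{2}-2n^2 .
\]
This is negative and strictly decreasing in \(i\). Its maximum, at \(i=0\), equals \(\frac{n-3n^2}{2}\); at \(i=1\) it equals \(\frac{n-3n^2}{2}-1\).

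For \(g\ne1\) we have \(\ell(g)\ge1\), and also \(\DesB(g)\ne\varnothing\), since only the identity has no type-\(\mathsf B\) descents. I then split into two cases.

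\emph{Case \(0\notin\DesB(g)\).} Some descent \(i\ge1\) exists and contributes at most \(\frac{n-3n^2}{2}-1\). Every other term is \(\le0\), and \(-\ell(g)\le-1\). Hence
\[
  C(g)-2nD(g)\le-\frac{3n^2-n+4}{2}.
\]
Equality forces \(\DesB(g)=\{1\}\), \(\ell(g)=1\) and \(\negg(g)=0\), that is, \(g=s_1\). This case only occurs when \(n\ge2\).

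\emph{Case \(0\in\DesB(g)\).} Then \(g(1)<0\), so \(\negg(g)\ge1\). Combining this with \(\ell(g)\ge1\) and the \(i=0\) descent weight gives
\[
  C(g)-2nD(g)\le\frac{n-3n^2}{2}-n-1=-\frac{3n^2+n+2}{2}.
\]
Equality requires \(\DesB(g)=\{0\}\), \(\negg(g)=1\) and \(\ell(g)=1\), that is, \(g=s_0\).

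Comparing the two bounds, \((3n^2+n+2)-(3n^2-n+4)=2n-2\). This is \(0\) for \(n=1\) and positive for \(n\ge2\). So for \(n\ge2\) the second case is strictly worse and the maximum is attained uniquely at \(s_1\). For \(n=1\) only the second case exists, and the maximum \(-3=-\frac{3-1+4}{2}\) is attained uniquely at \(s_0\).

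There is no real obstacle here. The only points needing care are the observation that \(\DesB(g)=\varnothing\) forces \(g=1\), and that \(0\in\DesB(g)\) forces a negative entry. Both follow from the convention \(g(0)=0\): the first is the standard fact that \(\ell(g)=0\) iff \(g\) has no descents. In each case, equality in every estimate pins \(g\) down to a single simple reflection.
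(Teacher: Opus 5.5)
Your proof is correct and follows essentially the same route as the paper, which writes $C(g)-2nD(g)=-\frac{n(3n-1)}{2}\desB(g)-\ell(g)-\bigl(n\negg(g)+\sum_{i\in\DesB(g)}\binom{i+1}{2}\bigr)$ and bounds each nonpositive piece using $\desB(g)\ge1$, $\ell(g)\ge1$, and the fact that the parenthesized term is at least $1$. Your case split on whether $0\in\DesB(g)$ simply makes that last bound and its equality case explicit, including the comparison showing $s_0$ is strictly worse than $s_1$ when $n\ge2$. It also treats the $n=1$ case, which the paper dismisses as clear.
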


\begin{proof}
  The case \(n=1\) is clear, so assume \(n\ge2\). By \eqref{eq:Cg},
  \[
    C(g)-2nD(g)
    =-\frac{n(3n-1)}2\desB(g)-\ell(g)
     -\left(n\negg(g)+\sum_{i\in\DesB(g)}\frac{i(i+1)}2\right).
  \]
  For \(g\ne1\), we have \(\desB(g)\ge1\), \(\ell(g)\ge1\), and the
  parenthesized term is at least \(1\). Hence
  \(C(g)-2nD(g)\le -n(3n-1)/2-2\). Equality forces
  \(\desB(g)=\ell(g)=1\), \(\negg(g)=0\), and \(\DesB(g)=\{1\}\), whence
  \(g=s_1\). Thus
  \[
    \max_{g\ne1}\bigl(C(g)-2nD(g)\bigr)
    =C(s_1)-2nD(s_1)
    =-\frac{n(3n-1)}2-2
    =-\frac{3n^2-n+4}{2}.\qedhere
  \]
\end{proof}

We now prove~\Cref{cor:global}. For \(n=1\), the abscissa assertion follows
from the formula for \(\zeta_{H_1(\Z)}(s)\) recalled in the introduction.
Assume henceforth that \(n\ge2\). Among the zeta factors in
\eqref{eq:global-product-growth}, only
\(\zeta(s-2n+1)\) has abscissa \(2n\); indeed, for \(i\in[n]_0\),
  \[
    \frac{\frac{n(n+5)}2-\frac{i(i+1)}2+1}{n+1}<2n.
  \]
  It remains to control the numerator product. By
  \Cref{lem:max-C-minus-2nD}, every nonconstant monomial \(X^aY^b\) of
  \(N_n(X,Y)\) satisfies \(a-2nb\le-(3n^2-n+4)/2<-1\). Hence, uniformly in
  \(p\) and for \(\Re(s)\ge2n\),
  \[
    N_n(p,p^{-s})=1+O\left(p^{-(3n^2-n+4)/2}\right).
  \]
  Thus \(\prod_pN_n(p,p^{-s})\) is absolutely convergent near
  \(s=2n\), and the only pole on \(\Re(s)=2n\) is the simple pole of
  \(\zeta(s-2n+1)\), with residue~\(R_n\); see~\eqref{def:R}. Applying
  \cite[Thm.~4.20]{duSG00} with \(a=2n\) and \(w=1\) gives
  \[
    s_N(H_n(\Z)) \sim \frac{R_n}{2n\Gamma(1)}N^{2n} =\frac{R_n}{2n}N^{2n}.
  \]
  This concludes the proof of \Cref{cor:global}.

We record in \Cref{tab:N} the Euler factors \(N_n(p,p^{-2n})\) appearing in
\(R_n\) for \(n\in\{1,2,3\}\). For \(n=1\), this factor equals \(1-p^{-3}\),
producing the denominator \(\zeta(3)\) in \eqref{equ:n=2}.

\begin{table}[h]
  \begin{tabular}{r|l}
    \(n\) & \(N_n(p,p^{-2n})\) \\ \hline \\[-1em]
    \(1\) & \(1-p^{-3}\)
    \\[0.1em]
    \(2\) & \(1+p^{-7}-p^{-8}-p^{-9}-p^{-10}-p^{-11}+p^{-12}+p^{-19}\)
    \\[0.1em]
    \(3\) & \scriptsize\(1+p^{-14}+p^{-15}-2p^{-18}-2p^{-19}
      -2p^{-20}-p^{-21}+p^{-24}+p^{-25}+p^{-26}-p^{-27}
      +p^{-31}-\cdots-p^{-58}\)
  \end{tabular}
  \caption{Euler factors \(N_n(p,p^{-2n})\) appearing in the residue factor~\(R_n\); cf.\ \eqref{def:R}}
  \label{tab:N}
\end{table}

\subsection{Reduced zeta functions}\label{subsec:red}

We use \Cref{thm:Bn-target} to compute the reduced subalgebra zeta function
\(Z^{\textup{red}}_{\mfh_n}(T)\in\Q(T)\), obtained from
\(\zeta_{\mfh_n(\lri)}(s)\) by formally setting \(q=1\) while keeping
\(T=q^{-s}\); see \cite{Evseev2009Reduced}.

Let \(A_0(X)=1\), and for \(d\ge1\) let
\(A_d(X)=\sum_{w\in S_d}X^{\des(w)+1}\). These are the Eulerian
polynomials \cite[Eq.~(1.36)]{stanley2011}.
We use the Eulerian identity
\begin{equation}\label{eq:stanley}
  \sum_{i\ge0}i^dX^i=\frac{A_d(X)}{(1-X)^{d+1}}
\end{equation}
from \cite[Prop.~1.4.4]{stanley2011}. We also use Brenti's
type-\(\mathsf B\) Eulerian polynomials
\[
  B_n(X,Y):=\sum_{\sigma\in B_n}Y^{\negg(\sigma)}X^{\desB(\sigma)}
\]
introduced in \cite[(10)]{brenti1994}, together with the generating-series
identity \cite[(12)]{brenti1994}
\begin{equation}\label{eq:brenti}
  \sum_{i\ge0}\bigl(1+(1+Y)i\bigr)^nX^i
  =
  \frac{B_n(X,Y)}{(1-X)^{n+1}}.
\end{equation}

\begin{prop}\label{prop:zeta-red-brenti}
  We have
  \[
    Z^{\textup{red}}_{\mfh_n}(T)
    =
    \sum_{d=0}^n
    \binom{n}{d}
    \frac{A_d(T^{n+1})}{(1-T)^{2n-d}(1-T^{n+1})^{d+1}}.
  \]
\end{prop}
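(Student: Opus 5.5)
The plan is to specialize the type-$\mathsf B$ formula of \Cref{thm:Bn-target} at $q=1$ and then resum using the Eulerian identities \eqref{eq:stanley} and \eqref{eq:brenti}.

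First I would check that the specialization is legitimate. In the second expression of \Cref{thm:Bn-target}, the numerator $\sum_{g\in B_n}(-1)^{\negg(g)}q^{C(g)}T^{(n+1)\desB(g)+\negg(g)}$ is a Laurent polynomial in $q$ with coefficients in $\Z[T]$. The denominator $(T;q)_{2n}\prod_{m=0}^n(1-q^{c_m}T^{n+1})$ specializes at $q=1$ to the nonzero element $(1-T)^{2n}(1-T^{n+1})^{n+1}$ of $\Z[T]$. Hence setting $q=1$ with $T$ fixed is well defined. Writing $X:=T^{n+1}$, it gives
\[
  Z^{\textup{red}}_{\mfh_n}(T)
  =\frac{\sum_{g\in B_n}(-T)^{\negg(g)}X^{\desB(g)}}{(1-T)^{2n}(1-X)^{n+1}}
  =\frac{B_n(X,-T)}{(1-T)^{2n}(1-X)^{n+1}},
\]
by the definition of Brenti's polynomial $B_n(X,Y)$. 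Equivalently, one may specialize the first expression in \Cref{thm:Bn-target}, since $\ell(g)$ disappears at $Y=q^{-1}=1$.

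Next I would apply \eqref{eq:brenti} with $Y=-T$, viewed as an identity of formal power series in $X$ with coefficients in $\Z[T]$:
\[
  \frac{B_n(X,-T)}{(1-X)^{n+1}}=\sum_{i\ge0}\bigl(1+(1-T)i\bigr)^nX^i .
\]
Expanding by the binomial theorem gives $\bigl(1+(1-T)i\bigr)^n=\sum_{d=0}^n\binom nd(1-T)^d i^d$. Exchanging the finite sum over $d$ with the sum over $i$ and using \eqref{eq:stanley} (including the case $d=0$, where $A_0=1$), I obtain
\[
  \sum_{d=0}^n\binom nd(1-T)^d\frac{A_d(X)}{(1-X)^{d+1}}.
\]
Dividing by $(1-T)^{2n}$ then yields exactly the claimed formula $\sum_d\binom nd A_d(T^{n+1})(1-T)^{d-2n}(1-T^{n+1})^{-d-1}$.

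The only point needing care is the passage between rational functions and power series. Both sides are rational functions in $T$, and the power-series identities above hold in $\Z[T][[X]]$. Substituting $X=T^{n+1}$ is compatible with expansion in $\Z[[T]]$, so the two rational functions have the same Taylor expansion and therefore coincide. I expect no real obstacle: the substance was done in \Cref{thm:Bn-target}, and this proposition only recognizes Brenti's polynomial at $Y=-T$.
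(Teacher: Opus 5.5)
Your proposal is correct and follows the paper's proof essentially verbatim. Both set $q=1$ in \Cref{thm:Bn-target} to get $B_n(T^{n+1},-T)/\bigl((1-T)^{2n}(1-T^{n+1})^{n+1}\bigr)$, apply \eqref{eq:brenti} with $X=T^{n+1}$ and $Y=-T$, expand binomially, and finish with \eqref{eq:stanley}. Your remarks on why the $q=1$ specialization is legitimate and on passing between power series and rational functions are sound extra care that the paper leaves implicit.
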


\begin{proof}
  Setting \(q=1\) in \Cref{thm:Bn-target} gives
  \[
    Z^{\textup{red}}_{\mfh_n}(T)
    =
    \frac{
      \sum_{g\in B_n}
      (-1)^{\negg(g)}T^{(n+1)\desB(g)+\negg(g)}
    }{
      (1-T)^{2n}(1-T^{n+1})^{n+1}
    }
    =
    \frac{B_n(T^{n+1},-T)}
      {(1-T)^{2n}(1-T^{n+1})^{n+1}}.
  \]
  Applying \eqref{eq:brenti} with \(X=T^{n+1}\) and \(Y=-T\), we obtain
  \begin{align*}
    Z^{\textup{red}}_{\mfh_n}(T)
    &=
    \frac{1}{(1-T)^{2n}}
    \sum_{i\ge0}\bigl(1+(1-T)i\bigr)^nT^{(n+1)i} \\
    &=
    \frac{1}{(1-T)^{2n}}
    \sum_{d=0}^n\binom{n}{d}(1-T)^d
    \sum_{i\ge0}i^d(T^{n+1})^i.
  \end{align*}
  The proposition then follows from \eqref{eq:stanley}.
\end{proof}

\begin{cor}\label{cor:zeta-red-denominator}
  There exists a polynomial \(P_n(T)\in\Z[T]\) such that
  \[
    Z^{\textup{red}}_{\mfh_n}(T)
    =
    \frac{P_n(T)}{(1-T)^{n}(1-T^{n+1})^{n+1}}.
  \]
\end{cor}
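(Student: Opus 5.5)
The plan is to start from \Cref{prop:zeta-red-brenti} and put all $n+1$ summands over the common denominator $(1-T)^{2n}(1-T^{n+1})^{n+1}$. The summand indexed by $d$ then has numerator
\[
  \binom{n}{d}A_d(T^{n+1})(1-T)^{d}(1-T^{n+1})^{n-d}.
\]
The claim is that the total numerator is divisible by $(1-T)^n$, so that $(1-T)^{2n}$ reduces to $(1-T)^n$. We cannot cancel factor by factor: the $d=0$ summand has only the factor $(1-T^{n+1})^n$. That gives divisibility by $(1-T)^n$ via $1-T^{n+1}=(1-T)[n+1]_T$, but not by more. The same works for every $d$. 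The summand contains $(1-T)^d(1-T^{n+1})^{n-d}$, and
\[
  (1-T^{n+1})^{n-d}=(1-T)^{n-d}[n+1]_T^{\,n-d},
\]
so each summand numerator is divisible by $(1-T)^{d}(1-T)^{n-d}=(1-T)^n$.

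Concretely, I would write
\[
  Z^{\textup{red}}_{\mfh_n}(T)
  =\sum_{d=0}^n\binom{n}{d}\frac{A_d(T^{n+1})\,(1-T)^n[n+1]_T^{\,n-d}\,(1-T^{n+1})^{n-d}}{(1-T)^{2n}(1-T^{n+1})^{n+1}}\cdot\frac{1}{(1-T^{n+1})^{n-d}}\cdot(1-T^{n+1})^{n-d},
\]
or more cleanly
\[
  \frac{A_d(T^{n+1})}{(1-T)^{2n-d}(1-T^{n+1})^{d+1}}
  =\frac{A_d(T^{n+1})\,[n+1]_T^{\,n-d}}{(1-T)^{n}(1-T^{n+1})^{n+1}}.
\]
This holds because multiplying numerator and denominator by $(1-T^{n+1})^{n-d}=(1-T)^{n-d}[n+1]_T^{n-d}$ turns $(1-T)^{2n-d}$ into $(1-T)^{2n-d}(1-T)^{n-d}$. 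That is the wrong direction, so the bookkeeping needs care. Done correctly: multiply numerator and denominator by $[n+1]_T^{\,n-d}$. The denominator becomes
\[
  (1-T)^{2n-d}[n+1]_T^{\,n-d}(1-T^{n+1})^{d+1}
  =(1-T)^{n}(1-T^{n+1})^{n-d}(1-T^{n+1})^{d+1}
  =(1-T)^n(1-T^{n+1})^{n+1}.
\]
So the identity holds with numerator $A_d(T^{n+1})[n+1]_T^{\,n-d}$.

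Summing over $d$ gives the corollary with
\[
  P_n(T)=\sum_{d=0}^n\binom{n}{d}A_d(T^{n+1})\,[n+1]_T^{\,n-d}\in\Z[T],
\]
which is integral because the Eulerian polynomials and $[n+1]_T$ are. There is no real obstacle here. The only point needing care is the exponent check $(2n-d)-(n-d)=n$, which is exactly what makes a uniform denominator possible.
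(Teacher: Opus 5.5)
Your argument is correct and is the same as the paper's: starting from \Cref{prop:zeta-red-brenti}, you rewrite the $d$-th summand over $(1-T)^n(1-T^{n+1})^{n+1}$ by multiplying numerator and denominator by $[n+1]_T^{\,n-d}=\bigl(\frac{1-T^{n+1}}{1-T}\bigr)^{n-d}$, which gives $P_n(T)=\sum_{d=0}^n\binom{n}{d}A_d(T^{n+1})[n+1]_T^{\,n-d}\in\Z[T]$. For a write-up, delete the garbled first display and the divisibility discussion before it, since the per-summand identity alone proves the claim.
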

\begin{proof}
  In \Cref{prop:zeta-red-brenti}, multiplying the \(d\)-th summand by
  \((1-T)^n(1-T^{n+1})^{n+1}\) leaves the polynomial factor
  \[
    \left(\frac{1-T^{n+1}}{1-T}\right)^{n-d}.
  \]
  Hence the claimed denominator suffices.
\end{proof}

\begin{remark}
  The reduced zeta function \(Z^{\textup{red}}_{\mfh_n}(T)\) is the
  integer-point transform of a \(2n+1\)-dimensional rational polyhedral cone:
  \begin{equation}\label{eq:cone}
    Z^{\textup{red}}_{\mfh_n}(T)
    =
    \sum_{\substack{(e_0,e_1,\dots,e_{2n})\in \N_0^{2n+1} \\ e_0 \leq e_1+e_2,\dots,e_{2n-1}+e_{2n}}}
    T^{\sum_{i=0}^{2n}e_i}.
  \end{equation}
  Indeed, the basis \((x_1,\dots,x_{2n},y)\) in \Cref{eq:pres} is nice and
  simple in the sense of~\cite{Evseev2009Reduced}, so that
  \cite[Prop.\ 4.1]{Evseev2009Reduced} applies.
\end{remark}

Note that the local functional equation \Cref{cor:funeq} implies the
self-reciprocity
\[
  \left.Z^{\textup{red}}_{\mfh_n}(T)\right|_{T\rarr T^{-1}}
  =
  -T^{2n+1}Z^{\textup{red}}_{\mfh_n}(T),
\]
which may also be deduced directly from~\Cref{eq:cone}; cf.\
\cite[Prop.~5.2]{Evseev2009Reduced}.

\subsubsection*{Behaviour at \(T=1\)}
By \Cref{cor:zeta-red-denominator}, \(Z^{\textup{red}}_{\mfh_n}(T)\)
has a pole of order at most \(2n+1\) at \(T=1\). We consider
\[
  c_n
  :=
  \left.Z^{\textup{red}}_{\mfh_n}(T)(1-T)^{2n+1}\right|_{T=1}.
\]

\begin{prop}\label{prop:cn-limit}
  We have
  \[
    c_n
    =
    \sum_{k=0}^n \binom{n}{k}\frac{k!}{(n+1)^{k+1}}
    =
    1-n\sum_{k=1}^{n}\binom{n-1}{k-1}\frac{k!}{(n+1)^{k+1}}
    \in(0,1).
  \]
\end{prop}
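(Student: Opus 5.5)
The plan is to compute $c_n$ directly from \Cref{prop:zeta-red-brenti}. I will multiply each summand by $(1-T)^{2n+1}$ and evaluate at $T=1$. The $d$-th summand becomes
\[
  \binom{n}{d}\,A_d(T^{n+1})\,\frac{(1-T)^{d+1}}{(1-T^{n+1})^{d+1}},
\]
and as $T\to1$ we have $(1-T)/(1-T^{n+1})\to 1/(n+1)$. Since $A_d(1)=d!$ (this holds for $d\ge1$ because $A_d$ sums over $S_d$, and for $d=0$ because $A_0=1$), the summand tends to $\binom{n}{d}\,d!/(n+1)^{d+1}$. Summing over $d$ gives the first expression for $c_n$. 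No cancellation issue arises, because every summand individually has a pole of order at most $2n+1$: the exponent is $(2n-d)+(d+1)$.

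For the second expression I will use $\binom{n}{k}=\binom{n-1}{k}+\binom{n-1}{k-1}$, or more directly a telescoping identity. Set $a_k:=\binom{n}{k}k!/(n+1)^{k+1}=\frac{n!}{(n-k)!\,(n+1)^{k+1}}$. Then
\[
  a_k-a_{k+1}=\frac{n!}{(n-k)!\,(n+1)^{k+1}}\Bigl(1-\frac{n-k}{n+1}\Bigr)=\frac{(k+1)\,n!}{(n-k)!\,(n+1)^{k+2}}.
\]
The plan is to write $1=\sum_{k}(\text{telescoping})$. Equivalently, I want to verify
\[
  1-\sum_{k=0}^n a_k = n\sum_{k=1}^n\binom{n-1}{k-1}\frac{k!}{(n+1)^{k+1}}.
\]
The $k$-th term on the right equals $\frac{n!\,k}{(n-k)!\,(n+1)^{k+1}}$. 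The cleanest route is to check that $\sum_k (n+1-k)\,\frac{n!}{(n-k)!(n+1)^{k+1}}$ telescopes. Writing $b_k:=\frac{n!}{(n-k)!(n+1)^{k}}$, we have $b_{k+1}=b_k\frac{n-k}{n+1}$, so $b_k-b_{k+1}=b_k\frac{k+1}{n+1}$. Summing from $k=0$ to $n$ gives $b_0-b_{n+1}=1$, since $b_{n+1}=0$. Hence $\sum_k (k+1)\,b_k/(n+1)=1$, that is, $\sum_k a_k+\sum_k k\,a_k=1$. This is exactly the claimed identity, because $k\,a_k=n\binom{n-1}{k-1}k!/(n+1)^{k+1}$. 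I expect this bookkeeping to be the only mildly delicate step, mainly in keeping the indices consistent.

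Finally, for $c_n\in(0,1)$: positivity is immediate from the first formula, since all terms are positive. Strict inequality $c_n<1$ follows from the second formula, because for $n\ge1$ the subtracted sum contains the positive term $k=1$.
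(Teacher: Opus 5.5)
Your proposal is correct and follows the paper's proof essentially step for step. You take the termwise limit in \Cref{prop:zeta-red-brenti} using $A_k(1)=k!$ and $(1-T)/(1-T^{n+1})\to 1/(n+1)$, and then telescope $\sum_{k=0}^n (k+1)a_k=1$; the paper obtains that telescoping by writing $k+1=(n+1)-(n-k)$, which is exactly your relation $b_k-b_{k+1}=b_k\frac{k+1}{n+1}$, and it deduces $0<c_n<1$ from the two formulas just as you do.
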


\begin{proof}
  By \Cref{prop:zeta-red-brenti},
  \[
    (1-T)^{2n+1}Z^{\textup{red}}_{\mfh_n}(T)
    =
    (1-T)
    \sum_{k=0}^n
    \binom{n}{k}(1-T)^k
    \frac{A_k(T^{n+1})}{(1-T^{n+1})^{k+1}}.
  \]
  As \(T\to1\), we have \(A_k(T^{n+1})\to A_k(1)=k!\) and
  \(\dfrac{1-T}{1-T^{n+1}}\to \dfrac{1}{n+1}\). Therefore,
  \[
    c_n
    =
    \sum_{k=0}^n \binom{n}{k}\frac{k!}{(n+1)^{k+1}}.
  \]

  For the second equality, observe that
  \begin{align*}
    \sum_{k=0}^n \binom{n}{k}\frac{k!}{(n+1)^{k+1}}
   & +n\sum_{k=1}^{n}\binom{n-1}{k-1}\frac{k!}{(n+1)^{k+1}} 
    = \sum_{k=0}^n\frac{n!(k+1)}{(n-k)!(n+1)^{k+1}}.
  \end{align*}
  Rewriting \(k+1=(n+1)-(n-k)\), the last sum telescopes to \(1\). Hence the
  second formula follows, and the two formulas imply \(0<c_n<1\).
\end{proof}

In particular, \(Z^{\textup{red}}_{\mfh_n}(T)\) has a pole of order exactly
\(2n+1\) at \(T=1\).
\begin{remark}
  The numerators of the rational numbers \(c_n\) form
  \href{https://oeis.org/A393141}{OEIS sequence~A393141} \cite{OEIS:A393141}.
\end{remark}

\begin{ex}
  For \(n\in\{1,2,3\}\), the reduced zeta functions are
\begin{alignat*}{2}
  Z^{\textup{red}}_{\mfh_1}(T)
  &=
  \frac{1+T+T^2}{(1-T)(1-T^2)^2},
  \qquad &
  c_1&=\frac34,\\
  Z^{\textup{red}}_{\mfh_2}(T)
  &=
  \frac{1+2T+3T^2+5T^3+3T^4+2T^5+T^6}{(1-T)^2(1-T^3)^3},
  \qquad&
  c_2&=\frac{17}{27},\\
  Z^{\textup{red}}_{\mfh_3}(T)
  &=
  \frac{1+3T+6T^2+10T^3+19T^4+21T^5+22T^6+\dots+T^{12}}
    {(1-T)^3(1-T^4)^4},\quad & c_3 &=\frac{71}{128}.
\end{alignat*}
\end{ex}

\begin{acknowledgements}
  This research was funded by the Deutsche Forschungsgemeinschaft (DFG,
  German Research Foundation) -- Project-ID 491392403 -- TRR 358.
\end{acknowledgements}

\printbibliography
\end{document}